\theoremstyle{plain}
\newtheorem{theorem}{Theorem}[section]
\newtheorem{lemma}[theorem]{Lemma}
\newtheorem{proposition}[theorem]{Proposition}
\newtheorem{corollary}[theorem]{Corollary}
\newtheorem*{theorem-A}{Theorem A}
\newtheorem*{theorem-B}{Theorem B}
\newtheorem*{theorem-C}{Theorem C}
\newtheorem*{theorem-D}{Theorem D}
\newtheorem*{theorem-E}{Theorem E}
\newtheorem*{theorem-F}{Theorem F}
\theoremstyle{definition}
\newtheorem{remark}[theorem]{Remark}
\newtheorem{example}[theorem]{Example}
\def\rn{\mathbb R\sp n}
\def\Rn{\mathbb R\sp n}
\def\R{\mathbb R}
\def\N{\mathbb N}
\def\S{\mathbb S}
\def\M{\mathcal M}
\def\limsup{\operatornamewithlimits{lim\,sup}}
\def\o{\Omega}
\def\med{\operatorname{med}}
\def\ddd{d_{E,\o}}
\newtoks\by
\newtoks\paper
\newtoks\book
\newtoks\jour
\newtoks\yr
\newtoks\pages
\newtoks\vol
\newtoks\publ
\newtoks\eds
\newtoks\proc
\newtoks\no
\def\ota{{\hbox{???}}}
\def\cLear{\by=\ota\paper=\ota\book=\ota\jour=\ota\yr=\ota
\pages=\ota\vol=\ota\publ=\ota}
\def\endpaper{\the\by, \textit{\the\paper},
{\the\jour} \textbf{\the\vol} (\the\yr), \the\pages.\cLear}
\def\endbook{\the\by, \textit{\the\book}, \the\publ.\cLear}
\def\endprep{\the\by, \textit{\the\paper}, \the\jour.\cLear}
\def\endproc{\the\by, \textit{\the\paper}, \the\publ, \the\pages.\cLear}
\def\name#1#2{#1 #2}
\def\et{ and }
\numberwithin{equation}{section}
\newcommand{\norm}[1]{{\left\vert\kern-0.25ex\left\vert\kern-0.25ex\left\vert #1
    \right\vert\kern-0.25ex\right\vert\kern-0.25ex\right\vert}}
\begin{document}

\date{\today}

\title[Fractional Orlicz-Sobolev embeddings]{Fractional Orlicz-Sobolev embeddings}

\author {Angela Alberico, Andrea Cianchi, Lubo\v s Pick and Lenka Slav\'ikov\'a}

\address{Angela Alberico, Istituto per le Applicazioni del Calcolo ``M. Picone''\\
Consiglio Nazionale delle Ricerche \\
Via Pietro Castellino 111\\
80131 Napoli\\
Italy} \email{a.alberico@iac.cnr.it}

\address{Andrea Cianchi, Dipartimento di Matematica e Informatica \lq\lq U. Dini"\\
Universit\`a di Firenze\\
Viale Morgagni 67/a\\
50134 Firenze\\
Italy} \email{andrea.cianchi@unifi.it}

\address{Lubo\v s Pick, Department of Mathematical Analysis\\
Faculty of Mathematics and Physics\\
Charles University\\
Sokolovsk\'a~83\\
186~75 Praha~8\\
Czech Republic} \email{pick@karlin.mff.cuni.cz}

\address{Lenka Slav\'ikov\'a, Mathematical Institute, University of Bonn, Endenicher Allee 60,
53115 Bonn, Germany
and
Department of Mathematical Analysis, Faculty of Mathematics and
Physics,  Charles University, Sokolovsk\'a~83,
186~75 Praha~8, Czech Republic}
\email{slavikova@karlin.mff.cuni.cz}
\urladdr{}

\subjclass[2000]{46E35, 46E30}
\keywords{Fractional Orlicz--Sobolev spaces; Sobolev embeddings; Hardy inequalities; compact embeddings; Orlicz spaces; rearrangement-invariant spaces}

\begin{abstract}
The optimal Orlicz target space is exhibited for embeddings of fractional-order Orlicz--Sobolev spaces in $\rn$. An improved embedding with an  Orlicz--Lorentz target space, which is optimal in the broader class of all rearrangement-invariant spaces, is also established. Both  spaces of order  $s\in (0,1)$, and higher-order spaces are considered. Related Hardy type inequalities are proposed as well.  An extension theorem is proved, that enables us to derive embeddings for spaces defined in Lipschitz domains. Necessary and sufficient conditions for the compactness of fractional Orlicz-Sobolev embeddings are provided.
\end{abstract}

\date{\today}

\maketitle

\section{Introduction}

The present paper is aimed at offering optimal Sobolev--Poincar\'e type inequalities and related embeddings for fractional-order Orlicz-Sobolev spaces. These spaces extend the classical fractional Sobolev spaces introduced in \cite{Aronszajn, Gagliardo, Slobodeckij}. Given a number $s\in (0,1)$ and a Young function $A: [0, \infty ) \to[0, \infty]$, namely a convex function vanishing  at $0$, the fractional-order Orlicz-Sobolev space $W^{s,A}(\rn)$ is defined via a seminorm $|\,\cdot \,|_{s,A,\rn}$ built upon the functional defined as
\begin{equation}\label{intro1}
\int_{\rn} \int_{\rn}A\left(\frac{|u(x)-u(y)|}{|x-y|^s}\right)\frac{\,dx\,dy}{|x-y|^n}
\end{equation}
for a measurable function $u$ in $\rn$. The definition of the seminorm $|\,\cdot \,|_{s,A,\rn}$ is given, via the functional \eqref{intro1}, in analogy with
the notion of  Luxemburg norm in Orlicz spaces.
The bases  for a theory of
 the spaces $W^{s,A}(\rn)$, motivated e.g. by the analysis of nonlinear fractional  Laplacians with non-polynomial kernels, have  recently been laid in \cite{deNBS, BonderSalort} under the
     $\Delta_2$-condition and a sublinear growth condition near zero  on the Young function $A$. Neither of these additional assumptions will be imposed throughout.
\par The standard Gagliardo functional and the associated seminorm $|\,\cdot \,|_{s,p,\rn}$, underlying the notion of the fractional Sobolev space $W^{s,p}(\rn)$ for $p \in [1, \infty)$, are recovered   by the choice $A(t)=t^p$.
A renewed interest in the area around fractional Sobolev spaces has flourished in the last two decades. This has been favoured by a  myriad of investigations on   nonlocal equations of elliptic and parabolic type, whose solutions naturally belong to the spaces
$W^{s,p}(\rn)$.
A touch of recent contributions in this connection is furnished by \cite{BaFiVa, BFV, BBM, BBM_2002, BouPonVanSch, BraCin, BraSal, BrezisMiro1, BrezisMiro2, BrezisNguyen, CaFaSoWe, CaSir, CaSil, CaSaSil, CaRoSa, CaVal, CaDiFiLaSl, CW, ChenWe, CFW, CostadeFigYang, DydaFrank, DydaVaha, FiFuMaMiMo, FMT, FJX, Heuer, KuMi, Ludwig, Mallick, MaranoMosco, MaSh2, MaSh1, MuNa1, PaPi, PaRu, R-OSe1, R-OSe2, SeegerTrebels, Tz, Zhou}.
Comprehensive treatments of the theory of fractional Sobolev spaces, as special instances of the more general Besov spaces, can be found e.g. in \cite{BesovIlinNik,  Leoni}. A self-contained presentation of their basic properties is provided in \cite{DPV}.
\par Embeddings for the spaces  $W^{s,p}(\rn)$ into Lebesgue spaces are classical. In particular, if $1\leq p < \frac ns$, then there exists a constant $C$ such that
\begin{equation}\label{intro2}
	\|u\|_{L^{\frac{np}{n-sp}}(\R^n)}
		\le C |u|_{s,p,\R^n}
		\end{equation}
for every measurable function $u$ decaying to $0$ (in a suitable sense) near infinity.
\\ An improved version of inequality \eqref{intro2} has been established in \cite{FrankSeiringer}. It asserts that, in fact,
\begin{equation}\label{intro3}
	\|u\|_{L^{\frac{np}{n-sp},p}(\R^n)}
		\le C |u|_{s,p,\R^n}\,,
\end{equation}
where the Lorentz space $L^{\frac{np}{n-sp},p}(\R^n)\subsetneq L^{\frac{np}{n-sp}}(\R^n)$.
\par
Our main results amount to  sharp counterparts of inequalities \eqref{intro2} and \eqref{intro3} for general seminorms $|\,\cdot \,|_{s,A,\rn}$. Given any $s\in (0,1)$ and any Young function $A$, with subcritical growth   corresponding to the assumption $p < \frac ns$ in the case of powers, we detect the optimal Orlicz target space $L^{A_{\frac ns}}(\rn)$ such that
\begin{equation}\label{intro4}
	\|u\|_{L^{A_{\frac ns}}(\rn)}
		\le C |u|_{s,A,\R^n}
\end{equation}
for some constant $C$ and every measurable function $u$ decaying to $0$   near infinity. An explicit formula for the Young function $A_{\frac ns}$ is provided, which only depends on $A$ and on the ratio $\frac ns$.
Here, and in what follows, the expression \lq\lq optimal target space" referred to an embedding or inequality means \lq\lq smallest possible" within a specified family, in the sense that if the embedding also holds with the optimal target space in question replaced by another space from the same family, then the former is continuously embedded into the latter.
\\ Inequality \eqref{intro4} is derived as a consequence of a stronger inequality
\begin{equation}\label{intro5}
	\|u\|_{L({\widehat A},\frac{n}{s})(\R^n)}
		\le C |u|_{s,A,\R^n}\,,
\end{equation}
where $L({\widehat A},\frac{n}{s})(\R^n)$ -- a space of Orlicz-Lorentz type depending only on $A$ and $\frac ns$  -- is optimal in the larger class of all rearrangement-invariant spaces.
\\ In particular, these results reproduce inequalities \eqref{intro2} and \eqref{intro3} when $A(t)=t^p$. The latter also provides new information about \eqref{intro3}, and  tells us that the space $L^{\frac{np}{n-sp},p}(\R^n)$ is indeed optimal in \eqref{intro3} among all rearrangement-invariant spaces. Let us mention that embeddings for the spaces $W^{s,A}(\rn)$, under additional technical assumptions on $A$ and into non-optimal (in general) target spaces have recently appeared in \cite{BO}.
\par
Optimal inequalities  for Orlicz-Sobolev spaces of  fractional-order $s>1$ are also presented. As customary, these spaces are defined on replacing the function $u$ in \eqref{intro1} by $\nabla ^{[s]}u$, the vector of all weak derivatives of $u$ whose order is the integer part $[s]$ of $s$. The  inequalities in question parallel \eqref{intro4} and \eqref{intro5}. They extend inequalities   \eqref{intro4} and \eqref{intro5} to any $s \in (0,n) \setminus \N$,
 and take the form
\begin{equation}\label{intro6}
	\|u\|_{L^{A_{\frac ns}}(\rn)}
		\le C \big|\nabla ^{[s]}u\big|_{s,A,\R^n}\,,
\end{equation}
and
\begin{equation}\label{intro7}
	\|u\|_{L({\widehat A},\frac{n}{s})(\R^n)}
		\le C \big|\nabla ^{[s]}u\big|_{s,A,\R^n}\,,
\end{equation}
respectively, for functions $u$ all of whose derivatives up to the order $[s]$  decay to $0$ near infinity. The sharp spaces  $L^{A_{\frac ns}}(\rn)$ and  $L({\widehat A},\frac{n}{s})(\R^n)$ appearing in \eqref{intro6} and \eqref{intro7} are defined exactly via the same formulas as in the case when $s\in (0,1)$, save that now are applied for  $s \in (1,n) \setminus \N$. Our conclusions  could thus be formulated via statements simultaneously covering the cases when $s\in (0,1)$ and $s\in (1,n) \setminus \N$. We prefer to enucleate
the results for $s\in (0,1)$ in a separate section for ease of presentation, and also because those for $s\in (1,n) \setminus \N$ call for  a  combination of the former and of inequalities for integer-order Orlicz-Sobolev spaces.
\par  The integer-order Orlicz-Sobolev embeddings have been established in  \cite{cianchi_IUMJ, cianchi_CPDE, cianchi-ibero, cianchi_forum}.
 Importantly, these embeddings are exactly matched by the fractional-order embeddings announced above, although the latter rely on a substantially different approach.
Indeed,  applying our formulas for the optimal spaces  $L^{A_{\frac ns}}(\rn)$ and  $L({\widehat A},\frac{n}{s})(\R^n)$ with $s \in \N$ recovers the optimal Orlicz and the optimal rearrangement-invariant space, respectively, in the Orlicz-Sobolev inequality of integer-order $s$.
\par  Closely related fractional-order Hardy type inequalities in $\rn$ are proposed as well. In fact, a crucial step in our approach is a Hardy inequality of order $s \in (0,1)$, which extends to the Orlicz realm a result from \cite{MaSh1}. The Hardy inequality for $s \in (1,n) \setminus \N$ is, by contrast, deduced as a consequence of inequality \eqref{intro7}.
\par Analogous inequalities and embeddings when $\rn$ is replaced by a  sufficiently regular bounded subset $\Omega$ -- a bounded Lipschitz domain -- are established. In order to treat this variant, we prove  an extension theorem for functions in the space $W^{s,A}(\Omega)$, a  generalization of   a well-known result for  fractional Sobolev spaces that can be found, for instance,  in  \cite[Theorem 5.4]{DPV}.
\par Compact embeddings for fractional-order Orlicz-Sobolev spaces are characterized as well.  A necessary and sufficient condition on a rearrangement-invariant space $Y(\Omega)$ for the embedding
$$W^{s,A} (\Omega) \to Y(\Omega)$$
to be compact is exhibited when $s\in (0,n) \setminus \N$ and $\Omega$ is a bounded Lipschitz domain.  As a consequence, the embedding
$$W^{s,A} (\Omega) \to L^B(\Omega)$$
is shown to be compact for an Orlicz space $L^B(\Omega)$ if and only if the Young function $B$ grows essentially more slowly near infinity than the function $A_{\frac ns}$ appearing in \eqref{intro6}. Local versions of these compactness results are also provided in the case when $\Omega=\rn$.
Like the other results of this paper, on setting $s\in \N$ in their statements, our fractional compact embeddings
perfectly tie up with    their integer-order counterparts proved in \cite{cianchi_IUMJ, cianchi_CPDE, cianchi-ibero, cianchi_forum}.
\par The material is organized as follows. Section \ref{rispaces} and Section \ref{fracorlicz} are devoted to notations, definitions and necessary background from the theory of Orlicz and rearrangement-invariant spaces, and the theory of integer and fractional-order Orlicz-Sobolev spaces, respectively. Several sharp one-dimensional Hardy type inequalities in Orlicz and rearrangement-invariant spaces of critical use in the proofs of our main results are collected in Section \ref{1d}. Some of them are known, but others are new. The main results are exposed in Sections \ref{hardy}--\ref{compact}. Section \ref{hardy} deals with the Hardy inequality for fractional Orlicz-Sobolev spaces on $\rn$ of order $s\in (0,1)$. Orlicz-Sobolev embeddings for spaces of order in the same range are presented in Section \ref{s<1}, whereas Section \ref{s>1} is concerned with embeddings of arbitrary order. Fractional Orlicz-Sobolev spaces on open subsets of
$\rn$ and their embeddings are the subject of Section \ref{domains}. The objective of the final Section \ref{compact} are criteria for the compactness of embeddings.

\section{Orlicz spaces and rearrangement-invariant   spaces}\label{rispaces}

 A  function  $A: [0,
\infty ) \to [0, \infty ]$ is called a Young function if it has
the form
\begin{equation}\label{young}
A(t) = \int _0^t a(\tau ) d\tau \quad \text{for $t \geq 0$}
\end{equation}
for some non-decreasing, left-continuous function $a: [0, \infty )
\to [0, \infty ]$ which is neither identically equal to $0$ nor to
$\infty$. Clearly, any convex (non trivial) function from $[0,
\infty )$ into $[0, \infty ]$, which is left-continuous and
vanishes at $0$, is a Young function.
\par\noindent
Note that, if $k \geq 1$, then
\begin{equation}\label{kt}
k\,A(t) \leq A(kt) \quad \text{for $t \geq 0$.}
\end{equation}
The Young conjugate $\widetilde{A}$ of $A$  is defined by
$$
\widetilde{A}(t) = \sup \{\tau t-A(\tau ):\,\tau \geq 0\}  \quad \text{for $t\geq 0$.}
$$
The following representation formula for $\widetilde{A}$ holds:
\begin{equation}\label{youngconj}
\widetilde A(t) = \int _0^t a^{-1}(\tau ) d\tau \quad \text{for $t \geq 0$.}
\end{equation}
 Here, $a^{-1}$ denotes the left-continuous inverse of the
function $a$ appearing in \eqref{young}.
\\ A  Young function $A$ is said to dominate another Young function $B$
globally   if there exists a positive
 constant $C$  such that
\begin{equation}\label{B.5bis}
B(t)\leq A(C t) \quad \text{for $ t\geq 0.$}
\end{equation}
The function $A$ is said to dominate $B$ near infinity[resp. near zero] if there
exists $t_0> 0$ such that \eqref{B.5bis} holds for $t \geq t_0$ [$t \leq t_0$]. The functions $A$ and $B$ are called equivalent globally [near infinity] [near zero] if they dominate each other globally [near infinity] [near zero]. Plainly, the function $A$ dominates [is equivalent to] $B$ globally if and only if
$A$ dominates [is equivalent to] $B$ near zero and near infinity.
\\
The function $B$ is said to grow essentially more slowly near infinity than $A$ if
\begin{equation}\label{nov 110}
\lim _{t \to \infty} \frac{B(\lambda t)}{A(t)} =0
\end{equation}
for every $\lambda >0$. Note that condition \eqref{nov 110} is equivalent to
\begin{equation}\label{nov 110bis}
\lim _{t \to \infty} \frac{A^{-1}(t)}{B^{-1}(t)} =0.
\end{equation}
\\
The growth of a Young function $A$ can be compared with that of a power function via its Matuszewska-Orlicz indices.
Recall that the upper Matuszewska-Orlicz  index $I(A)$ of a finite-valued Young function $A$ is defined as
\begin{equation}\label{index}I(A) = \lim_{\lambda \to \infty} \frac{\log \Big(\sup _{t>0}\frac{A(\lambda t)}{A(t)}\Big)}{\log \lambda}.
\end{equation}
The Matuszewska-Orlicz index  $I_\infty (A)$  of $A$ near infinity  is defined analogously, with
$\sup _{t>0}\frac{A(\lambda t)}{A(t)}$ replaced by $\limsup _{t \to \infty}\frac{A(\lambda t)}{A(t)}$.
\par
Let $\Omega$ be  a measurable subset of $\rn$, with $n\geq 1$. We set
\begin{equation}\label{M}
\mathcal{M}(\Omega)=\{u:\Omega \to \R : \text{$u$ is  measurable}\}\,,
\end{equation}
and
\begin{equation}\label{M+}
\mathcal{M}_+(\Omega)=\{ u\in \mathcal{M}(\Omega) : u \geq 0\}\,.
\end{equation}
The notation $\mathcal{M}_d(\Omega)$ is adopted for the subset of $\mathcal{M}(\Omega)$ of those functions $u$ that decay near infinity, in the sense that all their level sets $\{|u|>t\}$ have finite Lebesgue measure for $t>0$. Namely,
\begin{equation}\label{Md}
\mathcal{M}_d(\Omega)=\{ u\in \mathcal{M}(\Omega) : |\{|u|>t\}|<\infty\,\, \text{for every $t>0$}\}\,,
\end{equation}
where $|E|$ stands for the Lebesgue measure of a set $E\subset \rn$.
Of course, $\mathcal{M}_d(\Omega)= \mathcal{M}(\Omega)$ provided that $|\Omega|<\infty$.
\par
The Orlicz space $L^A (\Omega )$, associated with a Young function
$A$,  on  a measurable subset $\Omega$ of $\rn$, is the Banach
function space of those real-valued measurable functions $u$ in
$\Omega$ for which the
 Luxemburg norm
\begin{equation*}
 \|u\|_{L^A(\Omega )}= \inf \left\{ \lambda >0 :  \int_{\Omega }A
\left( \frac{|u|}{\lambda} \right) dx \leq 1 \right\}\,
\end{equation*}
is finite. In particular, $L^A (\Omega )= L^p (\Omega )$ if $A(t)=
t^p$ for some $p \in [1, \infty )$, and $L^A (\Omega )= L^\infty
(\Omega )$ if $A(t)=0$ for $t\in [0, 1]$ and $A(t) = \infty$ for
$t>0$.
\\ When convenient for specific choices of $A$, we shall also adopt the notation $A(L)(\o)$ to denote the Orlicz space $L^A(\o)$.
\par\noindent
The H\"older type inequality
\begin{equation}\label{holder}
\int _\Omega |u v|\,dx \leq 2\|u\|_{L^A(\Omega )}
\|v\|_{L^{\widetilde A}(\Omega )}
\end{equation}
holds for every $u \in L^A(\Omega )$ and $v\in L^{\widetilde
A}(\Omega )$.
\par\noindent If $A$ dominates $B$ globally, then
\begin{equation}\label{normineq}
\|u \|_{L^B(\Omega )} \leq C \|u \|_{L^A(\Omega )}
\end{equation}
for every $u \in L^A(\Omega )$, where $C$ is the same constant as in
\eqref{B.5bis}.  If $|\Omega|<\infty$   and $A$ dominates $B$ near infinity, then inequality
\eqref{normineq} continues to hold for some constant $C$ depending also on $A$, $B$ and $|\Omega|$. Thus, if $A$ is globally equivalent to $B$, then $L^A(\o)= L^B(\o)$, up to equivalent norms.  The same is true even if $A$ and $B$ are just equivalent near infinity, provided that $|\o|<\infty$.

\par The Orlicz spaces are members of the more general class of rearrangement-invariant spaces, whose definition is based upon that of decreasing rearrangement of a function.
\\
The \emph{decreasing
rearrangement} $u^{\ast}$ of a function $u\in \mathcal M (\Omega)$ is the (unique) non-increasing,
right-continuous function from $[0,\infty)$ into $[0,\infty]$
which is equidistributed with $u$. In formulas,
\begin{equation*}
u^{\ast}(r) = \inf \{t\geq 0: |\{x\in \o: |u(x)|>t \}|\leq r \} \quad \text{for $t\geq 0$.}
\end{equation*}
Moreover, we define the function $u^{\ast \ast}: (0, \infty)  \to [0, \infty]$ as
$$
u^{\ast \ast}(r) =
\frac{1}{r}\int_{0}^{r}u^{\ast}(\varrho) d\varrho \quad  \text{for $r> 0$.}
$$
Notice that $u^* \leq u^{\ast \ast}$.
 The
\emph{Hardy-Littlewood inequality} states that
\begin{equation}\label{B.0}
\int_{\o}|uv| \,dx \leq \int_{0}^{|\o|}u^{\ast}v^{\ast}\,dr
\end{equation}
for all functions $u, v \in \mathcal M(\o)$. As a consequence, one also has that
\begin{equation}\label{HLA}
\int_{\o}A(|uv|) \,dx \leq \int_{0}^{|\o|}A(u^{\ast}v^{\ast})\,dr
\end{equation}
for every Young function $A$.
 \par\noindent A
Banach function space $X(\o)$, in the sense of Luxemburg \cite[Chapter 1, Section 1]{BS},   is called a \emph{rearrangement-invariant space}   if
\begin{equation}\label{B.1}
 \|u\|_{X(\o)} = \|v \|_{X(\o)} \quad \text{whenever $u^*=v^*$.}
 \end{equation}
\par\noindent
The \emph{associate space} $X^{'}(\o)$ of $X(\o)$ is the rearrangement-invariant
space of all  e functions in $\mathcal M(\o)$ for which the norm
\begin{equation}\label{B.2}
 \|v \|_{X^{'}(\o)} =
\sup_{u \neq 0} \frac{\int _\o|uv| dx}{\|u \|_{X(\o)}}
\end{equation}
is finite.
Notice that, given two rearrangement-invariant spaces  $X(\o)$ and $Y(\o)$,
\begin{equation}\label{nov103}
X(\o) \to Y(\o) \quad \text{if and only if} \quad Y'(\o) \to X'(\o)
\end{equation}
with the same embedding constants. Here, and in  what follows, the arrow $\lq\lq \to "$ stands for continuous embedding.
\\ If $X(\o)= L^A(\o)$ for some Young function $A$, then
\begin{equation}\label{nov104}
{(L^A)}'(\o) = L^{\widetilde A}(\o)\,,
\end{equation}
up to equivalent norms, with absolute equivalence constants.
\\  Let  $\Omega$ be a  measurable set  in $ \rn$. With any function $u : \o \to \R$,  we can associate the function $\mathcal E_0(u) : \rn \to \R$  defined as
\begin{equation}\label{E0}
\mathcal E_0(u) (x)=  \begin{cases} u(x) \quad & \text{if $x \in \Omega$}
\\ 0 \quad & \text{if $x \in \rn \setminus \Omega$}\,.
\end{cases}
\end{equation}
The map $u \mapsto \mathcal E_0(u)$ plainly defines  a linear operator.  Given a rearrangement-invariant space
$X(\rn)$, we denote by $X(\Omega)$ the rearrangement-invariant space on $\Omega$ equipped with the   norm defined as
\begin{equation}\label{Xr}
\|u\|_{X(\Omega)}= \|\mathcal E_0(u)\|_{X(\rn)}
\end{equation}
for every function $u \in \mathcal M(\Omega)$.
Note that, if $X(\rn)= L^A(\rn)$ for some Young function $A$, then the space  $X(\o)$ defined   as in \eqref{Xr}  agrees with  the Orlicz space $L^A(\o)$.
\par\noindent
The \emph{representation space} $\overline{X}(0,|\o|)$ of a rearrangement-invariant space $X(\o)$
is the unique rearrangement-invariant space on $(0,|\o|)$ satisfying
\begin{equation}\label{B.3}
\|u \|_{X(\o)} = \|u^{\ast} \|_{\overline{X}(0,|\o|)}
\end{equation}
for every $u\in X(\o)$.
\par\noindent
If $|\o|<\infty$, then
\begin{equation}\label{B.3'}
L^\infty (\o) \to X(\o) \to L^1(\o)
\end{equation}
for every rearrangement-invariant  space $X(\o)$.
\\
Given any $\lambda>0$ and $L>0$, the \textit{dilation operator} $E_{\lambda}$, defined at
$f\in \M(0,L)$ by
\begin{equation}\label{dilation}
  (E_{\lambda} f)(t)=
\begin{cases}
  f(t/\lambda)\quad&\textup{if} \quad 0< t/\lambda\ \leq L
\\
  0&\textup{if}\quad  L< t/\lambda,
  \end{cases}
\end{equation}
is bounded on any rearrangement-invariant~space $X(0,L)$, with norm
not exceeding $\max\{1, 1/\lambda\}$.
\\
Assume that $|\o|<\infty$ and let $X(\o)$ and $Y(\o)$ be rearrangement-invariant spaces.
We say that the space $X(\o)$ is almost-compactly embedded into $Y(\o)$ if
\begin{equation}\label{E:almost_compact_definition}
\lim_{L\to 0^+} \sup_{\|u\|_{X(\o)}\leq 1} \|\chi_{(0,L)}u^*\|_{\overline{Y}(0,|\o|)}=0.
\end{equation}
Here, and in what follows, $\chi_E$ denotes the characteristic function of a set $E$.
By~\cite[Theorem 3.1]{S}, equation~\eqref{E:almost_compact_definition} is equivalent to the following condition:
\begin{equation}\label{E:convergence}
\text{if $\{u_i\}$ is a bounded sequence in $X(\o)$ such that $u_i \rightarrow 0$ a.e., then } \lim_{i\to \infty} \|u_i\|_{Y(\o)}=0.
\end{equation}
In the special case of Orlicz spaces $L^A(\o)$ and $L^B(\o)$, one has that
\begin{equation}\label{dec230}
 \text{$L^A(\o)$ is almost-compactly embedded into $L^B(\o)$ if and only if
$B$ grows essentially more slowly than $A$,}
\end{equation}
see,   e.g.,\cite[Theorem 4.17.7]{PKJF}).

\par The Orlicz-Lorentz  spaces are  a family of function spaces  that extends that of the Orlicz spaces. Given  a Young function $A$ and a number $q\in \R$, we denote by
 $L(A,q)(\o)$ the Orlicz-Lorentz space   of all functions $u \in \mathcal M(\o)$ for which the quantity
\begin{equation}\label{aug300}
	\|u\|_{L(A, q)(\o)}
		= \|r^{-\frac{1}{q}}u^{*}(r)\|_{L^A(0,|\o|)}
\end{equation}
is finite. Under suitable assumptions on $A$ and $q$, this quantity is a norm, and $L(A,q)(\o)$, equipped with this norm, is a (non-trivial) rearrangement-invariant space. This is certainly the case when $q>1$ and
\begin{equation}\label{aug310}
\int^\infty \frac{A(t)}{t^{1+q}}\, dt < \infty\,,
\end{equation}
see \cite[Proposition 2.1]{cianchi-ibero}.
\\ The  spaces $L(A, q)(\o)$  come into play in the description of the associate spaces of another closely related family of Orlicz-Lorentz type spaces. They are denoted by
$L[A,q](\Omega)$, and consist of all functions $u\in \mathcal M (\o)$ that make the functional
\begin{equation}\label{aug301}
	\|u\|_{L[A, q](\o)}
		= \|r^{-\frac{1}{q}}u^{**}(r)\|_{L^A(0,|\o|)}
\end{equation}
finite. One can verify that, if $q<-1$, then  this functional is a rearrangement-invariant norm that renders $L[A, q](\o)$ a rearrangement-invariant space provided that   either $|\o|<\infty$, or $|\o|=\infty$ and
\begin{equation}
   \label{jan30}
\int_0 \frac{A(t)}{t^{1+(-q)'}}\, dt < \infty\,,
\end{equation}
where $(-q)'=\tfrac q{q+1}$, the H\"older conjugate of $-q$.
%
For special choices of the function $A$, the space $L(A, q)(\o)$ agrees, up to equivalent norms, with customary Lorentz type spaces. Assume, for instance, that $|\o|<\infty$ and that
$$\text{$A(t)$ is equivalent to $t^p (\log t)^\alpha(\log\log t)^\beta$ near infinity.}$$
for some powers $p$, $\alpha$ and $\beta$. Then, depending on the relations among $p$, $q$ and $\alpha$, the space  $L(A, q)(\o)$ agrees with the Lorentz space $L^{\sigma, p}(\o)$, the Lorentz-Zygmund space $L^{\sigma,p;  \gamma}(\o)$  or with the generalized Lorentz-Zygmund space $L^{\sigma,p;  \gamma, \delta}(\o)$, for a suitable choice of the parameters  $\sigma, p \in (0, \infty]$ and  $\gamma, \delta \in \R$.
Recall that $L^{\sigma, p}(\o)$, $L^{\sigma,p;  \gamma}(\o)$  and $L^{\sigma,p;  \gamma, \delta}(\o)$ are the spaces of those functions $u\in \mathcal M(\o)$ for which the quantity
\begin{equation}\label{dec271}
\|u\|_{L^{\sigma, p} (\o)}=
 \big\|u^*(r) r^{\frac 1\sigma - \frac 1p}\big\|_{L^p(0, |\o|)},
\end{equation}
\begin{equation}\label{dec272}
\|u\|_{L^{\sigma, p;   \gamma} (\o)}=
 \big\|u^*(r) r^{\frac 1\sigma - \frac 1p}(\log (1+ |\o|/r))^\gamma \big\|_{L^p(0, |\o|)},
\end{equation}
\begin{equation}\label{dec273}
\|u\|_{L^{\sigma, p;   \gamma, \delta} (\o)}=
 \big\|u^*(r) r^{\frac 1\sigma - \frac 1p}(\log (1+ |\o|/r))^\gamma (\log(1+\log (1+ |\o|/r)))^\delta \big\|_{L^p(0, |\o|)},
\end{equation}
respectively, is finite. Notice that $L^p(\o)=L^{p,p}(\o)$, $L^{\sigma,p; 0}(\o)=L^{\sigma, p}(\o)$ and
$L^{\sigma, p;   \gamma, 0} (\o)=
L^{\sigma, p;   \gamma} (\o)$.
The full range of parameters  $\sigma, p, \gamma$  for which $L^{\sigma,p;  \gamma}(\o)$ is nontrivial is exhibited in~\cite[Remark 9.10.2(a)]{PKJF}. A characterization of the parameters for which the functional defined by  \eqref{dec272} is (equivalent to) a norm, and $L^{\sigma,p;  \gamma}(\o)$ equipped with this norm is  a rearrangement-invariant space, can be found in~\cite[Theorem 9.10.4]{PKJF}. This will always be the case in our use of the spaces $L^{\sigma,p;  \gamma}(\o)$, as well as of that of the spaces $L^{\sigma,p;  \gamma,\delta}(\o)$, for which an analogous characterization is stated in~\cite[Lemma 9.3.1, Remark 9.3.2 and Lemma 9.5.6]{PKJF}.

%

\section{Fractional Orlicz-Sobolev spaces}\label{fracorlicz}

Assume  that $\Omega$ is an open subset of $\rn$. Given $m \in \N$ and a  Young function $A$, we
denote by $V^{m,A}(\Omega )$ the homogeneous Orlicz-Sobolev space given by
\begin{equation}\label{homorliczsobolev}
V^{m,A}(\Omega ) = \{ u \in W^{m,1}_{\rm loc} (\Omega):\,   |\nabla ^m u| \in L^A(\Omega)\}.
\end{equation}
Here, $\nabla ^m u$ denotes the vector of all weak derivatives of $u$ of order $m$. If $m=1$, we also simply write $\nabla u$ instead of $\nabla^1 u$.
The notation
$W^{m,A}(\Omega )$  is adopted for the classical Orlicz-Sobolev space defined by
\begin{equation}\label{orliczsobolev}
W^{m,A}(\Omega ) = \{u \in V^{m,A}(\Omega): |\nabla ^k u| \in L^A(\Omega), \, k=0,1, \dots, m-1\}\,,
\end{equation}
where $\nabla ^0u$ has to be interpreted as $u$.
The space $W^{m,A}(\Omega)$ is a Banach space equipped with the
norm
$$\|u\|_{W^{m,A}(\Omega )} = \sum _{k=0}^m  \|\nabla^k u
\|_{L^A(\Omega)}\,.$$
\par Now, let $s\in (0,1)$. The seminorm $|u|_{s,A, \o}$ of  a function $u \in \mathcal M (\Omega)$ is given by
\begin{equation}\label{aug340}
|u|_{s,A, \o}
		= \inf\left\{\lambda>0: \int_{\o} \int_{\o}A\left(\frac{|u(x)-u(y)|}{\lambda|x-y|^s}\right)\frac{\,dx\,dy}{|x-y|^n}\le1\right\}\,.
\end{equation}
The homogeneous
 fractional Orlicz-Sobolev space $V^{s,A}(\o)$ is defined as
\begin{equation}\label{aug341}
	V^{s,A}(\o) = \big\{u \in \mathcal M (\Omega):  |u|_{s,A, \o}<\infty\}\,.
\end{equation}
The definitions of the seminorm $|u|_{s,A, \o}$ and of the space $V^{s,A}(\o)$ carry over to vector-valued functions $u$ just by replacing the absolute value of $u(x)-u(y)$ by the norm  of the same expression on the right-hand side of equation \eqref{aug340}.
\\
The subspace $V^{s,A}(\o) \cap \mathcal M_d(\o)$ of those functions in $V^{s,A}(\o)$ that decay near infinity is denoted by $V^{s,A}_d(\o)$. Thus,
\begin{equation}\label{nov100}
	V^{s,A}_d(\o) =  \{ u\in V^{s,A}(\o): |\{|u|>t\}|<\infty\,\, \text{for every $t>0$}\} \,.
\end{equation}
The definition of $V^{s,A}(\o)$ is extended to all $s\in (0, \infty) \setminus \N$ in a customary way.
Denote by $[s]$ the integer part of $s$, and set $\{s\}= s-[s]$, the fractional part of $s$.
Then we set
\begin{equation}\label{aug343}
V^{s,A}(\Omega ) = \{ W^{[s],1}_{\rm loc} (\Omega):\,  \nabla ^{[s]}u \in V^{\{s\}, A}(\o)\}\,.
\end{equation}
In analogy with \eqref{nov100}, we extend definition \eqref{nov100} to every $s\in (0, \infty) \setminus \N$ on setting
\begin{equation}\label{nov100higher}
	V^{s,A}_d(\o) =  \{ u\in V^{s,A}(\o): |\{|\nabla ^k u|>t\}|<\infty\,\, \text{for $k=0,1, \dots ,[s]$, and for  every $t>0$}\} \,.
\end{equation}
The functional $\big|\nabla ^{[s]}u\big|_{\{s\},A, \Omega}$ defines a norm on the space $V^{s,A}_d(\o)$.
\\ If $|\o|< \infty$ and $s \in (0,1)$, we also define the space
\begin{equation}\label{perpspace}
V^{s,A}_{\perp}(\Omega) = \{u\in V^{s,A}(\Omega): u_\o =0\}\,,
\end{equation}
where
$$u_\Omega = \frac 1{|\o|}\int _\Omega u\; dx\,,$$
the mean value of $u$ over $\o$. Definition  \eqref{perpspace} is extended to any $s\in (0, \infty) \setminus \N$ on setting
\begin{equation}\label{sep100}
V^{s,A}_\perp (\o) = \{u \in V^{s,A}(\o): (\nabla ^ku)_\o =0, \, k=1, \dots , [s]\}\,.
\end{equation}
\\
The fractional-order Orlicz-Sobolev space $W^{s,A}(\Omega)$ is defined, for $s\in (0, \infty) \setminus \N$ and any open set $\o$, as
\begin{equation}\label{aug344}
W^{s,A}(\Omega ) = \{ u \in V^{s,A}(\Omega ):\,  u \in W^{[s],A}(\Omega)\},
\end{equation}
and is a   Banach space equipped with the norm
$$\|u\|_{W^{s,A}(\o)} = \|u\|_{W^{[s],A}(\Omega)} + \big|\nabla ^{[s]}u\big|_{\{s\},A, \Omega}.$$
 Clearly, $W^{s,A}(\o) \to V^{s,A}_d(\o)$, and, as a consequence of Proposition \ref{poinc}, Section \ref{domains}, $W^{s,A}(\o) = V^{s,A}_d(\o)$ if $\Omega$ is bounded. The space $ V^{s,A}_d(\o)$ naturally arises as a natural  maximal domain space  for  various embeddings of ours to hold.
\par
For the sake of completeness, let us recall that inclusion relations hold between integer-order and fractional-order Orlicz-Sobolev spaces.
If $s\in (0,1)$ and $A$ is a Young function, then
\begin{equation}
    \label{incl1}
    W^{1,A}(\rn) \to W^{s,A}(\rn).
\end{equation}
Moreover, denote by
$\overline A$ the Young function defined as
\begin{equation}\label{Abar}
\overline A (t) = \int_0^t \int_{\mathbb S^{n-1}}A(|x_1| \tau)\, d\mathcal H^{n-1}(x)\frac {d\tau}\tau \quad \text{for $t \geq 0$,}
\end{equation}
where $x=(x_1, \dots , x_n)$, $\S^{n-1}$ stands for the $(n-1)$--dimensional unit sphere in $\rn$, and $\mathcal H^{n-1}$ for the $(n-1)$--dimensional Hausdorff measure.
Then the function $\overline A$ is equivalent to $A$, and
if $u\in W^{1,A}(\rn)$, then there exists $\lambda_0>0$ such that
\begin{equation}\label{na1}
\lim_{s\to 1^-} (1-s) \int_{\rn} \int_{\rn} A\left(\frac{|u(x) - u(y)|}{\lambda |x-y|^s}\right)\; \frac {dx\,dy}{|x-y|^n}= \int_{\rn} \overline{A} \left(\frac{|\nabla u|}{\lambda}\right)\; dx
\end{equation}
for every $\lambda \geq \lambda_0$.
\\
If, in particular, $u$ belongs to the subspace of $W^{1,A}(\rn)$ of those functions  such that
\begin{equation}\label{nov1}
\int_{\rn} A\Big(\frac{|u|}{\lambda}\Big)\,dx + \int_{\rn} A\Big(\frac{|\nabla u|}{\lambda}\Big)\,dx < \infty
\end{equation}
for every $\lambda >0$,
then equation \eqref{na1} also holds for every $\lambda>0$. Recall that the subspace of functions $u$ fulfilling \eqref{nov1} agrees with the closure of $C^\infty_0(\rn)$ in $W^{1,A}(\rn)$. It coincides with the whole of $W^{1,A}(\rn)$ if and only if $A$ fulfills the so called $\Delta_2$-condition.
\\ Embedding \eqref{incl1} and an analogue of equation  \eqref{na1} hold with $\rn$ replaced by any bounded Lipschitz domain.
\\ As a consequence of embedding \eqref{incl1}, one also has that
\begin{equation}
    \label{higherincl1}
    W^{[s]+1,A}(\rn) \to W^{s,A}(\rn)
\end{equation}
for every $s \in (0,\infty) \setminus \N$.
\\
In the classical case when $A(t)=t^p$ for some $p\geq 1$,  embedding \eqref{incl1} and   equation  \eqref{na1} have been established in \cite{BBM}.  For functions $A$ satisfying the $\Delta_2$-condition and with the function  $\overline A$ in a somewhat implicit form, they are proved in \cite{BonderSalort}. The present general version can be found in \cite{ACPS_limit1}.
\par
We conclude this section with  a fractional-order P\'olya--Szeg\H{o} principle on the decrease of the functional \eqref{intro1} under symmetric rearrangement of  functions $u$. Recall that the symmetric rearrangement $u^\bigstar$ of a function $u \in \mathcal M_d (\rn)$ is defined as
$$u^\bigstar (x) = u^*(\omega_n |x|^n) \quad \text{for $x \in \rn$,}$$
where $\omega_n$ denotes the Lebesgue measure of the unit ball in $\rn$. Thus, $u^\bigstar$ is radially decreasing about $0$ and is equidistributed with $u$.
The fractional P\'olya--Szeg\H{o} principle goes back to \cite{almgren-lieb,Bae} in the case when $A$ is a power. The result for Young functions $A$ satisfying the $\Delta_2$-condition and functions $u \in W^{s,A}(\rn)$ is the subject of \cite{deNBS}. The general version stated in Theorem \ref{P:L1} below can be proved via the same route. The necessary variant is sketched after its statement.

\begin{theorem}{\rm{\bf [Fractional
P\'olya--Szeg\H{o} principle]}} \label{P:L1}
Let  $s \in (0,1)$ and let $A$ be a Young function. Assume that $u\in\mathcal M_d (\rn)$.
Then
\begin{equation}\label{E:28'}
	\int_{\R^n}\int_{\R^n} A\left(\frac{|u(x)-u(y)|}{|x-y|^{s}}\right)\frac{dx\,dy}{|x-y|^n}
		\ge \int_{\R^n}\int_{\R^n} A\left(\frac{|u^{\bigstar}(x)-u^{\bigstar}(y)|}{|x-y|^{s}}\right)\frac{dx\,dy}{|x-y|^n}.
\end{equation}
\end{theorem}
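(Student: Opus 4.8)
The plan is to reduce to non‑negative functions and then to run a polarization (two‑point rearrangement) argument, following the route of \cite{deNBS}; the key observation is that, carried out directly on the double‑integral functional
$J_A(v):=\int_{\rn}\int_{\rn}A\big(|v(x)-v(y)|\,|x-y|^{-s}\big)|x-y|^{-n}\,dx\,dy$
rather than on the Luxemburg seminorm, the argument uses neither the $\Delta_2$‑condition nor the membership of $u$ in $W^{s,A}(\rn)$, and hence covers an arbitrary Young function $A$ and every $u\in\mathcal M_d(\rn)$. As a first reduction, since $u^{\bigstar}$ depends on $u$ only through $|u|$, since $\big||u(x)|-|u(y)|\big|\le|u(x)-u(y)|$, and since $A$ is non‑decreasing, the left‑hand side of \eqref{E:28'} for $u$ dominates the one for $|u|$ while the right‑hand side is unchanged; so I may assume $u\ge0$. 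If convenient, a further truncation $u\mapsto u_k:=\min\{u,k\}$ together with the monotone convergence theorem — using $|u_k(x)-u_k(y)|\uparrow|u(x)-u(y)|$ and $(u_k)^{\bigstar}=\min\{u^{\bigstar},k\}\uparrow u^{\bigstar}$ — reduces matters to bounded $u\ge0$, and one may assume $J_A(u)<\infty$, there being nothing to prove otherwise.

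\textbf{Polarization and a two‑point inequality.} Fix an affine hyperplane $H\subset\rn$, let $\sigma$ denote the reflection across $H$, let $H_+$ be one of the open half‑spaces bounded by $H$, and let $u^H$ be the polarization of $u$ with respect to $H_+$, i.e. $u^H=\max\{u,u\circ\sigma\}$ on $H_+$, $u^H=\min\{u,u\circ\sigma\}$ on $\sigma(H_+)$ and $u^H=u$ on $H$. Splitting the domain of integration in $J_A$ into the four products of half‑spaces, using the invariance of the measure $|x-y|^{-n}\,dx\,dy$ under $(x,y)\mapsto(\sigma x,\sigma y)$ and relabelling, one rewrites $J_A(v)=\int_{H_+}\int_{H_+}\Theta(v;x,y)\,dx\,dy$ with
\begin{equation*}
\Theta(v;x,y)=\frac{A\big(\tfrac{|v(x)-v(y)|}{r^s}\big)+A\big(\tfrac{|v(\sigma x)-v(\sigma y)|}{r^s}\big)}{r^n}
+\frac{A\big(\tfrac{|v(x)-v(\sigma y)|}{R^s}\big)+A\big(\tfrac{|v(\sigma x)-v(y)|}{R^s}\big)}{R^n},
\end{equation*}
where $r=|x-y|=|\sigma x-\sigma y|$ and $R=|x-\sigma y|=|\sigma x-y|$, so that $0<r\le R$ whenever $x,y\in H_+$. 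Thus $J_A(u^H)\le J_A(u)$ will follow from the pointwise estimate $\Theta(u^H;x,y)\le\Theta(u;x,y)$ for a.e. $(x,y)\in H_+\times H_+$. Since passing from $v=u$ to $v=u^H$ merely replaces the pair $(u(x),u(\sigma x))$ by $(u(x)\vee u(\sigma x),\,u(x)\wedge u(\sigma x))$ and the pair $(u(y),u(\sigma y))$ by $(u(y)\vee u(\sigma y),\,u(y)\wedge u(\sigma y))$, this is an elementary inequality in four non‑negative numbers and the parameters $0<r\le R$: the polarized configuration attaches the larger absolute differences to the smaller distance $r$, hence to the larger weight $r^{-n}$ and to the larger dilation factor $r^{-s}$ inside $A$ — which is where $s>0$ is used — and the convexity together with the monotonicity of $A$ make this exchange non‑increasing for $\Theta$. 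I expect the main obstacle to be establishing this two‑point inequality for a general Young function $A$, rather than only for the powers $A(t)=t^p$ where it is classical (\cite{almgren-lieb,Bae}); a delicate point is that one must play the two weights $r^{-n}$, $R^{-n}$ off against the two slopes $r^{-s}$, $R^{-s}$, which is where the restriction $s<1\le n$ enters. (A possible alternative to the two‑point argument is to decompose $A$ into the building blocks $t$ and $(t-\lambda)_+$, reducing by linearity of $v\mapsto J_{(\cdot)}(v)$ to a fractional‑isoperimetric‑type inequality obtained from the Riesz rearrangement inequality after layer‑cake slicing; but the coupling of the threshold $\lambda|x-y|^s$ with the kernel makes this route more cumbersome.)

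\textbf{Passage to the symmetric rearrangement.} Granted $J_A(u^H)\le J_A(u)$ for every hyperplane $H$, it remains — exactly as in \cite{deNBS} — to choose a sequence of hyperplanes $(H_j)$ for which the iterated polarizations $u^{H_1},u^{H_1 H_2},\dots$ converge in measure to $u^{\bigstar}$, and to pass to the limit. This last step relies on the lower semicontinuity of $J_A$ under convergence in measure, which follows from Fatou's lemma because the integrand in $J_A$ is non‑negative and, as a function of the pair $(v(x),v(y))$, lower semicontinuous — here one uses that $A$, being convex and left‑continuous, is lower semicontinuous. Combining monotonicity along the polarizations with this semicontinuity yields $J_A(u^{\bigstar})\le\liminf_{j}J_A(u^{H_1\cdots H_j})\le J_A(u)$, which is \eqref{E:28'}. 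It is precisely this semicontinuity‑plus‑approximation step — replacing the approximation by smooth functions that is available only under the $\Delta_2$‑condition — that permits a general Young function $A$ and a general $u\in\mathcal M_d(\rn)$.
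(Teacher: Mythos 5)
Your proposal is correct and follows essentially the same route as the paper's sketch: reduce to $u\ge0$ via $\big||u(x)|-|u(y)|\big|\le|u(x)-u(y)|$, prove monotonicity of the functional under polarization by working directly with the integral functional $J_A$ rather than the Luxemburg seminorm (thus bypassing the $\Delta_2$-condition), and then pass to the limit along a sequence of polarizations converging to $u^{\bigstar}$ in measure, as furnished by Van Schaftingen's result, finishing with lower semicontinuity via Fatou. One small clarification worth making explicit: the two‑point polarization inequality $\Theta(u^H;x,y)\le\Theta(u;x,y)$ for a general Young function $A$ is not actually a new obstacle — it is already available from the proof of Theorem 3.7 in the cited work of De Nápoli, Fernández Bonder and Salort, where the $\Delta_2$‑condition enters only in their smooth‑approximation step, not in the polarization step itself; so you may simply invoke it rather than flag it as an expected difficulty.
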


\begin{proof}[Sketch of proof]
The proof follows along the same lines as that of~\cite[Theorem~3.7]{deNBS}. One step of the proof of that theorem requires that $u$ be approximated by a subsequence  of polarizations of $u$ that converges to $u^{\bigstar}$  a.e. in $\rn$. This is guaranteed if $u$ is just nonnegative and belongs to the space $\mathcal M_d (\o)$. Actually,   as observed in \cite[Section 4.1]{VS}, under these assumptions, there exists a sequence of polarizations of $u$ (with respect to a sequence of hyperplanes independent of $u$) that converges to $u^\bigstar$ in measure. The assumption that $u$ be nonnegative is not a restriction, since
\begin{equation}\label{aug21}\int_{\R^n}\int_{\R^n} A\left(\frac{|u(x)-u(y)|}{|x-y|^{s}}\right)\frac{dx\,dy}{|x-y|^n} \geq \int_{\R^n}\int_{\R^n} A\left(\frac{||u(x)|-|u(y)||}{|x-y|^{s}}\right)\frac{dx\,dy}{|x-y|^n},
\end{equation}
and $|u|^\bigstar = u^\bigstar$.
\end{proof}


\section{One-dimensional Hardy type  inequalities in Orlicz spaces}\label{1d}

The results recalled in the first part of this section concern optimal target norms in inequalities for the integral operator $T_s$ defined, for $n\in\N$,  $s\in(0,n)$ and $L\in (0, \infty]$, as
\begin{equation}\label{T}
    T_sf(r)=\int_{r}^{L}\varrho^{-1+\frac{s}{n}}f(\varrho)\,d\varrho \quad\text{for $r\in [0,L]$,}
\end{equation}
for $f\in\M_+(0,L)$.
\par We begin with the optimal Orlicz target space corresponding to an Orlicz domain space $L^A(0,L)$, where
 $A$ is a~Young function such that
\begin{equation}\label{E:0'}
	\int^{\infty}\left(\frac{t}{A(t)}\right)^{\frac{s}{n-s}}\,dt = \infty
\end{equation}
and
\begin{equation}\label{E:0''}
	\int_{0}\left(\frac{t}{A(t)}\right)^{\frac{s}{n-s}}\,dt < \infty.
\end{equation}	
Such an Orlicz target is defined in terms of the Young function
 $A_{\frac{n}{s}}$ given by
\begin{equation}\label{Ans}
A_{\frac{n}{s}} (t) = A(H^{-1}(t)) \quad \text{for $t\geq 0$,}
\end{equation}
where
\begin{equation}\label{H}
H(t) = \bigg(\int _0^t \bigg(\frac \tau{A(\tau)}\bigg)^{\frac
{s}{n-s}} d\tau\bigg)^{\frac {n-s}n} \quad \text{for $t \geq0$.}
\end{equation}

\par\noindent {\bf Theorem A.}
\, 
\emph{Let $n\in\N$, $s\in(0,n)$, $L\in (0, \infty]$. Assume that $A$ is a~Young function  fulfilling  conditions \eqref{E:0'} and  \eqref{E:0''}. Let $A_{\frac ns}$ be the Young function defined by \eqref{Ans}. Then there exists a constant $C=C(\frac ns)$ such that
\begin{equation}\label{aug230}
	\bigg\|\int_{r}^{L}f(\varrho)\varrho^{-1+\frac{s}{n}}\,d\varrho \bigg\|_{L^{A_{\frac ns}}(0, L)}
		\leq C\|f\|_{L^A(0, L)}
\end{equation}
for every function $f\in L^A(0, L)$. Moreover, $L^{A_{\frac ns}}(0, L)$ is the optimal Orlicz target space in \eqref{aug230}.}

\medskip
\par\noindent
{\rm Inequality \eqref{aug230} is equivalent to \cite[inequality (2.7)]{cianchi_CPDE}, with $n$ replaced by $n/s$. The optimality of the space
$L^{A_{\frac ns}}(0, \infty)$ follows from \cite[Lemma 1]{cianchi_IUMJ}, where such an optimality is proved  with $A_{\frac ns}$ replaced by an equivalent Young function. Such an equivalence is shown in \cite[Lemma 2]{cianchi_pacific}. }

\smallskip
\par
We next focus an inequality parallel to \eqref{aug230}, but with a target space which is optimal among all rearrangement-invariant spaces.
Let $n$, $s$, $L$ and $A$ be as in Theorem A.  Denote by  $\widehat A$ the Young function given by
\begin{equation}\label{E:1}
	\widehat A (t)=\int_0^t\widehat a (\tau)\,d\tau\quad\text{for $t\in[0,\infty)$},
\end{equation}
where
\begin{equation}\label{E:2}
	{\widehat a\,}^{-1}(r) = \left(\int_{a^{-1}(r)}^{\infty}
		\left(\int_0^t\left(\frac{1}{a(\varrho)}\right)^{\frac{s}{n-s}}\,d\varrho\right)^{-\frac{n}{s}}\frac{dt}{a(t)^{\frac{n}{n-s}}}
				\right)^{\frac{s}{s-n}}
					\quad\text{for $r\ge0$}.
\end{equation}
Let $L(\widehat A,\frac{n}{s})(0,L)$ be the  Orlicz-Lorentz space equipped with the norm   defined as in  \eqref{aug300}, namely as
\begin{equation}\label{E:29}
	\|f\|_{L(\widehat A,\frac{n}{s})(0, L)}
		= \|r^{-\frac{s}{n}}f^{*}(r)\|_{L^{\widehat A}(0, L)}
\end{equation}
for $f \in \mathcal M(0,L)$.
Assumption \eqref{E:0''} ensures that condition \eqref{aug310}
is certainly fulfilled with $A$ replaced by $\widehat A$ and $q$ by $\frac ns$.
This is a consequence of
\cite[Propositions 2.1 and 2.2]{cianchi-ibero}.
Thus, $L(\widehat A,\frac{n}{s})(0,L)$ is actually a rearrangement-invariant space.
\\ By \cite[Lemma 2.3]{cianchi-ibero},
assumption \eqref{E:0''} is equivalent to condition \eqref{jan30}, with $A$ replaced by $\widetilde A$ and $q$ by $-\frac ns$. Hence, the space  $L[\widetilde A, -\frac ns](0,L)$, endowed with the norm defined as in \eqref{aug301} by
\begin{equation}\label{jan90}
	\|f\|_{L[\widetilde A, -\frac ns](0,L)}
		= \|r^{\frac{s}{n}}f^{**}(r)\|_{L^{\widehat A}(0, L)}
\end{equation}
for $f \in \mathcal M(0,L)$,
is also a
rearrangement-invariant space.
Moreover,
one has that
\begin{equation}\label{E:36}
	L[\widetilde A,-\tfrac{n}{s}](0,L) = L({\widehat A},\tfrac{n}{s})'(0,L),
\end{equation}
up to equivalent norms,
where $ L({\widehat A},\frac{n}{s})'(0,L)$ denotes the associate space of
  $L({\widehat A},\frac{n}{s})(0,L)$. Property \eqref{E:36} is  stated and established in \cite[Lemma 4.5]{cianchi_MZ} for $L<\infty$; the proof in the case when $L=\infty$ is completely analogous.

\bigskip
\par\noindent {\bf Theorem B.}
\, 
\emph{Let $n\in\N$, $s\in(0,n)$, $L\in (0, \infty]$. Assume that $A$ is a~Young function  fulfilling  conditions \eqref{E:0'} and  \eqref{E:0''}, and  let ${\widehat A}$ be the Young function defined by \eqref{E:1}. Then there exists a constant $C=C(\frac ns)$ such that
\begin{equation}\label{aug231}
	\bigg\|\int_{r}^{L}f(\varrho)\varrho^{-1+\frac{s}{n}}\,d\varrho \bigg\|_{L({\widehat A}, \frac ns)(0, L)}
		\leq C\|f\|_{L^A(0, L)}
\end{equation}
for every function $f\in L^A(0, L)$. Moreover, $L({\widehat A}, \frac ns)(0, L)$ is the optimal rearrangement-invariant target space in \eqref{aug231}.}

\bigskip
\par \noindent
{\rm
Inequality \eqref{aug231}
 agrees with inequality \cite[inequality (3.1)]{cianchi-ibero}, with $n$ replaced with $n/s$. The optimality of the space $L({\widehat A}, \frac ns)(0, L)$ follows from   \cite[inequalities (4.6)--(4.8)]{cianchi-ibero}.}

\smallskip
\par
Let us notice that the function $A$ always dominates $\widehat A$. Moreover, $A$ is equivalent to  $\widehat A$  if and only if $A(t)$ grows less than the power $t^{\frac ns}$ in the sense that its Matuszewska-Orlicz index $I(A)$, defined by \eqref{index}, is smaller $\frac ns$. An analogous property holds near infinity in connection with the index $I_\infty (A)$. These assertions are proved in
 \cite[Propositions 5.1 and 5.2]{cianchi-ibero}, and collected in the following proposition.
\medskip

\par\noindent {\bf Proposition C.}  \,
\emph{
Let $n\in\N$ and $s\in(0,n)$. Assume that  $A$ is a~Young function  fulfilling  conditions \eqref{E:0'} and  \eqref{E:0''}, and  let   $A_{\frac ns}$ be the Young function defined by \eqref{E:1}. Then there exists a~constant $c= c(n/s)$ such that
\begin{equation}\label{E:14'}
	{\widehat A}(t)\le A(ct)\quad \text{for $t>0$}.
\end{equation}
Moreover, the function ${\widehat A}$ is equivalent to $A$ globally {\rm [}near infinity{\rm ]} if and only if $I(A) < \frac ns$ {\rm [}$I_\infty(A) < \frac ns${\rm ]}.
}


\medskip
\par\noindent
As a consequence of Proposition C, the space $L({\widehat A}, \frac ns)(0, L)$ reduces to  $L(A, \frac ns)(0, L)$ if $A(t)$ is subcritical with respect to $t^{\frac ns}$ in the sense of Matuszewska-Orlicz indices.

 \medskip
\par\noindent {\bf Proposition D.} 
\emph{
Let $n\in\N$, $s\in(0,n)$. Assume that  $A$ is a~Young function  fulfilling  conditions \eqref{E:0'} and  \eqref{E:0''}.
\\ (i) If $I(A) < \frac ns$, then $ L({\widehat A}, \frac ns)(0, \infty) = L(A, \frac ns)(0, \infty)$, up to equivalent norms.
\\  (ii) If $I_\infty(A) < \frac ns$, then $ L({\widehat A}, \frac ns)(0, L) = L(A, \frac ns)(0, L)$, up to equivalent norms, for every $L>0$.
}

 \medskip
\par\noindent

The embedding of the space
$L({\widehat A},\tfrac{n}{s})(0,L)$ into $L^{A_{\frac{n}{s}}}(0,L)$ is a trivial consequence of the optimality of the former in inequality \eqref{aug231} in the class of all rearrangement-invariant spaces, which includes, in particular, the Orlicz spaces. This fact is stated in Proposition \ref{P:1} below. A direct proof of this proposition is however given, which shows that the norm of the embedding in question is independent of $A$ and $L$, a piece of information of use in the proofs of our main results.

\begin{proposition}\label{P:1}
Let $n\in\N$, $s\in(0,n)$ and  $L\in (0, \infty]$. Assume that $A$ is a~Young function  fulfilling  conditions \eqref{E:0'} and  \eqref{E:0''}. Let
$A_{\frac ns}$
and ${\widehat A}$ be  the Young functions defined as in \eqref{Ans} and \eqref{E:1}, respectively.
Then
\begin{equation}\label{E:35}
	L({\widehat A},\tfrac{n}{s})(0,L) \to L^{A_{\frac{n}{s}}}(0,L).
\end{equation}
Moreover, the norm of embedding \eqref{E:35} depends only on $\tfrac ns$.
\end{proposition}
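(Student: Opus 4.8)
The plan is to show \eqref{E:35} directly by comparing Luxemburg norms, using the definitions \eqref{E:29}, \eqref{Ans}, \eqref{H}, and the structural inequality \eqref{E:14'} from Proposition C. First, I would recall that by \eqref{E:14'} the Young function $\widehat A$ is dominated by $A$ with a constant $c$ depending only on $n/s$, so by \eqref{normineq} (in the global form) the norm $\|\cdot\|_{L^{\widehat A}(0,L)}$ is comparable to $\|\cdot\|_{L^A(0,L)}$ from above, with constant depending only on $n/s$; thus it suffices to prove
$$
	\|f\|_{L^{A_{\frac ns}}(0,L)} \le C \,\|r^{-\frac sn} f^*(r)\|_{L^A(0,L)}
$$
with $C=C(n/s)$. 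Since both sides are rearrangement invariant in $f$, I may assume $f=f^*$ is non-increasing.

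The key device is a pointwise lower bound for the rearrangement of $f$ in terms of the Orlicz-Lorentz functional on the right. Fix $\lambda>0$ such that $\int_0^L A\big(\tfrac{r^{-s/n} f^*(r)}{\lambda}\big)\,dr \le 1$. For a fixed level $r$, using that $f^*$ is non-increasing one has, for $\varrho \in (0,r)$, that $f^*(\varrho) \ge f^*(r)$, hence
$$
	1 \ge \int_0^r A\Big(\frac{\varrho^{-s/n} f^*(\varrho)}{\lambda}\Big)\,d\varrho
		\ge \int_0^r A\Big(\frac{\varrho^{-s/n} f^*(r)}{\lambda}\Big)\,d\varrho .
$$
Now change variables $\tau = (f^*(r)/\lambda)\varrho^{-s/n}$, i.e. $\varrho = (f^*(r)/(\lambda\tau))^{n/s}$, which turns the last integral into a quantity of the form $c(n/s)\,(f^*(r)/\lambda)^{n/s}\int_{(f^*(r)/\lambda)r^{-s/n}}^{\infty} A(\tau)\,\tau^{-1-n/s}\,d\tau$; comparing this with the definition \eqref{H} of $H$ (after one more change of variables expressing $\int A(\tau)\tau^{-1-n/s}\,d\tau$ in terms of $\int (\tau/A(\tau))^{s/(n-s)}\,d\tau$, which is essentially the content of \cite[Lemma 2.3]{cianchi-ibero} used in the excerpt to relate \eqref{E:0''} and \eqref{jan30}) yields a bound of the shape $1 \ge c(n/s)\, H\big(\text{const}\cdot f^*(r)\big)/r$ for an appropriate normalization. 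Equivalently, $H^{-1}$ applied to $c'(n/s)\, r$ bounds $f^*(r)$ from above (up to a constant), so that $A_{\frac ns}(f^*(r)/C) = A(H^{-1}(f^*(r)/C)) \le A(\text{something}\lesssim 1)\cdot(\ldots)$; integrating in $r$ and using $\int_0^L A\big(\varrho^{-s/n}f^*(\varrho)/\lambda\big)\,d\varrho\le 1$ one more time gives $\int_0^L A_{\frac ns}(f^*(r)/(C\lambda))\,dr \le 1$, whence $\|f\|_{L^{A_{\frac ns}}(0,L)} \le C\lambda$. Taking the infimum over admissible $\lambda$ closes the estimate, and the constant produced is a function of $n/s$ alone.

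The main obstacle is the bookkeeping in the chain of changes of variables that converts the tail integral $\int A(\tau)\tau^{-1-n/s}\,d\tau$ into the form $H(t)^{n/(n-s)}=\int_0^t(\tau/A(\tau))^{s/(n-s)}\,d\tau$ appearing in \eqref{H}: one must verify that the exponents match and that the finiteness/divergence conditions \eqref{E:0'}–\eqref{E:0''} guarantee the relevant integrals are finite at $0$ and divergent at $\infty$, so that $H$ is a genuine (finite-valued, strictly increasing) Young-type function with a well-defined inverse, making $A_{\frac ns}$ in \eqref{Ans} meaningful. An alternative, cleaner route that avoids some of this is to observe that \eqref{E:35} follows formally from Theorem A and Theorem B: inequality \eqref{aug231} combined with the optimality clause of Theorem A (which says $L^{A_{\frac ns}}(0,L)$ is the smallest Orlicz space in \eqref{aug230}) forces $L({\widehat A},\tfrac ns)(0,L)\hookrightarrow L^{A_{\frac ns}}(0,L)$, since $L^{A_{\frac ns}}(0,L)$ is in particular an admissible Orlicz target for the operator $T_s$ and the Orlicz-Lorentz space is admissible too; one then only needs the remark, already flagged in the excerpt, that the embedding constant coming from this abstract argument can be tracked and depends only on $n/s$ because the constants in Theorems A and B do. I would present the direct computation as the primary proof (for the explicit constant control) and mention the abstract argument as motivation.
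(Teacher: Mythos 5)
Your primary (direct) proof has a direction error at the very first reduction. Proposition~C gives $\widehat A(t)\le A(ct)$, i.e.\ $A$ dominates $\widehat A$, so by \eqref{normineq} one has $\|\cdot\|_{L^{\widehat A}(0,L)}\le c\,\|\cdot\|_{L^A(0,L)}$: the weighted $L^{\widehat A}$-norm is \emph{smaller} than the weighted $L^A$-norm. Consequently, proving $\|f\|_{L^{A_{n/s}}}\le C\|r^{-s/n}f^*\|_{L^A}$ is strictly \emph{weaker} than the target inequality $\|f\|_{L^{A_{n/s}}}\le C\|r^{-s/n}f^*\|_{L^{\widehat A}}$, and does not imply it. So it does not suffice to prove the $L^A$-version, and the rest of the pointwise computation — whose bookkeeping is anyway left unverified and genuinely delicate, since the tail integral $\int_t^\infty\widehat A(\tau)\tau^{-1-n/s}\,d\tau$ must be matched against $H$, which is built from $A$, not $\widehat A$ — is built on this false reduction.

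Your alternative route (invoking the optimality clauses of Theorems~A and~B) does establish the bare embedding \eqref{E:35}, and the paper says as much just before the proposition. But the paper also states explicitly that the reason a direct proof is given is precisely to show that the embedding constant depends only on $n/s$, independently of $A$ and $L$; the abstract optimality argument does not deliver this, because the comparison constants hidden in the optimality statement may a priori depend on $A$. Your remark that the constant ``can be tracked'' because Theorems~A and~B have $n/s$-dependent constants is an assertion, not an argument; it is exactly the nontrivial content that the proposition is there to supply. The paper's actual proof proceeds by duality: using \eqref{nov103}, \eqref{nov104}, and the identification \eqref{E:36} of $L(\widehat A,\tfrac ns)'$ with $L[\widetilde A,-\tfrac ns]$, the embedding \eqref{E:35} is shown (with the same constant, up to absolute factors) to be equivalent to the inequality \eqref{E:39} involving $f^{**}$, which by a further duality calculation is equivalent to the one-dimensional Hardy inequality \eqref{E:40}=\eqref{aug230}; the latter holds with constant $C(n/s)$ by Theorem~A. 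You should adopt this duality reduction (or at minimum repair the sign of your first step and carry out the pointwise estimate against $\widehat A$ rather than $A$, verifying carefully how the tail integral of $\widehat A$ relates to $H$ and $A_{n/s}$).
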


\begin{proof}
Denote by $L[\widetilde A,-\frac{n}{s}](0,L)$ the Orlicz--Lorentz space endowed with the norm defined as in \eqref{aug301}.
Thanks to equation \eqref{E:36} and to
property \eqref{nov103}, embedding~\eqref{E:35} is equivalent to
\begin{equation}\label{E:37}
	\big(L^{A_{\frac{n}{s}}}\big)' (0,L)\to  L({\widehat A}, \tfrac{n}{s})'(0,L),
\end{equation}
with the same embedding constants.
Also, by property \eqref{nov104},
\begin{equation}\label{nov102}
	\big(L^{A_{\frac{n}{s}}}\big)'(0,L) = L^{\widetilde{A_{\frac{n}{s}}}}(0,L),
\end{equation}
up to equivalent norms, with absolute equivalence constants. Owing to equations
by~\eqref{E:36} --\eqref{nov102}, embedding~\eqref{E:35} will follow if we show that
\begin{equation}\label{E:38}
	L^{\widetilde{A_{\frac{n}{s}}}}(0,L) \to L[\widetilde A,-\tfrac{n}{s}](0,L).
\end{equation}
Embedding~\eqref{E:38} is equivalent to the inequality
\begin{equation}\label{E:39}
	\|r^{\frac{s}{n}}f^{**}(r)\|_{L^{\widetilde A}(0,L)}
		\le C \|f^{*}\|_{L^{\widetilde{A_{\frac{n}{s}}}}(0,L)}
			\end{equation}
		for some constant $C$ and for every function $f\in\mathcal M_+  (0,L)$. Moreover, the norm of embedding \eqref{E:38} equals the optimal constant $C$ in inequality \eqref{E:39}.
 Inequality
~\eqref{E:39} is in its turn equivalent to the inequality
\begin{equation}\label{E:40}
	\left\|\int_{r}^{L}g(\varrho)\varrho^{-1+\frac{s}{n}}\,d\varrho\right\|_{L^{{A_{\frac{n}{s}}}}(0,L)}
		\le C' \|g\|_{L^{A}(0,L)}
\end{equation}
for every function $g\in\mathcal M_+  (0,L)$, and for some constant $C'$ equivalent to $C$, up to absolute multiplicative constants.  Indeed,
\begin{align*}
\sup_{f \in L^{\widetilde{A_{\frac{n}{s}}}}(0,L)} &
\frac{\|r^{\frac{s}{n}}f^{**}(r)\|_{L^{\widetilde A}(0,L)}}
{ \|f^{*}\|_{L^{\widetilde{A_{\frac{n}{s}}}}(0,L)}}  \approx
\sup _{f \in L^{\widetilde{A_{\frac{n}{s}}}}(0,L)}  \sup_{g \in L^A(0,L)} \frac{\int_0^Lr^{\frac{s}{n}}f^{**}(r) g(r)\, dr}{\|g\|_{L^A(0,L)} \|f^{*}\|_{L^{\widetilde{A_{\frac{n}{s}}}}(0,L)}}
\\ \nonumber & =
 \sup_{g \in L^A(0,L)}\sup _{f \in L^{\widetilde{A_{\frac{n}{s}}}}(0,L)}  \frac{\int_0^Lf^{*}(r) \int_r ^L\varrho^{-1+\frac{s}{n}}g(\varrho)\, d\varrho\, dr}{\|g\|_{L^A(0,L)} \|f^{*}\|_{L^{\widetilde{A_{\frac{n}{s}}}}(0,L)}}
\approx  \sup_{g \in L^A(0,L)} \frac{\left\|\int_{r}^{L}g(\varrho)\varrho^{-1+\frac{s}{n}}\,d\varrho\right\|_{L^{{A_{\frac{n}{s}}}}(0,L)}}{\|g\|_{L^{A}(0,L)}},
\end{align*}
where the relations \lq\lq$\approx$" hold up to absolute multiplicative constants.
Inequality~\eqref{E:40} is nothing but \eqref{aug230}, and hence the latter holds with a constant $C'$ depending only on $\tfrac ns$.
%
\end{proof}

A characterization of the optimal rearrangement-invariant target space $X_s(0,L)$, corresponding to any given rearrangement-invariant domain space $X(0,L)$, for the operator $T_s$ defined by \eqref{T} is contained in the next result. Notice that, in view of Theorem B, if $X(0,L)=L^A(0,L)$, then $X_s(0,L)= L(\widehat A, s)$.

\medskip
\par\noindent  {\bf Theorem E.}\,
{\emph Let  $s\in(0,n)$ and $L\in (0, \infty]$.
Let  $\|\cdot\|_{X(0,L)}$ be a rearrangement-invariant norm. If $L=\infty$, assume, in addition, that
\begin{equation}\label{E:nontriv}
    \|(1+r)^{-1+\frac{s}{n}}\|_{X'(0,\infty)}< \infty.
\end{equation}
(i) Define the functional $\|\cdot\|_{Z(0,L)}$ as
\begin{equation}\label{E:sigma}
    \|f\|_{Z(0,L)} =\left\|r^{\frac{s}{n}}f^{**}(r)\right\|_{X'(0,L)}
\end{equation}
for $f\in\M(0,L)$.
Then $\|\cdot\|_{Z(0,L)}$ is a~rearrangement-invariant~norm on $(0,L)$. Denote by $\|\cdot\|_{X_{s}(0,L)}$ the norm defined by
\begin{equation*}
    \|f\|_{X_{s}(0,L)}= \|f\|_{Z'(0,L)}\
\end{equation*}
for $f\in\M(0,L)$.
Then
\begin{equation}\label{E:hardy}
\bigg\|\int_{r}^{L}\varrho^{-1+\frac{s}{n}}f(\varrho)\,d\varrho \bigg\|_{X_s(0,L)} \leq C \|f\|_{X(0,L)}
\end{equation}
for every $f \in X(0,L)$.
Moreover, $X_{s}(0,L)$ is the optimal rearrangement-invariant target space in \eqref{E:hardy}.
\\
(ii) Let $s_1, s_2>0$ be such that $s_1+s_2<n$. Then
\begin{equation}\label{E:iteration}
    (X_{s_1})_{s_2}(0,L)=X_{s_1+s_2}(0,L).
\end{equation}}

\par\noindent
In the case when $L<\infty$, Part (i) of Theorem E is established in  \cite[Theorem~4.5]{EKP} and Part (ii) in \cite[Theorem~3.5]{CP-Trans}. The proof for $L=\infty$ follows along the same lines; the result of Part (i)  is stated in \cite[Theorem~4.4]{EMMP} and that of Part (ii) in \cite[Proposition~4.3]{Mih}.

\begin{remark}  Condition \eqref{E:nontriv} is indispensable if $L=\infty$. Indeed, if \eqref{E:nontriv} fails, then the functional $\|\cdot\|_{Z(0,L)}$ given by ~\eqref{E:sigma} is trivial in the sense that $\|f\|_{Z(0,L)}<\infty$ only if $f=0$. Furthermore, the  inequality
\begin{equation}\label{E:hardyfail}
\bigg\|\int_{r}^{\infty}\varrho^{-1+\frac{s}{n}}f(\varrho)\,d\varrho \bigg\|_{Y(0,\infty)} \leq C \|f\|_{X(0,\infty)}
\end{equation}
does not hold for any rearrangement-invariant  space $Y(0,\infty)$.
\end{remark}

The last two results of this section are new. They amount to Hardy type inequalities in integral form in Orlicz spaces. Of course, they can be  equivalently stated in the form of the boundedness of suitable integral operators in the relevant Orlicz spaces.

\begin{lemma}\label{L:1}
Let $s\in(0,1)$ and let $L\in(0,\infty]$. Assume that $A$ is any Young function. Then
\begin{equation}\label{E:4}
	\int_0^L A\left(r^{-s}\int_0^{{r}}f(\varrho)\,d\varrho \right)\,\frac{dr}{r}
		\le
	\int_0^L A\left(\frac{1}{s}r^{1-s}f(r)\right)\,\frac{dr}{r}
\end{equation}
for every function $f\in \mathcal M_+ (0,L)$.
\end{lemma}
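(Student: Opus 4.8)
The plan is to obtain \eqref{E:4} from Jensen's inequality applied in the variable $r$, followed by an application of the Fubini--Tonelli theorem. Write $F(r)=\int_0^r f(\varrho)\,d\varrho$. The starting point is the observation that, for each fixed $r\in(0,L)$, the function $\varrho\mapsto \tfrac{s}{r^{s}}\varrho^{s-1}$ is the density of a probability measure $\mu_r$ on $(0,r)$, and that
\[
r^{-s}F(r)=\int_0^r \tfrac1s\,\varrho^{1-s}f(\varrho)\,d\mu_r(\varrho).
\]
Thus the argument of $A$ on the left-hand side of \eqref{E:4} is exactly the $\mu_r$-average of the quantity $\tfrac1s\varrho^{1-s}f(\varrho)$ whose $A$-image is integrated on the right-hand side; in other words \eqref{E:4} is a weighted Hardy-type inequality for which $\varrho^{s-1}$ is the natural weight in the averaging.

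First I would invoke Jensen's inequality with the measure $\mu_r$. Since $A$ is convex and nondecreasing and the integrand is nonnegative, this requires no a priori integrability hypothesis: if $r^{-s}F(r)=\infty$ the resulting bound is trivially true, because a nontrivial Young function satisfies $\lim_{t\to\infty}A(t)/t>0$, which forces the right-hand side below to be infinite as well. Jensen's inequality then yields, for a.e.\ $r\in(0,L)$,
\[
A\bigl(r^{-s}F(r)\bigr)\le \int_0^r A\Bigl(\tfrac1s\,\varrho^{1-s}f(\varrho)\Bigr)\,d\mu_r(\varrho)
=\frac{s}{r^{s}}\int_0^{r} A\Bigl(\tfrac1s\,\varrho^{1-s}f(\varrho)\Bigr)\varrho^{s-1}\,d\varrho.
\]
Next I would integrate this inequality against $\tfrac{dr}{r}$ over $(0,L)$ and use Tonelli's theorem to interchange the order of integration, which is legitimate without further assumptions since all the integrands are nonnegative. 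The inner integral is $\int_\varrho^{L} r^{-s-1}\,dr=\tfrac1s\bigl(\varrho^{-s}-L^{-s}\bigr)\le \tfrac1s\varrho^{-s}$, with equality when $L=\infty$; the hypothesis $s<1$ is precisely what guarantees its convergence. The factor $\tfrac1s$ it produces cancels the constant $s$ coming from the normalization of $\mu_r$, while $\varrho^{s-1}\cdot\varrho^{-s}=\varrho^{-1}$ restores the weight $\tfrac{d\varrho}{\varrho}$, and the right-hand side of \eqref{E:4} is reproduced.

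I do not expect any genuine obstacle: the computation is elementary and every step is unconditional on $f\in\mathcal M_+(0,L)$ thanks to the nonnegativity of all the integrands. The only point that calls for a moment's thought is the identification of the correct probability measure $\mu_r$ — equivalently, the recognition that $\varrho^{s-1}$ is the right weight so that the averaging is exact and no loss is incurred when one passes back from the iterated integral to the single integral on the right of \eqref{E:4}.
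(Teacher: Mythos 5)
Your proof is correct, and it takes a genuinely different route from the one in the paper. The paper proceeds by a logarithmic change of variables $r=e^{-\xi}$, $\varrho=e^{-\eta}$, rewrites the desired inequality as a boundedness statement $\int A(Hg)\,d\xi\le\int A(\tfrac1s g)\,d\xi$ for the kernel operator $Hg(\xi)=e^{s\xi}\int_\xi^\infty g(\eta)e^{-s\eta}\,d\eta$, establishes $\|H\|_{L^1\to L^1}\le1/s$ and $\|H\|_{L^\infty\to L^\infty}\le1/s$, and then invokes Calder\'on's interpolation theorem (rearrangement-invariant spaces are exact interpolation spaces between $L^1$ and $L^\infty$) to transfer the bound to $L^A$; the integral inequality is finally recovered by applying the norm estimate to the scaled Young function $A_M=A/M$. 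Your proof avoids the change of variables and the interpolation machinery entirely: you observe that $\tfrac{s}{r^s}\varrho^{s-1}\,d\varrho$ is a probability measure on $(0,r)$, that $r^{-s}\int_0^r f$ is precisely the average of $\tfrac1s\varrho^{1-s}f(\varrho)$ against it, apply Jensen pointwise in $r$, and finish by Tonelli, with the computation $\int_\varrho^L r^{-s-1}\,dr\le\tfrac1s\varrho^{-s}$ supplying exactly the cancellation needed. Your argument is more elementary and self-contained — it needs only convexity of $A$ and Fubini for nonnegative integrands — while the paper's operator-theoretic route illustrates a general technique (endpoint bounds plus Calder\'on interpolation) that it reuses, e.g., in the proof of Lemma \ref{L:2}. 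Both approaches produce the same sharp constant $1/s$, and your handling of the degenerate case $r^{-s}\int_0^r f=\infty$ (via the at-least-linear growth of a nontrivial Young function) is sound.
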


\begin{proof}
The change of variables $r=e^{-\xi}$ and $\varrho =e^{-\eta}$ turns inequality~\eqref{E:4} into
\begin{equation*}
	\int_{\log\frac{1}{L}}^{\infty}A\left(e^{s\xi}\int_{\xi}^{\infty}f(e^{-\eta})e^{-\eta}\,d\eta\right)\,d\xi
		\le
	\int_{\log\frac{1}{L}}^{\infty}A\left(\frac{1}{s}e^{(s-1)\xi}f(e^{-\xi})\right)\, d\xi.
\end{equation*}
Of course, $\log\frac{1}{L}$ has to be understood as $-\infty$ if $L=\infty$. On setting
\begin{equation*}
	g(\xi)=e^{(s-1)\xi}f(e^{-\xi})\quad\text{for $\xi >\log\tfrac{1}{L}$,}
\end{equation*}
the last inequality can be rewritten as
\begin{equation}\label{E:5}
	\int_{\log\frac{1}{L}}^{\infty}A\left(e^{s\xi}\int_{\xi}^{\infty}g(\eta)e^{-s\eta}\,d\eta\right)\,d\xi
		\le
	\int_{\log\frac{1}{L}}^{\infty}A\left(\frac{1}{s}g(\xi)\right)\, d\xi.
\end{equation}
Define the operator
\begin{equation*}
	Hg(\xi)=e^{s\xi}\int_{\xi}^{\infty}g(\eta)e^{-s\eta}\,d\eta \quad \text{for $\xi >\log\tfrac{1}{L}$,}
\end{equation*}
for $g\in\M_+\left(\log \tfrac{1}{L},\infty\right)$.
We claim that $H\colon L^{\infty}\left(\log \tfrac{1}{L},\infty\right) \to L^{\infty}\left(\log \tfrac{1}{L},\infty\right)$,
and
\begin{equation}\label{E:6}
	\|H\|_{L^{\infty}\to L^{\infty}}\le\frac{1}{s}.
\end{equation}
Indeed,
\begin{align*}
	\|Hg\|_{L^{\infty}\left(\log \frac{1}{L},\infty\right)}
		& =
			\sup_{t\in\left(\log \frac{1}{L},\infty\right)} e^{s\xi}\int_{\xi}^{\infty}g(\eta)e^{-s\eta}\,d\eta
			\\
		& \le
			\|g\|_{L^{\infty}\left(\log \frac{1}{L},\infty\right)} \sup_{\xi\in\left(\log \frac{1}{L},\infty\right)}
			e^{s\xi}\left[\frac{e^{-\eta s}}{-s}\right]_{\eta=\xi}^{\eta=\infty}
			= \frac{1}{s}\|g\|_{L^{\infty}\left(\log \frac{1}{L},\infty\right)}
\end{align*}
for $g \in L^{\infty}\left(\log \frac{1}{L},\infty\right)$.
Moreover, $H\colon L^{1}\left(\log \tfrac{1}{L},\infty\right) \to L^{1}\left(\log \tfrac{1}{L},\infty\right)$,
and
\begin{equation}\label{E:7}
	\|H\|_{L^{1}\to L^1}\le\frac{1}{s},
\end{equation}
inasmuch as
\begin{align*}
	\|Hg\|_{L^{1}\left(\log \frac{1}{L},\infty\right)}
		& =
			\int_{\log \frac{1}{L}}^{\infty} e^{s\xi}\int_{\xi}^{\infty}g(\eta)e^{-s\eta}\,d\eta\,d\xi
			 =
			\int_{\log\frac{1}{L}}^{\infty}g(\eta)e^{-\eta s}\int_{\log\frac{1}{L}}^{\eta}e^{s\xi}\,d\xi\,d\eta
			\\
		&
			 \le \frac{1}{s} \int_{\log\frac{1}{L}}^{\infty}g(\eta)\,d\eta
			= \frac{1}{s}\|g\|_{L^1\left(\log \frac{1}{L},\infty\right)}
\end{align*}
for $g \in L^1\left(\log \frac{1}{L},\infty\right)$.
From~\eqref{E:6}--\eqref{E:7}, via an interpolation theorem by  Calder\'on \cite[Chapter 3, Theorem~2.12]{BS}, one can infer that
\begin{equation}\label{E:8}
	H\colon L^{A}\left(\log \tfrac{1}{L},\infty\right) \to L^{A}\left(\log \tfrac{1}{L},\infty\right)
\end{equation}
with
\begin{equation}\label{nov111}
	\|H\|_{L^{A}\to L^A}\le\frac{1}{s}.
\end{equation}
Notice that in deducing \eqref{E:8} and \eqref{nov111} one makes use of the fact that $L^{A}\left(\log \tfrac{1}{L},\infty\right)$ is a rearrangement-invariant space, and hence an exact interpolation space between $L^1\left(\log \tfrac{1}{L},\infty\right)$ and $L^\infty\left(\log \tfrac{1}{L},\infty\right)$ (see~\cite[Chapter~3, Theorem~2.12]{BS}). Therefore,
\begin{equation}\label{E:9}
	\|Hg\|_{L^{A}\left(\log \frac{1}{L},\infty\right)}
		\le \left\|\frac{1}{s}g\right\|_{L^{A}\left(\log \frac{1}{L},\infty\right)}
\end{equation}
for every $g\in L^{A}\left(\log \tfrac{1}{L},\infty\right)$.
An application of inequality~\eqref{E:9} with $A$ replaced by $A_M$, defined by $A_M(t)=\frac{A(t)}{M}$ for $t>0$, where
\begin{equation*}
	M = \int_{\log\frac{1}{L}}^{\infty}A\Big(\frac{1}{s}g(r)\Big)\,dr,
\end{equation*}
yields~\eqref{E:5} via the definition of Luxemburg norm.
\end{proof}

\begin{lemma}\label{L:2}
Let $n\in\N$, $s\in(0,n)$, $L\in (0, \infty]$. Assume that $A$ is a~Young function  fulfilling  conditions \eqref{E:0'} and  \eqref{E:0''}, and  let ${\widehat A}$ be the Young function defined by \eqref{E:1}. Then there exists a~constant  $C= C(n/s)$ such that
\begin{equation}\label{E:10}
	\int_{0}^{L}{\widehat A}\left(r^{-s}\int_{r}^{L}f(\varrho)\,d\varrho\right) r^{n-1}\, dr
		\le
	\int_{0}^{L} A\left(Cr^{1-s}f(r)\right)r^{n-1}\,dr
\end{equation}
for every function $f \in \mathcal M_+(0, L)$. Moreover, $\lim _{s\to s_0} C(n/s)<\infty$ for any $s_0 \in [0,n)$.
\end{lemma}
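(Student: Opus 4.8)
The plan is to reduce inequality~\eqref{E:10} to the rearrangement-invariant Hardy inequality of Theorem~B by passing to Luxemburg-type normalizations, and then to track the dependence of the constant on $s$ separately. First I would observe that, after dividing $f$ by a suitable normalizing constant and dividing $A$ by the total mass of the right-hand side, proving~\eqref{E:10} is equivalent to the following claim: if $\int_0^L A(Cr^{1-s}f(r))\,r^{n-1}\,dr \le 1$ for some fixed $C=C(n/s)$, then $\int_0^L \widehat A\big(r^{-s}\int_r^L f\big)\,r^{n-1}\,dr \le 1$. Rewriting the left integral in terms of $g(\varrho)=\varrho^{1-s+\frac{n-1}{?}}f(\varrho)$-type substitutions is delicate because of the two different weights $r^{n-1}$ and $\frac{dr}{r}$ hidden in Theorem~A/B versus the present statement; the clean route is instead to use the measure-space change $r\mapsto \omega_n r^n$ (or simply $t=r^n$) so that $r^{n-1}\,dr$ becomes (a multiple of) $dt$, and the operator $f(\varrho)\mapsto \int_r^L f(\varrho)\,d\varrho$ with weight $r^{-s}$ becomes, up to the substitution, exactly the operator $T_{?}$ of~\eqref{T} acting with the exponent $-1+\frac{s}{n}$ on the $t$-variable.

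More precisely, the key computation is this: set $t = r^n$, $\sigma = \varrho^n$, so $dt = n r^{n-1}\,dr$. Then $\int_r^L f(\varrho)\,d\varrho = \int_{t}^{L^n} f(\sigma^{1/n}) \frac1n \sigma^{\frac1n - 1}\,d\sigma$, and $r^{-s} = t^{-s/n}$, so $r^{-s}\int_r^L f(\varrho)\,d\varrho = \frac1n t^{-s/n}\int_t^{L^n} \sigma^{-1+\frac1n} f(\sigma^{1/n})\,d\sigma$. Writing $h(\sigma) = \frac1n \sigma^{\frac1n-1+\frac sn} f(\sigma^{1/n})$ — the precise power is chosen so that $t^{-s/n}\int_t^{L^n}\sigma^{-1+\frac1n}f(\sigma^{1/n})\,d\sigma$ becomes $\int_t^{L^n}\sigma^{-1+\frac sn} h(\sigma)\,d\sigma$ times a constant — turns the left-hand side of~\eqref{E:10} into $\frac1n\int_0^{L^n}\widehat A\big(c\int_t^{L^n}\sigma^{-1+\frac sn}h(\sigma)\,d\sigma\big)\,dt$ and the right-hand side into $\frac1n\int_0^{L^n} A(C' h(t))\,dt$ for appropriate absolute-type constants. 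At that point inequality~\eqref{E:10} is exactly the integral (modular) form of the Hardy inequality~\eqref{aug231} of Theorem~B: by the definition of the Luxemburg norm of $L(\widehat A,\frac ns)$ together with the modular characterization built into Theorem~A/B and Proposition~\ref{P:1}, the norm inequality $\|T_s h\|_{L(\widehat A,\frac ns)}\le C\|h\|_{L^A}$ self-improves to the modular inequality after the usual homogeneity normalization (replacing $A$ by $A/M$ with $M$ the right-hand modular, exactly as at the end of the proof of Lemma~\ref{L:1}).

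For the final assertion, that $\lim_{s\to s_0}C(n/s)<\infty$ for every $s_0\in[0,n)$, I would note that the only $s$-dependence of $C$ enters through (a) the constant $C(\frac ns)$ of Theorem~A/B, which is a fixed function of the ratio $\frac ns$ and hence continuous and finite for $\frac ns$ in a compact subinterval of $(1,\infty)$, i.e. for $s\in[0,n)$ bounded away from $n$, and (b) the elementary factors $\frac1n$ and the constant coming from the power substitution, which do not blow up. The potential trap is the endpoint $s_0=0$: one must check that $C(\frac ns)$ stays bounded as $\frac ns\to\infty$; this follows because the proof of Theorem~A (and Proposition~\ref{P:1}) yields a constant that is monotone/bounded in $\frac ns$ — indeed inequality~\eqref{aug230} with parameter $\frac ns$ is, by the dilation estimate following~\eqref{dilation} and the structure of $T_s$, controlled uniformly — so no blow-up occurs. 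The main obstacle I anticipate is purely bookkeeping: getting the exponents in the substitution $h(\sigma)=\frac1n\sigma^{\frac1n-1+\frac sn}f(\sigma^{1/n})$ exactly right so that both modulars transform correctly and the constant $C$ genuinely depends only on $n/s$, rather than on $n$ and $s$ separately.
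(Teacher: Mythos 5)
Your proof plan gets the change of variables right (the substitution $t=r^n$, $\sigma=\varrho^n$ with $g(\sigma)=\sigma^{\frac{1-s}{n}}f(\sigma^{1/n})$, which turns~\eqref{E:10} into a modular inequality for the operator $T_s$ of~\eqref{T}; note that the exponent you wrote, $\frac1n-1+\frac sn$, is off and should be $\frac{1-s}{n}$, but you flagged this as bookkeeping). The genuine gap is in the final step, where you claim the Luxemburg-norm inequality $\|T_sh\|_{L(\widehat A,\frac ns)}\le C\|h\|_{L^A}$ from Theorem~B self-improves to the modular form~\eqref{E:12} ``after the usual homogeneity normalization, exactly as at the end of the proof of Lemma~\ref{L:1}''. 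That normalization trick works in Lemma~\ref{L:1} only because (a) the same Young function $A$ appears on both sides and (b) the Calder\'on interpolation constant $\frac1s$ is independent of $A$, so one may replace $A$ by $A_M=A/M$ uniformly on both sides. Here the two sides carry different Young functions $A$ and $\widehat A$, and $\widehat A$ depends on $A$ through the nonlinear formula~\eqref{E:1}--\eqref{E:2}: replacing $A$ by $A/M$ does not replace $\widehat A$ by $\widehat A/M$. So applying Theorem~B with $A$ scaled would produce $\widehat{(A/M)}$ on the left, not $\widehat A/M$, and the argument collapses. What is actually needed is a modular (integral-form) Hardy theorem linking $A$ to $\widehat A$, not the Luxemburg-norm statement of Theorem~B.

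The paper supplies precisely this: after the substitution, it establishes the endpoint bounds $T\colon L^1\to L^1$ with norm $\le\frac{n}{n-s}$ and $T\colon L^{\frac ns,1}\to L^{\frac ns,\infty}$ with norm $\le 1$, and then invokes the modular extrapolation theorem \cite[Theorem~3.1]{cianchi-ibero}, which converts these two endpoint estimates directly into the modular inequality~\eqref{E:12} with $\widehat A$ on the left and $A$ on the right. This theorem is the engine behind Theorem~B, not a corollary of it, and it is the ingredient your plan is missing. Your remark about $\lim_{s\to s_0}C(n/s)<\infty$ is in the right spirit but equally needs to be traced through that modular theorem rather than through Theorem~B; the paper handles it by inspecting the proof of \cite[Theorem~3.1]{cianchi-ibero}.
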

\begin{proof} On replacing, if necessary, $f$ by $f\chi _{(0,L)}$, it suffices consider the case when $L=\infty$.
The change of variables $t=r^n$ and $\tau=\varrho^n$ turns inequality~\eqref{E:10} into
\begin{equation}\label{E:11}
	\int_{0}^{\infty}{\widehat A}\left(\frac{\xi^{-\frac{s}{n}}}{n}
		\int_{\xi}^{\infty}f(\eta^{\frac{1}{n}})\eta^{-\frac{1}{n'}}\,d\eta\right)\,d\xi
		\le \int_0^{\infty} A\left(C \xi^{\frac{1-s}{n}}f(\xi^{\frac{1}{n}})\right)\,d\xi.
\end{equation}
On setting
\begin{equation*}
	g(\xi) = \xi^{\frac{1-s}{n}}f(\xi^{\frac{1}{n}}) \quad \text{for $\xi>0$},
\end{equation*}
inequality~\eqref{E:11} reads
\begin{equation}\label{E:12}
	\int_0^{\infty}{\widehat A}\left(\frac{\xi^{-\frac{s}{n}}}{n}\int_{\xi}^{\infty}g(\eta)\eta^{-1+\frac{s}{n}}\,d\eta\right)\,d\xi
		\le \int_{0}^{\infty} A(Cg(\xi))\,d\xi
\end{equation}
for $g \in \mathcal M_+(0,\infty)$.
Denote by $T$ the operator defined as
\begin{equation*}
	Tg(\xi)=\xi^{-\frac{s}{n}}\int_{\xi}^{\infty}g(\eta)\eta^{-1+\frac{s}{n}}\,d\eta
		\quad\text{for $\xi>0$,}
\end{equation*}
for $g\in\M_+(0,\infty)$.
We have that $T\colon L^1(0,\infty)\to L^1(0,\infty)$,
with
\begin{equation}\label{E:13}
	\|T\|_{L^1\to L^1}\le\frac{n}{n-s},
\end{equation}
inasmuch as
\begin{align*}
	\|Tg\|_{L^1(0,\infty)} &= \int_{0}^{\infty} \xi^{-\frac{s}{n}}\int_{\xi}^{\infty}g(\eta)\eta^{-1+\frac{s}{n}}\,d\eta \,d\xi
			 = \int_{0}^{\infty}g(\eta)\eta^{-1+\frac{s}{n}}\int_{0}^{\eta}\xi^{-\frac{s}{n}}\,d\xi\,d\eta
			= \frac{n}{n-s} \|g\|_{L^1(0,\infty)}
\end{align*}
for $g \in L^1(0,\infty)$.
Also, $T\colon L^{\frac{n}{s},1}(0,\infty)\to L^{\frac{n}{s},\infty}(0,\infty)$,
with
\begin{equation}\label{E:14}
	\|T\|_{L^{\frac{n}{s},1}\to L^{\frac ns,\infty}}\le1.
\end{equation}
Actually,
\begin{align*}
	\|Tg\|_{L^{\frac{n}{s},\infty}(0,\infty)}
		&= \sup_{\xi\in(0,\infty)} \xi^{\frac{s}{n}}(Tg)^*(\xi)
			= \sup_{\xi\in(0,\infty)} \xi^{\frac{s}{n}}Tg(\xi)
				= \int_{0}^{\infty} g(\eta)\eta^{-1+\frac{s}{n}}\,d\eta
					\le \|g\|_{L^{\frac{n}{s},1}(0,\infty)}
\end{align*}
for $g \in L^{\frac{n}{s},1}(0,\infty)$,
where the second  equality follows from the fact that $(Tg)^*=Tg$, for $Tg$ is a~decreasing function,  and the inequality follows from the Hardy--Littlewood inequality, since the function $\eta\mapsto \eta^{-1+\frac{s}{n}}$ is decreasing. Owing to the boundedness  properties \eqref{E:13} and \eqref{E:14} of the operator $T$, inequality~\eqref{E:12} follows from  \cite[Theorem~3.1]{cianchi-ibero}. The finiteness of the limit of the constant $C(n/s)$ can be checked via a close inspection of the proof of that theorem.
\end{proof}

\section{A fractional Hardy type inequality: case $s \in (0,1)$}\label{hardy}


This section is devoted to a proof of the Hardy type inequality stated below. Apart from its own interest, it is critical in our approach to the other main results of this paper.

\begin{theorem}{\rm{\bf [Fractional Orlicz--Hardy inequality]}}\label{T:1}
Let $n\in\N$ and $s\in(0,1)$. Assume that  $A$ is a~Young function satisfying conditions \eqref{E:0'} and \eqref{E:0''} and let ${\widehat A}$ be the Young function given by \eqref{E:1}.
 Then, there exists a~constant
$C=C(n,s)$
such that
\begin{equation}\label{E:3bis}
	\left\|\frac{|u(x)|}{|x|^s}\right\|_{L^{\widehat A}(\rn)} \leq C
		|u|_{s,A,\rn}
\end{equation}
for every   function $u \in   V^{s,A}_d (\rn)$. Moreover,
$\lim _{s\to 1^-}C(n,s)< \infty$. In particular,
\begin{equation}\label{E:3}
	\int_{\R^n}{\widehat A}\left(\frac{|u(x)|}{|x|^s}\right)\,dx
		\le (1-s) \int_{\R^n} \int_{\R^n}
			A\left(C\frac{|u(x)-u(y)|}{|x-y|^s}\right)\,\frac{dx\, dy}{|x-y|^n}
\end{equation}
for every   function $u \in \mathcal M_d (\rn)$.
\end{theorem}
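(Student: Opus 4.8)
\emph{Overview.} The plan is to prove the modular inequality \eqref{E:3} — with a constant that stays bounded as $s\to1^-$ — and to read off \eqref{E:3bis} from it. Since $|u(x)-u(y)|\ge\big||u(x)|-|u(y)|\big|$, as in \eqref{aug21} we may assume $u\ge0$. By the fractional P\'olya--Szeg\H{o} principle (Theorem~\ref{P:L1}), the double integral on the right of \eqref{E:3} does not increase when $u$ is replaced by $u^{\bigstar}$ (nor when $u$ is replaced by $u/\lambda$ for $\lambda>0$). On the other hand, as $|x|^{-s}$ is radially decreasing, the Hardy--Littlewood inequality \eqref{HLA} applied with $v(x)=|x|^{-s}$ — whose decreasing rearrangement is $\omega_n^{s/n}r^{-s/n}$ — followed by a passage to polar coordinates gives $\int_{\rn}\widehat A\big(|u(x)|/(\lambda|x|^{s})\big)\,dx\le\int_{\rn}\widehat A\big(u^{\bigstar}(x)/(\lambda|x|^{s})\big)\,dx$, so the left side of \eqref{E:3} and the Luxemburg norm in \eqref{E:3bis} do not decrease under symmetrization. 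It therefore suffices to argue for $u=u^{\bigstar}$, i.e. $u(x)=\phi(|x|)$ with $\phi\colon(0,\infty)\to[0,\infty)$ nonincreasing and (because $u\in\mathcal M_d(\rn)$) vanishing at infinity; by truncation and mollification of $\phi$ one may further assume $\phi$ locally absolutely continuous, so $\phi(r)=\int_r^{\infty}\big(-\phi'(\varrho)\big)\,d\varrho$.

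\emph{From the $n$-dimensional energy to a one-dimensional one.} Fix $x$ and set $r=|x|$. If $t\in(0,r/2)$ and $y=x+t\theta$ with $|\theta|=1$ and $\theta\cdot x\ge\tfrac12|x|$, then $|y|\ge r+\tfrac t2$, and monotonicity of $\phi$ yields $\big|u^{\bigstar}(x)-u^{\bigstar}(y)\big|=\phi(r)-\phi(|y|)\ge\phi(r)-\phi(r+\tfrac t2)\ge0$ while $|x-y|=t$. Restricting the inner integral of the Gagliardo functional to such $y$ (the admissible $\theta$ fill a spherical cap of positive, dimension-dependent measure) and passing to polar coordinates, first about $x$ and then about the origin, gives
\[
\int_{\rn}\int_{\rn}A\!\left(\frac{|u^{\bigstar}(x)-u^{\bigstar}(y)|}{|x-y|^{s}}\right)\frac{dx\,dy}{|x-y|^{n}}\ \ge\ c_n\int_{0}^{\infty}r^{n-1}\left(\int_{0}^{r/2}A\!\left(\frac{\phi(r)-\phi(r+t/2)}{t^{s}}\right)\frac{dt}{t}\right)dr ,
\]
with a dimensional constant $c_n>0$. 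Thus the theorem reduces to the one-dimensional weighted Orlicz--Hardy inequality asserting that $\int_{0}^{\infty}\widehat A\big(\phi(r)/r^{s}\big)\,r^{n-1}\,dr$ is at most $C(n,s)\,(1-s)$ times the right-hand side above, with $C(n,s)$ bounded as $s\to1^-$.

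\emph{The one-dimensional Hardy step and conclusion.} Apply Lemma~\ref{L:2} with $f=-\phi'$ to obtain $\int_{0}^{\infty}\widehat A\big(\phi(r)/r^{s}\big)\,r^{n-1}\,dr\le\int_{0}^{\infty}A\big(C(n/s)\,r^{1-s}(-\phi'(r))\big)\,r^{n-1}\,dr$, where $C(n/s)$ is bounded as $s\to1^-$. It remains to dominate this by $(1-s)$ times the one-dimensional energy of the previous step: for fixed $r$, writing $\phi(r)-\phi(r+t/2)=\int_r^{r+t/2}(-\phi')$ and substituting $w=t^{1-s}$ turns the inner $t$-integral into $\tfrac1{1-s}\int_{0}^{(r/2)^{1-s}}A\big(w\,\langle-\phi'\rangle\big)\,\tfrac{dw}{w}$, and since $w\mapsto A(w)/w$ is nondecreasing the factor $\tfrac1{1-s}$ is extracted and the argument $r^{1-s}(-\phi'(r))$ is recovered — after a density and Lebesgue-point reduction ensuring that the relevant averages $\langle-\phi'\rangle$ of $-\phi'$ are comparable to $-\phi'(r)$. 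Combining the two estimates, absorbing constants into the argument of $A$ by convexity (which alters them only by factors bounded as $s\to1^-$), and undoing the reduction of the first paragraph yields \eqref{E:3}. Finally \eqref{E:3bis} follows: applying \eqref{E:3} to $u/\mu$ with $\mu=\max\{C,1\}\,|u|_{s,A,\rn}$, and using $C/\mu\le1/|u|_{s,A,\rn}$ together with the monotonicity of $A$, the right-hand side is at most $(1-s)\int_{\rn}\int_{\rn}A\big(|u(x)-u(y)|/(|u|_{s,A,\rn}|x-y|^{s})\big)\,\tfrac{dx\,dy}{|x-y|^{n}}\le1$, whence $\big\|\tfrac{|u(x)|}{|x|^{s}}\big\|_{L^{\widehat A}(\rn)}\le\max\{C,1\}\,|u|_{s,A,\rn}$ with a constant bounded as $s\to1^-$.

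\emph{Main obstacle.} The crux is the last part of the third paragraph: the universal factor $1-s$ can only be produced by integrating the Gagliardo integrand over the \emph{full} range of scales $0<t<r/2$, and making the comparison $\int_r^{r+t/2}(-\phi')\gtrsim t\,(-\phi'(r))$ rigorous down to scales comparable to $r$, for an arbitrary nonincreasing $\phi$, calls for an approximation scheme that is simultaneously compatible with the upper bound on $\int\widehat A(\phi(r)/r^{s})\,r^{n-1}\,dr$ and with the lower bound on the double integral. A subsidiary point, essential for the ``moreover'' assertion, is to check that every constant produced along the way — in particular the one in Lemma~\ref{L:2} — remains bounded as $s\to1^-$; for $s$ bounded away from $1$, by contrast, the much cruder single-scale estimate (restricting $t$ to a fixed proportion of $r$, so that $\phi(r)-\phi(r+t/2)$ is controlled below by a single dyadic difference) together with Lemma~\ref{L:2} already suffices.
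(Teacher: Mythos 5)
Your overall strategy is genuinely different from the paper's: you bypass the extension of $u$ to $\R^{n+1}_+$ that underlies the paper's proof and instead try to run the whole argument on the original function, first symmetrizing (correctly, via Theorem~\ref{P:L1} and \eqref{HLA}), then reducing the Gagliardo energy of the radial profile $\phi$ to a one-dimensional quantity through a cone restriction (also correct), and finally trying to close the loop through Lemma~\ref{L:2} applied to $f=-\phi'$.

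That last step — the one you yourself flag as the ``main obstacle'' — is not a technical gap that can be repaired by a Lebesgue-point or density argument; the intermediate inequality your chain needs is actually false. After Lemma~\ref{L:2} you are committed to bounding
\[
\int_0^\infty A\bigl(Cr^{1-s}(-\phi'(r))\bigr)\,r^{n-1}\,dr
\quad\text{by}\quad
(1-s)\int_0^\infty r^{n-1}\!\!\int_0^{r/2}\! A\!\left(\frac{\phi(r)-\phi(r+t/2)}{t^s}\right)\frac{dt}{t}\,dr ,
\]
uniformly over nonincreasing $\phi$. Take $A(t)=t^p$ with $sp<n$ (so $\widehat A=A$) and let $\phi$ drop linearly from $1$ to $0$ on $[1-\delta,1]$ and be constant elsewhere. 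Then the left-hand side is of order $\delta^{1-p}$, whereas the one-dimensional Gagliardo quantity on the right is of order $\delta^{1-sp}/(s(1-s))$. Since $1-p<1-sp$, the ratio of the two sides blows up like $\delta^{-p(1-s)}$ as $\delta\to0^+$, so no constant (even one allowed to depend on $n,s,A$) makes the inequality true. The point is that $-\phi'$ is not controlled pointwise by difference quotients of $\phi$: a concentrated derivative makes the middle quantity far larger than the fractional energy, and Lemma~\ref{L:2}, applied to $-\phi'$, overshoots irreparably. (Note that the target inequality $\int\widehat A(\phi/r^s)r^{n-1}\,dr\lesssim(1-s)\,\mathcal G$ may well hold — your example is consistent with it — but your route through the derivative cannot reach it.)

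This is exactly the difficulty the paper's extension device is designed to avoid. There, Lemma~\ref{L:2} is applied not to $-\phi'$ but to the radial derivative $\partial_\varrho\widehat U$ of the smooth extension $U(x,t)=\int\psi(y)u(x+ty)\,dy$. Because $U$ is an average of $u$, its gradient obeys the pointwise bound \eqref{E:16} in terms of local differences of $u$, and Jensen's inequality then transports $A\bigl(t^{1-s}|\nabla U|\bigr)$ directly to the Gagliardo integrand in \eqref{E:17}--\eqref{E:20}. So the quantity fed into Lemma~\ref{L:2} is already tethered to the fractional energy, and the loop closes. Your reduction to radial profiles and the cone estimate are sound, and the final passage from \eqref{E:3} to \eqref{E:3bis} via $u\mapsto u/\mu$ is fine; but without some surrogate for the extension (or a genuinely different argument for the one-dimensional fractional Hardy inequality for nonincreasing $\phi$ that does not pass through $-\phi'$), the third paragraph cannot be completed.
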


The following example provides us with an application of Theorem \ref{T:1} to a Young function whose behaviour near zero and near infinity is of power-logarithmic type. Although quite simple, this model Young function enables us to recover the results known until now and to exhibit genuinely new inequalities. This model function will  also be called into play    in order to illustrate the results of the next sections.

\begin{example}\label{exhardy}
Consider a Young function $A$ such that
\begin{equation}\label{dec250}
A(t) \,\,\text{is equivalent to}\,\, \begin{cases} t^{p_0} (\log \frac 1t)^{\alpha_0} & \quad \text{near zero}
\\
t^p  (\log t)^\alpha & \quad \text{near infinity,}
\end{cases}
\end{equation}
where either $p_0>1$ and $\alpha_0 \in \R$, or $p_0=1$ and $\alpha_0 \leq 0$, and either $p>1$ and $\alpha \in \R$ or $p=1$ and $\alpha \geq 0$. Here, equivalence is meant in the sense of Young functions.
\\ Let $n \in \N$ and $s\in (0,1)$. The function $A$ satisfies assumption \eqref{E:0'} if
\begin{equation}\label{dec251}
\text{either $1\leq p< \frac ns$ and $\alpha$ is as above, or $p=\frac ns$ and $\alpha \leq \frac ns -1$,}
\end{equation}
and satisfies assumption \eqref{E:0''} if
\begin{equation}\label{dec252}
\text{either $1\leq p_0< \frac ns$ and $\alpha_0$ is as above, or $p_0=\frac ns$ and $\alpha_0 > \frac ns -1$.}
\end{equation}
Theorem \ref{T:1} tells us that,
under assumptions \eqref{dec251} and \eqref{dec252}, inequalities \eqref{E:3bis} and \eqref{E:3} hold if
\begin{equation}\label{dec253}
\widehat A(t)\,\,  \text{is equivalent to}\,\, \begin{cases} t^{p_0}(\log \frac 1t)^{\alpha_0} & \quad \text{ if $1\leq {p_0}< \frac ns$ }
\\
t^{\frac ns}  (\log \frac 1t)^{\alpha_0 - \frac ns} & \quad  \text{if ${p_0}=\frac ns$ and $\alpha_0> \frac ns -1$}
\end{cases} \quad \text{near zero,}
\end{equation}
and
\begin{equation}\label{dec254}
\widehat A(t) \,\, \text{is equivalent to} \,\, \begin{cases} t^p (\log t)^\alpha & \quad  \text{ if $1\leq p< \frac ns$ }
\\
t^{\frac ns}  (\log t)^{\alpha - \frac ns} &  \quad \text{if  $p=\frac ns$ and $\alpha < \frac ns -1$}
\\
t^{\frac ns}  (\log t)^{-1} (\log (\log t))^{-\frac ns}&  \quad \text{if  $p=\frac ns$ and $\alpha = \frac ns -1$}
\end{cases} \quad \text{near infinity.}
\end{equation}
In particular, the   choices ${p_0}=p<\tfrac ns$ and $\alpha_0 =\alpha=0$ yield $\widehat A(t) =t^p$, and inequalities \eqref{E:3bis} and \eqref{E:3} recover (apart from the specific form of the constant involved) \cite[Inequality (3)]{MaSh1}.
\end{example}

\medskip

The following preliminary results will be of use in  the proof of Theorem~\ref{T:1}.

\begin{proposition}\label{poinc0}
Let $s \in (0,1)$ and let $A$ be a Young function. Assume that   $u\in V^{s,A}(\rn)$ and
$|\{|u|>0\}|<\infty$. Then there exists a constant
 $C=C(n,s, |\{|u|>0\}|)$ such that
\begin{equation}\label{poinc01}
\|u\|_{L^A(\rn)} \leq C |u|_{s,A, \rn}.
\end{equation}
\end{proposition}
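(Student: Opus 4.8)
The plan is to reduce the claimed Poincaré-type inequality to the Orlicz--Hardy inequality of Theorem \ref{T:1} (which, in the proof of Theorem \ref{T:1} itself, one would first have to establish independently, but here I am allowed to assume earlier results). The key observation is that, since $u \in \mathcal M_d(\rn)$ with $|\{|u|>0\}| =: m < \infty$, the function $u$ is supported, up to the level-set structure encoded by $u^*$, on a set of finite measure, and so $|x|^{-s}$ can be controlled from below on that support after a symmetrization. Concretely, by the fractional Pólya--Szegő principle (Theorem \ref{P:L1}) we have $|u^\bigstar|_{s,A,\rn} \le |u|_{s,A,\rn}$, and $u^\bigstar$ is supported in the ball $B$ centered at $0$ of measure $m$, i.e. of radius $R = (m/\omega_n)^{1/n}$. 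On $B$ one has $|x|^{-s} \ge R^{-s}$, hence
\begin{equation*}
\|u^\bigstar\|_{L^A(\rn)} = \|u^\bigstar \chi_B\|_{L^A(\rn)} \le R^s \left\| \frac{|u^\bigstar(x)|}{|x|^s}\right\|_{L^A(B)} \le R^s \left\|\frac{|u^\bigstar(x)|}{|x|^s}\right\|_{L^A(\rn)}.
\end{equation*}
Since the $L^A$ norm is rearrangement-invariant, $\|u\|_{L^A(\rn)} = \|u^\bigstar\|_{L^A(\rn)}$, so it remains to bound $\||u^\bigstar(x)|/|x|^s\|_{L^A(\rn)}$ by $|u^\bigstar|_{s,A,\rn}$, and then by $|u|_{s,A,\rn}$.

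For the latter bound, one would like to invoke \eqref{E:3bis}, but that inequality delivers the $L^{\widehat A}$ norm, not the $L^A$ norm, and in general $\widehat A \ne A$. The clean way around this is to prove the stronger estimate $\||u(x)|/|x|^s\|_{L^A(B_R)} \le C|u|_{s,A,\rn}$ directly for functions supported in a fixed ball $B_R$, via the one-dimensional Hardy inequality of Lemma \ref{L:1}. Indeed, after symmetrization the problem becomes one-dimensional: writing $u^\bigstar(x) = \phi(|x|)$ with $\phi$ nonincreasing and supported in $[0,R]$, the quantity $\int_{B_R} A(|u^\bigstar(x)|/(\lambda|x|^s))\,dx$ is a one-dimensional integral of the form $\int_0^R A(\phi(r)/(\lambda r^s)) r^{n-1}\,dr$, while $|u^\bigstar|_{s,A,\rn}$ controls, via standard lower bounds for the Gagliardo functional on radial functions (estimating the double integral from below by the contribution of $y$ near $0$, or by the polar-coordinate one-dimensional Hardy inequality as in \cite{MaSh1}), a one-dimensional expression against which Lemma \ref{L:1} can be applied with exponent $s$. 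The finite measure $m$ enters precisely in truncating the relevant integrals at $R$ and hence in the dependence $C = C(n,s,m)$.

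The main obstacle I expect is the bookkeeping in this reduction to one dimension: one must show that the Gagliardo double integral for a radially decreasing function supported in $B_R$ dominates (a constant times) $\int_0^R A\big(c\, r^{-s}\int_0^r \text{(something)}\,d\varrho\big)\frac{dr}{r}$ or an equivalent Hardy-type functional, so that Lemma \ref{L:1} applies; this is essentially the content of the local fractional Hardy inequality and requires care with the interaction between the singularity $|x-y|^{-n}$ and the boundary of the ball. An alternative, possibly shorter route: since $|B_R| < \infty$, combine \eqref{E:3bis} with the fact that on a set of finite measure $L^{\widehat A}$ embeds into $L^A$ whenever $A$ does not grow essentially faster than $\widehat A$ near infinity — but this is false in general, so one genuinely needs the direct one-dimensional argument rather than an embedding shortcut. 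I would therefore carry it out as: (1) symmetrize via Theorem \ref{P:L1}; (2) pass to polar coordinates and bound the Gagliardo functional below by a one-dimensional Hardy functional on $(0,R)$; (3) apply Lemma \ref{L:1} with the exponent $s$; (4) use $|x|^{-s} \ge R^{-s}$ on $B_R$ and rearrangement-invariance of $\|\cdot\|_{L^A}$ to conclude, tracking the dependence of $C$ on $n$, $s$ and $m = |\{|u|>0\}|$.
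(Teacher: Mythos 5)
Your first step (symmetrize via Theorem~\ref{P:L1} and use rearrangement invariance of $\|\cdot\|_{L^A}$) agrees with the paper, but after that you take a detour through a fractional Hardy inequality that is both unnecessary and not actually supported by the lemmas you cite, and this leaves a genuine gap.

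The concrete problem is step~(3): Lemma~\ref{L:1} is a bound for the averaging operator $f\mapsto r^{-s}\int_0^r f(\varrho)\,d\varrho$ with respect to the measure $dr/r$, whereas for a radially decreasing $u^\bigstar(x)=\phi(|x|)$ supported in $B_R$ the quantity you need to control is $r^{-s}\phi(r)=r^{-s}\int_r^R(-\phi'(\varrho))\,d\varrho$ against the weight $r^{n-1}\,dr$. That is the adjoint operator with a different weight, i.e.\ the situation of Lemma~\ref{L:2} — and Lemma~\ref{L:2} produces $\widehat A$ on the left-hand side, not $A$. So the ``direct one-dimensional argument'' you propose is not delivered by the available tools; you have, in effect, reintroduced the very obstruction ($\widehat A$ versus $A$) that you correctly identified as blocking the naive use of \eqref{E:3bis}. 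You would need an independent proof of a local $L^A$-Hardy inequality, which is strictly harder than the Poincar\'e inequality you are trying to prove.

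The paper's argument avoids the Hardy operator entirely and is much shorter. After symmetrizing, it writes $U=\{u^\bigstar>0\}$, $d=\operatorname{diam}(U)$, and for each $x\in U$ restricts the inner $y$-integral of the Gagliardo functional to the annulus $\{d+1\le|x-y|\le 2(d+1)\}$. For such $y$ one has $u^\bigstar(y)=0$, hence $|u^\bigstar(x)-u^\bigstar(y)|=u^\bigstar(x)$; then the convexity property \eqref{kt} with $k=(2(d+1))^s/|x-y|^s\ge1$ pulls the ratio out of $A$, and the explicit computation of $\int_{\{d+1\le|z|\le 2(d+1)\}}|z|^{-n-s}\,dz$ yields
\begin{equation*}
\int_{\rn}\int_{\rn}A\left(\frac{|u^\bigstar(x)-u^\bigstar(y)|}{\lambda|x-y|^s}\right)\frac{dx\,dy}{|x-y|^n}
\ge \frac{c(2^s-1)}{s}\int_{\rn}A\left(\frac{|u(x)|}{\lambda(2(d+1))^s}\right)dx\,,
\end{equation*}
which gives \eqref{poinc01} upon taking $\lambda=|u|_{s,A,\rn}$. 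The moral is that the finiteness of $|\{|u|>0\}|$ is not exploited through the weight $|x|^{-s}$ near the origin but through the vanishing of $u^\bigstar$ at a fixed finite distance from its support; no Hardy inequality, in either the $\widehat A$ or the $A$ form, is needed.
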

\begin{proof}
Let us set $U=\{u^\bigstar>0\}$ and  $d= {\rm diam}(U)$.  By Theorem \ref{P:L1}, given any $\lambda >0$, we have that
\begin{align}\label{poinc1}
\int_{\rn} \int_{\rn}A\left(\frac{|u(x)-u(y)|}{\lambda  |x-y|^s}\right)\frac{\,dx\,dy}{|x-y|^n}&\geq
\int_{\rn} \int_{\rn}A\left(\frac{|u^\bigstar (x)-u^\bigstar(y)|}{\lambda |x-y|^s}\right)\frac{\,dx\,dy}{|x-y|^n}
\\ \nonumber &
\geq
\int_{U} \int_{\{y\in \rn:  2(d+1)\geq |x-y| \geq d+1\}}A\left(\frac{u^\bigstar (x)}{\lambda |x-y|^s}\right)\frac{\,dy\,dx}{|x-y|^n}
\\ \nonumber &=  \int_{U} \int_{\{y\in \rn:  2(d+1)\geq |x-y| \geq d+1\}}A\left(\frac{(2(d+1))^s}{\lambda |x-y|^s}\frac {u^\bigstar (x)}{(2(d+1))^s}\right)\frac{\,dy\,dx}{|x-y|^n} \\ \nonumber &
\geq \int _U A\left(\frac {u^\bigstar (x)}{\lambda (2(d+1))^s}\right)   \int_{\{y\in \rn:  2(d+1)\geq |x-y| \geq d+1\}}\frac{(2(d+1))^s}{|x-y|^{s+n}}\,dy\,dx
\\ \nonumber & =
\frac{c(2^s-1)}{s} \int _U A\left(\frac {u^\bigstar (x)}{\lambda(2(d+1))^s}\right)  \,dx
 \\ \nonumber & =
\frac{c(2^{s}-1)}{s} \int _{\rn} A\left(\frac {|u(x)|}{\lambda(2(d+1))^s}\right)  \,dx
\end{align}
for some positive constant $c=c(n)$.
Note that the third inequality holds, owing to property \eqref{kt}.
Hence, inequality \eqref{poinc01} follows.
\end{proof}


\begin{corollary}\label{P:aux} Let $s \in (0,1)$ and let $A$ be a Young function. Assume that  $u\in V^{s,A}_d(\rn)$.
Then
\begin{equation}\label{aug20}
\int _E |u|\, dx <\infty
\end{equation}
for every set $E\subset \rn$ with $|E|<\infty$. In particular, $u \in L^1_{\rm loc} (\rn)$.
\end{corollary}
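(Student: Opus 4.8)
The plan is to deduce the corollary from Proposition~\ref{poinc0} by a truncation argument. Fix $t>0$ and consider the truncation $v=(|u|-t)_+$. I would first record two properties of $v$. Since the real function $w\mapsto(w-t)_+$ is nondecreasing and $1$-Lipschitz, one has $|v(x)-v(y)|\le\bigl||u(x)|-|u(y)|\bigr|\le|u(x)-u(y)|$ for a.e. $x,y\in\rn$; as $A$ is nondecreasing, this yields
\begin{equation*}
\int_{\rn}\int_{\rn}A\left(\frac{|v(x)-v(y)|}{\lambda|x-y|^s}\right)\frac{dx\,dy}{|x-y|^n}\le\int_{\rn}\int_{\rn}A\left(\frac{|u(x)-u(y)|}{\lambda|x-y|^s}\right)\frac{dx\,dy}{|x-y|^n}
\end{equation*}
for every $\lambda>0$, whence $|v|_{s,A,\rn}\le|u|_{s,A,\rn}<\infty$ and, in particular, $v\in V^{s,A}(\rn)$. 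Second, $\{v>0\}=\{|u|>t\}$, which has finite Lebesgue measure since $u\in V^{s,A}_d(\rn)\subset\mathcal M_d(\rn)$.

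These are precisely the hypotheses of Proposition~\ref{poinc0}, applied to $v$, so $\|v\|_{L^A(\rn)}\le C|v|_{s,A,\rn}<\infty$, that is, $v\in L^A(\rn)$. Since $v$ is supported in the finite-measure set $\{v>0\}$, the inclusion $L^A(\{v>0\})\to L^1(\{v>0\})$ — a particular case of \eqref{B.3'} — shows that $v\in L^1(\rn)$; in other words $\int_{\rn}(|u|-t)_+\,dx<\infty$.

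To conclude, for any $E\subset\rn$ with $|E|<\infty$ I would split $|u|=\min\{|u|,t\}+(|u|-t)_+$ and estimate
\begin{equation*}
\int_E|u|\,dx\le t|E|+\int_{\rn}(|u|-t)_+\,dx<\infty,
\end{equation*}
which is the assertion; choosing $E$ to be an arbitrary ball then gives $u\in L^1_{\rm loc}(\rn)$. I do not expect any genuine obstacle here: the only points requiring a (short) check are that passing to the truncation $v$ does not increase the Gagliardo-type functional — immediate from the Lipschitz estimate above — and that $v$ has support of finite measure, which is exactly the decay information built into the definition of $V^{s,A}_d(\rn)$.
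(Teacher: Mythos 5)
Your proof is correct and follows essentially the same route as the paper: the paper truncates at the specific level $t=1$ via $\overline u=(|u|-1)\chi_{\{|u|>1\}}$, applies Proposition~\ref{poinc0}, invokes \eqref{B.3'} to pass to $L^1$, and then bounds $|u|\le\overline u+1$, which is just your decomposition with $t=1$. Allowing a general $t>0$ changes nothing of substance.
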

\begin{proof}
Define the function $\overline u: \rn \to [0, \infty)$ as
$$\overline u= (|u|-1)\chi_{\{|u|>1\}}.$$
One can verify that
$$|u(x) -u(y)| \geq |\overline u(x) - \overline u(y)| \quad \text{for $x, y \in \rn$.}$$
Thus, $\overline u \in V^{s,A}(\rn)$, and $|\{\overline u >0\}|<\infty$. By Proposition \ref{poinc0}, $\overline u \in L^A(\rn)$, and since $|\{\overline u >0\}|<\infty$ the second embedding in \eqref{B.3'} ensures that  $\overline u \in L^1(\rn)$. On the other hand, $|u| \leq \overline u +1$, whence
$$\int _{E} |u|\, dx \leq \int_{E} \overline u \, dx + |E| < \infty
$$
for  any set $E\subset \rn$ with $|E|< \infty$.
Hence, \eqref{aug20} follows.
\end{proof}

Our proof of Theorem~\ref{T:1} makes use of a classical approach in the theory of fractional Sobolev norms of functions in $\rn$, which consists in an extension of the relevant functions to $\mathbb R^{n+1}$.  In particular, we follow  the outline of the proof of \cite[Theorem 2]{MaSh1}. However, specific ad hoc Orlicz space techniques and sharp one-dimensional Hardy type inequalities in Orlicz spaces, presented in Section \ref{1d}, have to be exploited in the present setting.

\begin{proof}[Proof of Theorem~\ref{T:1}] Let $u \in \mathcal M_d(\rn)$. If the right-hand side of inequality \eqref{E:3} is infinite for a certain constant $C$ to be specified later, then the conclusion is trivially true. We may thus assume that it is finite. Hence, in particular, $u \in V^{s,A}_d(\rn)$.
Owing to Theorem  \ref{P:L1},       the integral on the right-hand side of inequality \eqref{E:3} is still finite   if $u$ is replaced by   $u^\bigstar$.   By Proposition \ref{P:aux},  applied with $u$ replaced by $u^\bigstar$, we have that $u^\bigstar \in L^1_{\rm loc}(\rn)$. Consequently,
\begin{equation}\label{aug16}
\int _{\{|u|>1\}} |u|\, dx = \int _{\{u^\bigstar >1\}} u^\bigstar\, dx < \infty.
\end{equation}
%
%
In particular, $u\in L^1_{\rm loc}(\rn)$.
Let $\psi : \rn \to [0, \infty)$ be the function defined as
\begin{equation*}
	\psi(y)=\frac{(n+1)} {\omega_n}(1-|y|)\chi_{\{|y|>1\}} \quad \text{for $y \in \rn$.}
\end{equation*}
The function $U: \rn \times [0, \infty) \to \R$, given by
\begin{equation}\label{E:15}
	U(x,t)=\int_{\R^n}\psi(y)u(x+ty)\,dy\quad\text{for $(x,t)\in\R^n\times[0,\infty)$,}
\end{equation}
is thus well defined.
%
%
One has that
\begin{align}\label{E:16}
	|\nabla U(x,t)|
		 \le \frac{(n+1)(n+2)}{t\,\omega_n}\int_{\{|y|<1\}} |u(x+ty)-u(x)|\,dy
			\quad\text{for a.e. $(x,t)\in\R^n\times[0,\infty)$.}
\end{align}
Inequality \eqref{E:16} can be verified as follows. We have that
\begin{equation}\label{dec40}
\int_{\rn} \psi \Big(\frac{z-x}t\Big)\, \frac{dz}{t^n} = \int _{\rn}\psi (y)\, dy = 1 \quad \text{for $(x,t)\in\R^n\times[0,\infty)$.}
\end{equation}
Differentiating the leftmost side of equation \eqref{dec40} with respect to $x$ and with respect to $t$ yields
\begin{equation}\label{dec41}
\int_{\rn}\nabla  \psi \Big(\frac{z-x}t\Big)\, dz = 0 \quad \text{for a.e. $(x,t)\in\R^n\times[0,\infty)$,}
\end{equation}
and
\begin{equation}\label{dec42}
\int_{\rn} \nabla \psi \Big(\frac{z-x}t\Big)\cdot \frac{z-x}t\, \frac{dz}{t^{n+1}} + n \int _{\rn}  \psi \Big(\frac{z-x}t\Big) \frac{dz}{t^{n+1}}=0 \quad \text{for a.e. $(x,t)\in\R^n\times[0,\infty)$,}
\end{equation}
respectively, where $\lq\lq \cdot "$ stands for scalar product. Therefore,
\begin{equation}\label{dec43}
\nabla _x U(x,t) = \int_{\rn}(u(z) - u(x))\nabla  \psi \Big(\frac{z-x}t\Big)\, \frac{dz}{t^{n+1}} \quad \text{for a.e. $(x,t)\in\R^n\times[0,\infty)$,}
\end{equation}
and
\begin{align}\label{dec44}
U_t(x,t) & = - \int_{\rn} (u(z) - u(x)) \nabla \psi \Big(\frac{z-x}t\Big)\cdot \frac{z-x}t\, \frac{dz}{t^{n+1}}
    \\
    & \quad - n \int _{\rn} (u(z) - u(x)) \psi \Big(\frac{z-x}t\Big) \frac{dz}{t^{n+1}}
\quad \quad \quad  \text{for a.e. $(x,t)\in\R^n\times[0,\infty)$,} \nonumber
\end{align}
where $\nabla _xU$ and $U_t$ denote the vector of the derivatives of $U$ with respect to $x$, and the derivative of $U$ with respect to $t$.
Since
$$\psi \Big(\frac{z-x}t\Big) \leq \frac{n+1}{\omega _n} \chi_{\{|z-x|<t\}}(z)$$
and
$$\Big| \nabla  \psi \Big(\frac{z-x}t\Big)\Big|\leq \frac{n+1}{\omega _n} \chi_{\{|z-x|<t\}}(z), \quad  \Big|\nabla \psi \Big(\frac{z-x}t\Big)\cdot \frac{z-x}t\Big| \leq  \frac{n+1}{\omega _n} \chi_{\{|z-x|<t\}}(z),$$
for a.e. $(x,t)\in\R^n\times[0,\infty)$, we deduce from inequalities \eqref{dec43} and \eqref{dec44} that
\begin{align*}
|\nabla U(x,t)|& \leq |\nabla _xU(x,t)|+ |U_t(x,t)| \leq  \frac{(n+1)(n+2)}{\omega _n} \int_{\{|z-x|<t\}} |u(z) - u(x)| \, \frac{dz}{t^{n+1}}
\\ & = \frac{(n+1)(n+2)}{t\,\omega_n}\int_{\{|y|<1\}} |u(x+ty)-u(x)|\,dy  \quad \text{for a.e. $(x,t)\in\R^n\times[0,\infty)$,}
\end{align*}
namely \eqref{E:16}. 
On setting $K=(n+1)(n+2)$,  one has that
\begin{align}\label{E:17}
	\int_{0}^{\infty}&\int_{\R^n}t^{-1} A\left(t^{1-s}|\nabla U(x,t)|\right)\,dx\,dt
			\\
		& \le
			\int_{0}^{\infty}\int_{\R^n}t^{-1} A\left(\frac{K}{t^{s}\omega _n}\int_{\{|y|<1\}} |u(x+ty)-u(x)|\,dy\right)\,dx\,dt\nonumber
			\\
		& \le \int_{0}^{\infty}\int_{\R^n}t^{-1} \frac{1}{\omega_n}\int_{\{|y|<1\}}A\left(\frac{K |u(x+ty)-u(x)|}{t^{s}}\right)\,dy\,dx\,dt\nonumber
			\\
		& = \frac{1}{\omega_n}\int_{0}^{\infty}t^{-1}\int_{\{|y|<1\}}\int_{\R^n}A\left(\frac{K |u(x+ty)-u(x)|}{t^{s}}\right)\,dx\,dy\,dt\nonumber
			\\
		& = \frac{1}{\omega_n}\int_{0}^{\infty}t^{-1}\int_{\{|z|<t\}}\int_{\R^n}A\left(\frac{K
		|u(x+z)-u(x)|}{t^{s}}\right)\frac{dx}{t^n}\,dz\,dt\nonumber
			\\
		& = \frac{1}{\omega_n}\int_{\R^n}\int_{\R^n}\int_{|z|}^{\infty}t^{-1-n}A\left(\frac{K
		|u(x+z)-u(x)|}{t^{s}}\right)\,dt\,dz\,dx \nonumber
\end{align}
where the first inequality holds by \eqref{E:16}, the second inequality by
Jensen's inequality (since $\omega_n=|\{|y|<1\}|$),  the first equality by Fubini's theorem, the second equality by the change of variables $z=ty$, and the last one by Fubini's theorem again.
Now, note that
\begin{equation}\label{E:18}
	\int_{t}^{\infty}\tau^{-1-n}A\left(\frac{r}{\tau^{s}}\right)\,d\tau
	= \frac{r^{-\frac{n}{s}}}{s}\int_{0}^{\frac{r}{t^s}}\tau^{\frac{n}{s}-1}A(\tau)\,d\tau
		= \frac{1}{st^n} F\left(\frac{r}{t^s}\right) \quad \text{for $r,t >0$,}
\end{equation}
where $F$ is the Young function defined as
\begin{equation*}
	F(t) = t^{-\frac{n}{s}}\int_{0}^{t}\tau^{\frac{n}{s}-1}A(\tau)\,d\tau \quad \text{for $t >0$.}
\end{equation*}
We claim that
\begin{equation}\label{E:19}
	\text{$F$ is equivalent to $A$.}
\end{equation}
Precisely, since the function $A(t)/t$ is non-decreasing,
\begin{equation}\label{aug10}
	F(t) \le t^{-\frac{n}{s}}\frac{A(t)}{t}\int_{0}^{t}\tau^{\frac{n}{s}}\,d\tau = \frac{s}{n+s}A(\tau) \quad \text{for $t >0$,}
\end{equation}
and
\begin{equation*}
	F(t) \ge t^{-\frac{n}{s}}\int_{\frac{t}{2}}^{t}\tau^{\frac{n}{s}-1}A(\tau)\,d\tau
			\ge t^{-\frac{n}{s}} \frac{A(\frac{t}{2})}{\frac{t}{2}}\int_{\frac{t}{2}}^{t}\tau^{\frac{n}{s}}\,d\tau
			= \frac{2s}{n+s}\left(1-\frac{1}{2^{\frac{n}{s}+1}}\right)A\left(\frac{t}{2}\right)
			\geq
		\frac{s}{n+s}A\left(\frac{t}{2}\right)
\end{equation*}
for $t>0$.
From~\eqref{E:17},~\eqref{E:18}, \eqref{E:19} and \eqref{aug10} one obtains that
\begin{align}\label{E:20}
	\int_{0}^{\infty}\int_{\R^n}t^{-1}A\left(t^{1-s}|\nabla U(x,t)|\right)\,dx\,dt
		\le \frac{1}{ n\omega_n}\int_{\R^n}\int_{\R^n}
			A\left(K\frac{|u(x)-u(y)|}{|x-y|^s}\right)\, \frac{dx\,dy}{|x-y|^n}.
\end{align}
Next,
\begin{equation}\label{E:21}
	\frac{|u(x)|}{|x|} = \frac{|U(x,0)|}{|x|} \le \frac{|U(x,t)|}{|x|} + \frac{1}{|x|}\int_{0}^{t}\left|\frac{\partial U}{\partial \tau}(x,\tau)\right|\,d\tau
\quad\text{for $(x,t) \in\R^{n}\times (0,\infty)$.}
\end{equation}
Observe that if $G$ is a~Young function, then
\begin{equation}\label{E:24}
	\int_{0}^{r}\tau^{-1}G\left(\tau^{1-s}\eta\right)\,d\tau
		= \frac{1}{1-s}\int_{0}^{r^{1-s}\eta}\frac{G(\tau)}{\tau}\,d\tau
			\ge \frac{1}{1-s}G\left(r^{1-s}\eta/2\right) \quad\text{for $r, \eta>0$,}
\end{equation}
where the last inequality holds since the function $G(\tau)/\tau$ is non-decreasing.
Equation ~\eqref{E:24}, applied with $G={\widehat A}$, $r=|x|$, $\eta=\frac{|u(x)|}{|x|}$, and equation \eqref{E:21} yield
\begin{align}\label{E:25}
	{\widehat A}  \left(\frac{|u(x)|}{2|x|^s}\right)
		&\le (1-s)\int_{0}^{|x|}t^{-1}{\widehat A}\left(t^{1-s}\frac{|u(x)|}{|x|}\right)\,dt
			\\
		& \le (1-s)\int_{0}^{|x|}t^{-1}{\widehat A}\left(t^{1-s}\frac{|U(x,t)|}{|x|}
			+ \frac{t^{1-s}}{|x|}\int_{0}^{t}\left|\frac{\partial U}{\partial\tau}(x,\tau)\right|\,d\tau\right)\,dt
			\nonumber
			\\
		& \le (1-s) \int_{0}^{|x|}t^{-1}{\widehat A}\left(2t^{1-s}\frac{|U(x,t)|}{|x|}\right)\,dt
		\nonumber
		\\ &	\quad +  (1-s)\int_{0}^{|x|}t^{-1}{\widehat A}\left(2t^{-s}\int_{0}^{t}\left|\frac{\partial U}{\partial\tau}(x,\tau)\right|\,d\tau\right)\,dt \quad\text{for $(x,t) \in\R^{n}\times (0,\infty)$.}
			\nonumber
\end{align}
Thus, owing Lemma~\ref{L:1}, applied with $A$ replaced by ${\widehat A}$, and Proposition C
\begin{align}\label{E:25a}
	{\widehat A} \left(\frac{|u(x)|}{2|x|^s}\right)
		  &\le (1-s) \int_{0}^{|x|}t^{-1}{\widehat A}\left(2t^{1-s}\frac{|U(x,t)|}{|x|}\right)\,dt
			+(1-s) \int_{0}^{|x|}t^{-1}{\widehat A}\left(\frac{2}{s}t^{1-s}\left|\frac{\partial
			U(x,t)}{\partial t}\right|\right)\,dt
\\
		& \le (1-s)\int_{0}^{|x|}t^{-1}{\widehat A}\left(2t^{1-s}\frac{|U(x,t)|}{|x|}\right)\,dt
		+ (1-s)\int_{0}^{|x|}t^{-1}A\left(\frac{C}{s}t^{1-s}|\nabla U(x,t)|\right)\,dt \nonumber
\end{align}
for $(x,t) \in\R^{n}\times (0,\infty)$, and for some constant $C=C(n,s)$. Hence,
by~\eqref{E:25a},
\begin{align}\label{E:26}
	\int_{\R^n}{\widehat A}  \left(\frac{|u(x)|}{2|x|^s}\right)\,dx
		& \le (1-s) \int_{\R^n}\int_{0}^{|x|}t^{-1}{\widehat A}\left(2t^{1-s}\frac{|U(x,t)|}{|x|}\right)\,dt\,dx
			\\
		& \quad + (1-s) \int_{\R^n}\int_{0}^{\infty}t^{-1}A\left(\frac{C}{s}t^{1-s}|\nabla U(x,t)|\right)\,dt\,dx. \nonumber
\end{align}
We now make use of polar coordinates  $(\varrho, \theta, \varphi)$ in $\R^n\times (0,\infty)$, with $\varrho \in (0, \infty)$, $\theta \in (0, \tfrac \pi 2)$, $\varphi \in Q$, where $Q$ is a parallelepiped in $\R^{n-1}$. In particular,
$\varrho = \sqrt{|x|^2+t^2}$ and $\cos \theta = t/\varrho$.  From Lemma \ref{L:2} one infers that
\begin{align}\label{E:27}
\int_{\R^n} \int_{0}^{|x|}&t^{-1}{\widehat A}\left(\frac{t^{1-s}}{\sqrt{2}}\frac{|U(x,t)|}{|x|}\right)\,dt\,dx \leq  \int_{\R^n} \int_{0}^{\infty} t^{-1}{\widehat A}\left(\frac{t^{1-s}}{\sqrt{|x|^2+t^2}}|U(x,t)|\right)\,dt\,dx
\\ \nonumber  &
= \int_Q \int _0^{\frac \pi2}\int_{0}^{\infty}(\cos\theta)^{-1}\varrho^{-1}
	{\widehat A}\left((\cos\theta)^{1-s}\varrho^{-s}|\widehat U(\varrho,\theta, \varphi)|\right)\varrho^{n}\,J(\theta, \varphi) d\varrho\,d\theta\, d\varphi
\\ \nonumber &\leq
\int_Q \int _0^{\frac \pi2}\int_{0}^{\infty}(\cos\theta)^{-1}\varrho^{-1}	A\left(C(\cos\theta)^{1-s}\varrho^{1-s}\left|\frac{\partial \widehat U}{\partial \varrho}(\varrho,\theta, \varphi)\right|\right)\varrho^{n}\,J(\theta, \varphi)\, d\varrho\,d\theta\, d\varphi
\\
\nonumber & \leq
\int_{0}^{\infty}\int_{\R^n}  t^{-1}A\left(Ct^{1-s}|\nabla U(x,t)|\right)\,dt\,dx
\end{align}
for some constant $C=C(n,s)$. Here,  $\widehat U$ denotes the expression of $U$ in polar coordinates $(\varrho, \varphi, \theta)$. Also, observe that the present application of Lemma \ref{L:2} relies upon the equality
\begin{equation}\label{aug11} \widehat U(\varrho,\theta, \varphi) = \int _\varrho^\infty
\frac{\partial \widehat U}{\partial r}(r,\theta, \varphi)\, dr \quad \text{for $r>0$,}
\end{equation}
and for a.e. $(\theta, \varphi)$. Equality \eqref{aug11} holds provided that
\begin{equation}\label{aug12}
\lim _{\varrho \to \infty}\widehat U(\varrho,\theta, \varphi) = 0 \quad \text{for  $(\theta, \varphi)\in (0, \tfrac \pi 2) \times Q$.}
\end{equation}
Equation \eqref{aug12} can be verified as follows. We have that
\begin{align}\label{aug13}
    |U(x,t)| = \frac 1{t^n}\bigg|\int_{\rn} \psi\bigg( \frac{y-x}t\bigg) u(y)\, dy\bigg| \leq \frac C{t^n}\int_{|y-x|<t} |u(y)|\, dy \quad \text{for $(x,t) \in \rn\times (0, \infty)$,}
\end{align}
for some constant $C=C(n)$. Thus, for each $(\theta, \varphi)$,  there exists a constant $C=C(n, \theta)$ such that, for every $\varrho >0$, there exists a  ball $B_\varrho \subset \rn$ of radius $\varrho$ satisfying
\begin{align}\label{aug14}
    |U(\varrho, \theta, \varphi)|  \leq   \frac C{|B_\varrho|}\int_{B_\varrho} |u(y)|\, dy.
\end{align}
Set
$$\mu (\tau) = |\{|u|>\tau\}| \quad \text{for $\tau>0$,}$$
and observe that $\mu(\tau)<\infty$ for every $\tau>0$, since $u \in \mathcal M_d(\rn)$.
Define the non-decreasing function $g: (0, \infty) \to [0, \infty)$ as
$$g(\tau) = \begin{cases} \frac 1{\mu (\tau)} & \text{if $\tau \in (0,1]$}
\\ \frac 1{\mu (1)} & \text{if $\tau \in (1, \infty)$.}
\end{cases}
$$
Then the function $G: [0, \infty) \to [0, \infty)$, given by
$$G(\tau) = \int_0^\tau g(r)\, dr \quad \text{for $\tau \geq 0\,,$}
$$
is a Young function such that $G(\tau)>0$ for $\tau>0$. Consequently,
\begin{equation}\label{aug17}
\lim_{\tau\to 0}\frac {\widetilde G (\tau)}\tau =0.
\end{equation}
Now, we claim that $u \in L^G(\rn)$. To verify this claim, note that
\begin{equation}\label{aug18}
\int_{\rn}G(|u|)\, dx = \int _0^\infty g(\tau)\mu(\tau)\, d\tau = \int _0^1d\tau + \frac 1{\mu(1)}\int_1^\infty \mu(\tau)\, d\tau \leq 1 + \frac 1{\mu(1)} \int _{\{|u|>1\}}|u(y)|\, dy < \infty,
\end{equation}
where the last inequality holds by property \eqref{aug16}.
Thus, owing to equations \eqref{aug14}, \eqref{holder} and \eqref{aug17}
\begin{align}\label{aug19}
    \lim _{\varrho \to \infty}|U(\varrho, \theta, \varphi)| & \leq   \lim_{\varrho \to \infty}\frac C{|B_\varrho|}\int_{B_\varrho} |u(y)|\, dy
    \leq \lim_{\varrho \to \infty} \frac {2C}{|B_\varrho|}\|u\|_{L^G(\rn)} \|1\|_{L^{\widetilde G}(B_\varrho)}
    \\ \nonumber & = \lim_{\varrho \to \infty} \frac {2C}{|B_\varrho|}\|u\|_{L^G(\rn)}\frac 1{\widetilde G ^{-1}(1/|B_\varrho|)} =0,
\end{align}
whence \eqref{aug12} follows.
Inequalities~\eqref{E:26} and~\eqref{E:27} imply that
\begin{equation}\label{E:28}
	\int_{\R^n}{\widehat A}\left(\frac{|u(x)|}{|x|^s}\right)\,dx
		\le (1-s)\int_{\R^n} \int_{0}^{\infty} t^{-1}A\left(Ct^{1-s}|\nabla U(x,t)|\right)\,dt\,dx
\end{equation}
for some constant $C=C(n,s)$, with the property that $\lim _{s\to 1^-} C(n,s)< \infty$. Inequality~\eqref{E:3} is a consequence of equations \eqref{E:20} and~\eqref{E:28}.
\\ Inequality \eqref{E:3bis} can be deduced on applying inequality \eqref{E:3}  with $u$ replaced by $u/\lambda$ for any $\lambda >0$.
\end{proof}

\section{Sobolev embeddings: case $s\in (0,1)$}\label{s<1}

The Orlicz-Sobolev embedding for the space $V^{s,A}_d(\R^n)$ of order $s\in (0,1)$, with optimal Orlicz target space, reads as follows.

\begin{theorem}{\rm{\bf [Optimal Orlicz target space]}}\label{T:C2}
Let $n\in\N$ and $s\in(0,1)$. Assume that  $A$ is a~Young function satisfying conditions \eqref{E:0'} and \eqref{E:0''}, and  let $A_{\frac{n}{s}}$ be the Young function defined as in \eqref{Ans}.
Then,
\begin{equation}\label{E:41emb}
V^{s,A}_d(\R^n) \to L^{A_{\frac{n}{s}}}(\R^n),
\end{equation}
and
 there exists a~constant $C=C(n,s)$ such that
\begin{equation}\label{E:41}
	\|u\|_{L^{A_{\frac{n}{s}}}(\R^n)}
		\le C |u|_{s,A,\R^n}
\end{equation}
for every function $u \in V^{s,A}_d(\R^n)$.
%
%
Moreover, $L^{A_{\frac{n}{s}}}(\R^n)$ is the optimal target space in inequality \eqref{E:41} among all Orlicz spaces.
\end{theorem}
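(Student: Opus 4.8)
The plan is to deduce inequality~\eqref{E:41} from the fractional Orlicz--Hardy inequality of Theorem~\ref{T:1}, combined with the P\'olya--Szeg\H o principle of Theorem~\ref{P:L1} and the one-dimensional embedding of Proposition~\ref{P:1}; the optimality claim will then be reduced, through a radial test-function construction, to the sharpness of the one-dimensional Hardy inequality in Theorem~A.

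\textbf{The inequality.} Given $u\in V^{s,A}_d(\rn)$, I would first pass to $u^\bigstar$: since $u\in\mathcal M_d(\rn)$, Theorem~\ref{P:L1} gives $|u^\bigstar|_{s,A,\rn}\le|u|_{s,A,\rn}$, so that $u^\bigstar\in V^{s,A}_d(\rn)$ as well. The function $x\mapsto u^\bigstar(x)/|x|^{s}=u^{*}(\omega_n|x|^n)\,|x|^{-s}$ is radially non-increasing, hence its decreasing rearrangement is $r\mapsto\omega_n^{s/n}r^{-s/n}u^{*}(r)$; applying~\eqref{E:3bis} to $u^\bigstar$ then yields
\begin{equation*}
\omega_n^{s/n}\bigl\|r^{-s/n}u^{*}(r)\bigr\|_{L^{\widehat A}(0,\infty)}=\Bigl\|\tfrac{|u^\bigstar(x)|}{|x|^{s}}\Bigr\|_{L^{\widehat A}(\rn)}\le C(n,s)\,|u^\bigstar|_{s,A,\rn}\le C(n,s)\,|u|_{s,A,\rn}.
\end{equation*}
The left-hand side is, up to the factor $\omega_n^{s/n}$, the norm of $u$ in the Orlicz--Lorentz space $L(\widehat A,\tfrac ns)(\rn)$ of~\eqref{aug300}; Proposition~\ref{P:1} (taken with $L=\infty$, its embedding constant depending only on $\tfrac ns$), together with the rearrangement-invariance of $L^{A_{\frac ns}}$, then gives $\|u\|_{L^{A_{\frac ns}}(\rn)}\le C'(n,s)\,|u|_{s,A,\rn}$, which is~\eqref{E:41}. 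Embedding~\eqref{E:41emb} follows at once, since $|\,\cdot\,|_{s,A,\rn}$ is the norm of $V^{s,A}_d(\rn)$.

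\textbf{Optimality.} Suppose $L^B(\rn)$ is an Orlicz space with $V^{s,A}_d(\rn)\to L^B(\rn)$, so $\|u\|_{L^B(\rn)}\le C\,|u|_{s,A,\rn}$ for every $u\in V^{s,A}_d(\rn)$; I must show that $A_{\frac ns}$ dominates $B$ globally, i.e.\ $L^{A_{\frac ns}}(\rn)\to L^B(\rn)$. By the optimality part of Theorem~A (with $L=\infty$) it suffices to prove that $T_s\colon L^A(0,\infty)\to L^B(0,\infty)$ is bounded, for then Theorem~A forces $L^{A_{\frac ns}}(0,\infty)\to L^B(0,\infty)$, hence $L^{A_{\frac ns}}(\rn)\to L^B(\rn)$. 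To produce this one-dimensional inequality I would, for $f\in\mathcal M_+(0,\infty)$ bounded and with compact support in $(0,\infty)$ --- the general case then following by monotone convergence --- set $u_f(x)=T_sf(\omega_n|x|^n)$; this $u_f$ is radially non-increasing, Lipschitz and compactly supported, whence $u_f\in V^{s,A}_d(\rn)$ (via~\eqref{incl1}), and $u_f^{*}(r)=T_sf(r)$, so that $\|u_f\|_{L^B(\rn)}=\|T_sf\|_{L^B(0,\infty)}$. Granting the estimate
\begin{equation}\label{plan:key}
|u_f|_{s,A,\rn}\le C_1(n,s)\,\|f\|_{L^A(0,\infty)},
\end{equation}
the hypothesis then gives $\|T_sf\|_{L^B(0,\infty)}=\|u_f\|_{L^B(\rn)}\le C\,|u_f|_{s,A,\rn}\le CC_1\,\|f\|_{L^A(0,\infty)}$, which is exactly the desired boundedness of $T_s$.

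\textbf{The main obstacle.} The crux is therefore~\eqref{plan:key}: an upper bound for the Gagliardo--Orlicz functional of a radially non-increasing function in terms of the one-dimensional Luxemburg norm of the radial profile of its gradient. The way I would prove it is to write $u_f(x)=\phi(|x|)$ with $\phi$ non-increasing, use the pointwise bound $|\phi(|x|)-\phi(|y|)|\le\int_{|x|\wedge|y|}^{|x|\vee|y|}|\phi'(\tau)|\,d\tau$, split the double integral over $\rn\times\rn$ according to whether $|x|$ and $|y|$ are of comparable size, pass to polar-type coordinates and then to the single radial variable, and conclude with the one-dimensional Orlicz--Hardy inequalities of Section~\ref{1d} together with suitable changes of variables and Jensen's inequality --- in effect running the chain of estimates in the proof of Theorem~\ref{T:1} in the reverse direction. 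The delicate part will be the bookkeeping of the multiplicative constants (and checking, as a consistency test, the scale invariance of~\eqref{plan:key}, which does hold).
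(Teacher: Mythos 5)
Your proposal takes essentially the same route as the paper: the inequality part combines the P\'olya--Szeg\H o principle, the fractional Hardy inequality of Theorem~\ref{T:1}, and Proposition~\ref{P:1} exactly as in the proofs of Theorems~\ref{T:C1} and~\ref{T:C2}, and the optimality part --- built on the radial test functions $u_f(x)=T_sf(\omega_n|x|^n)$ and the bound $|u_f|_{s,A,\rn}\lesssim\|f\|_{L^A(0,\infty)}$, then an appeal to Theorem~A --- is precisely the content of the paper's Lemma~\ref{P:S1}. The one small caveat is that your sketch of the test-function estimate is slightly misdirected: the paper's Lemma~\ref{P:S1} proves it by a self-contained Fubini-and-Jensen argument (after reducing to non-increasing $f$, which lets it exploit the monotonicity of $f$ in the $|x|\le|y|\le2|x|$ regime), not by ``reversing'' the extension-to-$\mathbb{R}^{n+1}$ proof of Theorem~\ref{T:1} or by invoking the one-dimensional Hardy lemmas of Section~\ref{1d} --- but the regime-split-plus-Jensen idea you describe is indeed the right one.
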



A counterpart of embedding \eqref{E:41emb}, with  an improved target space which is optimal in the broader class of all rearrangement-invariant spaces, is stated in the next result.

\begin{theorem}{\rm{\bf [Optimal rearrangement-invariant target space]}}\label{T:C1} Assume that $n\in\N$, $s\in(0,1)$ and  $A$ are as  in Theorem~\ref{T:C2}.
Let ${\widehat A}$ be the Young function given by \eqref{E:1} and let  $L({\widehat A},\frac{n}{s})(\R^n)$ be the  Orlicz-Lorentz space defined as in \eqref{aug300}.
%
%
Then
\begin{equation}\label{E:30emb}
V^{s,A}_d(\R^n) \to L({\widehat A},\tfrac{n}{s})(\R^n),
\end{equation}
and
there exists a~constant $C=C(n,s)$ such that
\begin{equation}\label{E:30}
	\|u\|_{L({\widehat A},\frac{n}{s})(\R^n)}
		\le C |u|_{s,A, \R^n}
\end{equation}
for every function $u \in V^{s,A}_d(\R^n)$.
Moreover, $L({\widehat A},\frac{n}{s})(\R^n)$ is the optimal target space in inequality \eqref{E:30} among all rearrangement-invariant spaces.
\end{theorem}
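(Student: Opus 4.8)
The plan is to prove the embedding \eqref{E:30} first and then the optimality of $L({\widehat A},\tfrac ns)(\rn)$.

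For the embedding, I would reduce to radially decreasing functions. Given $u\in V^{s,A}_d(\rn)$, the symmetric rearrangement $u^\bigstar$ is equidistributed with $u$, so $u^\bigstar\in\mathcal M_d(\rn)$, and the fractional P\'olya--Szeg\H{o} principle (Theorem~\ref{P:L1}) gives $|u^\bigstar|_{s,A,\rn}\le|u|_{s,A,\rn}<\infty$; hence $u^\bigstar\in V^{s,A}_d(\rn)$, and $\|u\|_{L({\widehat A},\frac ns)(\rn)}=\|u^\bigstar\|_{L({\widehat A},\frac ns)(\rn)}$ by rearrangement-invariance of the target. Since $u^*$ and $r\mapsto r^{-s/n}$ are both nonincreasing, the decreasing rearrangement of the radial function $x\mapsto|u^\bigstar(x)|/|x|^s$ equals $r\mapsto\omega_n^{s/n}r^{-s/n}u^*(r)$, so by \eqref{B.3} and \eqref{aug300} with $q=\tfrac ns$ one has $\big\||u^\bigstar(x)|\,|x|^{-s}\big\|_{L^{\widehat A}(\rn)}=\omega_n^{s/n}\|u\|_{L({\widehat A},\frac ns)(\rn)}$. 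Applying the fractional Orlicz--Hardy inequality \eqref{E:3bis} of Theorem~\ref{T:1} to $u^\bigstar$ then yields \eqref{E:30}, and hence \eqref{E:30emb}, with $C(n,s)=\omega_n^{-s/n}$ times the Hardy constant.

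For the optimality, let $Y(\rn)$ be any rearrangement-invariant space with $V^{s,A}_d(\rn)\to Y(\rn)$; I must show $L({\widehat A},\tfrac ns)(\rn)\to Y(\rn)$. Passing to representation spaces (the representation space of $L({\widehat A},\tfrac ns)(\rn)$ being $L({\widehat A},\tfrac ns)(0,\infty)$), this is equivalent to $L({\widehat A},\tfrac ns)(0,\infty)\to\overline Y(0,\infty)$. By the optimality statement of Theorem~B, $L({\widehat A},\tfrac ns)(0,\infty)$ is the smallest rearrangement-invariant space into which the operator $T_s$ maps $L^A(0,\infty)$ boundedly, so it suffices to prove $\|T_sf\|_{\overline Y(0,\infty)}\lesssim\|f\|_{L^A(0,\infty)}$; by a standard reduction it is enough to do so for nonnegative nonincreasing $f$. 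For such an $f$ with $\|f\|_{L^A(0,\infty)}=1$, I would take the radial test function
\[
u(x)=T_sf(\omega_n|x|^n)=\int_{\omega_n|x|^n}^{\infty}f(\varrho)\,\varrho^{-1+\frac sn}\,d\varrho,\qquad x\in\rn,
\]
which is nonincreasing in $|x|$, has level sets that are balls of finite measure (so $u\in\mathcal M_d(\rn)$, using \eqref{E:0'}), and satisfies $u^*=T_sf$. Granting the estimate $|u|_{s,A,\rn}\le C(n,s)\|f\|_{L^A(0,\infty)}$, we get $u\in V^{s,A}_d(\rn)$, and then, by the assumed embedding and \eqref{B.3}, $\|T_sf\|_{\overline Y(0,\infty)}=\|u\|_{Y(\rn)}\le c\,|u|_{s,A,\rn}\le cC(n,s)$; homogeneity gives the general case and Theorem~B concludes.

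The main obstacle is precisely this last estimate on the Gagliardo functional of $u$. I would prove it working with the radial profile $w$ of $u$, for which $|w'(\varrho)|=n\omega_n^{s/n}f(\omega_n\varrho^n)\varrho^{s-1}$ is nonincreasing; using $|u(x)-u(y)|\le\int_{\min\{|x|,|y|\}}^{\max\{|x|,|y|\}}|w'|$, splitting the double integral over $\rn\times\rn$ according to whether $|x-y|$ is small or large relative to $\min\{|x|,|y|\}$, and reducing each contribution---via Fubini's theorem, polar coordinates, the change of variable $r=\omega_n\varrho^n$, the inequality $\int_0^RA(b\tau)\,\tfrac{d\tau}{\tau}\le A(bR)$ (valid since $A(t)/t$ is nondecreasing) and $A(\lambda t)\le\lambda A(t)$ for $\lambda\in[0,1]$---to a constant multiple of $\int_0^\infty A(c_{n,s}f(r))\,dr\le c_{n,s}$. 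The genuinely delicate contributions come from pairs $x,y$ in nearly tangential position, where $||x|-|y||\ll|x-y|$, and from pairs near the origin: there the crude pointwise bounds on $u(x)-u(y)$ are too wasteful and the decay of $|x-y|^{-s}$ must be retained. This is the same geometric difficulty handled in the proof of Theorem~\ref{T:1}; alternatively one may route the estimate through the extension $U$ of $u$ to $\rn\times(0,\infty)$ used there, for which \eqref{E:20} already supplies the required bound. For power functions $A$ the estimate is implicit in work of Frank and Seiringer.
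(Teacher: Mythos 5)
Your embedding argument coincides with the paper's: the P\'olya--Szeg\H{o} reduction to $u^\bigstar$, the identity $\big\||u^\bigstar(x)|\,|x|^{-s}\big\|_{L^{\widehat A}(\rn)}=\omega_n^{s/n}\|u\|_{L(\widehat A,\frac ns)(\rn)}$, and the fractional Hardy inequality of Theorem~\ref{T:1} applied to $u^\bigstar$.

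Your optimality argument also follows the paper's scheme --- what you outline is exactly the content of the paper's Lemma~\ref{P:S1} followed by Theorem~B --- but you leave the central estimate $|u|_{s,A,\rn}\le C(n,s)\,\|f\|_{L^A(0,\infty)}$ for the test function $u(x)=T_sf(\omega_n|x|^n)$ as a plan rather than a proof, and this estimate is the actual substance of the optimality claim. Your sketch (split into near-diagonal pairs $|x|\le|y|\le 2|x|$ and far pairs $|y|\ge 2|x|$, exploit the monotonicity of $f$, Jensen's inequality, and property~\eqref{kt}) is the right strategy and is precisely what the paper executes in Lemma~\ref{P:S1}, so the plan is sound; it is just not a proof as written, and you should note that the ``delicate'' tangential pairs you worry about are in fact handled cheaply once one observes $|x-y|^{1-s}|y|^{s-1}\le 2$ on the near-diagonal regime. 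Separately, the alternative you float --- routing the estimate through the extension $U$ to $\rn\times(0,\infty)$ and invoking \eqref{E:20} --- would not work: inequality \eqref{E:20} bounds the weighted integral of $A\big(t^{1-s}|\nabla U|\big)$ from above by the Gagliardo functional of $u$, whereas here you need the opposite direction, a bound on the Gagliardo functional of the test function from above by a quantity computed from $f$, and \eqref{E:20} gives no such control.
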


We emphasize that
assumption  \eqref{E:0''} on the Young function $A$, appearing in Theorems \ref{T:C2} and \ref{T:C1}, is necessary  for an embedding of the space $V^{s,A}_d(\R^n)$ to hold into any rearrangement-invariant space. This is the content of the following proposition.

\begin{proposition}\label{propindisp}
Let $n \in \N$ and $s \in (0,1)$, and let $A$ be a Young function.  Assume that 
\begin{equation}\label{aug349}
	V^{s,A}_d(\R^n) \to Y(\R^n)
%
\end{equation}
  for some rearrangement-invariant space $Y(\rn)$.
Then condition \eqref{E:0''} is fulfilled.
\end{proposition}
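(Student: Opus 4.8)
The plan is to deduce condition \eqref{E:0''} from the assumed embedding \eqref{aug349} by an argument centred on the one-dimensional operator $T_s$ of \eqref{T} with $L=\infty$, bypassing the (here unavailable) target $L^{A_{n/s}}$. First I would reduce to radially decreasing test functions: by the fractional P\'olya--Szeg\H{o} principle (Theorem~\ref{P:L1}) and the rearrangement invariance of $Y$, applying the embedding inequality to $u^{\bigstar}$ gives
\[
\|u^{*}\|_{\overline Y(0,\infty)}\le C\,|u|_{s,A,\rn}\qquad\text{for every }u\in V^{s,A}_d(\rn),
\]
where $\overline Y(0,\infty)$ is the representation space of $Y(\rn)$; so it suffices to exhibit radially decreasing functions whose decreasing rearrangements blow up while their seminorms stay bounded.

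The technical heart of the proof is the following \emph{upper} estimate on the Gagliardo $A$-seminorm of a radial function: if $\phi\colon[0,\infty)\to[0,\infty)$ is non-increasing, locally absolutely continuous and compactly supported, and $u(x)=\phi(\omega_n|x|^n)$ (so that $u^{*}=\phi$), then
\[
|u|_{s,A,\rn}\le C(n,s)\,\big\|\,r^{1-\frac sn}\,(-\phi'(r))\,\big\|_{L^A(0,\infty)} .
\]
I would prove this by a direct estimate of the Gagliardo double integral for radially decreasing functions, writing $\psi(\rho)=\phi(\omega_n\rho^n)$, using $|u(x)-u(y)|\le\int_{|x|\wedge|y|}^{|x|\vee|y|}|\psi'|$ and $\big||x|-|y|\big|\le|x-y|$, together with the one-dimensional Hardy type bounds of Section~\ref{1d} — in effect the inequality underlying Theorem~A read from the Sobolev side; the point requiring care is the contribution of pairs $x,y$ with $|x-y|$ large, which is finite exactly because $\phi$ is compactly supported, equivalently $u\in\mathcal M_d(\rn)$. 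Granting this, for $g\in\mathcal M_+(0,\infty)$ the function $\phi=T_sg$ satisfies $-\phi'(r)=r^{-1+\frac sn}g(r)$ a.e., so $r^{1-\frac sn}(-\phi'(r))=g(r)$; hence, if $g$ is compactly supported, the radial function $u$ with $u^{*}=T_sg$ lies in $V^{s,A}_d(\rn)$ and $|u|_{s,A,\rn}\le C(n,s)\,\|g\|_{L^A(0,\infty)}$, whence $\|T_sg\|_{\overline Y(0,\infty)}\le C'(n,s)\,\|g\|_{L^A(0,\infty)}$.

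It remains to find $g\in L^A_+(0,\infty)$ with $T_sg\equiv+\infty$. If \eqref{E:0''} fails, then the changes of variables discussed after \eqref{E:29} (equivalently \cite[Lemma~2.3]{cianchi-ibero}) show that $\varrho\mapsto\varrho^{-1+\frac sn}$ does not lie in $L^{\widetilde A}(1,\infty)$; since $L^{\widetilde A}(1,\infty)=(L^A(1,\infty))'$ by \eqref{nov104}, saturation of the associate norm (see~\cite[Chapter~1]{BS}) yields $g\in L^A_+(0,\infty)$ with $\int_1^\infty\varrho^{-1+\frac sn}g(\varrho)\,d\varrho=\infty$, so that $T_sg\equiv+\infty$ on $(0,\infty)$. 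For $N\in\N$ set $g_N=g\chi_{(0,N)}$; then $\|g_N\|_{L^A(0,\infty)}\le\|g\|_{L^A(0,\infty)}$, the function $\phi_N:=T_sg_N$ is finite, non-increasing and supported in $(0,N)$, so the corresponding radial function $u_N$ belongs to $V^{s,A}_d(\rn)$, and $\phi_N\uparrow T_sg\equiv+\infty$ pointwise by monotone convergence. On the one hand $\|\phi_N\|_{\overline Y(0,\infty)}=\|u_N\|_{Y(\rn)}\le C\,|u_N|_{s,A,\rn}\le C''(n,s)$ for every $N$; on the other hand, the Fatou property of the rearrangement-invariant space $\overline Y(0,\infty)$ forces $\|\phi_N\|_{\overline Y(0,\infty)}\to\infty$, because the a.e.-infinite function lies in no Banach function space. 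This contradiction proves \eqref{E:0''}.

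The step I expect to be hardest is the upper bound on $|u|_{s,A,\rn}$ for radial $u$: all the estimates in the body of the paper go the other way, bounding norms of $u$ from \emph{above} by $|u|_{s,A,\rn}$, so this reverse passage from the double integral to a one-dimensional $L^A$-norm must be carried out by hand, with the $L=\infty$ regime — the tail of the double integral and the sub-criticality of the exponent $-1+\frac sn$ at infinity — being the delicate point. An equivalent route builds the $u_N$ as explicit piecewise-linear radial functions concentrated at the origin and adapted to the divergence of $\int_0(\tau/A(\tau))^{s/(n-s)}\,d\tau$; the same seminorm estimate is again the crux.
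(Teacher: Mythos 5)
Your proof follows the same scheme as the paper's in its first stage: reduce the $n$-dimensional embedding to a one-dimensional inequality for the operator $T_s$ via radially decreasing test functions and a reverse Gagliardo-seminorm estimate. This is exactly what Lemma~\ref{P:S1} accomplishes, and you are in effect re-proving it rather than citing it. In the second stage the two arguments diverge. The paper passes from the one-dimensional Hardy inequality to condition~\eqref{E:0''} by citing a necessary condition for such inequalities (a dual version of \cite[Lemma~1]{EGP}, see \cite[Lemma~2]{cianchi_JFA}), whereas you argue directly: if \eqref{E:0''} fails, then $\varrho^{-1+s/n}\notin L^{\widetilde A}(1,\infty)$, the resonance principle produces a single $g\in L^A_+$ with $T_s g\equiv+\infty$, and the truncations $g_N$, combined with the uniform seminorm bound and the Fatou property of $\overline Y$, give the contradiction. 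Both routes are correct; yours is more self-contained, at the price of spelling out the resonance and Fatou steps. One point to be careful about: the reverse estimate you state, namely $|u|_{s,A,\rn}\lesssim\|r^{1-s/n}(-\phi')\|_{L^A(0,\infty)}$ for \emph{every} compactly supported, non-increasing, locally absolutely continuous $\phi$, is strictly more general than what the proof of Lemma~\ref{P:S1} actually establishes — there the estimate $|u|_{s,A,\rn}\lesssim\|f\|_{L^A}$ is carried out only for $\phi=T_s f$ with $f$ non-increasing, which amounts to requiring $r^{1-s/n}(-\phi')$ to be non-increasing, and this monotonicity is used in an essential way in the chain~\eqref{nov201}. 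Your more general claim is plausible but would need its own proof. Fortunately you do not need the extra generality: since $\varrho^{-1+s/n}$ is decreasing on $(1,\infty)$, Hardy--Littlewood lets you replace the resonance function $g$ by a non-increasing one without losing $\int_1^\infty g\varrho^{-1+s/n}\,d\varrho=\infty$ or increasing $\|g\|_{L^A}$, after which the truncations $g_N$ are also non-increasing and the paper's version of the estimate applies verbatim. With that adjustment, the argument is complete.
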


\begin{example}\label{ex01}
Let $A$ be a Young function as in \eqref{dec250}. Assume that the parameters $p$, $p_0$, $\alpha$ and $\alpha_0$ fulfill conditions \eqref{dec251} and \eqref{dec252}. Theorem \ref{T:C2} then tells us that embedding  \eqref{E:41emb} and inequality \eqref{E:41} hold if
\begin{equation}\label{dec255}
A_{\frac ns}(t)\,\,  \text{is equivalent to}\,\, \begin{cases} t^{\frac {n{p_0}}{n-s{p_0}}} (\log \frac 1t)^{\frac {n\alpha_0}{n-s{p_0}}} & \quad \text{ if $1\leq {p_0}< \frac ns$ }
\\
e^{-t^{-\frac{n}{s(\alpha_0 +1)-n}}} & \quad  \text{if ${p_0}=\frac ns$ and $\alpha_0 > \frac ns -1$}
\end{cases} \quad \text{near zero,}
\end{equation}
and
\begin{equation}\label{dec256}
A_{\frac ns}(t) \,\, \text{is equivalent to} \,\, \begin{cases} t^{\frac {np}{n-sp}} (\log t)^{\frac {\alpha p}{n-sp}} & \quad  \text{ if $1\leq p< \frac ns$ }
\\
e^{t^{\frac{n}{n-(\alpha +1)s}}}&  \quad \text{if  $p=\frac ns$ and $\alpha < \frac ns -1$}
\\
e^{e^{t^{\frac n{n-s}}}} &  \quad \text{if  $p=\frac ns$ and $\alpha = \frac ns -1$}
\end{cases} \quad \text{near infinity.}
\end{equation}
Moreover, the target space in the resultant embedding and inequality is optimal among all Orlicz spaces.
\\
In the special case when
\begin{equation}\label{jan1}p={p_0}<\tfrac ns \quad \text{ and} \quad  \alpha=\alpha_0 =0,
\end{equation}
this recovers inequality \eqref{intro2} for the classical fractional space $W^{s,p}(\rn)$. In the borderline situation corresponding to
\begin{equation}\label{jan2}
p={p_0}=\tfrac ns, \quad \alpha =0 \quad \text{and} \quad \alpha_0 > \tfrac ns -1,
\end{equation}
a fractional embedding of Pohozhaev-Trudinger-Yudovich  type  \cite{Poh, Tru, Yu} is established -- see also the recent paper \cite{PaRu} in this connection.
\\
On the other hand, Theorem \ref{T:C1} provides us with the optimal embedding \eqref{E:30emb} and inequality \eqref{E:30},  with a Young function $\widehat A$ whose behaviour is described in \eqref{dec253} and \eqref{dec254}.
The specific choices \eqref{jan1} yield inequality \eqref{intro3} -- a fractional extension of results of \cite{Oneil, Peetre} -- since the Orlicz-Lorentz target space \eqref{E:30emb} coincides with the standard Lorentz space $L^{\frac {np}{n-p},p}(\rn)$ in this case. Also, when the parameters $p,{p_0},\alpha, \alpha_0$ are as in \eqref{jan2}, inequality  \eqref{E:30} takes the form of a fractional inequality of Brezis-Wainger-Hansson type \cite{BW, Han}.
\end{example}

Lemma \ref{P:S1} below is critical in the proof of the optimality of the target spaces in Theorems \ref{T:C2} and \ref{T:C1}, and in the proof of Proposition \ref{propindisp}.


\begin{lemma}
\label{P:S1}
Let $n\in \N$ and  $s \in (0,1)$.  Let $A$ be a Young function and let $Y(\rn)$ be a rearrangement-invariant space.  Assume that there exists a constant $C$ such that
\begin{equation}\label{E:S1}
\|u\|_{Y(\rn)} \leq C |u|_{s,A, \R^n}
\end{equation}
for every function $u \in V^{s,A}_d(\R^n)$. Then
there exists a constant $C'$ such that
\begin{equation}\label{E:S2}
	\bigg\|\int_{r}^{\infty}f(\varrho)\varrho^{-1+\frac{s}{n}}\,d\varrho \bigg\|_{\overline Y(0, \infty)}
		\leq C'\|f\|_{L^A(0, \infty)}
\end{equation}
for every   function $f\in L^A(0,\infty)$.
\end{lemma}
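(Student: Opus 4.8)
The plan is to prove \eqref{E:S2} by testing the assumed inequality \eqref{E:S1} on suitable radially symmetric functions. Replacing $f$ by $|f|$, we may assume $f\geq 0$, and we may also assume $\|f\|_{L^A(0,\infty)}<\infty$, since otherwise there is nothing to prove. Given such an $f$, set
\[
v(r)=\int_r^\infty f(\varrho)\varrho^{-1+\frac sn}\,d\varrho\qquad\text{for }r>0,
\]
which is nonnegative and nonincreasing, and define $u:\rn\to[0,\infty)$ by $u(x)=w(|x|)$ with $w(R)=v(\omega_n R^n)$. The point of this particular choice is that the distribution function of $u$ is $t\mapsto v^{-1}(t)$, so that $u^{*}(r)=v(r)$ for $r>0$; hence, by \eqref{B.3},
\[
\|u\|_{Y(\rn)}=\|u^{*}\|_{\overline Y(0,\infty)}=\Big\|\int_r^\infty f(\varrho)\varrho^{-1+\frac sn}\,d\varrho\Big\|_{\overline Y(0,\infty)}.
\]
Therefore, once we know that $u\in V^{s,A}_d(\rn)$, inequality \eqref{E:S1} yields \eqref{E:S2}, provided that $|u|_{s,A,\rn}$ can be bounded by a constant depending only on $n$ and $s$ times $\|f\|_{L^A(0,\infty)}$.

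To make this rigorous and to secure membership in $V^{s,A}_d(\rn)$, I would first establish \eqref{E:S2} for $f$ bounded and supported in a compact subinterval of $(0,\infty)$. For such $f$ the function $v$ is bounded, nonincreasing, locally Lipschitz on $(0,\infty)$, and vanishes for large $r$; hence $u$ is bounded, compactly supported and locally Lipschitz, so that $u\in V^{s,A}_d(\rn)$ (finiteness of $|u|_{s,A,\rn}$ follows by splitting the Gagliardo integral into the regions $|x-y|\le 1$ and $|x-y|>1$ and using a computation of the type \eqref{E:24}). Granting \eqref{E:S2} for such $f$, the general case follows by approximating an arbitrary nonnegative $f\in L^A(0,\infty)$ from below by $f_k=\min\{f,k\}\chi_{(1/k,k)}$: then $T_sf_k\uparrow T_sf$ pointwise, $\|f_k\|_{L^A(0,\infty)}\leq\|f\|_{L^A(0,\infty)}$, and one passes to the limit by the Fatou property of the rearrangement-invariant norm $\|\cdot\|_{\overline Y(0,\infty)}$.

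The core of the argument — and the step I expect to be the main obstacle — is the estimate
\[
|u|_{s,A,\rn}\leq C''(n,s)\,\|f\|_{L^A(0,\infty)}
\]
for the radial function $u(x)=w(|x|)$, $w(R)=v(\omega_n R^n)$. A direct differentiation gives $r^{1-s}|w'(r)|=c_{n,s}\,f(\omega_n r^n)$, so that, after the change of variable $\varrho=\omega_n r^n$, the weighted Luxemburg norm $\|r^{1-s}w'(r)\|_{L^A((0,\infty),\,r^{n-1}dr)}$ is comparable, with constants depending only on $n$ and $s$, to $\|f\|_{L^A(0,\infty)}$. Thus the desired bound reduces to the \emph{radial} inequality: with $\lambda$ a suitable constant multiple of $\|r^{1-s}w'(r)\|_{L^A((0,\infty),\,r^{n-1}dr)}$,
\[
\iint_{\rn\times\rn} A\!\left(\frac{|u(x)-u(y)|}{\lambda|x-y|^{s}}\right)\frac{dx\,dy}{|x-y|^{n}}\leq 1 .
\]
To prove this I would pass to polar coordinates $r_1=|x|$, $r_2=|y|$, perform the angular integration of $|x-y|^{-n}$ over $\S^{n-1}\times\S^{n-1}$ (reducing the two–sphere integral, via the substitution $\varrho=|x-y|\in[|r_1-r_2|,\,r_1+r_2]$, to an explicit one–dimensional kernel in $r_1,r_2$), use $|u(x)-u(y)|=\int_{r_2}^{r_1}|w'(\varrho)|\,d\varrho$ for $r_1>r_2$ together with Jensen's inequality to bring $A$ inside the radial integral, and finally reduce the resulting weighted double integral to a one–dimensional Orlicz inequality of the type handled in Section \ref{1d} (in the spirit of Lemma \ref{L:1}). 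Since this is merely an upper bound for the Gagliardo seminorm, no growth restriction on $A$ is needed, and careful bookkeeping of the constants shows that $C''$ depends only on $n$ and $s$. Combining this estimate with \eqref{E:S1} and the identity for $\|u\|_{Y(\rn)}$ above completes the proof.
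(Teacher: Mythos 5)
Your overall plan coincides with the paper's: test \eqref{E:S1} on the radially decreasing function $u(x)=\int_{\omega_n|x|^n}^\infty f(\varrho)\varrho^{-1+\frac sn}\,d\varrho$, observe that $u^{*}(r)=\int_r^\infty f(\varrho)\varrho^{-1+\frac sn}\,d\varrho$ and $\|u\|_{Y(\rn)}=\|u^*\|_{\overline{Y}(0,\infty)}$, and reduce everything to the seminorm estimate $|u|_{s,A,\rn}\lesssim\|f\|_{L^A(0,\infty)}$. The truncation/Fatou argument to legitimize membership of $u$ in $V^{s,A}_d(\rn)$ is also fine. Where the two treatments diverge is in how that seminorm estimate is actually proved, and there your sketch has real gaps.

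The paper does not merely replace $f$ by $|f|$: it invokes \cite[Theorem~1.1]{pesa} to reduce to \emph{nonincreasing} $f$, and that monotonicity is used in an essential way on the near-diagonal region $|x|\le|y|\le2|x|$ (chain \eqref{nov201}), where the pointwise bound $f(r)\le f(\omega_n|x|^n)$ for $r\ge\omega_n|x|^n$ makes the double integral trivially summable. Your sketch has no access to that bound. The route you propose instead --- polar coordinates, angular integration of the kernel, Jensen applied to $\int_{r_2}^{r_1}|w'(\sigma)|\,d\sigma$, and then ``a one-dimensional Orlicz inequality of the type handled in Section~\ref{1d} (in the spirit of Lemma~\ref{L:1})'' --- does not close as described. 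After Jensen and Fubini what one actually faces is a boundedness question for an explicit kernel in $(r_1,r_2,\varrho,\sigma)$, not the Hardy operator to which Lemma~\ref{L:1} refers, so the appeal to Section~\ref{1d} is off target. Moreover, a naive Jensen step (normalizing $\int_{r_2}^{r_1}|w'|$ by $r_1-r_2$ and pushing $A$ inside) produces a nonintegrable $|r_1-r_2|^{-1}$ singularity near the diagonal; to tame it one must in addition exploit convexity via $A(\lambda t)\le\lambda A(t)$ for $\lambda\le1$, extract the small coefficient $\lambda\approx|x|^{s-1}(|y|-|x|)/|x-y|^s$, and then bound the resulting kernel $\iint_{\{|x|\le|y|\le2|x|,\ \omega_n|x|^n<r<\omega_n|y|^n\}}|x-y|^{-n-s}\,dx\,dy\lesssim r^{1-\frac sn}$ by a direct Fubini computation. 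With that extra ingredient the estimate does go through for all $f\ge0$ (so the Pe\v sa reduction is in principle avoidable), but this convexity factor and kernel bound are precisely the missing substance of your sketch; as written, the argument would stall at the point where you try to invoke Lemma~\ref{L:1}. The paper's reduction to nonincreasing $f$ followed by the two-region estimate is the clean way to finish.
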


\begin{remark}\label{suff}
{\rm Under the assumption that the function $A$ fulfills conditions \eqref{E:0'}--\eqref{E:0''}, a converse of Lemma \ref{P:S1} also holds. Namely,  inequality \eqref{E:S2} is a sufficient condition for inequality \eqref{E:S1}. To verify this assertion, recall from Theorem B that
the space $L(\widehat A, \frac ns)(0, \infty)$ is optimal in inequality \eqref{E:30}. Thereby,   if inequality \eqref{E:S2} holds, then  $L(\widehat A, \frac ns)(0, \infty) \to Y(0, \infty)$. Hence, $L(\widehat A, \frac ns)(\rn) \to Y(\rn)$ as well, and  inequality \eqref{E:S1} is thus a consequence of  \eqref{E:30}.
}
\end{remark}

\begin{proof}[Proof of Lemma \ref{P:S1}]
 In what follows, the relation $\lesssim$ between two expressions will be used to denote that the former is bounded by the latter, up to a positive constant depending only on $n$ and $s$. The relation $\approx$ means that the two expressions are bounded by each other,  up to positive constants depending only on $n$ and $s$.
\\ Assume that inequality \eqref{E:S1} holds.
Owing to \cite[Theorem 1.1]{pesa}, inequality \eqref{E:S2} holds   for every  function $f\in \mathcal M_+(0,\infty)$ if and only if it just holds for every non-increasing function $f: (0,\infty) \to [0, \infty)$. It thus suffices to prove inequality \eqref{E:S2} for this class of functions $f$. Given any  $f$ of this kind, define the function $u: \rn \to [0,\infty)$ as
\begin{equation*}
	u(x) = \int_{\omega_n|x|^n}^{\infty}f(r)r^{-1+\frac{s}{n}}\,dr \quad \text{for $x\in\R^n$}.
\end{equation*}
Let $x,y\in\R^n$. Suppose first that  $|y|\ge2|x|$. Then
\begin{align*}
	\frac{|u(x)-u(y)|}{|x-y|^{s}}
		& = \frac{\left|\int_{\omega_n|x|^n}^{\infty}f(r)r^{-1+\frac{s}{n}}\,dr-\int_{\omega_n|y|^n}^{\infty}f(r)r^{-1+\frac{s}{n}}\,dr\right|}{|x-y|^{s}}
			\\
		& = \frac{\int_{\omega_n|x|^n}^{\omega_n|y|^n}f(r)r^{-1+\frac{s}{n}}\,dr}{|x-y|^{s}}
			\lesssim \frac{\int_{\omega_n|x|^n}^{\omega_n|y|^n}f(r)r^{-1+\frac{s}{n}}\,dr}{|y|^{s}}.
\end{align*}
Thus,
\begin{align}\label{nov200}
	\int_{\R^n}\int_{|y|\ge2|x|}
		& A\left(\frac{|u(x)-u(y)|}{|x-y|^{s}}\right)\frac{dx\,dy}{|x-y|^n}
			\lesssim \int_{\R^n}\int_{\{|y|\ge2|x|\}}
			A\left(C\frac{\int_{\omega_n|x|^n}^{\omega_n|y|^n}f(r)r^{-1+\frac{s}{n}}\,dr}{|y|^{s}}\right)\frac{dy}{|y|^n}\,dx
				\\  \nonumber
		& \lesssim \int_{\R^n}\int_{\{|y|\ge2|x|\}}
			\int_{\omega_n|x|^n}^{\omega_n|y|^n}A\left(C'f(r)\right)r^{-1+\frac{s}{n}}\,dr\frac{dy}{|y|^{n+s}}\,dx
				\\  \nonumber
		& \lesssim \int_{0}^{\infty}A\left(C'f(r)\right)r^{-1+\frac{s}{n}}
				\int_{\omega_n|x|^n<r\}}\int_{\{|y|\ge2|x|\}}\frac{dy}{|y|^{n+s}}\,dx\,dr
				\\  \nonumber
		& \lesssim \int_{0}^{\infty}A\left(C'f(r)\right)r^{-1+\frac{s}{n}}
				\int_{\{\omega_n|x|^n<r\}}|x|^{-s}\,dx\,dr
				\\  \nonumber
		& \lesssim \int_{0}^{\infty}A\left(C'f(r)\right)r^{-1+\frac{s}{n}}r^{1-\frac{s}{n}}
				\,dr
				 \approx \int_{0}^{\infty}A\left(C'f(r)\right)\,dr,
\end{align}
for some positive constants $C$ and $C'$ depending on $n$ and $s$.
In particular, the second inequality in chain \eqref{nov200} relies upon Jensen's inequality.
\\ Now assume that $|x|\le|y|\le2|x|$. Thereby,
\begin{align*}
	\frac{|u(x)-u(y)|}{|x-y|^{s}}
		& \le \frac{\int_{\omega_n|x|^n}^{\omega_n|y|^n}f(r)r^{-1+\frac{s}{n}}\,dr}{|x-y|^{s}}
			\lesssim f(\omega_n|x|^n)\frac{|y|^s-|x|^s}{|x-y|^{s}}
				\lesssim f\left(\frac{\omega_n}{2^n}|y|^n\right)\frac{|y|^s-|x|^s}{|x-y|^{s}}
					\\
		& \lesssim f\left(\frac{\omega_n}{2^n}|y|^n\right)\frac{|y|-|x|}{|y|^{1-s}|x-y|^{s}}
			\lesssim f\left(\frac{\omega_n}{2^n}|y|^n\right)\frac{|x-y|}{|y|^{1-s}|x-y|^{s}}
				\lesssim f\left(\frac{\omega_n}{2^n}|y|^n\right)|x-y|^{1-s}|y|^{s-1}.
\end{align*}
Note that
\begin{equation*}
	|x-y|^{1-s}|y|^{s-1} \le \left(|x|^{1-s}+|y|^{1-s}\right)|y|^{s-1} \le 2.
\end{equation*}
Thus, there exists a constant $C=C(n,s)$ such that
\begin{align*}
	A\left(\frac{|u(x)-u(y)|}{|x-y|^s}\right)
		& \leq A\left(2Cf\left(\frac{\omega_n}{2^n}|y|^n\right)\frac{|x-y|^{1-s}|y|^{s-1}}{2}\right)
			\lesssim |x-y|^{1-s}|y|^{s-1}A\left(2Cf\left(\frac{\omega_n}{2^n}|y|^n\right)\right),
\end{align*}
where the last inequality holds thanks to property \eqref{kt}.
Therefore,
\begin{align}\label{nov201}
	\int_{\R^n}\int_{\{|x|\le|y|\le2|x|\}}
		& A\left(\frac{|u(x)-u(y)|}{|x-y|^{s}}\right)\frac{dx\,dy}{|x-y|^n}
			\\ \nonumber
		& \lesssim \int_{\R^n}\int_{\{|x|\le|y|\le2|x|\}} A\left(2Cf\left(\frac{\omega_n}{2^n}|y|^n\right)\right)|y|^{s-1}|x-y|^{1-s-n}\,dy\,dx
			\\ \nonumber
		& \lesssim \int_{\R^n}A\left(2Cf\left(\frac{\omega_n}{2^n}|y|^n\right)\right)|y|^{s-1}\int_{\{|x|\le|y|\}} |x-y|^{1-s-n}\,dx\,dy
			\\ \nonumber
		& \lesssim \int_{\R^n}A\left(2Cf\left(\frac{\omega_n}{2^n}|y|^n\right)\right)
			|y|^{s-1}\int_{\{x\in\R^n:|x-y|\le 2|y|\}} |x-y|^{1-s-n}\,dx\,dy
			\\ \nonumber
		& \lesssim \int_{\R^n}A\left(2Cf\left(\frac{\omega_n}{2^n}|y|^n\right)\right)
			|y|^{s-1}|y|^{1-s}\,dy
			= \int_{\R^n}A\left(2Cf\left(\frac{\omega_n}{2^n}|y|^n\right)\right)\,dy
			\\ \nonumber
		& = \int_{0}^{\infty}A\left(2Cf\left(\frac{r}{2^n}\right)\right)\,dr
			\approx \int_{0}^{\infty}A\left(2Cf\left(r\right)\right)\,dr.
\end{align}
Coupling inequality \eqref{nov200} with  \eqref{nov201} yields
\begin{equation*}
	\int_{\R^n}\int_{\{|y|\ge|x|\}}
		A\left(\frac{|u(x)-u(y)|}{|x-y|^{s}}\right)\frac{dx\,dy}{|x-y|^n}
			\lesssim \int_{0}^{\infty}A\left(Cf\left(r\right)\right)\,dr
\end{equation*}
for some positive constant $C=C(n,s)$.
Exchanging the roles of $x$ and $y$ tells us that
\begin{equation*}
	\int_{\R^n}\int_{\{|x|\ge|y|\}}
		A\left(\frac{|u(x)-u(y)|}{|x-y|^{s}}\right)\frac{dx\,dy}{|x-y|^n}
			\lesssim \int_{0}^{\infty}A\left(Cf\left(r\right)\right)\,dr.
\end{equation*}
Altogether,
\begin{equation}\label{july1}
	\int_{\R^n}\int_{\R^n}
		A\left(\frac{|u(x)-u(y)|}{|x-y|^{s}}\right)\frac{dx\,dy}{|x-y|^n}
			\lesssim \int_{0}^{\infty}A\left(Cf\left(r\right)\right)\,dr
\end{equation}
for some constant $C=C(n,s)$.
On replacing $f$ by $f/\lambda$ for any $\lambda >0$ in inequality \eqref{july1} one deduces  that
\begin{equation}\label{july2}
|u|_{s,A, \rn}
			\leq C \|f\|_{L^A(0, \infty)}
\end{equation}
for some positive constant $C=C(n,s)$.
On the other hand,
\begin{equation}\label{nov202}
	u^*(r)=\int_{r}^{\infty}f(\varrho)\varrho^{-1+\frac{s}{n}}\,d\varrho \quad \text{for $r>0$,}
\end{equation}
and
\begin{equation}\label{nov203}
	\|u\|_{Y(\rn)} =  \|u^*\|_{\overline Y(0,\infty)}.
\end{equation}
Inequality \eqref{E:S2} follows from equations \eqref{july2}--\eqref{nov203}.
\end{proof}

\begin{proof}[Proof of  Theorem~\ref{T:C1}]
Inequality~\eqref{E:28'} ensures that
\begin{equation}\label{E:31}
	\int_{\R^n}\int_{\R^n} A\left(\frac{|u(x)-u(y)|}{|x-y|^{s}}\right)\frac{dx\,dy}{|x-y|^n}
		\ge \int_{\R^n}\int_{\R^n} A\left(\frac{|u^{\bigstar}(x)-u^{\bigstar}(y)|}{|x-y|^{s}}\right)\frac{dx\,dy}{|x-y|^n}.
\end{equation}
Since
\begin{equation}\label{E:32}
	\int_{\R^n}{\widehat A}\left(\frac{|u^{\bigstar}(x)|}{|x|^s}\right)\,dx
		= \int_{0}^{\infty}{\widehat A}\left(\frac{\omega_n^{\frac sn}|u^{\ast}(r)|}{r^{\frac{s}{n}}}\right)\,dr,
\end{equation}
an application of inequality ~\eqref{E:3} to the function $u^{\bigstar}$ yields
\begin{equation}\label{E:33}
	\int_{0}^{\infty}{\widehat A}\left(C\frac{|u^{\ast}(r)|}{ r^{\frac{s}{n}}}\right)\,dr
		\le   \int_{\R^n}\int_{\R^n} A\left(\frac{|u^{\bigstar}(x)-u^{\bigstar}(y)|}{|x-y|^{s}}\right)\frac{dx\,dy}{|x-y|^n}
\end{equation}
for a suitable positive constant $C=C(n,s)$.
From inequalities \eqref{E:31} and \eqref{E:33} we deduce that
\begin{equation}\label{E:33'}
	\int_{0}^{\infty}{\widehat A}\left(C\frac{|u^{\ast}(r)|}{r^{\frac{s}{n}}}\right)\,dr
		\le   \int_{\R^n}\int_{\R^n} A\left(\frac{|u (x)-u(y)|}{|x-y|^{s}}\right)\frac{dx\,dy}{|x-y|^n}.
\end{equation}
Inequality~\eqref{E:33'} is a  version of~\eqref{E:30} in integral form. Inequality~\eqref{E:30} follows on applying~\eqref{E:33'} with $u$ replaced by $u/\lambda$ for any $\lambda >0$.
\\ It remains to prove that
 the target space  $L({\widehat A},\frac{n}{s})(\R^n)$ is optimal in  inequality \eqref{E:30}. To this purpose, assume that $Y(\rn)$ is a rearrangement-invariant space which renders inequality \eqref{E:30} true. Thus, by Proposition \ref{P:S1}, inequality \eqref{E:S2} holds.  The conclusion then follows via Theorem B.
\end{proof}

\begin{proof}[Proof of Theorem~\ref{T:C2}]
Inequality  \eqref{E:41} can be deduced via Theorem~\ref{T:C1} and Proposition~\ref{P:1}.
The optimality of the space $L^{{A_{\frac{n}{s}}}}(\rn)$ is a consequence  of Proposition \ref{P:S1} and of Theorem A.
\end{proof}

\begin{proof}[Proof of Proposition \ref{propindisp}]
Assume that embedding \eqref{aug349} holds for some rearrangement-invariant space $Y(\rn)$, namely that inequality \eqref{E:S1} holds. Then, by Proposition \ref{P:S1}, inequality
\eqref{E:S2}  holds as well. A necessary condition for one-dimensional Hardy type inequalities -- a dual version of \cite[Lemma~1]{EGP}, see e.g. \cite[Lemma 2]{cianchi_JFA} -- implies that
\begin{equation}\label{aug350}
\sup _{r>0}\|1\|_{\overline{Y}(0,r)}\|\varrho ^{-1+\frac sn}\|_{L^{\widetilde A}(r, \infty)} <\infty.
\end{equation}
Computations show that the second norm on the left-hand side of equation \eqref{aug350} is finite for any $r>0$ if and only if
\begin{equation}\label{aug351}
\int_0 \frac{\widetilde A(t)}{t^{1+\frac n{n-s}}}\, dt < \infty,
\end{equation}
see e.g. \cite[Lemma 3]{cianchi_ASNS}.
Condition \eqref{aug351} is in its turn equivalent to \eqref{aug349} -- see \cite[Lemma 2.3]{cianchi-ibero}.
\end{proof}

\section{Sobolev embeddings: case $s>1$}\label{s>1}

The results of the previous section are extended here to any fractional-order power $s\in (0, n)$. The optimal Orlicz target space for embeddings of the space $V^{s,A}_d(\R^n)$ is exhibited in the following theorem.

\begin{theorem}{\rm{\bf [Higher-order optimal Orlicz target space]}}\label{T:C2h}
Let $n\in\N$ and $s\in (0, n) \setminus \N$.  Assume that $A$ is a~Young function  fulfilling conditions \eqref{E:0'} and \eqref{E:0''}, and let
 $A_{\frac{n}{s}}$ be the Young function defined as  in \eqref{Ans}--\eqref{H}.
Then
\begin{equation}\label{E:41hemb}
V^{s,A}_d(\R^n) \to L^{A_{\frac{n}{s}}}(\R^n),
\end{equation}
and
there exists a~constant $C$ such that
\begin{equation}\label{E:41h}
	\|u\|_{L^{A_{\frac{n}{s}}}(\R^n)}
		\le C \big|\nabla ^{[s]}u\big|_{\{s\},A, \R^n}
\end{equation}
for every function $u \in V^{s,A}_d(\R^n)$.
%
%
Moreover, $L^{A_{\frac{n}{s}}}(\R^n)$ is the optimal target space in inequality \eqref{E:41h} among all Orlicz spaces.
\end{theorem}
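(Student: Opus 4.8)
Write $s=[s]+\{s\}$ and set $m=[s]\ge 1$, $\sigma=\{s\}\in(0,1)$, so that the seminorm governing \eqref{E:41h} is $|\nabla^{m}u|_{\sigma,A,\R^n}$. The plan is to treat the embedding \eqref{E:41hemb}--\eqref{E:41h} and the optimality among Orlicz spaces separately, in both cases reducing to the already settled case $\sigma\in(0,1)$ (Theorems~\ref{T:C2} and~\ref{T:C1}) combined with the integer-order Sobolev theory of order $m$. As in the proof of Theorem~\ref{T:C2}, for the positive part I would first prove the stronger estimate with the rearrangement-invariant target $L(\widehat A,\tfrac ns)(\R^n)$ -- the higher-order analogue of Theorem~\ref{T:C1} -- and then descend to the Orlicz target by Proposition~\ref{P:1}.

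For the positive part, since $u\in V^{s,A}_d(\R^n)$ the field $\nabla^{m}u$ lies in $V^{\sigma,A}(\R^n)$ and $|\{|\nabla^{k}u|>t\}|<\infty$ for $k=0,\dots,m$ and all $t>0$. In particular the scalar function $|\nabla^{m}u|$ belongs to $V^{\sigma,A}_d(\R^n)$, because $\big||\nabla^{m}u|(x)-|\nabla^{m}u|(y)\big|\le\|\nabla^{m}u(x)-\nabla^{m}u(y)\|$, so Theorem~\ref{T:C1} applied to it gives $|\nabla^{m}u|\in X(\R^n)$ with $\||\nabla^{m}u|\|_{X(\R^n)}\le C\,|\nabla^{m}u|_{\sigma,A,\R^n}$, where $X:=L(\widehat A,\tfrac n\sigma)$. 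Since the lower-order gradients $u,\nabla u,\dots,\nabla^{m-1}u$ decay near infinity, the standard reduction of $m$-th order Sobolev embeddings to one-dimensional Hardy inequalities -- which brings the embedding to the boundedness of the operator $T_{m}$, cf. the framework of Theorem~E and the references recalled there -- yields $u\in X_{m}(\R^n)$ with $\|u\|_{X_{m}(\R^n)}\le C\,\||\nabla^{m}u|\|_{X(\R^n)}$, where $X_{m}$ is the optimal target for $T_{m}$ with domain $X$. Now Theorem~B identifies $\overline X(0,\infty)$ with $(L^{A})_{\sigma}(0,\infty)$, and Theorem~E(ii) then gives $X_{m}=\big((L^{A})_{\sigma}\big)_{m}=(L^{A})_{\sigma+m}=(L^{A})_{s}$, which by Theorem~B again equals $L(\widehat A,\tfrac ns)$. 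Hence $V^{s,A}_d(\R^n)\to L(\widehat A,\tfrac ns)(\R^n)$ with norm controlled by $C\,|\nabla^{m}u|_{\sigma,A,\R^n}$, and Proposition~\ref{P:1} yields \eqref{E:41hemb} and \eqref{E:41h}.

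For the optimality among Orlicz spaces, assume $V^{s,A}_d(\R^n)\to L^{B}(\R^n)$ for a Young function $B$. I would prove a higher-order counterpart of Lemma~\ref{P:S1}: to each nonincreasing $f\in L^{A}(0,\infty)$ one attaches a function $u$ on $\R^n$, built by integrating the order-$\sigma$ test function of Lemma~\ref{P:S1} (associated with $f$) a further $m$ times in a radial variable, for which $|\nabla^{m}u|_{\sigma,A,\R^n}\lesssim\|f\|_{L^{A}(0,\infty)}$ while $u^{*}(r)\gtrsim T_{s}f(r)$. Inserting such $u$ into the assumed embedding and passing to decreasing rearrangements forces $\|T_{s}f\|_{L^{B}(0,\infty)}\lesssim\|f\|_{L^{A}(0,\infty)}$ for every nonincreasing $f$, hence -- by the monotonicity reduction of \cite[Theorem~1.1]{pesa} -- for every $f\in L^{A}(0,\infty)$. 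The optimality assertion in Theorem~A then forces $L^{A_{n/s}}(0,\infty)\to L^{B}(0,\infty)$, and therefore $L^{A_{n/s}}(\R^n)\to L^{B}(\R^n)$, which is the claimed optimality.

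The main obstacle is the radial test-function construction in the optimality step: after integrating $m$ times, the tensor $\nabla^{m}u$ contains, besides its leading radial entry, lower-order contributions of the form $U^{(k)}(\varrho)\,\varrho^{-(m-k)}$ times bounded angular factors, and one has to verify that none of these inflates the order-$\sigma$ Gagliardo seminorm while $u$ retains a decreasing rearrangement comparable with $T_{s}f$. The remainder is bookkeeping: checking the decay hypotheses so that Theorem~\ref{T:C1} applies to $\nabla^{m}u$ and the $m$-th order Sobolev embedding applies to $u$ (now over the rearrangement-invariant domain $L(\widehat A,\tfrac n\sigma)$), and tracking the dependence of the constants on $n$ and $s$.
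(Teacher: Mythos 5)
Your proposal follows essentially the same route as the paper: first establish the higher-order rearrangement-invariant embedding (Theorem~\ref{T:C1h}) by applying Theorem~\ref{T:C1} to $\nabla^{[s]}u$ and then reducing the integer-order part to the Hardy operator $T_{[s]}$ and iterating via Theorem~E(ii), and descend to the Orlicz target through Proposition~\ref{P:1}; for optimality, build a higher-order analogue of Lemma~\ref{P:S1} (the paper's Lemma~\ref{T:reduction_principle}) with the iterated radial test function, and conclude from Theorem~A. The ``main obstacle'' you flag -- controlling the angular and lower-order pieces of $\nabla^{[s]}u$ -- is exactly what the paper resolves with Lemma~\ref{L:lemma} and the explicit estimates in the proof of Lemma~\ref{T:reduction_principle}.
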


The next result enhances Theorem \ref{T:C2h} and provides us with the optimal rearrangement-invariant target space for embeddings of $V^{s,A}_d(\R^n)$.

\begin{theorem}{\rm{\bf [Higher-order optimal rearrangement-invariant target space]}}\label{T:C1h}
Let $n$, $s$ and $A$ be as in Theorem~\ref{T:C2h}.
Let ${\widehat A}$ be the Young function defined as in \eqref{E:1}--\eqref{E:2}, and let $L({\widehat A},\frac{n}{s})(\R^n)$ be the Orlicz-Lorentz~space equipped with the norm given by \eqref{E:29}.
Then
\begin{equation}\label{E:30hemb}
V^{s,A}_d(\R^n) \to L({\widehat A},\tfrac{n}{s})(\R^n),
\end{equation}
and
 there exists a~constant $C$ such that
\begin{equation}\label{E:30h}
	\|u\|_{L({\widehat A},\frac{n}{s})(\R^n)}
		\le C \big|\nabla ^{[s]}u\big|_{\{s\},A, \R^n}
\end{equation}
for every function $u \in V^{s,A}_d(\R^n)$
Moreover, $L({\widehat A},\frac{n}{s})(\R^n)$ is the optimal target space in inequality \eqref{E:30h} among all rearrangement-invariant spaces.
\end{theorem}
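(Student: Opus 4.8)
The plan is to reduce Theorem~\ref{T:C1h} to the already-established case $s\in(0,1)$ (Theorem~\ref{T:C1}) by splitting the smoothness as $s=[s]+\{s\}$ and composing with the homogeneous integer-order rearrangement-invariant Sobolev embedding of order $[s]$, recovering the optimal target via the iteration property of Theorem~E(ii). We may assume $s>1$, since for $s\in(0,1)$ the statement is exactly Theorem~\ref{T:C1}; thus $[s]\ge1$ and $\{s\}\in(0,1)$. First I would note that, for $u\in V^{s,A}_d(\R^n)$, definition~\eqref{nov100higher} forces every weak derivative of $u$ of order $[s]$ to decay near infinity, so that $\nabla^{[s]}u\in V^{\{s\},A}_d(\R^n)$; also, $A$ still obeys~\eqref{E:0'} and~\eqref{E:0''} with $s$ replaced by the smaller exponent $\{s\}$, because $t\mapsto(t/A(t))^{s/(n-s)}$ is non-increasing --- hence it is either bounded (near infinity, resp.\ near $0$), in which case the conditions are automatic for every exponent, or eventually $\le1$ near infinity (resp.\ $\ge1$ near $0$), where lowering the exponent to $\{s\}/(n-\{s\})$ only enlarges it. Theorem~\ref{T:C1} applied to the components of $\nabla^{[s]}u$ then yields
\begin{equation*}
	\big\||\nabla^{[s]}u|\big\|_{L(B,n/\{s\})(\R^n)}\le C\,\big|\nabla^{[s]}u\big|_{\{s\},A,\R^n},
\end{equation*}
where $B$ is the Young function built from~\eqref{E:1}--\eqref{E:2} relative to the ratio $n/\{s\}$.

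Next I would invoke the classical homogeneous integer-order rearrangement-invariant Sobolev embedding of order $[s]$ with domain $L(B,n/\{s\})(\R^n)$: since $u\in W^{[s],1}_{\mathrm{loc}}(\R^n)$ has all derivatives up to order $[s]$ decaying and $|\nabla^{[s]}u|\in L(B,n/\{s\})(\R^n)$, it lies in $\big(L(B,n/\{s\})\big)_{[s]}(\R^n)$ with the associated norm bound; this reduces, through the P\'olya--Szeg\H{o} principle and the pointwise rearrangement inequality for functions with weak derivatives, to the one-dimensional Hardy inequality of Theorem~E applied $[s]$ times (compare \cite{cianchi-ibero,EMMP}), the nontriviality hypothesis~\eqref{E:nontriv} holding at each stage for the same reason --- condition~\eqref{E:0''}, equivalently~\eqref{aug310} for $B$ and $n/\{s\}$ --- as in the paragraph preceding Theorem~B. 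Composing the two estimates gives $\|u\|_{(L(B,n/\{s\}))_{[s]}(\R^n)}\le C\,\big|\nabla^{[s]}u\big|_{\{s\},A,\R^n}$, and it remains only to identify the target: by Theorem~B, $L(B,n/\{s\})$ is the optimal target $X_{\{s\}}$ for $T_{\{s\}}$ with $X=L^A$ (in the notation of Theorem~E); applying $(\,\cdot\,)_{[s]}$ together with the iteration identity~\eqref{E:iteration} of Theorem~E(ii) --- legitimate since $\{s\}+[s]=s<n$ --- gives $\big(X_{\{s\}}\big)_{[s]}=X_s$; and Theorem~B once more, now relative to the ratio $n/s$, identifies $X_s$ with $L(\widehat A,n/s)$, up to equivalent norms. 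This establishes embedding~\eqref{E:30hemb} and inequality~\eqref{E:30h}.

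For the optimality I would take any rearrangement-invariant space $Y(\R^n)$ with $V^{s,A}_d(\R^n)\to Y(\R^n)$ and, by the optimality part of Theorem~B together with the reduction of \cite[Theorem~1.1]{pesa} to non-increasing functions, reduce the desired embedding $L(\widehat A,n/s)(\R^n)\to Y(\R^n)$ to proving the one-dimensional Hardy inequality
\begin{equation*}
	\bigg\|\int_r^\infty f(\varrho)\varrho^{-1+\frac sn}\,d\varrho\bigg\|_{\overline Y(0,\infty)}\le C\|f\|_{L^A(0,\infty)}
\end{equation*}
for every non-increasing $f\in L^A(0,\infty)$. To get this I would test $V^{s,A}_d(\R^n)\to Y(\R^n)$ with a radial function built from $f$ in two steps: first form $w(x)=\int_{\omega_n|x|^n}^\infty f(r)r^{-1+\frac{\{s\}}n}\,dr$, for which the computation in the proof of Lemma~\ref{P:S1}, carried out with exponent $\{s\}/n$, gives $|w|_{\{s\},A,\R^n}\lesssim\|f\|_{L^A(0,\infty)}$ and $w^*(r)=\int_r^\infty f(\varrho)\varrho^{-1+\frac{\{s\}}n}\,d\varrho$; then let $u$ be the radial function whose radial profile is an $[s]$-fold antiderivative, vanishing at infinity, of the radial profile of $w$, so that $u$ decays near infinity, $u^{\bigstar}=u$, $u^*(r)\ge c\int_r^\infty f(\varrho)\varrho^{-1+\frac sn}\,d\varrho$ by a Fubini and one-dimensional Hardy computation, and $\nabla^{[s]}u$ is comparable to $w$ up to radial terms of lower homogeneity with bounded angular factors. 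Since $\|u\|_{Y(\R^n)}=\|u^*\|_{\overline Y(0,\infty)}$, the inequality follows once the bound $\big|\nabla^{[s]}u\big|_{\{s\},A,\R^n}\lesssim\|f\|_{L^A(0,\infty)}$ is established.

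The hard part will be precisely this bound. Differentiating the $[s]$-fold antiderivative $[s]$ times produces, besides the principal term $\approx w$, several radial terms of different homogeneities multiplied by bounded angular factors $x^{\alpha}/|x|^{[s]}$; the principal term is handled by the argument of Lemma~\ref{P:S1}, whereas the remaining terms --- together with the oscillation of the angular factors in the fractional difference quotients --- have to be absorbed by the sharp one-dimensional Orlicz Hardy inequalities of Section~\ref{1d} (Lemmas~\ref{L:1} and~\ref{L:2}), using only the monotonicity of $f$. Everything else --- the transfer of~\eqref{E:0'}--\eqref{E:0''} to the exponent $\{s\}$, the verification of~\eqref{E:nontriv}, and the iteration bookkeeping --- is routine once this estimate is in place.
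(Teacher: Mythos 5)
Your plan for the inequality \eqref{E:30h} itself coincides with the paper's: apply the order-$\{s\}$ result (Theorem~\ref{T:C1}) to $\nabla^{[s]}u$, compose with an integer-order reduction principle of order $[s]$ (the paper invokes \cite[Theorem~3.3]{Mih} for this step), and then use the iteration identity of Theorem~E(ii) and Theorem~B to identify the composite optimal target $(X_{\{s\}})_{[s]}$ with $L(\widehat A,\tfrac ns)$. Your preliminary observation that \eqref{E:0'}--\eqref{E:0''} transfer from $s$ to $\{s\}$ is also correct. So the upper bound is fine and essentially the paper's route.

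The optimality half has a genuine gap. You correctly reduce to showing the one-dimensional Hardy inequality
$\bigl\|\int_r^\infty f(\varrho)\varrho^{-1+\frac sn}\,d\varrho\bigr\|_{\overline Y(0,\infty)}\lesssim\|f\|_{L^A(0,\infty)}$
for non-increasing $f$, via a radial test function, but the crucial bound $\big|\nabla^{[s]}u\big|_{\{s\},A,\R^n}\lesssim\|f\|_{L^A(0,\infty)}$ is left unproved --- you flag it yourself as ``the hard part'' --- and the tools you point to are not the right ones. Lemmas~\ref{L:1} and~\ref{L:2} bound integral means of $\widehat A$ or $A$ against one-dimensional Hardy-type operators; they do not control the double-integral Gagliardo seminorm of a radial function whose $[s]$-th derivatives carry non-radial factors $x_{\alpha_1}\cdots x_{\alpha_i}|x|^{jn-m-i}$. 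What the paper's Lemma~\ref{T:reduction_principle} actually uses for this is the algebraic Lemma~\ref{L:lemma}, which gives the pointwise bound
$\bigl|x_{\alpha_1}\cdots x_{\alpha_i}|x|^\beta - y_{\alpha_1}\cdots y_{\alpha_i}|y|^\beta\bigr|\lesssim|x-y|\,|x|^{\beta+i-1}$
for $|x|\le|y|\le2|x|$, together with the explicit formula \eqref{test} for the test function (not an $[s]$-fold antiderivative of your $w$, but a nested integral with the weight $r_{m+1}^{-m-1+s/n}$), Jensen's inequality, the splitting $|y|\ge2|x|$ versus $|x|\le|y|\le2|x|$, and monotonicity of $f$. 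Without such a device for the angular oscillation, your argument does not close. Moreover, your proposed $u$ (an $[s]$-fold antiderivative of $w(x)=\int_{\omega_n|x|^n}^\infty f(r)r^{-1+\{s\}/n}\,dr$) is not the paper's test function and it is unverified that it even satisfies $u^*(r)\gtrsim\int_r^\infty f(\varrho)\varrho^{-1+s/n}\,d\varrho$ and the seminorm bound simultaneously; the paper's \eqref{test} is chosen precisely so that both \eqref{nov207} and \eqref{E:upper} can be established by computation.
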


\begin{remark}\label{indsiph}
{\rm Assumption  \eqref{E:0''} on the Young function $A$ in Theorems \ref{T:C2h} and \ref{T:C1h} is  necessary   for an embedding
of the form
$$ V^{s,A}_d(\R^n) \to Y(\rn)$$
to hold for some rearrangement-invariant space, also if $s\in (0, n) \setminus \N$ . Indeed, Proposition \ref{propindisp} continues to hold  for $s$ in this range, with a completely analogous proof which makes use of Lemma~\ref{T:reduction_principle} below, a higher-order analogue of Lemma~\ref{P:S1}.
}
\end{remark}

As a consequence of Theorem  \ref{T:C1h}, we can derive a higher-order version of the Hardy type inequality \eqref{E:3bis}.

\begin{theorem}{\rm{\bf[Higher-order fractional Orlicz--Hardy inequality]}}\label{T:1higher}
Let $n$, $s$, $A$ and ${\widehat A}$ be as in Theorem~\ref{T:C1h}. Then there exists a~constant
$C$ such that
\begin{equation}\label{E:3higherbis}
	\left\|\frac{|u(x)|}{|x|^s}\right\|_{L^{\widehat A}(\rn)} \leq C
		\big|\nabla ^{[s]}u\big|_{\{s\},A,\rn}
\end{equation}
for every   function $u \in   V^{s,A}_d (\rn)$.
\end{theorem}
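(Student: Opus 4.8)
The plan is to obtain \eqref{E:3higherbis} as a quick corollary of Theorem~\ref{T:C1h}: the genuine analytic content already resides in the embedding \eqref{E:30h}, so what remains is only the elementary, measure‑theoretic fact that the Orlicz--Lorentz norm $\|\cdot\|_{L(\widehat A,\frac ns)(\rn)}$ dominates the weighted Orlicz functional $\big\||x|^{-s}\,\cdot\,\big\|_{L^{\widehat A}(\rn)}$. This is essentially the same mechanism by which, in the range $s\in(0,1)$, the Hardy inequality \eqref{E:3bis} and the Orlicz--Lorentz embedding \eqref{E:30} are interchangeable, only here it is to be run in the opposite direction, deducing the Hardy inequality from the embedding rather than the embedding from the Hardy inequality.

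First I would record the auxiliary estimate
\begin{equation*}
\left\|\frac{|u(x)|}{|x|^{s}}\right\|_{L^{\widehat A}(\rn)} \le \omega_n^{\frac sn}\,\|u\|_{L(\widehat A,\frac ns)(\rn)} \qquad\text{for every } u\in\mathcal M_d(\rn).
\end{equation*}
To prove it, observe that the decreasing rearrangement of the function $x\mapsto|x|^{-s}$ on $\rn$ is $r\mapsto(\omega_n/r)^{\frac sn}$, since $|\{|x|^{-s}>t\}|=\omega_n t^{-\frac ns}$ for $t>0$. Hence, by the Hardy--Littlewood type inequality \eqref{HLA} applied with $A$ replaced by $\widehat A$, for every $\lambda>0$ one has
\begin{equation*}
\int_{\rn}\widehat A\!\left(\frac{|u(x)|}{\lambda\,|x|^{s}}\right)dx \le \int_0^{\infty}\widehat A\!\left(\frac{\omega_n^{\frac sn}\,u^{*}(r)}{\lambda\,r^{\frac sn}}\right)dr .
\end{equation*}
Choosing $\lambda=\omega_n^{\frac sn}\,\|u\|_{L(\widehat A,\frac ns)(\rn)}$, the right‑hand side does not exceed $1$ by the very definition \eqref{aug300} of $\|\cdot\|_{L(\widehat A,\frac ns)(\rn)}$, and the definition of the Luxemburg norm then yields the displayed estimate.

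Combining this estimate with inequality \eqref{E:30h} of Theorem~\ref{T:C1h} then gives \eqref{E:3higherbis}, with a constant $C$ equal to $\omega_n^{\frac sn}$ times that of \eqref{E:30h}; finiteness of both sides for $u\in V^{s,A}_d(\rn)$ is automatic, since Theorem~\ref{T:C1h} already guarantees $\|u\|_{L(\widehat A,\frac ns)(\rn)}<\infty$. I do not expect any real obstacle in this argument: the only points requiring (routine) care are the identification of the rearrangement of $|x|^{-s}$ and the bookkeeping of the factor $\omega_n^{\frac sn}$. The substantive difficulty of the section is entirely packaged inside Theorem~\ref{T:C1h}, whose proof rests on the higher‑order reduction lemma (a counterpart of Lemma~\ref{P:S1}), the one‑dimensional Hardy inequalities for $T_s$ collected in Section~\ref{1d}, and the iteration identity \eqref{E:iteration}.
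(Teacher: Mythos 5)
Your proposal is correct and takes essentially the same route as the paper: both reduce \eqref{E:3higherbis} to the elementary estimate $\bigl\||x|^{-s}u\bigr\|_{L^{\widehat A}(\rn)}\le \omega_n^{s/n}\|u\|_{L(\widehat A,\frac ns)(\rn)}$ via the Hardy--Littlewood inequality \eqref{HLA} (with $|x|^{-s}$ replaced by its rearrangement $(\omega_n/r)^{s/n}$), and then invoke \eqref{E:30h}. You merely spell out the Luxemburg-norm bookkeeping that the paper's two-line proof leaves implicit.
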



\begin{example}\label{ex>1}
 Assume that $A$ is a Young function as in \eqref{dec250}, with  parameters $p$, ${p_0}$, $\alpha$ and $\alpha_0$ fulfilling conditions \eqref{dec251} and \eqref{dec252}. Then Theorem \ref{T:C2h} yields embedding \eqref{E:41hemb} and inequality \eqref{E:41h} for every $s\in (0,n)\setminus \N$, where the Young function $A_{\frac ns}$ obeys equations \eqref{dec255} and \eqref{dec256}. For the same range of exponents $s$,    Theorem \ref{T:C1h}  tells us that embedding \eqref{E:30hemb} and inequality \eqref{E:30h} hold,  where the Young function $\widehat A$ fulfills  \eqref{dec253} and \eqref{dec254}. Moreover, the target spaces in the relevant embeddings and inequalities are optimal in their respective classes.
\\ The same kind of Young function $\widehat A$ renders the Hardy inequality \eqref{E:3higherbis} true.
 \\ The special choices of the parameters $p,{p_0},\alpha, \alpha_0$ as in \eqref{jan1} or \eqref{jan2} produce higher-order versions of inequalities \eqref{intro2}
and \eqref{intro3}, or of their limiting versions in the spirit of Pohozhaev-Trudinger-Yudovich and Brezis-Wainger-Hansson, respectively.
\end{example}

The proof of the optimality of the target spaces in Theorems \ref{T:C2h} and \ref{T:C1h} relies up the following lemma.

\begin{lemma}\label{T:reduction_principle}
Let $n\in \N$ and  $s \in (0,n)\setminus
\N$.  Let $A$ be a Young function and let $Y(\rn)$ be a rearrangement-invariant space.
Assume that there exists a constant $C$ such that
\begin{equation}\label{E:i1}
\|u\|_{Y(\rn)} \leq C  \big|\nabla ^{[s]}u\big|_{\{s\},A, \R^n}
\end{equation}
for every function $u \in V^{s,A}_d(\rn)$. Then
there exists a constant $C'$ such that
\begin{equation}\label{E:i2}
	\bigg\|\int_{r}^{\infty}f(\varrho)\varrho^{-1+\frac{s}{n}}\,d\varrho \bigg\|_{\overline{Y}(0, \infty)}
		\leq C'\|f\|_{L^A(0, \infty)}
\end{equation}
 for every  function $f\in L^A(0,\infty)$.
\end{lemma}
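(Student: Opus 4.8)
The plan is to mimic the proof of Lemma~\ref{P:S1}, the essential new point being the choice of a test function whose $[s]$-th order gradient -- rather than the function itself -- carries a controlled fractional seminorm. If $[s]=0$ the assertion is precisely Lemma~\ref{P:S1}, so we may assume $m:=[s]\ge 1$ and write $\sigma:=\{s\}\in(0,1)$, so that $s=m+\sigma$. Exactly as in the proof of Lemma~\ref{P:S1}, by \cite[Theorem~1.1]{pesa} it suffices to establish inequality~\eqref{E:i2} for non-increasing functions $f$; moreover, by truncating $f$ and invoking the monotone convergence theorem together with the Fatou property of rearrangement-invariant norms, we may further assume that $f$ has bounded support.

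For such an $f$, set $\Phi(t)=\int_{\omega_n t^n}^{\infty}f(\varrho)\varrho^{-1+\frac{\sigma}{n}}\,d\varrho$ for $t\ge 0$; this is, up to a multiplicative constant, the radial profile of the test function used in Lemma~\ref{P:S1} with the exponent $\sigma$ in place of $s$. Define $u\colon\rn\to[0,\infty)$ to be the radial function with profile $V$ equal to the $m$-fold primitive of $\Phi$ that vanishes at infinity, namely
\begin{equation*}
	u(x)=V(|x|),\qquad V(\rho)=\frac{1}{(m-1)!}\int_{\rho}^{\infty}(t-\rho)^{m-1}\Phi(t)\,dt\quad\text{for }\rho\ge 0,
\end{equation*}
so that $V^{(m)}=(-1)^{m}\Phi$ and $V^{(j)}(\rho)=\tfrac{(-1)^{j}}{(m-1-j)!}\int_{\rho}^{\infty}(t-\rho)^{m-1-j}\Phi(t)\,dt$ for $0\le j\le m-1$. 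Since $f$ has bounded support, $\Phi$, $u$ and all derivatives of $u$ are bounded and supported in a fixed ball, $u$ and each $|\nabla^{k}u|$ ($k=0,\dots,m$) are radially decreasing, and hence $u\in V^{s,A}_{d}(\rn)$ once the seminorm bound below is in force.

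The heart of the matter is the estimate $\big|\nabla^{m}u\big|_{\sigma,A,\rn}\le C\,\|f\|_{L^{A}(0,\infty)}$ with $C=C(n,s)$. To prove it one writes each component of $\nabla^{m}u$, for the radial function $u$, as a finite linear combination of terms $c_{j}(x)\,V^{(j)}(|x|)$ with $j=1,\dots,m$, where $c_{j}$ is smooth on $\rn\setminus\{0\}$ and positively homogeneous of degree $j-m$; thus $|\nabla^{m}u(x)|\le C\sum_{j=1}^{m}|x|^{\,j-m}|V^{(j)}(|x|)|$, and, by subadditivity of the seminorm, it suffices to bound $\big|\,|x|^{\,j-m}V^{(j)}(|x|)\,\big|_{\sigma,A,\rn}$ for each $j$. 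For $j=m$ one has $V^{(m)}=\pm\Phi$, and this term equals, up to a smooth bounded angular multiplier, the radially decreasing function $x\mapsto\Phi(|x|)$, whose $\sigma$-Gagliardo seminorm is $\le C\|f\|_{L^{A}}$ precisely by the computation leading to~\eqref{july2} in the proof of Lemma~\ref{P:S1}. For $j<m$ one uses the monotonicity of $\Phi$ to check that the decreasing rearrangement of $x\mapsto|x|^{\,j-m}V^{(j)}(|x|)$ is dominated by $c\,r^{-\frac{m-j}{n}}(T_{\sigma+m-j}f)(r)$, and, since $\sigma+m-j<s<n$, the fractional seminorm of this radially decreasing profile is controlled by $\|f\|_{L^{A}}$ by combining the one-dimensional Orlicz--Hardy inequality of Lemma~\ref{L:2} (with exponent $\sigma+m-j$) with the monotonicity of $f$ and the dilation bound on rearrangement-invariant norms. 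Summing over $j$ gives the estimate. This step is the main obstacle: the lower-order terms $|x|^{\,j-m}V^{(j)}(|x|)$ with $j<m$ are pointwise larger than the top-order term $\Phi(|x|)$, and it is precisely to keep all the quantities involved finite along the way that one first reduces to compactly supported $f$.

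Finally, by Fubini's theorem and the substitution $\varrho=\omega_{n}\tau^{n}$,
\begin{equation*}
	u^{*}(r)=\frac{n\,\omega_{n}^{\sigma/n}}{m!}\int_{(r/\omega_{n})^{1/n}}^{\infty}\big(\tau-(r/\omega_{n})^{1/n}\big)^{m}\tau^{\sigma-1}f(\omega_{n}\tau^{n})\,d\tau\ \ge\ c(n,s)\,(T_{s}f)(2^{n}r)\qquad\text{for }r>0,
\end{equation*}
the inequality following on restricting the integral to $\tau\ge 2(r/\omega_{n})^{1/n}$, where $(\tau-(r/\omega_{n})^{1/n})^{m}\ge 2^{-m}\tau^{m}$, and using $\sigma+m=s$. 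Writing $T_{s}f=E_{2^{n}}\big((T_{s}f)(2^{n}\cdot)\big)$ and recalling that the dilation operator $E_{2^{n}}$ has norm at most $1$ on $\overline{Y}(0,\infty)$, we obtain $\|T_{s}f\|_{\overline{Y}(0,\infty)}\le\|(T_{s}f)(2^{n}\cdot)\|_{\overline{Y}(0,\infty)}\le c^{-1}\|u^{*}\|_{\overline{Y}(0,\infty)}=c^{-1}\|u\|_{Y(\rn)}$. Combining this with hypothesis~\eqref{E:i1} and with the seminorm estimate above yields inequality~\eqref{E:i2} for $f$ with bounded support, and the monotone limit described above then removes that restriction.
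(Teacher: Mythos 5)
The proof has a genuine gap, and in fact the chosen test function differs from the paper's in a way that matters.

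First, your test function is not the same as the paper's: you take $m$-fold primitives in the radial variable $\rho=|x|$, whereas the paper iterates integrals in the measure variable $r=\omega_n|x|^n$, producing $u(x)=\frac{1}{m!}\int_{\omega_n|x|^n}^{\infty}f(r)r^{-m-1+s/n}(r-\omega_n|x|^n)^m\,dr$. This distinction is not cosmetic. With the paper's choice, each radial differentiation brings out a factor $n\omega_n\rho^{n-1}$ by the chain rule, which tames the behaviour at the origin; indeed the paper establishes $|\nabla^m u(x)|\lesssim |x|^{s-m}f(\omega_n|x|^n)$, which is bounded near $0$ when $f$ is. With your choice, the lower-order terms $c_j(x)V^{(j)}(|x|)$ with $j<m$ behave like $|x|^{j-m}V^{(j)}(0)$ near the origin, and $V^{(j)}(0)\ne 0$ generically for compactly supported $f\not\equiv 0$. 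Thus for $m\ge 2$ your $\nabla^m u$ genuinely blows up at the origin like $|x|^{1-m}$. This is locally integrable, so $u$ is $m$-times weakly differentiable, but it makes the crucial bound $|\nabla^m u|_{\sigma,A,\rn}\lesssim\|f\|_{L^A}$ considerably harder, and nothing you say establishes it.

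Second, the per-term estimate is not justified. You split $\nabla^m u$ into pieces $c_j(x)V^{(j)}(|x|)$ and invoke subadditivity, which is fine, but then each piece must be shown to have a controlled Gagliardo seminorm. For $j<m$ you assert that ``the fractional seminorm of this radially decreasing profile is controlled by $\|f\|_{L^A}$ by combining the one-dimensional Orlicz--Hardy inequality of Lemma~\ref{L:2} with the monotonicity of $f$ and the dilation bound on rearrangement-invariant norms.'' This does not constitute a proof: Lemma~\ref{L:2} bounds an integral in which $\widehat A$ appears, not a Gagliardo-type double integral; the Gagliardo seminorm is not rearrangement-invariant, so controlling the decreasing rearrangement of the piece gives no direct control of its seminorm; and there is no visible way to combine the three cited facts into a seminorm estimate. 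If one attempts to repeat the close-range argument of Lemma~\ref{P:S1} for the profile $r^{-(m-j)/n}T_{\sigma+m-j}f(r)$, the extra singular weight $r^{-(m-j)/n}$ spoils the step in which a factor $\le 1$ is extracted before applying $A(\tau t)\le\tau A(t)$, so the argument does not carry over term by term. Finally, even for the top-order term you gloss over the angular multiplier: a factor that is homogeneous of degree $0$ is bounded but has derivative $\sim |x|^{-1}$, and multiplying by it does not preserve the Gagliardo seminorm; the paper's Lemma~\ref{L:lemma} is precisely the algebraic estimate needed to handle the differences of such angular-radial products when $|x|\approx|y|$, and you do not supply anything in its place. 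The paper avoids all of these difficulties by choosing the test function with the extra vanishing at the origin and by estimating $|\nabla^m u(x)-\nabla^m u(y)|$ directly (not term by term) via the dichotomy $|y|\ge 2|x|$ versus $|x|\le|y|\le 2|x|$ together with Lemma~\ref{L:lemma}.
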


\begin{remark}\label{suffh}
{\rm The implication of
Lemma \ref{T:reduction_principle} can be reversed
if the function $A$ satisfies conditions \eqref{E:0'}--\eqref{E:0''}. This has been pointed out in  Remark \ref{suff} for the case when $s \in (0,1)$. The argument supporting this assertion is completely analogous to that presented in that remark.
}
\end{remark}

An algebraic inequality to be used in the proof of Lemma \ref{T:reduction_principle}  is the subject of the next result.

\begin{lemma}\label{L:lemma}
Let $x$, $y\in \Rn$, $n \geq 1$ be such that $0<|x|\leq |y| \leq 2|x|$. Let $i \in \N \cup \{0\}$ and let $\beta \in \R$ be such that $i\leq n$ and $\beta+i \geq 0$. Assume that  $\alpha_1, \alpha_2, \dots, \alpha_i \in \{1, 2, \dots, n\}$. Then
\begin{equation}\label{E:est}
|x_{\alpha_1} \cdots x_{\alpha_i} |x|^\beta - y_{\alpha_1} \cdots y_{\alpha_i}|y|^{\beta}| \lesssim |x-y| |x|^{\beta+i-1},
\end{equation}	
up to a multiplicative constant depending on $i$ and $\beta$.  Here, the products $x_{\alpha_1} \cdots x_{\alpha_i} $ and $y_{\alpha_1} \cdots y_{\alpha_i}$ have to be interpreted as $1$ if $i=0$.
\end{lemma}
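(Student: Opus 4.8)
The plan is to split the difference algebraically into a ``polynomial part'' and a ``radial part'', rather than integrating the gradient of $z\mapsto z_{\alpha_1}\cdots z_{\alpha_i}|z|^{\beta}$ along the segment joining $x$ and $y$ --- the latter is the natural first attempt but fails, since that segment is not prevented from passing close to (or through) the origin. First I would write
\begin{equation*}
x_{\alpha_1}\cdots x_{\alpha_i}|x|^{\beta} - y_{\alpha_1}\cdots y_{\alpha_i}|y|^{\beta}
= \bigl(x_{\alpha_1}\cdots x_{\alpha_i} - y_{\alpha_1}\cdots y_{\alpha_i}\bigr)|x|^{\beta}
+ y_{\alpha_1}\cdots y_{\alpha_i}\bigl(|x|^{\beta}-|y|^{\beta}\bigr),
\end{equation*}
with the convention that the empty product ($i=0$) equals $1$, in which case only the last term survives.

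For the first summand I would use the telescoping identity (changing one coordinate at a time)
\begin{equation*}
x_{\alpha_1}\cdots x_{\alpha_i} - y_{\alpha_1}\cdots y_{\alpha_i}
= \sum_{k=1}^{i} x_{\alpha_1}\cdots x_{\alpha_{k-1}}\,(x_{\alpha_k}-y_{\alpha_k})\,y_{\alpha_{k+1}}\cdots y_{\alpha_i}.
\end{equation*}
Since $|x_{\alpha_j}|\le|x|$, $|y_{\alpha_j}|\le|y|\le2|x|$ and $|x_{\alpha_k}-y_{\alpha_k}|\le|x-y|$, each of the $i$ terms is bounded by $2^{i}|x-y|\,|x|^{i-1}$, so the first summand is $\lesssim|x-y|\,|x|^{\beta+i-1}$, with a constant depending only on $i$.

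For the second summand I would combine $|y_{\alpha_1}\cdots y_{\alpha_i}|\le|y|^{i}\le(2|x|)^{i}$ with the elementary one–variable estimate
\begin{equation*}
\bigl||x|^{\beta}-|y|^{\beta}\bigr|
= \Bigl|\int_{|x|}^{|y|}\beta\,t^{\beta-1}\,dt\Bigr|
\le |\beta|\,\bigl||x|-|y|\bigr|\,\sup_{t\in[|x|,|y|]}t^{\beta-1}
\lesssim |x-y|\,|x|^{\beta-1},
\end{equation*}
where the last step uses $|x|\le|y|\le2|x|$ to replace the supremum by $|x|^{\beta-1}$ at the cost of a constant depending on $\beta$ (the case $\beta=0$ being trivial, since then the difference vanishes). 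Multiplying the two bounds shows that the second summand is $\lesssim|x|^{i}\cdot|x-y|\,|x|^{\beta-1}=|x-y|\,|x|^{\beta+i-1}$, and adding the two contributions gives \eqref{E:est}.

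The argument is computationally routine; the only real decision is the one already flagged --- peeling off the radial factor $|x|^{\beta}$ (and handling $\bigl||x|^{\beta}-|y|^{\beta}\bigr|$ by a one–dimensional mean-value estimate) so that every intermediate quantity stays controlled by a single power of $|x|$, irrespective of the position of $x$ and $y$ relative to the origin.
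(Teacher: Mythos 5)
Your proof is correct, and it follows a genuinely different route from the paper's. The paper first reduces to a fixed multi-index by squaring and summing over all $(\alpha_1,\dots,\alpha_i)\in\{1,\dots,n\}^i$, uses homogeneity to normalize $|x|=1$, rewrites everything in terms of $|y|$ and $x\cdot y$, and then checks the resulting scalar inequality $1-2|y|^{\beta}(x\cdot y)^{i}+|y|^{2\beta+2i}\le C(1-2x\cdot y+|y|^2)$ by a short sequence of algebraic manipulations (monotonicity of $r\mapsto r(C-|y|^{\beta}r^{i-1})$, reduction to $(|y|^{\beta+i}-1)^2\le C(|y|-1)^2$). Your decomposition into a polynomial part and a radial part, handled respectively by telescoping and by a one-dimensional mean-value estimate, is more elementary, avoids the normalization and the sum-of-squares device entirely, and is arguably more transparent. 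It also makes no visible use of the hypotheses $i\le n$ and $\beta+i\ge0$ (the paper's argument invokes the latter when controlling the derivative of the auxiliary scalar function); those appear in the statement because they hold in the single place the lemma is applied, but your proof shows they are not needed for the estimate itself. Your opening remark about why the naive segment-integration fails is also apt: with $|x|\le|y|\le2|x|$ one can still have $y=-x$, so the segment may pass through the origin where $z\mapsto z_{\alpha_1}\cdots z_{\alpha_i}|z|^{\beta}$ need not be Lipschitz.
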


\begin{proof} Fix $x$ and $y$ as in the statement.
 If $i=0$, then inequality  \eqref{E:est}  reads
$$||x|^\beta - |y|^{\beta}| \lesssim |x-y| |x|^{\beta-1}.$$
This inequality holds, for instance, as a consequence of the mean value theorem for functions of several variables.
\\ Let us now assume that
 $i \geq 1$. Notice that inequality~\eqref{E:est} will follow if we show that
\begin{equation}\label{nov204}
\sum_{\alpha_1=1}^n \dots \sum_{\alpha_i=1}^{n} (x_{\alpha_1} \dots x_{\alpha_i}|x|^\beta - y_{\alpha_1} \dots y_{\alpha_i}|y|^\beta)^2 \leq C |x-y|^2 |x|^{2(\beta+i-1)}
\end{equation}
for some constant $C>0$ depending on $i$ and $\beta$. Thanks to homogeneity of inequality \eqref{nov204}, we may assume, without loss of generality, that $|x|=1$, and hence that $1\leq |y| \leq 2$. Inequality \eqref{nov204} then turns  into
$$
\sum_{\alpha_1=1}^n \dots \sum_{\alpha_i=1}^{n} (x_{\alpha_1} \dots x_{\alpha_i} - y_{\alpha_1} \dots y_{\alpha_i}|y|^\beta)^2 \leq C |x-y|^2,
$$
which can be   rewritten as
$$
|x|^{2i} -2|y|^{\beta} (x\cdot y)^i + |y|^{2\beta +2i} \leq C(|x|^2-2x\cdot y +|y|^2),
$$
namely
\begin{equation}\label{nov205}
1 -2|y|^{\beta} (x\cdot y)^i + |y|^{2\beta +2i} \leq C(1-2x\cdot y +|y|^2),
\end{equation}
since we are assuming that $|x|=1$.
\\
Inequality \eqref{nov205} can, in its turn, be rewritten as
$$
1+ |y|^{2\beta +2i} +2x\cdot y (C-|y|^\beta (x\cdot y)^{i-1}) \leq C(1+|y|^2).
$$
Observe that $|x\cdot y| \leq |x| |y| =|y| \leq 2$. Furthermore,  inasmuch as $[-|y|,|y|] \subseteq [-2, 2]$, the function
$$
[-|y|,|y|] \ni r \mapsto r (C-|y|^\beta r^{i-1})
$$
is non-decreasing, provided that
$C>i 2^{\beta+i-1}$.
Altogether, we deduce that
$$
x\cdot y (C-|y|^\beta (x\cdot y)^{i-1}) \leq |y| (C-|y|^{\beta+i-1}).
$$
It thus suffices to show that
$$
1+ |y|^{2\beta +2i} +2|y| (C-|y|^{\beta+i-1}) \leq C(1+|y|^2),
$$
or, equivalently,
$$
(|y|^{\beta+i}-1)^2 \leq C (|y|-1)^2,
$$
namely,
$$
|y|^{\beta+i}-1 \leq \sqrt{C} (|y|-1).
$$
This inequality clearly holds if $1 \leq |y| \leq 2$, provided that $C$ is sufficiently large, depending on $\beta$ and $i$.
\end{proof}

\begin{proof}[Proof of Lemma~\ref{T:reduction_principle}]
We focus on the case when $s\in (1,\infty) \setminus \N$, and hence $n\geq 2$, since the result for $s\in (0,1)$ has already been proved in Lemma \ref{P:S1}.
Assume that inequality \eqref{E:i1} holds.  By the reason explained in the proof of Lemma \ref{P:S1} in connection with  inequality  \eqref{E:S2}, it suffices to prove inequality \eqref{E:i2}
for every non-increasing function $f: (0,\infty) \to [0, \infty)$.  Moreover, on replacing, if necessary, $f$ by $f\chi_{(0,L)}$ for $L>0$, and letting $L \to \infty$, we may also assume that $f$ has a bounded support. Passage to the limit as $L\to \infty$ in inequality \eqref{E:i2} applied with $f$ replaced by $f\chi_{(0,L)}$ is legitimate owing to the Fatou property of rearrangement-invariant norms \cite[Chapter 1, Definition 1.1]{BS}. Denote, for simplicity,
 $[s]=m$. Given any function $f$ as above,   define the function $u:  \rn \to [0, \infty)$ as
\begin{equation}\label{test}
u(x)=\int_{\omega_n |x|^n}^\infty \int_{r_1}^\infty \dots \int_{r_m}^\infty f(r_{m+1}) r_{m+1}^{-m-1+\frac{s}{n}}\,dr_{m+1} \dots dr_{1} \quad \text{for $x\in \R^n$}.
\end{equation}
It is easily verified that $u$ is $m$-times weakly differentiable, and that $|\{ |\nabla^k u|>t\}| <\infty$ for every $k=0,1,\dots, m$ and every $t>0$. From Fubini's theorem, one can deduce that
\begin{equation}\label{nov207}
u(x)
=\frac{1}{m!}\int_{\omega_n |x|^n}^\infty f(r) r^{-m-1+\frac{s}{n}} (r-\omega_n |x|^n)^m \,dr
\gtrsim \int_{2\omega_n |x|^n}^\infty f(r) r^{-1+\frac{s}{n}}\,dr \quad \text{for $x\in \R^n$.}
\end{equation}
Throughout this proof, the relations $\gtrsim $, $\lesssim$ and $\approx$ hold up to multiplicative constants depending on $n$, $m$ and $s$. The same dependence concerns all constants appearing explicitly.
Inequality \eqref{nov207}, combined with the boundedness of the dilation operator on rearrangement-invariant spaces, implies that
\begin{equation}\label{E:lower}
\|u\|_{Y(\Rn)}\gtrsim \left\|\int_{2\omega_n |x|^n}^\infty f(r) r^{-1+\frac{s}{n}}\,dr \right\|_{Y(\rn)} =
\left\|\int_{2r}^\infty f(\varrho) \varrho^{-1+\frac{s}{n}}\,d\varrho\right\|_{\overline Y(0,\infty)}
 \gtrsim \left\|\int_{r}^\infty f(\varrho) \varrho^{-1+\frac{s}{n}}\,d\varrho\right\|_{\overline Y(0,\infty)}
\end{equation}
for any rearrangement-invariant space $Y(\Rn)$.
\\
Let $g:(0, \infty) \to [0, \infty)$ be the function   defined  as
$$g(r) = \frac{1}{m!}\int_{\omega_n r^n}^\infty f(\varrho) \varrho^{-m-1+\frac{s}{n}} (\varrho-\omega_n r^n)^m \,d\varrho \quad \text{for $r>0$.}$$
 One can verify (see  \cite[Proof of Theorem 3.1]{ACPS})  that any  $m$-th order derivative of $u$ is a linear combination of terms of the form
$$
\frac{x_{\alpha_1} \dots x_{\alpha_i} g^{(k)}(|x|)}{|x|^{m-k+i}},
$$
where $i=0,1,\dots,m$, $k=1,\dots,m$ and $\alpha_1, \dots, \alpha_i \in \{1,\dots, n\}$. Furthermore, $g^{(k)}(r)$ is a linear combination of functions of the form
$$
r^{jn-k}\int_{\omega_n r^n}^\infty \int_{r_{j+1}}^\infty \dots \int_{r_m}^\infty f(r_{m+1}) r_{m+1}^{-m-1+\frac{s}{n}}\,dr_{m+1} \dots dr_{j+1},
$$
where $j=1,\dots,k$. Note that, if $j=k=m$, then the last expression has to be interpreted as
$$
r^{mn-m}\int_{\omega_n r^n}^\infty f(r_{m+1}) r_{m+1}^{-m-1+\frac{s}{n}}\,dr_{m+1}.
$$
Altogether, we deduce that any $m$-th order derivative of $u$ agrees with a linear combination of terms of the form
$$
x_{\alpha_1} \dots x_{\alpha_i} |x|^{jn-m-i} \int_{\omega_n |x|^n}^\infty \int_{r_{j+1}}^\infty \dots \int_{r_m}^\infty f(r_{m+1}) r_{m+1}^{-m-1+\frac{s}{n}}\,dr_{m+1} \dots dr_{j+1},
$$
where $i=0,1,\dots,m$, $j=1,\dots,m$ and $\alpha_1, \dots, \alpha_i \in \{1,\dots, n\}$.
\\
For a.e. $x, y\in \Rn$ we have what follows. Assume first that $|y| \geq 2|x|$. Then
\begin{align}\label{dec27}
|\nabla^m u(x) - \nabla^m u(y)|
&\leq |\nabla^m u(x)| + |\nabla^m u(y)|\\  \nonumber
&\lesssim \sum_{j=1}^m |x|^{jn-m} \int_{\omega_n |x|^n}^{\infty} \int_{r_{j+1}}^\infty \dots \int_{r_m}^\infty f(r_{m+1}) r_{m+1}^{-m-1+\frac{s}{n}}\,dr_{m+1} \dots dr_{j+1}\\ \nonumber
&+\sum_{j=1}^m |y|^{jn-m} \int_{\omega_n |y|^n}^{\infty} \int_{r_{j+1}}^\infty \dots \int_{r_m}^\infty f(r_{m+1}) r_{m+1}^{-m-1+\frac{s}{n}}\,dr_{m+1} \dots dr_{j+1}\\ \nonumber
&\lesssim \sum_{j=1}^m |x|^{jn-m} \int_{\omega_n |x|^n}^\infty f(r) r^{-j-1+\frac{s}{n}}\,dr
+\sum_{j=1}^m |y|^{jn-m} \int_{\omega_n |y|^n}^\infty f(r) r^{-j-1+\frac{s}{n}}\,dr\\ \nonumber
&\lesssim |x|^{s-m} f(\omega_n |x|^n) + |y|^{s-m} f(\omega_n |y|^n),
\end{align}
where the third inequality follows via an analogue of equation \eqref{nov207}, and the last one  thanks to the monotonicity of $f$. Thus,
\begin{align*}
\int_{\Rn} &\int_{|y|\geq 2|x|} A\left(\frac{|\nabla^m u(x) - \nabla^m u(y)|}{|x-y|^{s-m}}\right) \frac{\,dx \,dy}{|x-y|^n}\\
&\lesssim  \int_{\Rn} \int_{|y| \geq 2|x|} A \left( C |x|^{s-m} |y|^{m-s} f(\omega_n|x|^n) \right) \frac{dy}{|y|^n}\,dx
+ \int_{\Rn} \int_{|y| \geq 2|x|} A \left(C f(\omega_n|y|^n) \right) \,dx\frac{dy}{|y|^n}\\
&\lesssim \int_{\Rn} \int_{2|x|}^\infty A \left( C'r^{m-s} |x|^{s-m}  f(\omega_n|x|^n) \right) \frac{dr}{r}\,dx
+ \int_{\Rn} A \left( C' f(\omega_n|y|^n) \right) \,dy
\end{align*}
for some constants $C$ and $C'$.
The change of variables $t=C' r^{m-s}|x|^{s-m}  f(\omega_n|x|^n)$ yields
$$
\int_{2|x|}^\infty A \left( C'r^{m-s} |x|^{s-m}  f(\omega_n|x|^n) \right) \frac{dr}{r}\,dx
\approx \int_0^{C' f(\omega_n |x|^n)} A(t) \frac{dt}{t}
\leq A\left(C' f(\omega_n |x|^n)\right),
$$
where the last inequality holds due to the monotonicity of the function $t\mapsto \frac{A(t)}{t}$.
Therefore,
\begin{align}\label{nov209}
\int_{\Rn} \int_{|y|\geq 2|x|} A\left(\frac{|\nabla^m u(x) - \nabla^m u(y)|}{|x-y|^{s-m}}\right) \frac{\,dx \,dy}{|x-y|^n}
\lesssim \int_{\Rn} A \left(C f(\omega_n|x|^n) \right) \,dx
= \int_0^\infty A \left(C f(r) \right) \,dr
\end{align}
for some constant $C$.
\\ Let us now assume that $|x| \leq |y| \leq 2|x|$. Then
\begin{align*}
&|\nabla^m u(x) - \nabla^m u(y)|\\
&\lesssim \sum_{i=0}^m \sum_{(\alpha_1, \dots, \alpha_i) \in \{1, \dots, n\}^i} \sum_{j=1}^m \bigg|x_{\alpha_1} \cdots x_{\alpha_i} |x|^{jn-m-i} \int_{\omega_n |x|^n}^{\infty} \int_{r_{j+1}}^\infty \dots \int_{r_m}^\infty f(r_{m+1}) r_{m+1}^{-m-1+\frac{s}{n}}\,dr_{m+1} \dots dr_{j+1}\\
& \quad - y_{\alpha_1} \cdots y_{\alpha_i} |y|^{jn-m-i} \int_{\omega_n |y|^n}^{\infty} \int_{r_{j+1}}^\infty \dots \int_{r_m}^\infty f(r_{m+1}) r_{m+1}^{-m-1+\frac{s}{n}}\,dr_{m+1} \dots dr_{j+1}\bigg|\\
&\lesssim \sum_{i=0}^m \sum_{(\alpha_1, \dots, \alpha_i) \in \{1, \dots, n\}^i} \sum_{j=1}^m \bigg|x_{\alpha_1} \cdots x_{\alpha_i} |x|^{jn-m-i} \int_{\omega_n |x|^n}^{\omega_n |y|^n} \int_{r_{j+1}}^\infty \dots \int_{r_m}^\infty f(r_{m+1}) r_{m+1}^{-m-1+\frac{s}{n}}\,dr_{m+1} \dots dr_{j+1}\bigg|\\
&\quad +\sum_{i=0}^m \sum_{(\alpha_1, \dots, \alpha_i) \in \{1, \dots, n\}^i} \sum_{j=1}^m\big|x_{\alpha_1} \cdots x_{\alpha_i} |x|^{jn-m-i} - y_{\alpha_1} \cdots y_{\alpha_i} |y|^{jn-m-i}\big|
\\ & \quad \quad \quad  \quad \quad \quad \quad \times
 \int_{\omega_n |y|^n}^{\infty} \int_{r_{j+1}}^\infty \dots \int_{r_m}^\infty f(r_{m+1}) r_{m+1}^{-m-1+\frac{s}{n}}\,dr_{m+1} \dots dr_{j+1} \\
&\lesssim \sum_{j=1}^m |x|^{jn-m} \int_{\omega_n |x|^n}^{\omega_n |y|^n} \int_{r_{j+1}}^\infty \dots \int_{r_m}^\infty f(r_{m+1}) r_{m+1}^{-m-1+\frac{s}{n}}\,dr_{m+1} \dots dr_{j+1}\\
& \quad + \sum_{j=1}^m |x-y| |x|^{jn-m-1} \int_{\omega_n |y|^n}^{\infty} \int_{r_{j+1}}^\infty \dots \int_{r_m}^\infty f(r_{m+1}) r_{m+1}^{-m-1+\frac{s}{n}}\,dr_{m+1} \dots dr_{j+1}\\
&\lesssim \sum_{j=1}^m |x|^{jn-m} \int_{\omega_n |x|^n}^{\omega_n |y|^n} \int_{r}^\infty f(\varrho)\varrho^{-j-2+\frac{s}{n}}\,d\varrho\,dr
+\sum_{j=1}^m |x-y| |x|^{jn-m-1} \int_{\omega_n |y|^n}^{\infty} f(r) r^{-j-1+\frac{s}{n}}\,dr\\
&\lesssim \sum_{j=1}^m |x|^{jn-m} (|y|^n - |x|^n) \int_{\omega_n |x|^n}^{\infty} f(r) r^{-j-2+\frac{s}{n}}\,dr
+\sum_{j=1}^m |x-y| |x|^{jn-m-1} \int_{\omega_n |x|^n}^{\infty} f(r) r^{-j-1+\frac{s}{n}}\,dr\\
&\lesssim |y-x| |x|^{-m-1+s} f(\omega_n |x|^n).
\end{align*}
Observe that the third inequality holds owing to Lemma~\ref{L:lemma}, the fourth one by an analogue of equation \eqref{nov207},  and the last one since $f$ is non-increasing.
Therefore,
\begin{align*}
\int_{\Rn} &\int_{|x| \leq |y| \leq 2|x|} A\left(\frac{|\nabla^m u(x) - \nabla^m u(y)|}{|x-y|^{s-m}}\right) \frac{\,dx \,dy}{|x-y|^n}\\
&\lesssim  \int_{\Rn} \int_{|x| \leq |y| \leq 2|x|} A\left(C|x|^{s-m-1}|y-x|^{m-s+1}f(\omega_n |x|^n)\right)\frac{dy}{|y-x|^n}\,dx\\
&= \int_{\Rn} \int_{|x| \leq |z+x| \leq 2|x|} A\left(C|x|^{s-m-1}|z|^{m-s+1}f(\omega_n |x|^n)\right)\frac{dz}{|z|^n}\,dx\\
&\leq\int_{\Rn} \int_{|z|\leq 3|x|} A\left(C|x|^{s-m-1}|z|^{m-s+1}f(\omega_n |x|^n)\right)\frac{dz}{|z|^n}\,dx\\
&\lesssim \int_{\Rn} \int_{0}^{3|x|} A\left(C|x|^{s-m-1}r^{m-s+1}f(\omega_n |x|^n)\right)\frac{dr}{r}\,dx
\end{align*}
for some positive constant $C$.
The change of variables $t=C|x|^{s-m-1}f(\omega_n |x|^n)r^{m-s+1}$ yields
$$
\int_{0}^{3|x|} A\left(C|x|^{s-m-1}r^{m-s+1}f(\omega_n |x|^n)\right)\frac{dr}{r}
\approx \int_0^{C f(\omega_n |x|^n)} A(t) \frac{\,dt}{t}
\leq A(C f(\omega_n |x|^n).
$$
Thereby,
\begin{align}\label{nov210}
\int_{\Rn} \int_{|x|\leq|y|\leq 2|x|} A\left(\frac{|\nabla^m u(x) - \nabla^m u(y)|}{ |x-y|^{s-m}}\right) \frac{\,dx \,dy}{|x-y|^n}
\lesssim \int_{\Rn} A \left( C f(\omega_n|x|^n) \right) \,dx
= \int_0^\infty A \left(C f(r) \right) \,dr.
\end{align}
Coupling equations \eqref{nov209} and \eqref{nov210} tells us that
\begin{align}\label{nov211}
\int_{\Rn} \int_{|x|\leq |y|} A\left(\frac{|\nabla^m u(x) - \nabla^m u(y)|}{ |x-y|^{s-m}}\right) \frac{\,dx \,dy}{|x-y|^n}
\lesssim \int_0^\infty A \left(C f(r) \right) \,dr,
\end{align}
for some positive constant $C$.
Adding inequality \eqref{nov211} to a parallel inequality obtained by exchanging the roles of $x$ and $y$, and applying the resultant inequality with $f$ replaced by $f/\lambda$ for any $\lambda >0$ yield
$$
\int_{\Rn} \int_{\Rn} A\left(\frac{|\nabla^m u(x) - \nabla^m u(y)|}{\lambda |x-y|^{s-m}}\right) \frac{\,dx \,dy}{|x-y|^n}
\lesssim \int_0^\infty A \left(\frac{C}{\lambda} f(r) \right) \,dr
$$
for some constant $C$. Now recall that $m=[s]$ and $s-m=\{s\}$  to infer that
\begin{equation}\label{E:upper}
\big|\nabla ^{[s]}u\big|_{\{s\},A, \rn} \lesssim \|f\|_{L^A(0,\infty)}.
\end{equation}
Combining inequalities  \eqref{E:lower} and~\eqref{E:upper}   shows that~\eqref{E:i1} implies~\eqref{E:i2}.
\end{proof}

We have now all the preliminaries at our disposal to accomplish the proof of Theorem~\ref{T:C1h}.

\begin{proof}[Proof of Theorem~\ref{T:C1h}] In this proof we  need  to make use of the function $\widehat A$, defined as in \eqref{E:1}--\eqref{E:2}, also with $s$ replaced by $\{s\}$. For clarity of notation, we shall denote by $\widehat A_{s}$ and $\widehat A_{\{s\}}$ the functions defined by \eqref{E:1}--\eqref{E:2} with $s$ and $\{s\}$, respectively.
 Theorem \ref{T:C1}, applied with $u$ replaced by $\nabla ^{[s]}u$, and with $s$ replaced by $\{s\}$, tells us that
\begin{equation}\label{aug3}
	\|\nabla ^{[s]}u\|_{L(\widehat A_{\{s\}},\frac{n}{\{s\}})(\R^n)}
		\le C \big|\nabla ^{[s]}u\big|_{s,A,\rn}
\end{equation}
for some constant $C$ and for every function $u \in V^{s,A}_d(\rn)$. Inequality \eqref{E:30h} will thus follow if we show that
\begin{equation}\label{aug4}
	\|u\|_{L({\widehat A}_s,\frac{n}{s})(\R^n)} \leq C\|\nabla ^{[s]}u\|_{L(\widehat A_{\{s\}},\frac{n}{\{s\}})(\R^n)}
\end{equation}
for some constant $C$ and for every function $u \in V^{s,A}_d(\rn)$.
In order to prove inequality \eqref{aug4}, we make use of Theorems B and  E.
By Theorem B,
\begin{equation}\label{aug5}
	\|T_{\{s\}}f\|_{L(\widehat A_{\{s\}},\frac{n}{\{s\}})(0,\infty)}
		\le C \|f\|_{L^A(0, \infty)},
\end{equation}
for some constant $C$ and every function $f\in L^A(0, \infty)$,
where $T_{\{s\}}$ is the operator defined as in \eqref{T}. Furthermore,
$L(\widehat A_{\{s\}},\frac{n}{\{s\}})(0,\infty)$ is the optimal rearrangement-invariant target space in \eqref{aug5}. The same result also tells us that
\begin{equation}\label{aug6}
	\|T_{s}f\|_{L({\widehat A}_s,\frac{n}{s})(0,\infty)}
		\le C \|f\|_{L^A(0, \infty)},
\end{equation}
for some constant $C$ and every function  $f\in L^A(0, \infty)$,
and that $L({\widehat A},\frac{n}{s})(0,\infty)$ is the optimal rearrangement-invariant target space in \eqref{aug6}. Denote by $X_{[s]}(0, \infty)$ the optimal rearrangement-invariant target space in the inequality
\begin{equation}\label{aug7}
	\|T_{[s]}f\|_{X_{[s]}(0,\infty)}
		\le C \| f\|_{L(\widehat A_{\{s\}},\frac{n}{\{s\}})(0,\infty)}\,
\end{equation}
for some constant $C$ and for every function $f\in L(\widehat A_{\{s\}},\frac{n}{\{s\}})(0,\infty)$.
Hence, since $s=[s]+\{s\}$, from Theorem E, part (ii), one can deduce that
\begin{equation}\label{aug8}
X_{[s]}(0, \infty) = L({\widehat A}_s,\tfrac{n}{s})(0,\infty).
\end{equation}
Therefore,
\begin{equation}\label{aug9}
	\|T_{[s]}f\|_{L({\widehat A}_s,\frac{n}{s})(0,\infty)}
		\le C \| f\|_{L(\widehat A_{\{s\}},\frac{n}{\{s\}})(0,\infty)}
\end{equation}
for some constant $C$ and for every function $f\in L(\widehat A_{\{s\}},\frac{n}{\{s\}})(0,\infty)$.
Owing to the reduction principle for integer-order Sobolev inequalities in the version of \cite[Theorem~3.3]{Mih},
 inequality \eqref{aug9} implies inequality \eqref{aug4}.
\\ The optimality of the space $L({\widehat A},\frac{n}{s})(\R^n)$ in inequality \eqref{E:30h} follows from Lemma \ref{T:reduction_principle} and Theorem B.
\end{proof}

\begin{proof}[Proof of Theorem~\ref{T:C2h}]
Inequality \eqref{E:41h} follows from Theorem~\ref{T:C1h} and Proposition~\ref{P:1}. The optimality of the space $L^{A_{\frac ns}}(\Omega)$ is a consequence of Lemma \ref{T:reduction_principle} and Theorem A.
\end{proof}

\begin{proof}[Proof of Theorem~\ref{T:1higher}]
Property \eqref{HLA} ensures that
\begin{equation}\label{oct101bis}
	\left\|\frac{|u(x)|}{|x|^s}\right\|_{L^{\widehat A}(\rn)}
		\le \| \omega_n ^{\frac sn} r^{-\frac sn} u^*(r)\|_{L^{\widehat A}(0, \infty)}  = \omega_n ^{\frac sn} \|u\|_{L(\widehat A, \frac ns)}.
\end{equation}
%
%
Coupling inequality  \eqref{oct101bis} with \eqref{E:30h}  yields  \eqref{E:3higherbis}.
\end{proof}

\section{Embeddings on domains}\label{domains}

So far, we have been dealing with embeddings and corresponding Sobolev--Poincar\'e inequalities for functions defined in the whole of $\rn$. This section is devoted to their counterparts for fractional Orlicz-Sobolev spaces on open subsets of $\rn$.
\par The open sets that will be considered are bounded Lipschitz domains according to the following definition. If $n \geq 2$,
 we make use of the notation $x=(x', x_n)$ for $x \in \rn$, where $x'\in \mathbb R^{n-1}$ and $x_n\in \mathbb R$, and
set $\mathcal{Q}= \{x=(x', x_n) \in \R^{n-1}\times\R: |x'|<1, |x_n|<1\}$, $\mathcal{Q}_+= \{x\in \mathcal Q: x_n>0\}$ and $\mathcal{Q}_0= \{x\in \mathcal Q: x_n=0\}$.
A set $\o \subset \rn$ is called a bounded Lipschitz domain if it is a bounded open set and  there exists  a finite number of balls $\{B_j\}_{j=1}^k$ such that $\partial \Omega \subset\bigcup_{j=1}^k B_j$, and corresponding  Lipschitz continuous homeomorphisms with Lipschitz continuous inverses $T_j: \mathcal{Q}\to B_j$, such that $T_j (\mathcal{Q}_+)=  B_j\cap \o$ and $T_j (\mathcal{Q}_0)=  B_j\cap \partial \o$. If $n=1$, then  a bounded Lipschitz domain $\o \subset \mathbb R$ is just the  union of a finite family of bounded intervals at positive distance from each other.

\smallskip
\par
As in the case of embedding in $\rn$, we premise our results in the basic case of spaces of order $s \in (0,1)$.

\begin{theorem}
{\rm{\bf[Optimal embeddings of order $s\in (0,1)$ on domains]}}
\label{sobperp}
Let $\Omega$ be a bounded Lipschitz domain in $\rn$. Assume that $s\in (0,1)$ and that $A$ is a Young function satisfying conditions \eqref{E:0'}--\eqref{E:0''}.
\\ (i)
One has that
\begin{equation}\label{embom1}
W^{s,A}(\o) \to L^{A_{\frac{n}{s}}}(\o),
\end{equation}
 and $L^{A_{\frac{n}{s}}}(\o)$ is the optimal Orlicz target space in \eqref{embom1}.
Moreover,
there exists a~constant $C=C(n,s,\o)$ such that
\begin{equation}\label{perp1}
		\|u\|_{L^{A_{\frac{n}{s}}}(\o)} \le C |u|_{s,A,\o}
\end{equation}
for every function $u \in V^{s,A}_\perp (\o)$.
\\ (ii) One has that
\begin{equation}\label{embom2}
W^{s,A}(\o) \to L({\widehat A},\tfrac ns) (\o),
\end{equation}
and $L({\widehat A},\frac ns) (\o)$ is the optimal rearrangement-invariant target space in \eqref{embom2}.  Moreover,
there exists a~constant $C=C(n,s,\o)$ such that
\begin{equation}\label{perp2}
		\|u\|_{L({\widehat A},\frac ns) (\o)}\le C |u|_{s,A,\o}
\end{equation}
for every function $u \in V^{s,A}_\perp(\o)$.
\end{theorem}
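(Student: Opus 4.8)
The plan is to transfer the embeddings for $\rn$ obtained in Theorems~\ref{T:C2} and~\ref{T:C1} to $\Omega$ by means of the extension theorem for $W^{s,A}(\Omega)$ proved earlier in this section, and to prove the optimality statements by localizing to a ball contained in $\Omega$ and appealing to the one-dimensional results of Section~\ref{1d}. For the upper bounds, let $\mathcal E\colon W^{s,A}(\Omega)\to W^{s,A}(\rn)$ be a bounded linear extension operator with $\mathcal E u=u$ a.e.\ in $\Omega$; composing with a fixed cutoff in $C^\infty_0(\rn)$ identically equal to $1$ on $\Omega$, we may assume that $\mathcal E u$ has bounded support, so that $\mathcal E u\in V^{s,A}_d(\rn)$ and $|\mathcal E u|_{s,A,\rn}\le\|\mathcal E u\|_{W^{s,A}(\rn)}\le C(\Omega)\,\|u\|_{W^{s,A}(\Omega)}$. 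Theorem~\ref{T:C2} then yields
\[
\|u\|_{L^{A_{\frac ns}}(\Omega)}\le\|\mathcal E u\|_{L^{A_{\frac ns}}(\rn)}\le C\,|\mathcal E u|_{s,A,\rn}\le C\,\|u\|_{W^{s,A}(\Omega)},
\]
which is embedding~\eqref{embom1}, and the identical computation with Theorem~\ref{T:C1} in place of Theorem~\ref{T:C2} gives~\eqref{embom2}. To obtain the Poincar\'e forms~\eqref{perp1} and~\eqref{perp2}, recall that for $u\in V^{s,A}_\perp(\Omega)$ the fractional Poincar\'e inequality $\|u\|_{L^A(\Omega)}\le C\,|u|_{s,A,\Omega}$ holds by Proposition~\ref{poinc}; since $[s]=0$, this gives $\|u\|_{W^{s,A}(\Omega)}=\|u\|_{L^A(\Omega)}+|u|_{s,A,\Omega}\le C\,|u|_{s,A,\Omega}$, and combining this with the displayed chain (and its analogue for the Orlicz--Lorentz target) produces~\eqref{perp1} and~\eqref{perp2}.

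For the optimality, fix $x_0\in\Omega$ and $r_0>0$ with $\overline{B_{r_0}(x_0)}\subset\Omega$, and suppose $W^{s,A}(\Omega)\to Y(\Omega)$ for some rearrangement-invariant space $Y(\Omega)$. If $u\in V^{s,A}(\rn)$ is supported in $\overline{B_{r_0}(x_0)}$, then $u|_\Omega\in W^{s,A}(\Omega)$: indeed $|u|_{s,A,\Omega}\le|u|_{s,A,\rn}<\infty$, and by Proposition~\ref{poinc0} (whose constant depends only on $n$, $s$ and $|B_{r_0}(x_0)|$) one has $u\in L^A(\rn)\subset L^A(\Omega)$ with $\|u\|_{L^A(\Omega)}\le C|u|_{s,A,\rn}$; moreover $\|u\|_{Y(\Omega)}=\|u\|_{Y(\rn)}$ by~\eqref{Xr}, since $u$ vanishes outside $\Omega$. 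Hence $\|u\|_{Y(\rn)}\le C|u|_{s,A,\rn}$ for all such $u$. Plugging into this inequality the radially decreasing functions $u(x)=\int_{\omega_n|x-x_0|^n}^{\infty}f(\varrho)\varrho^{-1+\frac sn}\,d\varrho$ of the proof of Lemma~\ref{P:S1}, with $f$ non-increasing and supported in $(0,\omega_n r_0^n)$ — which forces $u$ to be supported in $\overline{B_{r_0}(x_0)}$, and where translation-invariance of $|\cdot|_{s,A,\rn}$ and rearrangement-invariance of $Y(\rn)$ reduce the computation to the case $x_0=0$ — and repeating the estimates of that proof verbatim, we arrive at
\[
\Big\|\int_r^{L}f(\varrho)\varrho^{-1+\frac sn}\,d\varrho\Big\|_{\overline Y(0,L)}\le C\,\|f\|_{L^A(0,L)},\qquad L:=\omega_n r_0^n,
\]
first for non-increasing $f$ and, via~\cite[Theorem~1.1]{pesa}, then for every $f\in L^A(0,L)$. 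This is the Hardy inequality of Section~\ref{1d} on the finite interval $(0,L)$ with domain space $L^A(0,L)$. If $Y(\Omega)=L^B(\Omega)$ is an Orlicz space, then $\overline Y(0,L)=L^B(0,L)$ and Theorem~A forces $L^{A_{\frac ns}}(0,L)\to L^B(0,L)$, i.e.\ $A_{\frac ns}$ dominates $B$ near infinity, so $L^{A_{\frac ns}}(\Omega)\to L^B(\Omega)$; this is the Orlicz optimality in~(i). For a general rearrangement-invariant $Y(\Omega)$, Theorem~B forces $L(\widehat A,\tfrac ns)(0,L)\to\overline Y(0,L)$; since $0<L<|\Omega|<\infty$ and the dilation operator is bounded on every rearrangement-invariant space on a finite interval, this embedding is equivalent to $L(\widehat A,\tfrac ns)(0,|\Omega|)\to\overline Y(0,|\Omega|)$, that is, $L(\widehat A,\tfrac ns)(\Omega)\to Y(\Omega)$, which is the optimality in~(ii).

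The main obstacle is the localization step in the optimality part: one must transplant the extremal functions of Lemma~\ref{P:S1} into a ball compactly contained in $\Omega$, control $\|u\|_{W^{s,A}(\Omega)}$ purely by the Gagliardo energy of $u$ on $\rn$ — where the Poincar\'e-type bound of Proposition~\ref{poinc0} is needed to estimate the $L^A$-part by the seminorm — and then reconcile the resulting Hardy inequality on $(0,L)$ with the one on $(0,|\Omega|)$ through the boundedness of the dilation operator. The upper bounds, by contrast, follow routinely from the extension theorem of this section once Theorems~\ref{T:C2} and~\ref{T:C1} are in hand; the only care required there is to ensure that the extension can be taken with bounded support so that it lies in $V^{s,A}_d(\rn)$.
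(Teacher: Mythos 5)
Your proposal is correct and follows essentially the same route as the paper: extension theorem plus Theorems~\ref{T:C2} and~\ref{T:C1} for the upper bounds, and a localization-to-a-ball argument feeding Theorems~A and~B for optimality. The paper organizes the upper-bound part slightly differently -- it first derives the Poincar\'e forms~\eqref{perp1} and~\eqref{perp2} directly from inequality~\eqref{ext2} (which packages together exactly the combination of~\eqref{ext1} with Proposition~\ref{poinc} that you write out), and then deduces the embeddings~\eqref{embom1} and~\eqref{embom2} by applying the Poincar\'e forms to $u-u_\Omega$; you go in the opposite order. Both orders are fine. Two small remarks on the rest: (a) the cutoff you insert to force $\mathcal Eu$ to have bounded support is harmless but unnecessary here, since condition~\eqref{E:0''} forces $A(t)>0$ for $t>0$, so $L^A(\rn)\subset\mathcal M_d(\rn)$ and hence $W^{s,A}(\rn)\subset V^{s,A}_d(\rn)$ already; (b) in the optimality step, the paper's Lemma~\ref{T:reduction_omega} derives the Hardy inequality on $(0,L)$ with an extra Gagliardo term on the right, which it then absorbs using a cited bound (\cite[Inequality~(4.10)]{CP_Arkiv}), whereas you control the $L^A$-part of $\|u\|_{W^{s,A}(\Omega)}$ directly by invoking Proposition~\ref{poinc0}, which gives the Hardy inequality on $(0,L)$ in one pass. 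You also apply Theorems~A and~B on $(0,L)$ and then transfer the resulting space-embedding to $(0,|\Omega|)$ by dilation, while the paper dilates the Hardy inequality first and applies Theorems~A and~B on $(0,|\Omega|)$; these are equivalent. Overall this is a correct, slightly rearranged version of the paper's argument, and your in-line use of Proposition~\ref{poinc0} is a tidy alternative to the paper's reduction lemma.
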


Optimal arbitrary-order fractional Orlicz-Sobolev embeddings on domains are stated in the next theorem.

%
%

\begin{theorem}{\rm{\bf[Higher-order optimal embeddings  on domains]}}\label{omhigher}
Let $\Omega$ be a bounded Lipschitz domain in $\rn$. Assume that $s\in (0,n)\setminus \N$ and that $A$ is a Young function satisfying conditions \eqref{E:0'}--\eqref{E:0''}.
\\ (i) One has that
\begin{equation}\label{omhigher1}
W^{s,A}(\o) \to L^{A_{\frac{n}{s}}}(\o),
\end{equation}
and $L^{A_{\frac{n}{s}}}(\o)$ is the optimal Orlicz target space in \eqref{omhigher1}.
Moreover, there exists a constant $C$ such that
\begin{equation}\label{sep101}
	\|u\|_{L^{A_{\frac{n}{s}}}(\o)}
		\le C \big|\nabla ^{[s]}u\big|_{\{s\},A,\o}
\end{equation}
for every $u \in V^{s,A}_\perp (\o)$.
\\ (ii) One has  that
\begin{equation}\label{omhigher2}
W^{s,A}(\o) \to L({\widehat A},\tfrac ns) (\o),
\end{equation}
 and $L({\widehat A},\frac ns) (\o)$ is the optimal rearrangement-invariant target space in \eqref{omhigher2}.
Moreover, there exists a constant $C$ such that
\begin{equation}\label{sep102}
	\|u\|_{L({\widehat A},\frac ns) (\o)}
		\le C \big|\nabla ^{[s]}u\big|_{\{s\},A,\o}
\end{equation}
for every $u \in V^{s,A}_\perp (\o)$.
\end{theorem}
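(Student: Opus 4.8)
The plan is to reduce Theorem~\ref{omhigher} to three already available facts: the sharp embeddings on $\rn$ of Theorems~\ref{T:C2h} and~\ref{T:C1h}, the sharp embeddings of order $\{s\}\in(0,1)$ on domains of Theorem~\ref{sobperp}, and the known sharp integer-order Orlicz--Lorentz Sobolev--Poincar\'e inequalities on bounded Lipschitz domains; these are glued together by means of an extension operator and of the iteration property of Theorem~E. I would first dispose of the embeddings \eqref{omhigher1} and \eqref{omhigher2}. The extension theorem for $W^{s,A}(\o)$ (announced in the Introduction) furnishes a bounded linear operator $E\colon W^{s,A}(\o)\to W^{s,A}(\rn)$. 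Since $W^{s,A}(\rn)\to V^{s,A}_d(\rn)$, the function $Eu$ belongs to $V^{s,A}_d(\rn)$, so Theorems~\ref{T:C2h} and~\ref{T:C1h} apply to it and yield $\|Eu\|_{L^{A_{\frac ns}}(\rn)}+\|Eu\|_{L(\widehat A,\tfrac ns)(\rn)}\le C\big|\nabla^{[s]}Eu\big|_{\{s\},A,\rn}\le C\|u\|_{W^{s,A}(\o)}$. Because $|\mathcal E_0(u)|\le|Eu|$ a.e.\ in $\rn$, rearrangement-invariant norms are monotone, and $\|u\|_{X(\o)}=\|\mathcal E_0(u)\|_{X(\rn)}$, passing to $\mathcal E_0(u)$ gives \eqref{omhigher1} and \eqref{omhigher2}.

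For the Poincar\'e-type inequalities \eqref{sep101} and \eqref{sep102}, let $u\in V^{s,A}_\perp(\o)$ and write $m=[s]\ge1$. By \eqref{sep100} all the means $(\nabla^k u)_\o$ with $1\le k\le m$ vanish; in particular $\nabla^{m}u\in V^{\{s\},A}_\perp(\o)$, so Theorem~\ref{sobperp}(ii), used with fractional order $\{s\}$ and applied to the vector-valued function $\nabla^{m}u$, gives $\|\nabla^{m}u\|_{L(\widehat A_{\{s\}},\tfrac n{\{s\}})(\o)}\le C\big|\nabla^{[s]}u\big|_{\{s\},A,\o}$, where $\widehat A_{\{s\}}$ is built from $A$ and $\{s\}$ through \eqref{E:1}--\eqref{E:2}. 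It then remains to climb from $\nabla^{m}u$ back to $u$: by the reduction principle for integer-order Sobolev--Poincar\'e inequalities on bounded Lipschitz domains (in the form valid for functions with vanishing derivative means), the inequality $\|u\|_{X_{[s]}(\o)}\le C\|\nabla^{m}u\|_{L(\widehat A_{\{s\}},\tfrac n{\{s\}})(\o)}$ — with $X_{[s]}(\o)$ the optimal target of the operator $T_{[s]}$ corresponding to the domain space $L(\widehat A_{\{s\}},\tfrac n{\{s\}})(\o)$ — reduces to the one-dimensional inequality for $T_{[s]}$ on $(0,|\o|)$, which holds by Theorem~E(i); moreover, since Theorem~B identifies $L(\widehat A_{\{s\}},\tfrac n{\{s\}})(0,|\o|)$ as the optimal target of $T_{\{s\}}$ over $L^A(0,|\o|)$, Theorem~E(ii) yields $X_{[s]}(\o)=L(\widehat A_s,\tfrac ns)(\o)$. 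This proves \eqref{sep102}, and \eqref{sep101} follows from it via Proposition~\ref{P:1}.

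It remains to establish the optimality assertions. Assume $W^{s,A}(\o)\to Y(\o)$ for some rearrangement-invariant space $Y(\o)$. A domain version of Lemma~\ref{T:reduction_principle} — proved by testing this embedding on functions modeled on \eqref{test} but supported in a fixed ball $B$ with $\overline B\subset\o$, corrected by a polynomial of degree less than $[s]$ so as to meet the conditions $(\nabla^k\cdot)_\o=0$, and using the fractional Poincar\'e inequality of Proposition~\ref{poinc} (together with its higher-order consequences) to absorb the lower-order terms of the $W^{s,A}(\o)$-norm — shows that $T_s$ is bounded from $L^A(0,|\o|)$ into $\overline Y(0,|\o|)$. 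Since $|\o|<\infty$, only the behaviour near $0$ of the relevant one-dimensional spaces matters, so information on the subinterval $(0,|B|)$ suffices. The optimality of $L^{A_{\frac ns}}(\o)$ among Orlicz spaces, and of $L(\widehat A,\tfrac ns)(\o)$ among all rearrangement-invariant spaces, then follows from Theorem~A and Theorem~B, respectively, now applied with $L=|\o|<\infty$.

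I expect the main obstacle to be precisely this optimality step. For the test functions built from \eqref{test} to be legitimately extended by zero from $B$ to $\o$, one needs them to vanish on $\partial B$; this is arranged by replacing the upper limit of integration in \eqref{test} by $|B|$, which however couples the construction to the interval $(0,|B|)$ rather than $(0,|\o|)$ and forces the interpolation argument just described; simultaneously one must carry out the polynomial correction without destroying the saturation of the inequality, and, in the higher-order case, keep track of the mutually interacting derivative-mean conditions — a heavier version of the bookkeeping already present in the proof of Lemma~\ref{T:reduction_principle}.
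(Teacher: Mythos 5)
Your second paragraph essentially reproduces the paper's derivation of \eqref{sep102}: you apply Theorem~\ref{sobperp}(ii) with fractional order $\{s\}$ to the vector $\nabla^{[s]}u$ (using that its $\o$-mean vanishes for $u\in V^{s,A}_\perp(\o)$), then climb back to $u$ by an integer-order Orlicz--Lorentz Poincar\'e reduction principle identified, via Theorem~E(ii) and Theorem~B, as producing the target $L(\widehat A_s,\tfrac ns)(\o)$; this is exactly how the paper proceeds (via inequality \eqref{aug9} and the reduction principle of \cite[Theorem~6.1]{CPS}). Your third paragraph is also in the right spirit: the paper's Lemma~\ref{T:reduction_omega} tests the embedding on functions of the form \eqref{test} with $f$ supported in $(0,|\mathcal B|)$ for a ball $\mathcal B\subset\o$, and absorbs the lower-order parts of the $W^{s,A}(\o)$-norm via the dilation estimate \eqref{nov501} — no polynomial correction or orthogonality normalization of the test functions is needed, since the reduction lemma starts from the full norm $\|u\|_{W^{s,A}(\o)}$ on the right of \eqref{E:1om}; your proposed polynomial correction is superfluous (and, if pursued, would need care to preserve the saturation of the one-dimensional inequality, as you yourself anticipate).

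The genuine gap is in your first paragraph. You invoke a bounded extension operator $E\colon W^{s,A}(\o)\to W^{s,A}(\rn)$ for the higher-order space, but the paper's extension theorem (Theorem~\ref{ext}) is stated and proved only for $s\in(0,1)$; no extension operator for $W^{s,A}(\o)$ with $s>1$ is constructed or cited anywhere in the paper, and the phrase ``announced in the Introduction'' does not create one. Building such an operator is not immediate: one would have to extend simultaneously the integer-order derivatives and the Gagliardo-type seminorm of $\nabla^{[s]}u$, and compatibility of the integer-order Orlicz--Sobolev extension with the fractional extension of Theorem~\ref{ext} is not established. The paper sidesteps this entirely: it never extends $u$ to $\rn$ in the higher-order case, and instead proves the embedding \eqref{omhigher2} by applying the $\{s\}$-order domain result \eqref{embom2} to $\nabla^{[s]}u$ and then using the integer-order reduction principle together with \eqref{aug9} (this yields \eqref{omhigher5}); alternatively, it recovers the embedding from \eqref{sep102} by subtracting the polynomial $P_u$ of degree at most $[s]$ with $(\nabla^k(u-P_u))_\o=0$, whose coefficients are controlled by the $L^1$-means of the derivatives and hence by $\|u\|_{W^{s,A}(\o)}$. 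Your second paragraph already gives you \eqref{sep102}, so you could close the gap without extension by performing exactly this polynomial decomposition — but as written, your proof of \eqref{omhigher1} and \eqref{omhigher2} rests on a theorem that is not available.
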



\begin{example}\label{exdomains}
Assume that $\o$ is a bounded Lipschitz domain in $\rn$.
Let $s\in (0,n) \setminus \mathbb N$. Consider   a Young function $A$ as in \eqref{dec250} under   assumptions \eqref{dec251}   and \eqref{dec252} on the parameters $p$,  ${p_0}$, $\alpha$, $\alpha_0$.
By Theorem \ref{omhigher}, Part (i),  embedding \eqref{omhigher1} holds with $A_{\frac ns}$ fulfilling condition   \eqref{dec256}. Note that, since $|\o|<\infty$, only the behaviour near infinity of the function  $A_{\frac ns}$ is relevant now. Thus, embedding \eqref{omhigher1} reads
\begin{equation}\label{dec260}
W^{s,A}(\o) \to \,\, \begin{cases} L^{\frac {np}{n-sp}} (\log L)^{\frac {\alpha p}{n-sp}} (\o)& \quad  \text{ if $1\leq p< \frac ns$ }
\\
\exp {L^{\frac{n}{n-(\alpha +1)s}}}(\o) &  \quad \text{if  $p=\frac ns$ and $\alpha < \frac ns -1$}
\\
\exp \exp {L^{\frac n{n-s}}}(\o)  &  \quad \text{if  $p=\frac ns$ and $\alpha = \frac ns -1$,}
\end{cases}
\end{equation}
and the target spaces are optimal in the class of all Orlicz spaces. Embedding \eqref{dec260} reproduces or extends to the fractional case various results scattered in the literature. The case corresponding to \eqref{jan1} is classical. Integer-order Sobolev embeddings parallel to \eqref{dec260} are special instances of the general results of \cite{cianchi_forum}, which, in their turn, include various  borderline cases established in \cite{EdGuOp, FuLiSb, Poh, Tru, Yu}. In fact, the paper \cite{EdGuOp}, and some sequel contributions by the same authors, also deal with fractional embeddings, but defined in terms of potentials instead of difference quotients.
\\
As far as augmented embeddings with sharp rearrangement-invariant target spaces are concerned,
Theorem \ref{omhigher}, Part (ii), tells us that embedding \eqref{omhigher2} holds with $\widehat A$ obeying \eqref{dec253} and \eqref{dec254}. In this case, the resultant space $L(\widehat A, \frac ns)(\o)$ agrees (up to equivalent norms) with a (generalized) Lorentz-Zygmund space. Thus, embedding \eqref{omhigher2} can be written as
\begin{equation}\label{dec261}
W^{s,A}(\o) \to \,\, \begin{cases} L^{\frac {np}{n-sp}, p; \frac \alpha p} (\o)& \quad  \text{ if $1\leq p< \frac ns$ }
\\
L^{\infty, \frac ns; \frac {\alpha s}n-1} (\o) &  \quad \text{if  $p=\frac ns$ and $\alpha < \frac ns -1$}
\\
L^{\infty, \frac ns; - \frac sn, -1} (\o)  &  \quad \text{if  $p=\frac ns$ and $\alpha = \frac ns -1$,}
\end{cases}
\end{equation}
all target spaces being optimal among all rearrangement-invariant spaces. Embedding \eqref{dec261} is   well known in the integer-order case -- see \cite{cianchi_forum}.  The results of the latter paper encompass, in particular,  classical embeddings
of \cite{Oneil, Peetre}  and of \cite{BW, Han} under \eqref{jan1} and \eqref{jan2}, respectively.

\end{example}

Theorems \ref{sobperp} and \ref{omhigher} rely upon their analogues in $\rn$ and on the following extension domain for fractional Orlicz-Sobolev spaces on Lipschitz domains.

\begin{theorem}{\rm{\bf[Extension operator for fractional Orlicz-Sobolev spaces]}}\label{ext}
Let $\Omega$ be a bounded Lipschitz domain in $\rn$,  with $n\geq 1$.
Assume that $s\in (0, 1)$ and let $A$ be a Young function. Then, there exist a
linear extension operator $\mathcal{E}: W^{s, A}(\o) \to W^{s, A}(\Rn)$ and a constant $C=C(s,\Omega)$  such that
\begin{equation}\label{a200}
    \mathcal{E}(u) =u \quad \text{in}\;\; \o\,,
\end{equation}
and
\begin{equation}\label{ext1}
 \|\mathcal{E} (u)\|_{W^{s,A}(\rn)}\leq C  \|u\|_{W^{s,A}(\Omega)}
\end{equation}
for  every $u \in W^{s,A}(\Omega)$.
\\
Moreover, there exists a constant $C=C(s,\Omega)$ such that
\begin{equation}\label{ext2}
|\mathcal{E} (u)|_{s,A, \rn}\leq C
|u|_{s,A, \o}
\end{equation}
for every
 $u \in V^{s,A}_{\perp}(\Omega)$.
\end{theorem}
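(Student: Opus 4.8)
The plan is to follow the classical localization-plus-reflection scheme used for the fractional Sobolev spaces $W^{s,p}$ (as in \cite[Theorem~5.4]{DPV}), replacing the power-law computations by estimates that use only the convexity and monotonicity of $A$ and the Luxemburg-type definition \eqref{aug340} of the seminorm $|\,\cdot\,|_{s,A,\o}$. Two auxiliary facts will be isolated first. \textbf{(i) Multiplication by a Lipschitz cut-off is bounded on $W^{s,A}$:} if $\phi\in C^{1}_{0}(\rn)$, then $\|\phi u\|_{W^{s,A}(\o)}\le C\big(\|\phi\|_{\infty},\|\nabla\phi\|_{\infty},\operatorname{diam}\o,s\big)\,\|u\|_{W^{s,A}(\o)}$: one writes $|\phi(x)u(x)-\phi(y)u(y)|\le\|\phi\|_{\infty}|u(x)-u(y)|+\|\nabla\phi\|_{\infty}|x-y|\,|u(y)|$, splits the functional in \eqref{aug340} by convexity, bounds the first summand by $|u|_{s,A,\o}$, and controls the commutator term $\iint A\big(\tfrac{c|u(y)|\,|x-y|^{1-s}}{\lambda}\big)\tfrac{dx\,dy}{|x-y|^{n}}$ by passing to polar coordinates in $z=x-y$ and using the substitution (and monotonicity of $A(\tau)/\tau$) already exploited in \eqref{E:24}: $\int_{0}^{\operatorname{diam}\o}A(c\rho^{1-s})\tfrac{d\rho}{\rho}=\tfrac1{1-s}\int_{0}^{c(\operatorname{diam}\o)^{1-s}}A(\tau)\tfrac{d\tau}{\tau}\le\tfrac1{1-s}A\big(c(\operatorname{diam}\o)^{1-s}\big)$, which reduces the commutator to a multiple of $\int_{\o}A(c|u|)\,dx$. \textbf{(ii) Bi-Lipschitz changes of variables and extension by zero are bounded:} since $|T(x)-T(y)|\approx|x-y|$ and the Jacobian of a bi-Lipschitz map is bounded above and below, $v\mapsto v\circ T$ maps $W^{s,A}$ isomorphically with constants depending only on the Lipschitz constants of $T$ and $T^{-1}$; and if $v$ has support compactly contained in an open set $\o$, then $\mathcal E_{0}(v)\in W^{s,A}(\rn)$ with $\|\mathcal E_{0}(v)\|_{W^{s,A}(\rn)}\le C(\delta,s)\|v\|_{W^{s,A}(\o)}$, where $\delta=\operatorname{dist}(\operatorname{supp}v,\partial\o)>0$, the only new contribution being $\iint_{\operatorname{supp}v\times(\rn\setminus\o)}A\big(\tfrac{|v(x)|}{\lambda|x-y|^{s}}\big)\tfrac{dy\,dx}{|x-y|^{n}}$, finite because there $|x-y|\ge\delta$ and, up to a dimensional factor, $\int_{\{|z|\ge\delta\}}A\big(\tfrac{c}{|z|^{s}}\big)\tfrac{dz}{|z|^{n}}\approx\int_{\delta}^{\infty}A(c\rho^{-s})\tfrac{d\rho}{\rho}=\tfrac1s\int_{0}^{c\delta^{-s}}A(\tau)\tfrac{d\tau}{\tau}\le\tfrac1s A(c\delta^{-s})$.

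The core step is a \textbf{reflection estimate in the model half-cube}: for $v\in W^{s,A}(\mathcal Q_{+})$ its even reflection $\overline v(x',x_{n})=v(x',|x_{n}|)$ satisfies $\|\overline v\|_{W^{s,A}(\mathcal Q)}\le C\|v\|_{W^{s,A}(\mathcal Q_{+})}$ with an absolute constant $C$. The $L^{A}$-bound is immediate from $\int_{\mathcal Q}A(|\overline v|/\lambda)=2\int_{\mathcal Q_{+}}A(|v|/\lambda)$. For the Gagliardo functional I would split $\mathcal Q\times\mathcal Q$ according to the signs of $x_{n}$ and $y_{n}$: on the $(+,+)$ block the integrand is the one for $v$; the $(-,-)$ block reduces to it by the measure-preserving isometry $x\mapsto(x',-x_{n})$; and on the mixed blocks, writing $\tilde y=(y',-y_{n})$ when $y_{n}<0<x_{n}$, one has the elementary inequality $|x-y|\ge|x-\tilde y|$, so that, since $A$ is non-decreasing and $t\mapsto t^{-n}$ is decreasing, $A\big(\tfrac{|\overline v(x)-\overline v(y)|}{\lambda|x-y|^{s}}\big)\tfrac1{|x-y|^{n}}\le A\big(\tfrac{|v(x)-v(\tilde y)|}{\lambda|x-\tilde y|^{s}}\big)\tfrac1{|x-\tilde y|^{n}}$, and the change $y\mapsto\tilde y$ again reduces the block to the functional for $v$. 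Tracking the resulting factor $4$ through the Luxemburg norm by means of \eqref{kt} (equivalently $A(t/N)\le A(t)/N$ for $N\ge1$) yields the asserted bound. For $n=1$ the same reflection is performed across the two endpoints of each interval.

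With these at hand I would build $\mathcal E$ by a partition of unity. Choose a finite open cover of $\overline\o$ consisting of a ball $B_{0}\Subset\o$ and the boundary balls $B_{1},\dots,B_{k}$ (with the bi-Lipschitz charts $T_{j}\colon\mathcal Q\to B_{j}$, $T_{j}(\mathcal Q_{+})=B_{j}\cap\o$, from the definition of Lipschitz domain), together with smooth $\phi_{0},\dots,\phi_{k}$, $\operatorname{supp}\phi_{j}\Subset B_{j}$, $\sum_{j}\phi_{j}\equiv1$ on $\overline\o$. Define $\mathcal E(u)=\mathcal E_{0}(\phi_{0}u)+\sum_{j=1}^{k}\mathcal E_{0}\big(\overline{(\phi_{j}u)\circ T_{j}}\circ T_{j}^{-1}\big)$, where for $j\ge1$ the $j$-th summand is obtained by restricting $\phi_{j}u$ to $B_{j}\cap\o$, pulling it back to $\mathcal Q_{+}$ by $T_{j}$, reflecting it to $\mathcal Q$, pushing it forward to $B_{j}$ by $T_{j}^{-1}$, and extending by zero — legitimate because $\operatorname{supp}\phi_{j}\Subset B_{j}$ forces the support of the transported, reflected function to be compactly contained in $B_{j}$. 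Linearity is clear; $\mathcal E(u)=u$ in $\o$ (since $\sum_{j}\phi_{j}\equiv1$ there and each summand equals $\phi_{j}u$ on $\o$) gives \eqref{a200}; and \eqref{ext1} follows by chaining the bounds (i), (ii) and the reflection estimate along each summand, using also that for a function supported in $B_{j}\cap\o$ the $W^{s,A}(B_{j}\cap\o)$-norm is dominated by the $W^{s,A}(\o)$-norm. Finally, for $u\in V^{s,A}_{\perp}(\o)$ a Poincaré-type inequality $\|u\|_{L^{A}(\o)}\le C|u|_{s,A,\o}$ on the bounded domain $\o$ (established along the lines of Proposition~\ref{poinc0}; cf. Proposition~\ref{poinc}) gives $\|u\|_{W^{s,A}(\o)}\le C|u|_{s,A,\o}$, and then \eqref{ext2} is immediate from \eqref{ext1} since $|\,\cdot\,|_{s,A,\rn}\le\|\,\cdot\,\|_{W^{s,A}(\rn)}$.

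The main obstacle I anticipate is not conceptual but the bookkeeping imposed by the implicit Luxemburg-type definition of $|\,\cdot\,|_{s,A,\o}$: every inequality produced above (the commutator splitting, the four-block reflection, the bi-Lipschitz distortion, the passage through the charts and cut-offs) must be converted into an inequality between seminorms by a suitable rescaling of $\lambda$ via \eqref{kt}, and one must ensure that none of the emerging constants depends on $A$ or on $u$, so that in the end $C=C(s,\o)$. The genuinely Orlicz-specific analytic inputs are the two substitutions $\tau=c\rho^{1-s}$ and $\tau=c\rho^{-s}$ combined with the monotonicity of $A(\tau)/\tau$, which are exactly what make the commutator term and the extension-by-zero term finite for an arbitrary Young function, with no $\Delta_{2}$- or growth assumption.
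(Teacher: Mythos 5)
Your proposal is correct and follows essentially the same route as the paper's proof: the decomposition into (a) extension by zero for compactly supported functions (controlled via the monotonicity of $A(\tau)/\tau$ and property \eqref{kt}), (b) even reflection in the model half-cube (with the same constant $4$ coming from the estimate $|x-y|\ge|x-\tilde y|$ on the mixed blocks), (c) boundedness of Lipschitz cutoffs (commutator estimate plus the substitution $\tau=c\rho^{1-s}$), (d) bi-Lipschitz changes of variables via the charts $T_j$, and (e) a partition-of-unity gluing, together with the Poincar\'e inequality of Proposition \ref{poinc} to deduce \eqref{ext2} from \eqref{ext1}. The only cosmetic departure is that you multiply by the cutoff $\phi_j$ before pulling back, reflecting, and pushing forward, whereas the paper applies the cutoff $\zeta_j$ after; both orders work since in either case the cutoff forces compact support inside $B_j$ so that extension by zero applies.
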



The following Poincar\'e type inequality, of independent interest, is one ingredient in the proof of Theorem \ref{ext}.

\begin{proposition}
{\rm{\bf[Fractional Orlicz--Poincar\'e inequality]}}
\label{poinc} Let $\Omega$ be a bounded open set in $\rn$, with $n \geq 1$. Assume that $s\in (0,1)$ and that $A$ is a Young function. If $u \in V^{s,A}(\Omega)$, then $u \in L^A(\Omega)$. Moreover, there exists a constant $C=C(s, \Omega)$ such that
\begin{equation}\label{sep1}
\|u- u_\o\|_{L^A(\Omega)} \leq C |u|_{s,A, \Omega}
\end{equation}
for every function $u \in V^{s,A}(\Omega)$. In particular,
\begin{equation}\label{dec210}
\int_\Omega A\big(\big|u(x)-u_\Omega \big|\big)\, dx
\leq \int _\Omega   \int_\Omega A\left(\frac{C|u(x)-u(y)|}{|x-y|^s}\right)\frac{\,dx\,dy}{|x-y|^n}
\end{equation}
for every function $u\in \mathcal M_d(\o)$.
\end{proposition}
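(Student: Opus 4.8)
The plan is to follow the classical Poincar\'e scheme, taking care that $A$ is not assumed to satisfy a $\Delta_2$ condition, so that the argument of $A$ cannot be freely rescaled. Write $d=\operatorname{diam}(\Omega)<\infty$, fix $u\in V^{s,A}(\Omega)$, and set $\lambda=|u|_{s,A,\Omega}$; we may assume $\lambda>0$, since otherwise $u$ is a.e.\ constant and there is nothing to prove. By definition of the Luxemburg-type seminorm, $\int_\Omega\int_\Omega A\big(\tfrac{|u(x)-u(y)|}{\lambda|x-y|^s}\big)\tfrac{dx\,dy}{|x-y|^n}\le1$. First I would check that $u\in L^1(\Omega)$, so that the mean $u_\Omega$ is meaningful: by Fubini's theorem the finiteness of this double integral produces a point $x_0\in\Omega$ with $\int_\Omega A\big(\tfrac{|u(x_0)-u(y)|}{\lambda|x_0-y|^s}\big)\tfrac{dy}{|x_0-y|^n}<\infty$, and since $|x_0-y|\le d$ on $\Omega$ gives $|x_0-y|^s\le\max\{1,d^s\}$ and $|x_0-y|^{-n}\ge d^{-n}$, it follows that $y\mapsto|u(x_0)-u(y)|$ belongs to $L^A(\Omega)$, hence, as $|\Omega|<\infty$, to $L^1(\Omega)$ by the second embedding in \eqref{B.3'}; as $u(x_0)$ is finite, $u\in L^1(\Omega)$.

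The core of the argument combines Jensen's inequality with a pointwise comparison. Since $u(x)-u_\Omega=\tfrac1{|\Omega|}\int_\Omega(u(x)-u(y))\,dy$, Jensen's inequality applied to the convex function $A$ with respect to the probability measure $\tfrac{dy}{|\Omega|}$ on $\Omega$, together with the monotonicity of $A$, yields for every $\mu>0$
\[
\int_\Omega A\!\left(\frac{|u(x)-u_\Omega|}{\mu}\right)dx\le\frac1{|\Omega|}\int_\Omega\int_\Omega A\!\left(\frac{|u(x)-u(y)|}{\mu}\right)dx\,dy.
\]
On the other hand, for $x\neq y$ in $\Omega$ and every $C\ge\max\{1,d^s\}$, the bounds $|x-y|^s\le\max\{1,d^s\}\le C$ and $d^n|x-y|^{-n}\ge1$ give
\[
A\!\left(\frac{|u(x)-u(y)|}{C\lambda}\right)\le A\!\left(\frac{|u(x)-u(y)|}{\lambda|x-y|^s}\right)\le\frac{d^n}{|x-y|^n}\,A\!\left(\frac{|u(x)-u(y)|}{\lambda|x-y|^s}\right).
\]
Choosing $\mu=C\lambda$ and inserting this estimate, the normalization $\int_\Omega\int_\Omega A\big(\tfrac{|u(x)-u(y)|}{\lambda|x-y|^s}\big)\tfrac{dx\,dy}{|x-y|^n}\le1$ gives $\int_\Omega A\big(\tfrac{|u(x)-u_\Omega|}{C\lambda}\big)dx\le d^n/|\Omega|$. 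Since $K:=d^n/|\Omega|\ge1$ and $A(t/K)\le A(t)/K$ by \eqref{kt}, replacing $C$ by $KC$ turns the right-hand side into $1$, whence $\|u-u_\Omega\|_{L^A(\Omega)}\le KC\lambda$; this is \eqref{sep1} with $C(s,\Omega)=d^n\max\{1,d^s\}/|\Omega|$. In particular $u-u_\Omega\in L^A(\Omega)$, and, constants belonging to $L^A(\Omega)$ because $|\Omega|<\infty$, we conclude $u\in L^A(\Omega)$.

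Finally, for the integral form \eqref{dec210} — where $\mathcal M_d(\Omega)=\mathcal M(\Omega)$ since $|\Omega|<\infty$ — the same scheme applies without normalizing by $\lambda$: if the right-hand side is infinite there is nothing to prove, while if it is finite then $u\in V^{s,A}(\Omega)$, hence $u\in L^1(\Omega)$ as above, and the Jensen inequality with $\mu=1$ together with the pointwise bound $A(|u(x)-u(y)|)\le\tfrac{|\Omega|}{|x-y|^n}A\big(\tfrac{C|u(x)-u(y)|}{|x-y|^s}\big)$ — valid for $C=C(s,\Omega)$ sufficiently large, as one checks separately according as $|x-y|^n\le|\Omega|$ or $|x-y|^n>|\Omega|$, invoking the convexity of $A$ in the second case — yields exactly \eqref{dec210}. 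The only genuine subtlety here is the absence of a $\Delta_2$ condition on $A$: the geometric factors $d^s$ and $d^n/|\Omega|$ cannot be absorbed by dilating the argument of $A$, and must instead be passed through the Luxemburg normalization and then removed via the one admissible direction of convexity, $A(t/K)\le A(t)/K$ for $K\ge1$. A minor preliminary point, handled by the Fubini argument at the start, is that $u\in L^1(\Omega)$ must be secured before the Poincar\'e inequality itself is available.
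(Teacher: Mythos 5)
Your proof is correct, and the central estimate --- Jensen's inequality for the convex function $A$ against the probability measure $dy/|\Omega|$, combined with the pointwise comparison furnished by $|x-y|\le d:=\operatorname{diam}\Omega$, followed by absorption of the geometric constant via \eqref{kt} --- is essentially the same as the paper's chain \eqref{sep2}. Where you genuinely diverge is in securing $u\in L^1(\Omega)$ so that $u_\Omega$ is well defined. You settle this up front by a Fubini slicing argument: the finite modular yields a point $x_0$ with $\int_\Omega A\big(\tfrac{|u(x_0)-u(y)|}{\lambda|x_0-y|^s}\big)\tfrac{dy}{|x_0-y|^n}<\infty$, whence $u(x_0)-u\in L^A(\Omega)\hookrightarrow L^1(\Omega)$ by \eqref{B.3'}, and so $u\in L^1(\Omega)$. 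The paper instead first proves the modular inequality \eqref{dec210} under the \emph{a priori} hypothesis $u\in L^1(\Omega)$, and then removes this hypothesis by a truncation argument: it applies the inequality to $T_t(u)=\min\{|u|,t\}\operatorname{sign}(u)$, which is bounded and hence automatically in $L^1$, trades the truncation-sensitive mean for the truncation-stable median $\med(u)$ through a comparison lemma from \cite{EMilman}, and passes to the limit $t\to\infty$ by Fatou. Your route is shorter, more elementary, and avoids the median lemma altogether. Two minor points worth tightening: the fact that the modular is $\le1$ \emph{at} $\lambda=|u|_{s,A,\Omega}$ itself (and not merely for $\lambda'>\lambda$) is a standard monotone-convergence property of Luxemburg norms rather than a matter of definition; and the claim $d^n/|\Omega|\ge1$, which you use to invoke \eqref{kt}, deserves the one-line remark that $\Omega$ is contained in a cube of side $d$.
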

\begin{proof} Let $u \in V^{s,A}(\o)$. Suppose, for the time being,   that we already know that $u \in L^1(\Omega)$. Hence, $u_\Omega$ is well defined. Since $\Omega$ is bounded, we have that $|x-y|\leq C$ for some constant $C=C(\Omega)$. Thus,  there exist  constants $C=C(s, \Omega)$ and $C'=C'(s, \Omega)$ such that
\begin{align}\label{sep2}
\int_\Omega A\big(\big|u(x)-u_\Omega \big|\big)\, dx & = \int_\Omega A\bigg(\bigg|\frac 1{|\Omega|}\int _\Omega (u(x)-u(y))dy\bigg|\bigg) \, dx  
\int_\Omega \frac 1{|\Omega|}\int _\Omega A(|u(x)-u(y)|)dy \, dx \\ \nonumber & \leq
\frac C{|\Omega|}\int _\Omega   \int_\Omega A\left(\frac{C|u(x)-u(y)|}{|x-y|^s}\right)\frac{\,dx\,dy}{|x-y|^n}
\leq \int _\Omega   \int_\Omega A\left(\frac{C'|u(x)-u(y)|}{|x-y|^s}\right)\frac{\,dx\,dy}{|x-y|^n}
\end{align}
Note that the first inequality is due to Jensen inequality and the last one holds owing to property \eqref{kt}.
This established inequality \eqref{dec210}.
Inequality \eqref{sep1} follows on applying  \eqref{sep2}with $u$ replaced by  $u/\lambda$ for any $\lambda >0$.
\\It remains to show that, if
$u$ is any function  in $V^{s,A}(\Omega)$, then $u \in L^1(\o)$.
Given $t>0$, denote by $T_t: \R \to \R$ the function defined as $T_t(r) = \min\{|r|, t\}{\rm sign} (r)$ for $r \in\R$.
 One can verify that
$$|T_t(u) (x) - T_t(u)(y)| \leq |u(x)-u(y)| \quad \text{for $x, y \in \o$.}$$
Since $T_t(u) \in L^\infty(\o)$, and hence $T_t(u)\in L^1(\o)$, we may apply   inequality \eqref{sep2} with $u$ replaced by $T_t(u)$ and deduce  that
\begin{align}\label{sep3}
\int_\Omega A\big(\big|T_t(u) (x)- (T_t(u))_\o \big|\big)\, dx & \leq
\int _\Omega   \int_\Omega A\left(\frac{C'|T_t(u) (x)-T_t(u) (y)|}{|x-y|^s}\right)\frac{\,dx\,dy}{|x-y|^n}
\\ \nonumber & \leq \int _\Omega   \int_\Omega A\left(\frac{C'|u(x)-u(y)|}{|x-y|^s}\right)\frac{\,dx\,dy}{|x-y|^n}
\end{align}
for  $t>0$. Next, denote by $\med(u)$ the median of $u$ given by
$\med(u)= \inf\{\tau\in \R: |\{u>\tau\}|\leq |\o|/2\},$
and observe that
\begin{align}\label{sep4}\med(T_t(u))= \med(u)\quad \text{if   $t > |\med(u)|$. }
\end{align}
 Also, there exists a constant $C=C(|\Omega|)$ such that
\begin{align}\label{sep5}
\int_\Omega A(|v(x)- \med(v)|)\, dx \leq \int_\Omega A(C|v(x)- v_\Omega|)\, dx
\end{align}
for every function $v \in L^1(\Omega)$, see e.g. \cite[Lemma 2.1]{EMilman}. From inequalities  \eqref{sep3}--\eqref{sep5} we infer that
\begin{align}\label{sep6}
\int_\Omega A\big(\big|T_t(u) (x)- \med(u) \big|\big)\, dx
 \leq \int _\Omega   \int_\Omega A\left(\frac{C|u(x)-u(y)|}{|x-y|^s}\right)\frac{\,dx\,dy}{|x-y|^n}
\end{align}
for some constant $C$ and for every $t> |\med(u)|$. Since $\lim _{t\to \infty}T_t(u)  = u$ a.e. in $\Omega$, passing to the limit as $t\to \infty$ in inequality \eqref{sep6} yields, by Fatou's lemma,
\begin{align}\label{sep7}
\int_\Omega A\big(\big|u(x)- \med(u) \big|\big)\, dx   \leq \int _\Omega   \int_\Omega A\left(\frac{C|u(x)-u(y)|}{|x-y|^s}\right)\frac{\,dx\,dy}{|x-y|^n}.
\end{align}
Hence, given any $\lambda >0$,
\begin{align}\label{sep8}
\int_\Omega A(|u(x)|/\lambda)\, dx   & \leq  \int_\Omega A\left(\frac{2|u(x)-\med(u) |}\lambda \right)\, dx   + |\o|A(2|\med(u)|/\lambda )\\ & \nonumber \leq
\int _\Omega   \int_\Omega A\left(\frac{2C|u(x)-u(y)|}{\lambda |x-y|^s}\right)\frac{\,dx\,dy}{|x-y|^n}  + |\o|A(2\med(u) |/\lambda) < \infty.
\end{align}
Since $u \in V^{s,A}(\o)$, the double integral  in equation \eqref{sep8} is finite provided that $\lambda $ is sufficiently large.
This shows that $u \in L^A(\Omega)$, and hence, owing to the second embedding in \eqref{B.3'},  $u \in L^1(\Omega)$.
\end{proof}

\begin{proof}[Proof of Theorem \ref{ext}]
Inequality \eqref{ext2} is a consequence of  \eqref{ext1} and \eqref{sep1}.
\\
The proof of inequality \eqref{ext1} is patterned on that of \cite[Theorem 5.4 ]{DPV}, and is split in  steps. We focus the case when $n \geq 2$, the one-dimensional case being analogous, and even simpler.

\smallskip
\noindent
\emph {Step1.} Let $E$  be a compact set such that $E\subset \o$. Let $\mathcal E_0$ be the linear operator defined by \eqref{E0}.
Then there exists a constant $C=C(n, s, E, \Omega)$ such that, if $u\in W^{s,A}(\o)$
and  $u=0$ in $\o\setminus E$, then
 $\mathcal{E}_0(u)\in W^{s, A}(\rn)$ and
\begin{equation}\label{a2}
\|\mathcal{E}_0(u)\|_{W^{s, A}(\rn)}\leq C \| u \|_{W^{s, A}(\Omega)}\,.
\end{equation}

\smallskip
\par\noindent
Plainly,
\begin{equation}\label{a3}
\|\mathcal{E}_0(u)\|_{L^A(\rn)} = \|u\|_{L^A(\Omega)}\,.
\end{equation}
It thus suffices to show that
\begin{equation}\label{a5}
\big|\mathcal{E}_0(u)\big|_{s, A, \rn} \leq C  \| u \|_{W^{s, A}(\Omega)}\,
\end{equation}
for some constant $C=C(n,s,E, \o)$. Since $\mathcal{E}_0(u)$ vanishes outside $E$,
\begin{align}\label{a6}
\int_{\rn} \int_{\rn} A\left( \frac{|\mathcal{E}_0(u)(x) - \mathcal{E}_0(u)(y)|}{|x-y|^s}\right) \,\frac{dx\, dy}{|x-y|^n}& =
\int_\Omega \int_\Omega A\left( \frac{|u(x) - u(y)|}{|x-y|^s}\right) \,\frac{dx\, dy}{|x-y|^n}
\\
& \quad + 2 \int_E \left(\int_{\rn\setminus \Omega} A\left( \frac{|u(x) |}{|x-y|^s}\right) \,\frac{ dy}{|x-y|^n}\right) \;dx. \nonumber
\end{align}
Set  $\ddd= \operatorname{dist}(E, \rn \setminus \Omega)$. Thereby,
\begin{equation}\label{a100}
A\left(\frac{|u(x)|}{|x-y|^s}\right)= A\left( \frac{|u(x)|}{\ddd ^s} \frac{\ddd ^s}{|x-y|^s}\right)\leq A\left( \frac{|u(x)|}{\ddd ^s}\right) \frac{\ddd ^s}{|x-y|^s}\quad \text{if $x \in E$ and $y\in \rn\setminus \Omega$.}
\end{equation}
Notice that the last inequality holds by property \eqref{kt}, inasmuch as
$\frac{\ddd ^s}{|x-y|^s}\leq 1$. Hence,
\begin{align}\label{a7}
\int_E \int_{\rn \setminus \Omega}  A\left( \frac{|u(x)|}{|x-y|^s}\right)\; \frac{dx\,dy}{|x-y|^n}& \leq \ddd ^s\int_E \left(\int_{\rn \setminus\Omega} \frac{dy}{|x-y|^{n+s}}\right) A\left( \frac{|u(x)|}{\ddd ^s}\right) \; dx
\\
&
\leq
\ddd ^s \left(\int_{\rn \setminus \Omega} \frac{dy}{{\rm dist}(y,  E)^{n+s}}\right)\; \int_\Omega  A\left( \frac{|u(x)|}{\ddd ^s}\right)\;dx\,.\nonumber
\end{align}
The last integral over $\rn \setminus \Omega$ in equation \eqref{a7} is convergent, since $n+s>n$ and ${\rm dist}(y,  E)\geq \ddd>0$ if $y \in \rn \setminus \Omega$. Inequalities \eqref{a6} and \eqref{a7}, applied with $u$ replaced by $u/{\lambda}$, yield
$$\int_{\rn} \int_{\rn} A\left( \frac{|\mathcal{E}_0(u)(x) - \mathcal{E}_0(u)(y)|}{\lambda |x-y|^s}\right) \,\frac{dx\, dy}{|x-y|^n} \leq
\int_\Omega \int_\Omega A\left( \frac{|u(x) - u(y)|}{\lambda |x-y|^s}\right) \,\frac{dx\, dy}{|x-y|^n}
+  C \int_\Omega  A\left( \frac{|u(x)|}{\lambda \ddd ^s}\right)\;dx$$
for some constant $C=C(n,s,E, \o)$.  Hence, inequality \eqref{a5} follows.

\smallskip
\noindent
\\ \emph {Step 2.}
Assume that $\Omega$ is symmetric about the hyperplane $\{x_n=0\}$. Set $\Omega_{+} =\{ x\in \Omega : x_n>0\}$ and $\Omega_{-}=\{x\in \Omega: x_n\leq 0\}$. Given any  function $u: \o_+ \to \R$,  define the function  $\mathcal E_1(u) : \o \to \R$ as
\begin{equation}\label{a9}
\mathcal E_1(u)(x)=\begin{cases}
 u(x', x_n) & \text{if $x_n\geq 0$}
\\
u(x', -x_n) & \text{if $x_n<0\,.$}
\end{cases}
\end{equation}
If $u \in W^{s, A}(\Omega_+)$,  then $\mathcal E_1(u) \in W^{s, A}(\Omega)$ and
\begin{equation}\label{a10}
\|\mathcal E_1(u)\|_{W^{s,A}(\Omega)}\leq 4 \, \|u\|_{W^{s,A}(\Omega_+)}\,.
\end{equation}
Clearly,
\begin{equation}\label{a12}
\|\mathcal E_1(u)\|_{L^A(\Omega)}\leq 2 \, \|u\|_{L^A(\Omega_+)}\,.
\end{equation}
On the other hand, given $\lambda >0$,
\begin{align}\label{a14}
\int_\Omega\int_\Omega A\left(\frac{|\mathcal E_1(u)(x) - \mathcal E_1(u)(y)|}{\lambda |x-y|^s}\right)\; \frac {dx\,dy}{|x-y|^n} &= \int_{\Omega_+}\int_{\Omega_+} A\left(\frac{|u(x) - u(y)|}{\lambda|x-y|^s}\right)\; \frac {dx\,dy}{|x-y|^n}
\\
& \quad + 2 \int_{\Omega_+}\int_{\Omega_-} A\left(\frac{|u(x) - u(y', -y_n)|}{\lambda |x-y|^s}\right)\; \frac {dx\,dy}{|x-y|^n}\nonumber
\\
&
 \quad + \int_{\Omega_-}\int_{\Omega_-} A\left(\frac{|u(x', -x_n) - u(y', -y_n)|}{\lambda|x-y|^s}\right)\; \frac {dx\,dy}{ |x-y|^n} \nonumber
\\
&\leq 4\,\int_{\Omega_+}\int_{\Omega_+} A\left(\frac{|u(x) - u(y)|}{\lambda |x-y|^s}\right)\; \frac {dx\,dy}{|x-y|^n}\,,\nonumber
\end{align}
where the last inequality is due to the fact that
$(x_n - y_n)^2 \geq (x_n+y_n)^2$ if $x_n \geq 0$ and $y_n\leq 0$.
Inequality \eqref{a14} implies that
\begin{equation}\label{a13}
\big|\mathcal E_1(u)\big|_{s,A, \Omega} \leq 4 |u|_{s,A, \Omega_+}.
\end{equation}
Inequality \eqref{a10} follows from \eqref{a12} and \eqref{a13}.

\smallskip
\noindent
\\ \emph {Step 3.}
%
%
Let $\zeta : \Omega \to [0, 1]$ be a Lipschitz continuous function whose Lipschitz constant agrees with $L$. Then there exists a constant $C=C(s, L, \o)$ such that for every
$u\in W^{s, A}(\Omega)$, one has that  $\zeta \, u  \in W^{s, A}(\Omega)$ and
\begin{equation}\label{a15}
\|\zeta\,u\|_{W^{s,A}(\Omega)} \leq C \, \|u\|_{W^{s,A}(\Omega)}.
\end{equation}

\smallskip
\noindent
Inasmuch as $0 \leq \zeta \leq 1$,
\begin{align}\label{a15bis}
\| \zeta \, u\|_{L^A(\Omega)}\leq \|u\|_{L^A(\Omega)}.
\end{align}
Moreover,
\begin{align}\label{a16}
\int_\Omega &\int_\Omega A\left(\frac{|\zeta(x)\, u(x) - \zeta(y)\, u(y)|}{|x-y|^s}\right)\, \frac{dx\,dy}{|x-y|^n}
\\
&\leq
\int_\Omega\int_\Omega A\left(\frac{2|u(y)||\zeta(x) - \zeta(y)|}{|x-y|^s}\right)\, \frac{dx\,dy}{|x-y|^n} +
\int_\Omega\int_\Omega A\left(\frac{2|\zeta(x)| |u(x) - u(y)||}{|x-y|^s}\right)\, \frac{dx\,dy}{|x-y|^n}\nonumber
\\
&\leq
 \int_\Omega\int_\Omega A\left(\frac{2|u(y)||\zeta(x) - \zeta(y)|}{|x-y|^s}\right)\, \frac{dx\,dy}{|x-y|^n} +
\int_\Omega\int_\Omega A\left(\frac{2 |u(x) - u(y)||}{|x-y|^s}\right)\, \frac{dx\,dy}{|x-y|^n}.\nonumber
\end{align}
Since $\zeta$ has Lipschitz constant $L$,
\begin{align}\label{a17}
&\int_\Omega\int_\Omega A\left(\frac{2|u(y)||\zeta(x) - \zeta(y)|}{|x-y|^s}\right)\, \frac{dx\,dy}{|x-y|^n}
\\
&\leq
\int_\Omega\int_{\Omega\cap \{|x-y|\leq 1\}} A\left(2\, L\, |u(y)||x-y|^{1-s}\right)\, \frac{dx\,dy}{|x-y|^n}
+ \int_\Omega\int_{\Omega\cap \{|x-y|> 1\}} A\left(\frac{2|u(y)||\zeta(x)-\zeta(y)|}{|x-y|^s}\right)\, \frac{dx\,dy}{|x-y|^n}
\nonumber
\\
&
\leq
\int_\Omega\int_{\Omega\cap \{|x-y|\leq 1\}} A\left(2\, L\, |u(y)||x-y|^{1-s}\right)\, \frac{dx\,dy}{|x-y|^n}
+\int_\Omega \int_{\Omega\cap \{|x-y|> 1\}} A\left(\frac{2|u(y)|}{|x-y|^s}\right)\; \frac{dx\,dy}{|x-y|^n}\nonumber
\\
&
\leq \int_\Omega \left(\int_{\Omega\cap \{|x-y|\leq 1\}}\frac{dx}{|x-y|^{n+s-1}}\right)\,  A(2\, L\,|u(y)|)\;dy
 + \int_\Omega \left( \int_{\Omega\cap \{|x-y|> 1\}} \;\frac {dx}{|x-y|^{n+s}}\right)\,A\left(2|u(y)|)\right)\;dy\nonumber
\\
&
\leq  C \int_\Omega A(C'\,|u(y)|) \; dy  \leq \int_\Omega A(C''\,|u(y)|) \; dy \nonumber
\end{align}
for some constants $C=C(s,\o)$, $C'=C'(L)$ and $C''=C''(s,L,\o)$. Observe that the second inequality holds since
 $|\zeta(x)-\zeta(y)|\leq 1$, the third one holds owing to property \eqref{kt}, and the fourth one since $n+s-1<n$ and $n+s>n$, and the last one by property \eqref{kt} again. From inequalities \eqref{a16} and \eqref{a17}, applied with $u$ replaced by $u/\lambda$ for any $\lambda >0$, we infer that
\begin{equation}\label{a17bis}
|\zeta\,u|_{s,A, \Omega} \leq C \, \|u\|_{W^{s,A}(\Omega)}.
\end{equation}
for some constant $C=C(s, L, \o)$. Coupling inequality \eqref{a15bis} with \eqref{a17bis} yields \eqref{a15}.

\smallskip
\noindent
\emph {Step4.}
Conclusion.

\smallskip
\noindent Let $\mathcal{Q}$, $\mathcal{Q}_+$, $\{B_j\}_{j=1}^k$ and $\{T_j\}_{j=1}^k$ be as in the definition of bounded Lipschitz domain at the beginning of this section.
Since $\rn= \bigcup_{j=1}^k B_j \cup\left(\rn\setminus\partial \Omega\right)$,
there exists a smooth partition of unity $\{\zeta _j\}_{j=0}^k$ with respect to this covering of $\rn$ such that ${\rm supp }\, \zeta _0  \subset\rn\setminus\partial \Omega$, ${\rm supp}\, \zeta_j\subset B_j$ for  $j=1, \ldots,k$, $0\leq \zeta_j \leq 1$ for   $j=0, 1, \ldots, k$, and $\sum_{j=0}^k \zeta_j=1$ in $\rn$.
\\ Given any function $u \in W^{s,A}(\o)$, define the function $v_j : \mathcal{Q}_+ \to \R$, for $j=1, \dots ,k$  as
 \begin{equation*}
 v_j(\hat y)= u\big(T_j(\hat y)\big) \quad \text{for  $\hat y\in \mathcal{Q}_+$}\,.
 \end{equation*}
We claim  that $v_j\in W^{s,A}(\mathcal{Q}_+)$ for $j=1, \dots ,k$, and
\begin{equation}\label{nov400}
\|v_j\|_{W^{s,A}(\mathcal{Q}_+)}\leq C \, \|u\|_{W^{s, A}(B_j \cap \o)}\,
\end{equation}
for some constant $C=C(s, \Omega)$.
 This claim follows from the following chain:
\begin{align}\label{a20}
\int_{\mathcal{Q}_+}&\int_{\mathcal{Q}_+} A\left(\frac{|v_j(\hat{x})-v_j(\hat{y})|}{|\hat{x}-\hat{y}|^s}\right)\; \frac{d\hat{x}\,d\hat{y}}{|\hat{x}-\hat{y}|^n}
= \int_{\mathcal{Q}_+}\int_{\mathcal{Q}_+} A\left(\frac{|u(T_j(\hat{x}))- u(T_j(\hat{y}))|}{|\hat{x}-\hat{y}|^s}\right)\; \frac{d\hat{x}\,d\hat{y}}{|\hat{x}-\hat{y}|^n}
\\
&= \int_{B_j\cap\Omega}\int_{B_j\cap\Omega} A\left(\frac{|u(x)- u(y)|}{|T_j^{-1}(x) - T_j^{-1}(y)|^s}\right)\, |{\rm det}(J(T_j^{-1}))|\; \frac{dx\,dy}{|{T_j^{-1}(x) - T_j^{-1}(y)}|^n}\nonumber
\\
& 
\leq
\int_{B_j\cap\Omega}\int_{B_j\cap\Omega} A\left(\frac{C|u(x)- u(y)|}{|x-y|^s}\right)\; \frac{dx\,dy}{|x-y|^n}\,, \nonumber
\end{align}
for some constant $C$ depending on the Lipschitz constant of $T_j$ and
the Lipschitz constant of $T_j^{-1}$. Here, ${\rm det}(J(T_j^{-1}))$ denotes the determinant of the Jacobian of the map $T_j^{-1}$. Note that the last inequality relies  upon property \eqref{kt} as well.
\\
Next, let $\overline v_j : \mathcal Q \to \R$ be the function obtained on extending $v_j$ to $\mathcal Q$   as in Step 2, namely $\overline v_j  = \mathcal E_1 (v_j)$. Therefore, $\overline v_j \in  W^{s, A}(\mathcal{Q})$, and
\begin{equation}\label{nov402}
\|\overline v_j\|_{W^{s,A}(\mathcal{Q})}\leq 4 \, \|v_j\|_{W^{s, A}(\mathcal{Q}_+)}\,
\end{equation}
for $j=1, \dots , k$.
Now, let $w_j: B_j \to \R$ be given by
\begin{equation}\label{a21}
w_j(x)=\overline v_j\big(T_j^{-1}(x)\big) \quad \text{for  $x\in B_j$}\,.
\end{equation}
A chain analogous to  \eqref{a20} ensures that  $w_j\in W^{s, A}(B_j)$ and
\begin{equation}\label{nov401}
\|w_j\|_{W^{s,A}(B_j)}  \leq C \|\overline v_j\|_{W^{s,A}(\mathcal{Q})}
\end{equation}
for some constant $C=C(s, \o)$.
Definition \eqref{a21} immediately tells us that $w_j= u$ on $B_j\cap\Omega$,  and hence $\zeta _j\, w_j =  \zeta_j \, u$ on $B_j\cap\Omega$. By Step 3, $\zeta _j \,w_j\in W^{s, A}(B_j)$ and
\begin{equation}\label{a21''}
\|\zeta_j\,w_j\|_{W^{s,A}(B_j)}  \leq C \|w_j\|_{W^{s,A}(B_j)}.
\end{equation}
for $j=1, \dots , k$, for some constant $C=C(s, \o)$.
On the other hand,
$\zeta_j\,w_j$ has compact support in $B_j$.
Hence, the extension $\mathcal E_0 (\zeta_j\,w_j) : \rn \to \R$ of $\zeta_j\,w_j$ to $\rn$, defined as in Step 1, is such that $\mathcal E_0 (\zeta_j\,w_j) \in W^{s,A}(\rn)$ and
\begin{align}\label{a22'}
\|\mathcal E_0 (\zeta_j\,w_j)\|_{W^{s,A}(\rn)} & \leq C
\|\zeta_j\,w_j\|_{W^{s,A}(B_j)}
\end{align}
for $j=1, \dots , k$, for some constant $C=C(s, \o)$. Also,
since $\zeta_0\, u=0$ in a neighborhood of $\partial \Omega$,   the  extension of $\zeta_0\, u$ to $\rn$ given by $\mathcal E_0 (\zeta_0\, u)$ belongs  to $W^{s,A}(\rn)$, and, by Steps 1 and 3,
 \begin{equation}\label{a19}
 \|\mathcal E_0 (\zeta_0\, u)\|_{W^{s, A}(\rn)} \leq C
\| \zeta_0\, u\|_{W^{s, A}(\Omega)}\leq C'\,\|u\|_{W^{s, A}(\Omega)}\,
 \end{equation}
for some constants $C=C(s,\o)$ and $C'=C'(s,\o)$.
Finally, consider  the extension $\mathcal E (u) : \rn \to \R$ of $u$ to $\rn$ given by
\begin{equation*}
\mathcal E(u)= \mathcal E_0 (\zeta_0\, u) + \sum_{j=1}^k \mathcal E_0 (\zeta_j\,w_j)\,.
\end{equation*}
Then $\mathcal E$ defines a linear operator on $W^{s, A}(\Omega)$ such that, $\mathcal E(u)=u$ in $\o$ and,
 owing to inequalities  \eqref{nov400}, \eqref{nov402}, \eqref{nov401}, \eqref{a21''}, \eqref{a22'} and \eqref{a19},
\begin{equation*}
\|\mathcal E(u)\|_{W^{s,A}(\rn)} \leq C \|u\|_{W^{s,A}(\Omega)}\,
\end{equation*}
for some constant $C=C(s,\o)$ and for every $u \in W^{s,A}(\Omega)$.
The proof is complete.
\end{proof}

As in the case of fractional Orlicz-Sobolev spaces in $\rn$, the validity of an embedding on a domain implies a corresponding one-dimensional Hardy type inequality. This is the content of the following lemma, to be used in the proof of the optimality of the target spaces in the embeddings of Theorems \ref{sobperp} and  \ref{omhigher}.

\begin{lemma}\label{T:reduction_omega}
Let $n\in \N$ and  $s \in (0,n)\setminus
\N$.  Let $A$ be a Young function, let $\o$ be an open set in $\rn$ such that $|\o|<\infty$ and let $Y(\o)$ be a rearrangement-invariant space.
Assume that there exists a constant $C$ such that
\begin{equation}\label{E:1om}
\|u\|_{Y(\o)} \leq C \|u\|_{W^{s,A}(\Omega)}
\end{equation}
for every function $u \in W^{s,A}(\o)$. Then
there exists a constant $C'$ such that
\begin{equation}\label{E:2om}
	\bigg\|\int_{r}^{|\o|}f(\varrho)\varrho^{-1+\frac{s}{n}}\,d\varrho \bigg\|_{\overline{Y}(0, |\o|)}
		\leq C'\|f\|_{L^A(0, |\o|)}
\end{equation}
 for every  function $f\in L^A(0,|\o|)$.
\end{lemma}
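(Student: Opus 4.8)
The plan is to mimic the proof of Lemma~\ref{P:S1} (and its higher-order counterpart Lemma~\ref{T:reduction_principle}), but now working on the bounded domain $\o$ and using the extension operator of Theorem~\ref{ext} together with the embedding hypothesis \eqref{E:1om}. First I would reduce to the case of non-increasing test functions $f$ with bounded support: by \cite[Theorem~1.1]{pesa} it suffices to prove \eqref{E:2om} for non-increasing $f$, and a truncation $f\chi_{(0,L)}$ with $L\uparrow |\o|$ together with the Fatou property of rearrangement-invariant norms removes the support restriction (here of course $L\le|\o|$, so no genuine passage to infinity is needed). I would then fix a ball $B\subset\o$ with $|B|=|\o|/\kappa$ for a suitable dimensional constant $\kappa$, pick a point $x_0$ so that $B$ is centered appropriately, and define a radial test function on $B$ by the same formula as in the proofs of Lemmas~\ref{P:S1} and \ref{T:reduction_principle}, namely
\begin{equation*}
	u(x)=\int_{\omega_n|x-x_0|^n}^{c|\o|}\!\!\int_{r_1}^{c|\o|}\!\!\cdots\!\!\int_{r_m}^{c|\o|} f(r_{m+1})\,r_{m+1}^{-m-1+\frac sn}\,dr_{m+1}\cdots dr_1,
\end{equation*}
with $m=[s]$, supported (after multiplication by a fixed Lipschitz cutoff) in $B$ and extended by $0$ to $\o$; when $s\in(0,1)$ this reduces to the single-integral formula of Lemma~\ref{P:S1}.

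Next I would estimate $\|u\|_{W^{s,A}(\o)}$ from above by $\|f\|_{L^A(0,|\o|)}$, up to a constant depending on $n$, $s$ and $\o$. The seminorm part $\big|\nabla^{[s]}u\big|_{\{s\},A,\o}$, respectively $|u|_{s,A,\o}$ for $s<1$, is controlled exactly as in the computations \eqref{july1}--\eqref{july2} and \eqref{nov209}--\eqref{E:upper}, since all the pointwise estimates there are local in nature and only the region $|x|\lesssim|\o|^{1/n}$ is relevant now; the cutoff contributes a harmless extra term handled as in Step~3 of the proof of Theorem~\ref{ext} (via property \eqref{kt} and $n+s-1<n<n+s$). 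The lower-order terms $\|\nabla^k u\|_{L^A(\o)}$ for $k=0,\dots,[s]$ are estimated directly from the explicit formula for $u$ and its derivatives — each is a bounded function times an $L^A$-controlled tail integral of $f$, and since $|\o|<\infty$ the second embedding in \eqref{B.3'} and property \eqref{normineq} near infinity absorb them into $\|f\|_{L^A(0,|\o|)}$. Applying the embedding hypothesis \eqref{E:1om} to this $u$ then gives $\|u\|_{Y(\o)}\lesssim\|f\|_{L^A(0,|\o|)}$.

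Finally I would bound $\|u\|_{Y(\o)}$ from below. By the analogue of \eqref{nov207}, on $B$ one has $u(x)\gtrsim\int_{2\omega_n|x-x_0|^n}^{c|\o|} f(\varrho)\varrho^{-1+\frac sn}\,d\varrho$, so the decreasing rearrangement of $u$ on $(0,|\o|)$ dominates (up to a dilation and a constant) the function $r\mapsto\int_{r}^{|\o|} f(\varrho)\varrho^{-1+\frac sn}\,d\varrho$; using the boundedness of the dilation operator \eqref{dilation} on $\overline Y(0,|\o|)$ and \eqref{B.3} yields $\|u\|_{Y(\o)}\gtrsim\big\|\int_r^{|\o|}f(\varrho)\varrho^{-1+\frac sn}\,d\varrho\big\|_{\overline Y(0,|\o|)}$. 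Combining the upper and lower bounds gives \eqref{E:2om}. The main obstacle I anticipate is purely bookkeeping rather than conceptual: one must make sure the truncated upper limit $c|\o|$ in the definition of $u$ is chosen so that $u$ (times the cutoff) is genuinely supported in $B\subset\o$ while still retaining the decisive lower bound on a set of measure comparable to $|\o|$, and one must verify that the lower-order norms $\|\nabla^k u\|_{L^A(\o)}$ do not blow up — this is where finiteness of $|\o|$ and the near-infinity comparison of Young functions are used essentially, a feature absent in Lemmas~\ref{P:S1} and \ref{T:reduction_principle}.
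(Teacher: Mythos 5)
Your plan is in essence the paper's: take $0\in\o$ (without loss of generality), test \eqref{E:1om} with the function $u$ of \eqref{test} for $f$ supported in $(0,L)$, where $L$ is the measure of a ball $\mathcal B\subset\o$, bound $\|u\|_{W^{s,A}(\o)}$ from above and $\|u\|_{Y(\o)}$ from below exactly as in Lemma~\ref{T:reduction_principle}, and finally rescale from $(0,L)$ to $(0,|\o|)$. Two of your ingredients are superfluous, though: once $f$ is supported in $(0,L)$, the iterated integral in \eqref{test} already vanishes for $\omega_n|x|^n>L$, so $u$ is automatically supported in $\mathcal B\subset\o$, and neither the Lipschitz cutoff nor the extension operator of Theorem~\ref{ext} is needed.

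The substantive gap is in the treatment of the lower-order $L^A$-norms. For $k\geq1$, the pointwise bound $|\nabla^ku(x)|\lesssim|x|^{s-k}f(\omega_n|x|^n)$, together with $s>[s]\geq k$, does give $\|\nabla^ku\|_{L^A(\o)}\lesssim L^{(s-k)/n}\|f\|_{L^A(0,L)}$ directly. But for $k=0$ the rearrangement of $u$ is comparable (up to a dilation and constants) to $\int_r^L f(\varrho)\varrho^{-1+s/n}\,d\varrho$, which is a genuine integral operator applied to $f$ rather than a pointwise majorant, and bounding its $L^A(0,L)$-norm by a multiple of $\|f\|_{L^A(0,L)}$ is a Hardy inequality in Orlicz spaces. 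Your appeal to \eqref{B.3'} and to \eqref{normineq} through a near-infinity comparison of Young functions is not the right mechanism: Lemma~\ref{T:reduction_omega} does not assume \eqref{E:0'}--\eqref{E:0''}, so neither Theorem~A nor the function $A_{\frac ns}$ is available, and no comparison of Young functions actually enters. What is needed, and what the paper invokes via \cite[Inequality (4.10)]{CP_Arkiv}, is that $f\mapsto\int_r^L f(\varrho)\varrho^{-1+s/n}\,d\varrho$ is bounded on $L^A(0,L)$ for every Young function $A$, with norm $\lesssim L^{s/n}$; this follows for instance by Calder\'on interpolation between the elementary $L^1(0,L)$ and $L^\infty(0,L)$ bounds. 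Finally, since one can only take $L\leq|\mathcal B|$ and not $L\uparrow|\o|$, the passage from $(0,L)$ to $(0,|\o|)$ in \eqref{E:2om} requires the boundedness of the dilation operator \eqref{dilation} on $L^A$ and on $\overline Y$, a step you flag as bookkeeping but should carry out.
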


\begin{proof}[Proof, sketched] The proof follows along the same lines as that of Lemma \ref{T:reduction_principle}. One can  assume, without loss of generality, that $0\in \o$. Let $\mathcal B$ be a ball, centered at $0$ and with measure $L$, contained in $\o$.  Consider trial functions $u$ in inequality \eqref{E:1om} of the form \eqref{test}, with $f$ supported in $(0,L)$. Then an analogous argument as in the proof of Lemma \ref{T:reduction_principle} tells us that
\begin{equation}\label{nov500}
	\bigg\|\int_{r}^{L}f(\varrho)\varrho^{-1+\frac{s}{n}}\,d\varrho \bigg\|_{\overline{Y}(0, L)}
		\leq C\bigg(\|f\|_{L^A(0, L)} + \bigg\|\int_{r}^{L}f(\varrho)\varrho^{-1+\frac{s}{n}}\,d\varrho \bigg\|_{L^A(0,L)}\bigg)
\end{equation}
for every function $f \in L^A(0, L)$. On the other hand, (the same proof of) \cite[Inequality (4.10)]{CP_Arkiv} yields
\begin{equation}\label{nov501}
	\bigg\|\int_{r}^{L}f(\varrho)\varrho^{-1+\frac{s}{n}}\,d\varrho \bigg\|_{L^A(0, L)}
		\leq C L^{\frac sn}\|f\|_{L^A(0, L)}
\end{equation}
for some constant $C=C(s,n)$ and for every function $f \in L^A(0,L)$.  Inequalities \eqref{nov500} and \eqref{nov501} imply that \eqref{E:2om} holds with $|\o|$ replaced by $L$. Inequality \eqref{E:2om} holds in its original version, and in fact with $|\o|$ replaced by any $L>0$, as a consequence  of the boundedness  of the dilation operator, defined as in \eqref{dilation}, in any Orlicz space.
\end{proof}

\begin{proof}[Proof of Theorem~\ref{sobperp}]
Inequality \eqref{perp1} follows via inequality \eqref{ext2} and  Theorem \ref{T:C2}.   Embedding \eqref{embom1} can be deduced from an application of inequality \eqref{perp1} with $u$ replaced by $u-u_\o$ for any function $u \in W^{s,A}(\o)$.
The optimality of the target space in embedding  \eqref{embom1} is a consequence of Lemma \ref{T:reduction_omega} and of Theorem A.
\\ Inequality
\eqref{perp2}, and hence embedding \eqref{embom2}, follow via inequality \eqref{ext2} and  Theorem \ref{T:C1}. The optimality of the target space in embedding  \eqref{embom2} is a consequence of Lemma \ref{T:reduction_omega} and of Theorem B.
\end{proof}

\begin{proof}[Proof of  Theorem \ref{omhigher}]
Let us begin by proving  inequality \eqref{sep102}. Let $u \in W^{s,A}(\o)$. As in the proof of Theorem \ref{T:C1h},   we denote by $\widehat A_{s}$ and $\widehat A_{\{s\}}$ the functions associated with $A$ as in \eqref{E:1}--\eqref{E:2} with $s$ and $\{s\}$, respectively.
An application of embedding \eqref{embom2} to each $[s]$-th order weak derivative of $u$ tells us that
\begin{equation}\label{omhigher3}
\|\nabla ^{[s]}u\|_{L(\widehat A_{\{s\}},  \frac n{\{s\}})(\o)} \le C \|\nabla ^{[s]}u\|_{W^{\{s\},A}(\o)}
\end{equation}
for some constant $C=C(s,\o)$. On the other hand, owing to inequality \eqref{aug9} and  to the reduction principle for integer-order Sobolev inequalities of \cite[Theorem 6.1]{CPS},
\begin{equation}\label{omhigher4}
\|u\|_{L(\widehat A_{s},  \frac ns )(\o)}  \leq C \Big( \|\nabla ^{[s]}u\|_{L(\widehat A_{\{s\}},  \frac n{\{s\}})(\o)}+ \sum _{k=0}^{[s]-1}\|\nabla ^ku\|_{L^1(\o)}\Big)
\end{equation}
for some constant $C=C(s,\o)$.
Coupling inequalities \eqref{omhigher3} and  \eqref{omhigher4}, and making use of property \eqref{normineq}, implies that
\begin{equation}\label{omhigher5}
\|u\|_{L(\widehat A_{s}, \frac ns )(\o)}  \leq C \Big(\|\nabla ^{[s]}u\|_{W^{\{s\},A}(\o)} + \sum _{k=0}^{[s]-1}\|\nabla ^ku\|_{L^1(\o)}\Big) \leq C'  \|u\|_{W^{s,A}(\o)}
\end{equation}
for some constants $C$ and $C'$ depending on $s$ and $\o$. This establishes embedding \eqref{omhigher1}.
\\ As far as inequality \eqref{sep102} is concerned, assume that $u \in V^{s,A}_\perp (\o)$. Since $(\nabla ^{[s]}u)_\o =0$, inequality \eqref{perp2}, applied to each $[s]$-th order derivative of $u$ tells us that
\begin{equation}\label{omhigher6}
\|\nabla ^{[s]}u\|_{L(\widehat A_{\{s\}}, \frac n{\{s\}})(\o)} \le C  \big|\nabla ^{[s]}u\big|_{\{s\},A,\o}
\end{equation}
for some constant $C=C(s,\o)$. On the other hand, inasmuch as we are also assuming that $(\nabla ^ku)_\o =0$ for $k=0, \dots , [s]-1$, by  the integer-order result of \cite[Theorem 6.1]{CPS} and inequality \eqref{aug9} again,
\begin{equation}\label{omhigher7}
\|u\|_{L(\widehat A_{s}, \frac ns )(\o)}  \leq C \|\nabla ^{[s]}u\|_{L(\widehat A_{\{s\}}, \frac n{\{s\}})(\o)}
\end{equation}
for some constant $C=C(s,\o)$. Inequality \eqref{sep102} follows from \eqref{omhigher6} and \eqref{omhigher7}.
\\In order to deduce embedding \eqref{omhigher2} from inequality  \eqref{sep102}, observe that, for each function $u\in W^{s,A}(\o)$, there exists a (unique) polynomial $P_u$, of order at most $[s]$,   such that $u - P_u \in V^{s,A} _\bot (\o)$.
 Moreover, the
coefficients of $P_u$ are linear combinations of the components of
$\int _{\Omega} \nabla ^k u \,dx $, for $k=0, \dots , [s]$, with
coefficients depending on $[s]$, $k$ and $\o$.  Thus, there exist constants $C=C(s,\o)$ and $C'=C'(s,\o)$ such that
\begin{align}\label{dec215}
\|u\|_{L(\widehat A_{s}, \frac ns )(\o)}  & \leq \|u-P_u\|_{L(\widehat A_{s}, \frac ns )(\o)} + \|P_u\|_{L(\widehat A_{s}, \frac ns )(\o)}
\\ \nonumber  & \leq C \|\nabla ^{[s]}u\|_{L(\widehat A_{\{s\}}, \frac n{\{s\}})(\o)} + C \|1\|_{L(\widehat A_{s}, \frac ns )(\o)}\sum _{k=0}^{[s]}\int _\o| \nabla ^k u|\,dx
\\ \nonumber  &  \leq  C \|\nabla ^{[s]}u\|_{L(\widehat A_{\{s\}}, \frac n{\{s\}})(\o)} + 2 C\|1\|_{L(\widehat A_{s}, \frac ns )(\o)} \|1\|_{L^{\widetilde A}(\o)} \sum _{k=0}^{[s]} \|\nabla ^ku\|_{L^A(\o)} \leq C' \|u\|_{W^{s,A}(\o)},
\end{align}
where the second inequality holds owing to \eqref{omhigher7} applied to $u-P_u$, and the third inequality is due to \eqref{holder}. Embedding \eqref{omhigher2} is a consequence of inequality \eqref{dec215}.
\\ The optimality of the target space in embedding \eqref{omhigher2} is a consequence of Lemma \ref{T:reduction_omega} and of Theorem B.
\\ Embedding \eqref{omhigher1} and inequality \eqref{sep101} follow from embedding \eqref{omhigher2} and inequality \eqref{sep102}, respectively, via Proposition \ref{P:1}.
Lemma \ref{T:reduction_omega}  again and Theorem A imply that the target space is optimal  in embedding \eqref{omhigher1} among all Orlicz spaces.
\end{proof}

\section{Compact embeddings}\label{compact}


We conclude by criteria for the compactness of fractional Orlicz-Sobolev embeddings into Orlicz spaces and, more generally, into rearrangement-invariant spaces.
\par The results concerning spaces defined in the whole of $\rn$ have necessarily a local nature, in the following sense.
Given any non-integer positive number $s$,  a Young function $A$ and a rearrangement-invariant space $Y(\rn)$,  we say that the embedding
$$
V^{s,A}_d(\rn) \to Y_{\rm loc}(\rn)
$$
is compact if every bounded sequence in $V^{s,A}_d(\rn) $ has a subsequence whose restriction to $E$ converges in $Y(E)$ for every bounded measurable set $E$  in $\rn$. Here, $Y(E)$ denotes the rearrangement-invariant space given by the restriction of $Y(\rn)$ to $E$, defined as in \eqref{Xr}.

\par
A necessary and sufficient condition for compact embeddings into an Orlicz space amounts to requiring that the Young function that defines the latter space grows essentially more slowly near infinity (in the sense of  \eqref{nov 110}) than the Young function that defines the optimal Orlicz target for merely continuous embeddings. This is the content of the following theorem.

\begin{theorem}\label{T:orlicz_compactness}
Let $n\in\N$ and $s\in (0, n) \setminus \N$.  Let $A$ be a~Young function fulfilling conditions \eqref{E:0'} and \eqref{E:0''}, and let $A_{\frac ns}$ be the Young function defined by \eqref{Ans}.
Assume that $B$ is a Young function. The following properties are equivalent:
\\ \textup{(i)} $B$ grows essentially more slowly near infinity than $A_{\frac{n}{s}}$, namely
\begin{equation}\label{E:almost_compact_orlicz}
\lim_{t\to \infty} \frac{B(\lambda t)}{A_{\frac{n}{s}}(t)} =0
\end{equation}
for every $\lambda>0$.
\\ \textup{(ii)} The embedding
\begin{equation}\label{comploc1}
V^{s,A}_d(\rn) \to L^B_{\rm loc}(\rn)
\end{equation}
is compact.
%
\\ \textup{(iii)}
The embedding
\begin{equation}\label{comploc2}
W^{s,A}(\Omega) \to L^B(\o)
\end{equation}
is compact for every bounded Lipschitz domain $\Omega$ in $\Rn$.
\end{theorem}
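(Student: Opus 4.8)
The plan is to prove the cycle of implications (i)$\Rightarrow$(iii)$\Rightarrow$(ii)$\Rightarrow$(i), exploiting the machinery already assembled in the paper: the continuous embeddings with optimal rearrangement-invariant target (Theorems~\ref{T:C1} and \ref{T:C1h}, and \ref{omhigher}), the extension operator of Theorem~\ref{ext}, the reduction lemma \ref{T:reduction_omega} together with Theorem~A, and the notion of almost-compact embedding with its characterization \eqref{dec230}.

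\emph{Step 1: (i)$\Rightarrow$(iii).} Fix a bounded Lipschitz domain $\o$. By Theorem~\ref{omhigher}(ii) (or Theorem~\ref{sobperp}(ii) when $s\in(0,1)$) we have the continuous embedding $W^{s,A}(\o)\to L(\widehat A,\tfrac ns)(\o)$, and by Proposition~\ref{P:1} this factors through the Orlicz space $L^{A_{\frac ns}}(\o)$, so $W^{s,A}(\o)\to L^{A_{\frac ns}}(\o)$ continuously. It therefore suffices to prove that the inclusion $L^{A_{\frac ns}}(\o)\to L^B(\o)$ is compact. Since $|\o|<\infty$, the hypothesis \eqref{E:almost_compact_orlicz} says exactly that $B$ grows essentially more slowly near infinity than $A_{\frac ns}$; by \eqref{dec230} this means $L^{A_{\frac ns}}(\o)$ is \emph{almost-compactly} embedded into $L^B(\o)$. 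Now any bounded sequence $\{u_i\}$ in $W^{s,A}(\o)$ is bounded in $L^{A_{\frac ns}}(\o)$; since $W^{s,A}(\o)$ embeds compactly into, say, $L^1(\o)$ (this is the classical fractional Rellich property, which follows from the continuous embedding into $L^{A_{\frac ns}}(\o)\to L^1(\o)$ together with a standard Fr\'echet--Kolmogorov argument applied to the Gagliardo seminorm, or directly from Theorem~\ref{ext} and the compactness of $W^{s,1}(\rn)\to L^1_{\mathrm{loc}}$), we may pass to a subsequence converging a.e.\ in $\o$. The almost-compactness criterion in the form \eqref{E:convergence} then forces $\|u_i-u_j\|_{L^B(\o)}\to0$ along a suitable (sub)subsequence, proving compactness of \eqref{comploc2}.

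\emph{Step 2: (iii)$\Rightarrow$(ii).} Let $\{u_i\}$ be bounded in $V^{s,A}_d(\rn)$ and let $E\subset\rn$ be bounded measurable. Choose a bounded Lipschitz domain $\o\supset E$ (a large ball). A standard Fr\'echet--Kolmogorov / diagonal argument, or the observation that $\{u_i\}$ is bounded in $W^{s,A}(\o)$ (using that the Gagliardo functional over $\o$ is controlled by the one over $\rn$, and that the $L^A(\o)$ norm is controlled by Proposition~\ref{poinc0}/Corollary~\ref{P:aux}-type bounds via the decay assumption — more precisely one uses that $u_i\in\mathcal M_d(\rn)$ and the local $L^1$ bound of Corollary~\ref{P:aux} together with the continuous embedding into $L^{A_{\frac ns}}$), produces a subsequence bounded in $W^{s,A}(\o)$; applying the assumed compactness \eqref{comploc2} on $\o$ and then restricting to $E$ gives a subsequence converging in $L^B(E)$. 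A diagonal argument over an exhausting sequence of balls yields compactness of \eqref{comploc1}.

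\emph{Step 3: (ii)$\Rightarrow$(i).} This is the contrapositive direction and the main obstacle. Suppose \eqref{E:almost_compact_orlicz} fails: there is $\lambda_0>0$ and a sequence $t_k\to\infty$ with $B(\lambda_0 t_k)/A_{\frac ns}(t_k)\ge\delta>0$. One builds an explicitly oscillating or concentrating family of extremal-type functions showing that \eqref{comploc1} cannot be compact. The natural construction mimics the one behind the \emph{optimality} of $L^{A_{\frac ns}}$ in Theorem~\ref{T:C2h}: take the radial trial functions $u_f(x)=\int_{\omega_n|x|^n}^\infty f(\varrho)\varrho^{-1+s/n}\,d\varrho$ of Lemma~\ref{P:S1}/\ref{T:reduction_principle}, whose $V^{s,A}_d$-seminorm is controlled by $\|f\|_{L^A(0,\infty)}$ via \eqref{july1}/\eqref{E:upper} and whose decreasing rearrangement is $\int_r^\infty f\varrho^{-1+s/n}\,d\varrho$, and choose the densities $f_k$ supported on dyadic blocks where the Hardy operator $T_s$ from Theorem~A is ``sharp'', scaled so that $\|u_{f_k}\|_{V^{s,A}_d}$ stays bounded while $\|u_{f_k}\|_{L^B(B_1)}$ stays bounded away from zero and the $u_{f_k}$ converge to $0$ a.e.\ (by letting the blocks escape to smaller and smaller radii, or by translating disjoint bumps). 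The failure of \eqref{E:almost_compact_orlicz} is precisely what guarantees, via \eqref{dec230}/\eqref{E:convergence}, that no subsequence of $\{u_{f_k}\}$ converges in $L^B$ of a fixed ball; equivalently, since $L^{A_{\frac ns}}$ is \emph{not} almost-compactly embedded into $L^B$, one can pick $u_{f_k}$ with $\|\chi_{(0,L_k)}u_{f_k}^*\|_{L^B}\ge\tfrac\delta2$ for $L_k\to0$, which after a suitable radial rescaling (using the dilation-invariance built into the Hardy operator $T_s$ on $(0,\infty)$) yields a bounded, a.e.-null, non-$L^B$-convergent sequence in $V^{s,A}_d(\rn)$ localized in $B_1$. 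This contradicts compactness of \eqref{comploc1}. The delicate point, and the part requiring genuine work, is to verify that the extremals realizing the optimality of $L^{A_{\frac ns}}$ in Theorem~A can be arranged simultaneously to have bounded domain seminorm, to live inside a fixed ball, and to tend to zero almost everywhere — i.e.\ to transfer the \emph{static} optimality statement into a \emph{dynamical} non-compactness statement — and here one leans on the dilation structure of $T_s$ (scaling $r\mapsto\lambda r$ preserves $T_s$ up to the factor $\lambda^{s/n}$) exactly as in \cite{cianchi_IUMJ,cianchi-ibero}.
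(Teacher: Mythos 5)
Your Steps 1 and 2 are sound and close in spirit to what the paper does for $(\mathrm{i})\Rightarrow(\mathrm{ii})$: boundedness in the optimal Orlicz target $L^{A_{\frac{n}{s}}}$ via Theorem~\ref{T:C2h}, an a.e.\ convergent subsequence (the paper isolates this as Lemma~\ref{L:pointwise}, proved by the Riesz--Kolmogorov criterion and the translation estimate $\int_{\mathcal B}|u_i(x+h)-u_i(x)|\,dx\lesssim |h|^s$), and then the almost-compactness characterization \eqref{dec230}--\eqref{E:convergence}. One small caveat in Step~1: to pass to a.e.\ convergence for a bounded sequence in $W^{s,A}(\o)$ you do need to first extend via Theorem~\ref{ext}, since the chain~\eqref{nov417} uses the \emph{global} Gagliardo functional, but that is the fix you already mention.

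Step 3 is where there is a genuine gap. You correctly identify the crux --- turning the failure of \eqref{E:almost_compact_orlicz}, a purely Orlicz-space statement, into a non-compact sequence in $V^{s,A}_d(\rn)$ --- but you do not actually carry out the construction, and the natural one you gesture at does not work. If $B(\lambda_0 t_k)\ge\delta A_{\frac ns}(t_k)$ and you take $f_k$ to be a normalized bump on a single block $(L_k,2L_k)$ with $L_k=1/A_{\frac ns}(t_k)$, then $u_{f_k}^*(r)\gtrsim A^{-1}(1/L_k)L_k^{s/n}$ on $(0,L_k/2)$, and for $\|u_{f_k}\|_{L^B}$ to stay away from zero you would need $A^{-1}(1/L_k)L_k^{s/n}\gtrsim A_{\frac ns}^{-1}(1/L_k)$. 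But from $H(t)^{\frac{n}{n-s}}=\int_0^t(\tau/A(\tau))^{\frac{s}{n-s}}\,d\tau\ge t\,(t/A(t))^{\frac{s}{n-s}}$ one gets $A_{\frac ns}^{-1}(\tau)=H(A^{-1}(\tau))\ge A^{-1}(\tau)\,\tau^{-s/n}$, which is exactly the reverse inequality. So single-block concentration fails, and there is no obvious way to make the ``dilation structure of $T_s$'' produce the right extremizers directly. The paper avoids this trap by first proving the abstract implication $(\mathrm{ii})\Rightarrow\eqref{E:almost_compact}$ in Theorem~\ref{T:compactness_general}, via a \emph{saturation} argument (the $f_i$ of \eqref{E:saturation} are chosen to nearly attain the supremum in the Hardy inequality, not to be concentrated bumps), and only then reduces \eqref{E:almost_compact}$\Rightarrow(\mathrm{i})$ to the almost-compactness of $L(\widehat A,\tfrac ns)(0,1)\to L^B(0,1)$ (Theorem~B plus \cite[Theorem~4.2]{S2}) and to a computation with $\chi_{(0,L)}$, arriving at $\lim_{t\to\infty}A_{\frac ns}^{-1}(t)/B^{-1}(t)=0$, which is \eqref{E:almost_compact_orlicz} via \eqref{nov 110bis}. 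You should route Step 3 through that chain rather than trying to construct extremals by hand.
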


\begin{example}\label{excompact}
Assume that $\o$ is a bounded Lipschitz domain in $\rn$.
Let $s\in (0,n) \setminus \mathbb N$ and let $A$ be a Young function obeying   \eqref{dec250},    \eqref{dec251}  and \eqref{dec252}.  An application of
Theorem \ref{T:orlicz_compactness} and the use of property \eqref{nov 110bis} tell us that the embedding
\begin{equation*}
W^{s,A}(\o) \to L^B (\o)
\end{equation*}
is compact if and only if $B$ is a Young function fulfilling
\begin{equation}\label{dec264}
 \begin{cases} \lim _{t\to \infty}\frac{t^{\frac {n-sp}{np}}(\log t)^{-\frac \alpha n}}{B^{-1}(t)} =0
& \quad  \text{ if $1\leq p< \frac ns$ }
\\
 \lim _{t\to \infty}\frac{ (\log t)^{\frac {n-(\alpha +1)s}n}}{B^{-1}(t)}
&  \quad \text{if  $p=\frac ns$ and $\alpha < \frac ns -1$}
\\
 \lim _{t\to \infty}\frac{ (\log \log t)^{\frac {n-s}n}}{B^{-1}(t)}
&  \quad \text{if  $p=\frac ns$ and $\alpha = \frac ns -1$.}
\end{cases}
\end{equation}
A parallel conclusion holds, with $A$ and $B$ as above,  for the embedding $V^{s,A}_d(\rn) \to L^B_{\rm loc}(\rn)$.

\end{example}

\medskip

Theorem \ref{T:orlicz_compactness} will be deduced via the following
characterization of compact embeddings into rearrangement-invariant spaces. Due to the generality of the latter class of function spaces, such a characterization is naturally less explicit than \eqref{E:almost_compact_orlicz}, but still handy for applications to customary spaces.

\begin{theorem}\label{T:compactness_general}
Let $n\in\N$, $s\in (0, n) \setminus \N$ and let $A$ be a~Young function fulfilling conditions \eqref{E:0'} and \eqref{E:0''}.
Assume that $Y(\Rn)$ is a rearrangement-invariant space. The following properties are equivalent:
\\ \textup{(i)}
\begin{equation}\label{E:almost_compact}
\lim_{L\to 0^+} \sup_{\|f\|_{L^A(0,L)}\leq 1} \left\| \int_r^L f(\varrho) \varrho^{-1+\frac{s}{n}}\,d\varrho\right\|_{\overline Y(0,L)} =0.
\end{equation}
\\ \textup{(ii)} The embedding
\begin{equation}\label{comploc3}
V^{s,A}_d(\rn) \to Y_{\rm loc}(\rn)
\end{equation}
is compact.
\\ \\ \textup{(iii)}
The embedding
\begin{equation}\label{comploc4}
W^{s,A}(\Omega) \to Y(\o)
\end{equation}
is compact for every bounded Lipschitz domain $\Omega$ in $\Rn$.
\end{theorem}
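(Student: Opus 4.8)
The plan is to close the cycle $\textup{(iii)}\Rightarrow\textup{(i)}\Rightarrow\textup{(ii)}\Rightarrow\textup{(iii)}$ and, separately, to prove $\textup{(i)}\Rightarrow\textup{(iii)}$ and $\textup{(ii)}\Rightarrow\textup{(i)}$, so that the three properties become equivalent. The pivotal observation will be that condition~\eqref{E:almost_compact} is precisely the assertion that the optimal rearrangement-invariant target space $L(\widehat A,\tfrac ns)$ furnished by Theorem~B is \emph{almost-compactly embedded} into $Y$, in the sense of~\eqref{E:almost_compact_definition}. To see this I would combine the upper bound of Theorem~B -- which sends the $L^A(0,L)$-unit ball under $T_s$ into a fixed multiple of the $L(\widehat A,\tfrac ns)(0,L)$-unit ball, with a constant depending only on $n/s$ -- with the optimality part of Theorem~B in quantitative form, equivalently with the identification~\eqref{E:36} of $L(\widehat A,\tfrac ns)'$ together with a uniform-constant reading of the duality chain in the proof of Proposition~\ref{P:1}; this yields the reverse comparison, so that the decreasing rearrangements of the $L(\widehat A,\tfrac ns)$-unit ball coincide, up to constants depending only on $n/s$, with the functions $T_sf$, $\|f\|_{L^A}\le1$. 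Granting this, \cite[Theorem~3.1]{S} (recorded here as the equivalence \eqref{E:almost_compact_definition}$\Leftrightarrow$\eqref{E:convergence}) lets me trade condition~\eqref{E:almost_compact} for: every sequence bounded in $L(\widehat A,\tfrac ns)$ -- on $\o$, or on a bounded $E\subset\rn$ -- that tends to $0$ a.e.\ tends to $0$ in $Y$.

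\textbf{Necessity.} For $\textup{(iii)}\Rightarrow\textup{(i)}$ I would revisit the trial functions~\eqref{test} from the proof of Lemma~\ref{T:reduction_omega}, now keeping track of the parameter $L$: for a bounded Lipschitz $\o$ with $0\in\o$ and a ball $\mathcal B_L\subset\o$ of measure $L$ centred at the origin, the function built from a non-increasing $f$ supported in $(0,L)$ is supported in $\mathcal B_L$. If~\eqref{E:almost_compact} failed, there would be $\varepsilon_0>0$, $L_j\downarrow0$, and $f_j$ with $\|f_j\|_{L^A(0,L_j)}\le1$ and $\|T_sf_j\|_{\overline Y(0,L_j)}\ge\varepsilon_0$; the corresponding $u_j$ would be uniformly bounded in $W^{s,A}(\o)$ (using~\eqref{nov501} to absorb the $L^A$-term in~\eqref{nov500}), would converge to $0$ a.e.\ in $\o$, yet would satisfy $\|u_j\|_{Y(\o)}\gtrsim\varepsilon_0$, contradicting that a compact embedding is almost-compact and hence maps such a sequence to one converging to $0$ in $Y(\o)$. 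The implication $\textup{(ii)}\Rightarrow\textup{(i)}$ would be the same, with the $u_j$ viewed as elements of $V^{s,A}_d(\rn)$ supported in a fixed ball $\mathcal B_1$ and with $Y$ replaced by $Y(\mathcal B_1)$, the local compactness in~\eqref{comploc3} over $\mathcal B_1$ supplying the contradiction.

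\textbf{Sufficiency.} For $\textup{(i)}\Rightarrow\textup{(iii)}$, given $\{u_i\}$ bounded in $W^{s,A}(\o)$, I would first note it is bounded in $L(\widehat A,\tfrac ns)(\o)$ by Theorem~\ref{omhigher}, Part~(ii), hence in $L^1(\o)$ by~\eqref{B.3'}; the growth conditions~\eqref{E:0'}--\eqref{E:0''} push $W^{s,A}(\o)$ into a classical fractional, respectively integer-order, Sobolev space -- through Proposition~\ref{poinc} and~\eqref{incl1} when $s\in(0,1)$ and their higher-order analogues otherwise -- so that Rellich--Kondrachov compactness produces a subsequence, not relabelled, converging in $L^1(\o)$ and a.e.\ in $\o$. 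Then $u_i-u_j$ is bounded in $L(\widehat A,\tfrac ns)(\o)$ and tends to $0$ a.e., so by the pivot and~\eqref{E:convergence} one gets $\|u_i-u_j\|_{Y(\o)}\to0$; thus $\{u_i\}$ is Cauchy, hence convergent, in $Y(\o)$. For $\textup{(i)}\Rightarrow\textup{(ii)}$ I would run the same argument on each bounded measurable $E\subset\rn$: a bounded sequence in $V^{s,A}_d(\rn)$ has a subsequence converging a.e.\ by the local compactness of $V^{s,A}_d(\rn)\to L^1_{\rm loc}(\rn)$ (from Corollary~\ref{P:aux} and classical fractional compactness applied to bounded-support truncations), it is bounded in $L(\widehat A,\tfrac ns)(E)$ by Theorem~\ref{T:C1h}, and almost-compactness of $L(\widehat A,\tfrac ns)(E)\to Y(E)$ upgrades a.e.\ convergence to $Y(E)$-convergence. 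Finally $\textup{(ii)}\Rightarrow\textup{(iii)}$ would follow by extending a bounded sequence in $W^{s,A}(\o)$, after subtracting means so that Theorem~\ref{ext} and~\eqref{ext2} (and their higher-order counterpart) apply, to a bounded sequence in $V^{s,A}_d(\rn)$, using the local compactness just proved on a bounded $E\supset\overline\o$, and restricting to $\o$.

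\textbf{Main obstacle.} The hard part will be the quantitative identification in the first paragraph -- transferring between~\eqref{E:almost_compact}, stated on the short intervals $(0,L)$ with $f$ in the $L^A$-unit ball, and~\eqref{E:almost_compact_definition} for $L(\widehat A,\tfrac ns)\to Y$, stated with $g$ in the $L(\widehat A,\tfrac ns)$-unit ball of functions concentrated near the origin. The upper estimate is immediate from Theorem~B, but the reverse one requires optimality of $L(\widehat A,\tfrac ns)$ with constants independent of $L$; I expect to extract this from~\eqref{E:36} and the duality computation in the proof of Proposition~\ref{P:1}, whose constants depend only on $n/s$. A secondary nuisance is guaranteeing the a.e.-convergent subsequence in the present Orlicz generality, handled by the reductions to classical compactness indicated above.
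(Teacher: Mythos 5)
Your plan follows the paper's architecture -- necessity by saturating test functions, sufficiency through almost-compactness of $L(\widehat A,\tfrac ns)\to Y$ combined with a.e.\ convergence of bounded subsequences -- but two of the ingredients are not actually in place.

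The larger gap is the a.e.\ convergent subsequence. The references you invoke do not produce it: Proposition~\ref{poinc} only gives $V^{s,A}(\o)\to L^A(\o)$, and \eqref{incl1} reads $W^{1,A}(\rn)\to W^{s,A}(\rn)$, which is the wrong direction; neither embeds $W^{s,A}$ into a classical fractional Sobolev space. More fundamentally, $L^A\hookrightarrow L^1$ over a bounded set does \emph{not} by itself give $|u|_{s,1,\mathcal B}\lesssim|u|_{s,A,\mathcal B}$, because the measure $\frac{dx\,dy}{|x-y|^n}$ is infinite near the diagonal. One can rescue the idea by sacrificing a bit of smoothness -- a H\"older argument with $\widetilde A$ gives $|u|_{s',1,\mathcal B}\lesssim|u|_{s,A,\mathcal B}+\|u\|_{L^1(\mathcal B)}$ for any $s'<s$, after which classical fractional Rellich--Kondrachov applies -- but this needs to be carried out, not waved at via ``bounded-support truncations.'' The paper instead proves the substantial Lemma~\ref{L:pointwise}: a direct Riesz--Kolmogorov argument producing, from the modular bound alone, uniform integrability (through Lemma~\ref{L:ac_to_l1}, which is where \eqref{E:0''} genuinely enters) and the equicontinuity estimate \eqref{nov412}, $\int_{\mathcal B}|u_i(x+h)-u_i(x)|\,dx\lesssim|h|^s$. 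You should either reproduce that lemma or supply the H\"older detour carefully.

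The secondary point is your ``main obstacle.'' What the sufficiency direction actually needs is only the \emph{forward} implication: condition~\eqref{E:almost_compact} implies that $L(\widehat A,\tfrac ns)(E)\to Y(E)$ is almost-compact. The paper obtains this by citing~\cite[Theorem~4.2]{S2}; the reverse implication (almost-compactness $\Rightarrow$ \eqref{E:almost_compact}) is never used, because the converse implications $\textup{(ii)}\Rightarrow\textup{(i)}$ and $\textup{(iii)}\Rightarrow\textup{(i)}$ are established directly through the test functions \eqref{test} and the lower bound \eqref{nov207}, exactly as your ``Necessity'' paragraph anticipates. So the duality identification with uniform constants, which you flag as hard, can be dropped; you only need the one-sided transfer to almost-compactness, and that is available off the shelf.
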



\begin{example}\label{excompact1}
Let $\o$, $s$ and $A$ be as in Example \ref{excompact}.
Assume that $1\leq r,q\leq \infty$ and $\gamma\in \R$ are such that either $r=q=1$ and $\gamma \geq 0$, or $1<r<\infty$, or $r=\infty$, $q <\infty$ and $\gamma+1/q<0$, or $r=q=\infty$ and $\gamma\leq 0$ (by~\cite[Theorem 9.10.4]{PKJF}, this assumption ensures that $L^{r,q;\gamma}(\Omega)$ is equivalent to a rearrangement-invariant space). Then the embedding
\begin{equation}\label{jan4}
W^{s,A} (\o) \to L^{r,q ;\gamma}(\o)
\end{equation}
is compact if and only if
either
\begin{equation}\label{jan10}
1\leq p<\frac{n}{s}, \quad \alpha \in \R\quad  \text{and}\quad
\begin{cases}
&r<\frac{np}{n-sp},\\
&r=\frac{np}{n-sp}, \quad p\leq q, \quad   \frac{\alpha}{p}>\gamma,\\
&r=\frac{np}{n-sp}, \quad p>q, \quad  \frac{\alpha}{p}+\frac{1}{p}>\gamma+\frac{1}{q},
\end{cases}
\end{equation}
or
\begin{equation}\label{jan11}
  p=\frac{n}{s}, \quad \alpha\leq \frac{n}{s}-1 \quad  \text{and}\quad
\begin{cases}
&r<\infty\\
&r=\infty, \quad  \frac{\alpha s+s}{n}-1 >\gamma+\frac{1}{q}.
\end{cases}
\end{equation}
If $1\leq p<n/s$, or $p=n/s$ and $\alpha< n/s-1$ then this fact follows  from a combination of Theorem~\ref{T:compactness_general}, \cite[Theorems 4.1 and 4.2 and Proposition 7.2]{S2} and of Example~\ref{exdomains}. In the case when $p =n/s$ and $\alpha= n/s-1$, one needs to additionally observe that the space $L^{\infty, \frac ns; - \frac sn, -1}(\Omega)$ is continuously embedded into $L^{r,q;\gamma}(\Omega)$  if and only if either $r<\infty$, or $r=\infty$ and $\gamma+1/q<0$ (see, e.g., \cite[Theorem 9.5.14]{PKJF}), and that this embedding is in fact almost-compact thanks to the strict inequality in the last condition.
\\ The embedding $V^{s,A}_d(\rn)  \to L^{r,q ;\gamma}_{\rm loc}(\rn)$ is compact  under the same conditions on the exponents $r,q;\gamma$ as in \eqref{jan10} or \eqref{jan11}.
\end{example}

\medskip

Our proof of Theorem \ref{T:compactness_general} makes use of the following lemma.

\begin{lemma}\label{L:pointwise}
Let $n\in\N$, $s\in (0, n) \setminus \N$ and let $A$ be a~Young function fulfilling conditions \eqref{E:0'} and \eqref{E:0''}.
\\ (i)  Any  bounded sequence in $V^{s,A}_d(\R^n)$ has a subsequence  which converges a.e.\ in $\Rn$.
\\ (ii) Assume that $\o$ is a bounded Lipschitz domain in $\rn$. Then
any bounded sequence in $W^{s,A}(\Omega)$ has a subsequence which converges a.e. in $\Omega$.
\end{lemma}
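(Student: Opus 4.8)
The plan is to show that, in both situations, a bounded sequence is precompact in $L^1_{\rm loc}$, and then to pass to an a.e.\ convergent subsequence in the usual way; the argument distinguishes the cases $s\in(0,1)$ and $s\in(1,n)\setminus\N$. I would begin with part~(i) for $s\in(0,1)$. Let $\{u_i\}$ be bounded in $V^{s,A}_d(\rn)$, say $\sup_i|u_i|_{s,A,\rn}\le M$. By Theorem~\ref{T:C2} the sequence is bounded in $L^{A_{\frac ns}}(\rn)$, and hence, since every ball has finite measure, $\sup_i\|u_i\|_{L^1(B_R)}<\infty$ for each $R>0$. Fix $\eta\in C^\infty_0(B_1)$ with $\eta\ge0$ and $\int_{\rn}\eta=1$, put $\eta_\rho(x)=\rho^{-n}\eta(x/\rho)$ and $u_{i,\rho}=u_i*\eta_\rho$. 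The heart of the matter is the uniform translation estimate
\[
\|u_{i,\rho}-u_i\|_{L^1(B_R)}\le C(R)\,M\,\rho^{s}\qquad\text{for all }\rho>0,
\]
where $C(R)$ does not depend on $i$. To prove it I would write $u_{i,\rho}(x)-u_i(x)=\int_{\rn}\eta_\rho(h)\bigl(u_i(x-h)-u_i(x)\bigr)\,dh$, integrate the modulus over $B_R$, use Tonelli's theorem, and then estimate $|u_i(x-h)-u_i(x)|=M|h|^{s}\,\frac{|u_i(x-h)-u_i(x)|}{M|h|^{s}}$ together with the Young inequality $t\le A(t)+\widetilde A(1)$. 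Setting $\Phi_i(h)=\int_{\rn}A\bigl(\frac{|u_i(x+h)-u_i(x)|}{M|h|^{s}}\bigr)\,dx$, the change of variables $y=x+h$ in the Gagliardo functional yields $\int_{\rn}\Phi_i(h)\,|h|^{-n}\,dh\le1$, whence $\rho^{-n}\int_{B_\rho}\Phi_i(h)\,dh\le\int_{B_\rho}\Phi_i(h)\,|h|^{-n}\,dh\le1$ because $|h|\le\rho$ on $\operatorname{supp}\eta_\rho$; combining these facts gives the asserted estimate. The key point is that the whole estimate is kept at the level of the Gagliardo functional: one cannot route it through $W^{s,1}$, since $W^{s,A}(\rn)$ is in general not contained in $W^{s,1}_{\rm loc}(\rn)$ when $A$ is superlinear near zero.

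Granted the translation estimate, I would conclude by a classical compactness scheme. For each fixed $\rho>0$, the mollified functions satisfy $\|u_{i,\rho}\|_{L^\infty(B_R)}+\|\nabla u_{i,\rho}\|_{L^\infty(B_R)}\le\bigl(\|\eta_\rho\|_\infty+\|\nabla\eta_\rho\|_\infty\bigr)\sup_i\|u_i\|_{L^1(B_{R+\rho})}$, so $\{u_{i,\rho}\}_i$ is equibounded and equi-Lipschitz on $\overline{B_R}$ and, by the Arzel\`a--Ascoli theorem, precompact in $C(\overline{B_R})$, hence in $L^1(B_R)$. Together with the uniform smallness of $\|u_{i,\rho}-u_i\|_{L^1(B_R)}$ as $\rho\to0^{+}$, a standard $\varepsilon/3$ argument shows that $\{u_i\}$ is totally bounded, hence precompact, in $L^1(B_R)$. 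A diagonal extraction over $R=1,2,\dots$ produces a subsequence converging in $L^1_{\rm loc}(\rn)$, and a further subsequence converges a.e.\ in $\rn$. For $s\in(1,n)\setminus\N$ I would instead reduce to the integer-order Rellich--Kondrachov theorem: if $u\in V^{s,A}_d(\rn)$ then each $\partial_j u$ lies in $V^{s-1,A}_d(\rn)$ with $\bigl|\nabla^{[s-1]}(\partial_j u)\bigr|_{\{s-1\},A,\rn}\le\bigl|\nabla^{[s]}u\bigr|_{\{s\},A,\rn}$ (because $\{s-1\}=\{s\}$ and $\nabla^{[s-1]}(\partial_j u)$ is a sub-vector of $\nabla^{[s]}u$), so the continuous embeddings of Theorem~\ref{T:C2h}, applied to $u_i$ and to each $\partial_j u_i$, make $\{u_i\}$ bounded in $W^{1,1}(B_R)$ for every $R$; the classical compact embedding of $W^{1,1}(B_R)$ into $L^1(B_R)$ and a diagonal extraction then yield a subsequence converging a.e.\ in $\rn$.

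Part~(ii) then follows quickly. For $s\in(0,1)$, a bounded sequence $\{u_i\}$ in $W^{s,A}(\Omega)$ is sent by the extension operator of Theorem~\ref{ext} to a bounded sequence $\{\mathcal E u_i\}$ in $W^{s,A}(\rn)\subseteq V^{s,A}_d(\rn)$ with $\mathcal E u_i=u_i$ on $\Omega$; part~(i) furnishes a subsequence converging a.e.\ in $\rn$, hence a.e.\ in $\Omega$. For $s\in(1,n)\setminus\N$ one has trivially $W^{s,A}(\Omega)\hookrightarrow W^{1,1}(\Omega)$ — indeed $[s]\ge1$ and $L^A(\Omega)\hookrightarrow L^1(\Omega)$ because $|\Omega|<\infty$ — so any bounded sequence in $W^{s,A}(\Omega)$ is bounded in $W^{1,1}(\Omega)$, and the classical Rellich--Kondrachov theorem on the bounded Lipschitz domain $\Omega$ supplies an a.e.\ convergent subsequence. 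The only genuinely delicate point in the whole argument is the Orlicz translation estimate of the first paragraph; everything else is soft functional analysis combined with the continuous embeddings already established.
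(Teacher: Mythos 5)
Your overall scheme — establishing $L^1_{\rm loc}$-precompactness from the Gagliardo modular and concluding by diagonal extraction — is the same as the paper's; your mollification/Arzel\`a--Ascoli packaging is interchangeable with the Riesz--Kolmogorov criterion that the paper verifies directly. However, there is a genuine gap in your translation estimate. The step that writes $|u_i(x-h)-u_i(x)|=M|h|^s\cdot\frac{|u_i(x-h)-u_i(x)|}{M|h|^s}$ and then invokes the Young inequality $t\le A(t)+\widetilde A(1)$ requires $\widetilde A(1)<\infty$, and this can fail under the standing hypotheses: the Young function $A(t)=t/2$ satisfies both \eqref{E:0'} and \eqref{E:0''} for every $s\in(0,n)$, yet $\widetilde A(1)=\sup_{t\ge0}(t-\tfrac t2)=\infty$, and the inequality becomes vacuous. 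Conditions \eqref{E:0'}--\eqref{E:0''} force $A(t)>0$ for all $t>0$ but impose no lower bound on the asymptotic slope of $A$, so $\widetilde A(1)$ is genuinely allowed to be infinite. The remedy is short: use $t\le\lambda_0^{-1}\bigl(A(t)+\widetilde A(\lambda_0)\bigr)$ for any $\lambda_0\in\bigl(0,\lim_{\tau\to\infty}a(\tau)\bigr)$, for which $\widetilde A(\lambda_0)<\infty$; or normalize $A$ so that $A(1)\ge1$; or, as the paper does, split the $x$-integration according to whether $|u_i(x+h)-u_i(x)|\lessgtr 2^{s+1}|h|^s$ and, on the larger set, pass via an intermediate point $y\in B_{|h|}(x)$ to difference quotients exceeding $1$, on which only $A(t)\ge A(1)t$ is used (valid since \eqref{E:0''} forces $A(1)>0$).

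A secondary point concerns $s\in(1,n)\setminus\N$: you descend to $\partial_j u_i\in V^{s-1,A}_d(\rn)$ and invoke Theorem~\ref{T:C2h} at level $s-1$, which implicitly requires $A$ to satisfy \eqref{E:0'} and \eqref{E:0''} with $s$ replaced by $s-1$. This does follow from the hypotheses at $s$ — since $t/A(t)$ is monotone and $\tfrac{s}{n-s}$ increases with $s$, both the near-zero and near-infinity conditions transfer in the right direction — but it deserves a line. The paper avoids the verification at several intermediate orders by applying the $(0,1)$-order argument only to $\nabla^{[s]}u_i$ at order $\{s\}$ and then bootstrapping down via the $L^1$-Poincar\'e inequality. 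Part~(ii) as you present it is correct; in particular, the reduction $W^{s,A}(\Omega)\hookrightarrow W^{1,1}(\Omega)$ for $[s]\ge1$ and bounded $\Omega$, followed by Rellich--Kondrachov, is a clean shortcut.
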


A first step in the proof Lemma \ref{L:pointwise} in its turn relies upon the next result on the almost-compact embedding (according to the notion recalled in Section \ref{rispaces}) of the optimal Orlicz target space in fractional Orlicz-Sobolev embeddings into $L^1$.

\begin{lemma}\label{L:ac_to_l1}
Let $n\in\N$, $s\in (0, n)$ and let $A$ be a~Young function fulfilling conditions \eqref{E:0'} and \eqref{E:0''}. Let $A_{\frac ns}$ be the function defined by \eqref{Ans}. Assume that $E$ is a measurable bounded set in $\Rn$.
Then the Orlicz space $L^{A_{\frac{n}{s}}}(E)$ is almost-compactly embedded into $L^1(E)$.
\end{lemma}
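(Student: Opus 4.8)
Lemma~\ref{L:ac_to_l1} states that $L^{A_{\frac ns}}(E)$ is almost-compactly embedded into $L^1(E)$ for a bounded measurable set $E$. Since $|E|<\infty$, the plan is to verify condition \eqref{E:almost_compact_definition} directly with $X(E)=L^{A_{\frac ns}}(E)$ and $Y(E)=L^1(E)$, i.e.\ to show
\begin{equation*}
\lim_{L\to 0^+}\ \sup_{\|u\|_{L^{A_{\frac ns}}(E)}\le 1}\ \int_0^L u^*(r)\,dr = 0.
\end{equation*}
First I would record, from assumption \eqref{E:0''}, that the Young function $A_{\frac ns}$ blows up superlinearly near infinity; more precisely \eqref{E:0''} is exactly the condition guaranteeing that $L^{A_{\frac ns}}(E)\subsetneq L^1(E)$ with the inclusion being \emph{strict}, and in fact that $A_{\frac ns}(t)/t\to\infty$ as $t\to\infty$. (This is the content recalled around \eqref{E:0''}--\eqref{H}: condition \eqref{E:0''} is what makes $H(t)$ finite for $t>0$, hence $H^{-1}$ well-defined near $0^+$, and translates into $A_{\frac ns}$ growing essentially faster than $t$ near infinity.) Equivalently, by \eqref{nov 110bis} applied with $B(t)=t$, one has $\lim_{t\to\infty} A_{\frac ns}^{-1}(t)/t = 0$, which is precisely the statement that $L^1$ grows essentially more slowly near infinity than $A_{\frac ns}$.

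Second, I would invoke \eqref{dec230}: for Orlicz spaces $L^{A_{\frac ns}}(E)$ and $L^1(E)$ on a set of finite measure, $L^{A_{\frac ns}}(E)$ is almost-compactly embedded into $L^1(E)$ \emph{if and only if} the Young function $t\mapsto t$ grows essentially more slowly near infinity than $A_{\frac ns}$. So the whole lemma reduces to checking that
\begin{equation*}
\lim_{t\to\infty}\frac{\lambda t}{A_{\frac ns}(t)}=0\qquad\text{for every }\lambda>0,
\end{equation*}
equivalently, via \eqref{nov 110bis}, that $\lim_{t\to\infty} A_{\frac ns}^{-1}(t)/t=0$. Since on a bounded set only the behaviour near infinity matters (as remarked after \eqref{normineq}), this is the only thing to prove.

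Third, the remaining point — and the one I expect to be the only real computation — is to extract from \eqref{E:0''} the asymptotic $A_{\frac ns}(t)/t\to\infty$. Unwinding the definitions \eqref{Ans}--\eqref{H}: with $H(t)=\big(\int_0^t(\tau/A(\tau))^{s/(n-s)}\,d\tau\big)^{(n-s)/n}$ and $A_{\frac ns}(t)=A(H^{-1}(t))$, one computes $A_{\frac ns}(t)/t = A(\sigma)/H(\sigma)^{n/(n-s)}$ with $\sigma=H^{-1}(t)\to\infty$ as $t\to\infty$ (here I use that \eqref{E:0'} forces $H(t)\to\infty$, so $H^{-1}$ is defined on all of $[0,\infty)$). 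Then
\begin{equation*}
\frac{A(\sigma)}{H(\sigma)^{\frac n{n-s}}} = \frac{A(\sigma)}{\int_0^\sigma\big(\tau/A(\tau)\big)^{\frac s{n-s}}d\tau},
\end{equation*}
and since the integrand is, up to the monotonicity of $A(\tau)/\tau$, comparable to $\sigma\,(\sigma/A(\sigma))^{s/(n-s)}$ near $\sigma$ (an estimate of the same flavour as \eqref{aug10}), the quotient behaves like $A(\sigma)\cdot\sigma^{-1}\cdot(A(\sigma)/\sigma)^{s/(n-s)}\cdot$const $=$ const$\cdot(A(\sigma)/\sigma)^{n/(n-s)}\to\infty$, because $A(\sigma)/\sigma\to\infty$ for any Young function with $\widetilde A$ finite near $0$ (which holds here). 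This gives $A_{\frac ns}(t)/t\to\infty$, hence the desired almost-compact embedding via \eqref{dec230}, and the proof concludes. The main obstacle is thus purely the elementary but slightly delicate asymptotic analysis of the integral in \eqref{H}; everything else is a direct appeal to \eqref{dec230} and \eqref{E:0''}.
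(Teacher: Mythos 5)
Your high-level plan is the same as the paper's: reduce to the almost-compactness criterion \eqref{dec230} for a pair of Orlicz spaces on a set of finite measure, and show that $A_{\frac ns}$ grows essentially faster near infinity than $t\mapsto t$, equivalently $A_{\frac ns}(t)/t\to\infty$. The gap is in the ``purely computational'' step, and it is not a cosmetic one.

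First, the algebraic identity is wrong. With $\sigma=H^{-1}(t)$ one has $t=H(\sigma)$, so
\begin{equation*}
\frac{A_{\frac ns}(t)}{t}=\frac{A(\sigma)}{H(\sigma)},\qquad\text{not}\qquad\frac{A(\sigma)}{H(\sigma)^{\frac{n}{n-s}}}.
\end{equation*}
The quantity you actually analyze is $A_{\frac ns}(t)/t^{\frac{n}{n-s}}$, and that quantity need \emph{not} go to infinity: for $A(t)=t$ (which satisfies \eqref{E:0'} and \eqref{E:0''}), $H(t)=t^{\frac{n-s}{n}}$, $A_{\frac ns}(t)=t^{\frac{n}{n-s}}$, and your ratio is identically $1$, while the correct ratio $A_{\frac ns}(t)/t=t^{\frac{s}{n-s}}\to\infty$.

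Second, the two-sided comparability $\int_0^\sigma(\tau/A(\tau))^{\frac{s}{n-s}}\,d\tau\approx \sigma\,(\sigma/A(\sigma))^{\frac{s}{n-s}}$ is false in general. Monotonicity of $\tau/A(\tau)$ gives the integral $\ge\sigma\,(\sigma/A(\sigma))^{\frac{s}{n-s}}$, i.e.\ the \emph{wrong} direction for bounding the denominator from above; and the reverse inequality fails, e.g.\ for $A(\tau)$ equivalent to $\tau^{\frac ns}(\log\tau)^{\alpha}$ near infinity with $\alpha<\frac ns -1$, where the integral grows by an extra $\log\sigma$ factor.

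Third, the closing claim ``$A(\sigma)/\sigma\to\infty$ for any Young function with $\widetilde A$ finite near $0$'' is also false: $A(t)=t$ has $\widetilde A\equiv 0$ near $0$ but $A(t)/t\equiv 1$. This is precisely why the argument must exploit the exponent $\frac{n}{n-s}>1$ built into $H$ rather than any superlinearity of $A$ itself.

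The paper avoids all this with a one-line estimate going the \emph{other} way: since $\tau/A(\tau)$ is non-increasing, for $t>1$
\begin{equation*}
H(t)^{\frac{n}{n-s}}=\int_0^1\Big(\frac{\tau}{A(\tau)}\Big)^{\frac{s}{n-s}}d\tau+\int_1^t\Big(\frac{\tau}{A(\tau)}\Big)^{\frac{s}{n-s}}d\tau\lesssim 1+t\lesssim t,
\end{equation*}
hence $H(t)\lesssim t^{\frac{n-s}{n}}\lesssim A(t)^{\frac{n-s}{n}}$ near infinity (using only $t\lesssim A(t)$ for large $t$). Inverting gives $A_{\frac ns}(t)=A(H^{-1}(t))\gtrsim t^{\frac{n}{n-s}}$ near infinity, which is superlinear since $\frac{n}{n-s}>1$, and then \eqref{dec230} finishes, as in your outline. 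So the reduction you set up is fine, but the asymptotic analysis at its core needs to be replaced by this (simpler and correct) upper bound on $H$.
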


\begin{proof}
Since the function $t\mapsto \frac{t}{A(t)}$ is non-increasing,  one has that
\begin{equation}\label{incr}
\frac{t}{A(t)} \leq \frac{1}{A(1)} \quad  \text{for $t \geq 1$.}
\end{equation}
Hence
$$
H(t)^{\frac{n}{n-s}}
= \int_0^1 \left(\frac{\tau}{A(\tau)}\right)^{\frac{s}{n-s}}\,d\tau
+ \int_1^t \left(\frac{\tau}{A(\tau)}\right)^{\frac{s}{n-s}}\,d\tau
\lesssim t \quad \text{for $t>1$,}
$$	
up to a constant independent of $t$. Thus,
$$
H(t) \lesssim t^{\frac{n-s}{n}} \lesssim A(t)^{\frac{n-s}{n}} \quad \text{near infinity,}
$$
whence
\begin{equation}\label{limit}
A_{\frac{n}{s}}(t) =A(H^{-1}(t)) \gtrsim t^{\frac{n}{n-s}} \quad \text{near infinity.}
\end{equation}
In particular,
\begin{equation}\label{E:limit}
\lim_{t\to \infty} \frac{t}{A_{\frac{n}{s}}(t)} =0.
\end{equation}	
The conclusion hence follows, owing to property \eqref{dec230}.
\end{proof}

\begin{proof}[Proof of Lemma \ref{L:pointwise}]
We provide a proof of Part (i), the proof of Part (ii) being analogous.
 Let   $\mathcal B$ be an open ball in $\Rn$. It suffices to show that any bounded sequence $\{u_{i}\}$ in $V^{s,A}_d(\rn)$ has a subsequence which is convergent in $L^1(\mathcal B)$.
%
 Assume, for the time being, that $s\in (0,1)$.  By the Riesz-Kolmogorov compactness theorem,
this conclusion will follow if we show that
$\{u_{i}\}$ is a bounded sequence in $L^1(\mathcal B)$ and that, for every $\varepsilon >0$,
\begin{equation}\label{nov410}
\text{there exists $\delta>0$ and a compact set $\widehat{\mathcal B} \subseteq \mathcal B$ such that
$\|u_i\|_{L^1(\mathcal B \setminus \widehat{\mathcal B})}<\varepsilon$}
\end{equation}
 and
\begin{equation}\label{nov411} \text{$\int_{\mathcal B} |u_i(x+h)-u_i(x)|\,dx<\varepsilon$ \quad  if $h \in \rn$, $|h|<\delta$ and $i\in \N$. }
\end{equation}
The boundedness of the sequence $\{u_{i}\}$  in $V^{s,A}_d(\R^n)$ amounts to the existence of a positive  constant $C$ such that
\begin{equation}\label{nov416}
\int_{\Rn} \int_{\Rn} A \left( \frac{|u_i(x)-u_i(y)|}{C\,|x-y|^{s}}\right) \,\frac{dy\,dx}{|x-y|^n} \leq 1
\end{equation}
for $i \in \N$. This piece of information
implies, via Theorem~\ref{T:C2h}, that $\|u_i\|_{L^{A_{\frac{n}{s}}}(\mathcal B)} \leq C$ for some constant $C$ independent of $i$. The almost-compact embedding of $L^{A_{\frac{n}{s}}}(\mathcal B)$ into  $L^1(\mathcal B)$ established in Lemma~\ref{L:ac_to_l1} then ensures that
the sequence $\{u_{i}\}$ is bounded in $L^1(\mathcal B)$, and that property \eqref{nov410} holds.
It remains to prove property \eqref{nov411}. To this purpose, we shall show   that
\begin{equation}\label{nov412}
\int_\mathcal B |u_i(x+h)-u_i(x)| \,dx \lesssim |h|^s
\end{equation}
for $h \in \rn$ and $i \in \N$.
Here, and in the remaining part of this proof, the relations $ \lesssim$ and $\approx$ hold up to constants depending on $s$, $n$, $A$ and on the constant $C$ appearing in equation \eqref{nov416}.
Fix $h \in \rn$. We have that
\begin{align}\label{nov415}
\int_{\mathcal B} |u_i(x+h)-u_i(x)|\,dx
&=\int_{\mathcal B \cap \{x:~|u_i(x+h)-u_i(x)| \leq 2^{s+1}|h|^s\}} |u_i(x+h)-u_i(x)|\,dx\\ \nonumber
& \quad +\int_{\mathcal B \cap \{x:~|u_i(x+h)-u_i(x)| > 2^{s+1}|h|^s\}} |u_i(x+h)-u_i(x)|\,dx\\ \nonumber
&\leq 2^{s+1}|h|^s |\mathcal B| + \int_{\mathcal B \cap \{x:~|u_i(x+h)-u_i(x)| > 2^{s+1}|h|^s\}} |u_i(x+h)-u_i(x)|\,dx
\end{align}
for  $i \in \N$.
Fix $i \in \N$, and assume that $x\in \mathcal B$, $y, h\in \Rn$.
Since
$$
|u_i(x+h)-u_i(x)|\leq |u_i(x+h)-u_i(y)| + |u_i(y)-u_i(x)|,
$$
one has that
$$
\text{either}\quad |u_i(x+h)-u_i(y)| \geq \tfrac{1}{2}|u_i(x+h)-u_i(x)|\quad \text{or} \quad |u_i(y)-u_i(x)| \geq \tfrac{1}{2}|u_i(x+h)-u_i(x)|.
$$
Therefore, either
\begin{equation}\label{E:1bis}
\big|\{y \in \mathcal B_{|h|}(x): ~|u_i(x+h)-u_i(y)| \geq \tfrac{1}{2}|u_i(x+h)-u_i(x)|\} \big|\geq \frac{\omega_n|h|^n}{2},
\end{equation}
or
\begin{equation}\label{E:1bis'}
\big|\{y \in \mathcal B_{|h|}(x): ~|u_i(y)-u_i(x)| \geq \tfrac{1}{2}|u_i(x+h)-u_i(x)|\}\big| \geq \frac{\omega_n|h|^n}{2}.
\end{equation}
Here, $\mathcal B_{|h|}(x)$ denotes the ball in $\Rn$ centered at $x$ and having radius $|h|$.   In what follows, we assume  that \eqref{E:1bis} is in force, the argument in the case when \eqref{E:1bis'} holds  being  completely analogous. Set
$$S(x,h)=\{y \in \mathcal B_{|h|}(x): ~|u_i(x+h)-u_i(y)| \geq \tfrac{1}{2}|u_i(x+h)-u_i(x)|\}.$$
 The following chain holds:
\begin{align}\label{nov417}
& \int_{\{x\in \mathcal B :~|u_i(x+h)-u_i(x)| > 2^{s+1}|h|^s\}} |u_i(x+h)-u_i(x)|\,dx\\ \nonumber
&= \int_{\{x\in \mathcal B :~|u_i(x+h)-u_i(x)| > 2^{s+1}|h|^s\}} \frac{1}{|S(x,h)|} \int_{S(x,h)} |u_i(x+h)-u_i(x)|\,dy\,dx\\ \nonumber
&\lesssim \frac{1}{|h|^n} \int_{ \{x \in \mathcal B :~|u_i(x+h)-u_i(x)| > 2^{s+1}|h|^s\}} \int_{S(x,h)} |u_i(x+h)-u_i(y)|\,dy\,dx\\ \nonumber
&\lesssim \frac{1}{|h|^n} \int \int_{\{(x,y): ~x\in \mathcal B,~y\in \mathcal B_{|h|}(x), ~|u_i(x+h)-u_i(y)| >2^s|h|^s\}} |u_i(x+h)-u_i(y)|\,dy\,dx\\ \nonumber
&= \frac{1}{|h|^n} \int \int_{\{(x,y): ~x\in \mathcal B,~y\in \mathcal B_{|h|}(x), ~|u_i(x+h)-u_i(y)| >2^s|h|^s\}} \frac{|u_i(x+h)-u_i(y)|}{|x+h-y|^{n+s}} |x+h-y|^{n+s} \,dy\,dx\\ \nonumber
&\lesssim |h|^s \int \int_{\{(x,y): ~x\in \mathcal B,~y\in \Rn, ~|u_i(x+h)-u_i(y)| >|x+h-y|^s\}} \frac{|u_i(x+h)-u_i(y)|}{|x+h-y|^{s}} \,\frac{dy\,dx}{|x+h-y|^n}\\ \nonumber
&\lesssim |h|^s \int \int_{\{(x,y): ~x\in \Rn,~y\in \Rn, ~|u_i(x)-u_i(y)| >|x-y|^s\}} \frac{|u_i(x)-u_i(y)|}{|x-y|^{s}} \,\frac{dy\,dx}{|x-y|^n}\\ \nonumber
&\lesssim |h|^s  \int_{\Rn} \int_{\Rn} A \left( \frac{|u_i(x)-u_i(y)|}{|x-y|^{s}}\right) \,\frac{dy\,dx}{|x-y|^n}
\lesssim |h|^s.
\end{align}
Note that the last inequality holds thanks to property \eqref{incr}.
An application of inequalities \eqref{nov415} and \eqref{nov417} with $u$ replaced by $u/C$, where $C$ is the constant appearing in equation \eqref{nov416}, yields \eqref{nov411}.
\\
Let us next  consider  the case when $s\in (1,n) \setminus \N$. The argument above applied with the sequence $\{u_{i}\}$ replaced by $\{\nabla^{[s]}u_{i}\}$ tells us that there exists a subsequence of $\nabla^{[s]}u_{i}$, still indexed by $i$,  which converges in $L^1(\mathcal B)$.
 Moreover, by Theorem~\ref{T:C1}, the sequence $\{\nabla^{[s]}u_{i}\}$ is bounded in the space $L(\widehat A_{\{s\}},\frac{n}{\{s\}})(\Rn)$, where we are adopting the notation $\widehat A_{\{s\}}$ introduced in the proof of Theorem~\ref{T:C1h}. Making use of inequality~\eqref{aug4} with $s$ replaced by $k+\{s\}$, implies  that the sequence $\{\nabla^{[s]-k}u_{i}\}$ is bounded in $L(\widehat A_{\{s\}+k},\frac{n}{\{s\}+k})(\Rn)$ for $k=1,2,\dots,[s]$. In particular, the sequence $\{\nabla^k u_{i}\}$ is bounded in $L^1(\mathcal B)$ for  $k=0,1,\dots,[s]-1$. On taking, if necessary,   a subsequence  we may also assume that the sequence $\big\{\int_\mathcal B \nabla^k u_{i}dx\big\}$ converges. From  an application of the  Poincar\'e inequality in $W^{1,1}(\mathcal B)$, one can infer that  the sequence $\{\nabla^{[s]-1} u_{i}\}$ converges in $L^1(\mathcal B)$.  An iteration of the same argument  implies that the sequence
$\{\nabla^k u_{i}\}$ converges in $L^1(\mathcal B)$ for $k=0,1,\dots,[s]-1$. The convergence of a subsequence of $\{u_i\}$ in $L^1(\mathcal B)$, hence follows via the choice $k=0$.
\\ The existence of a subsequence of $\{u_i\}$ that converges a.e. on the whole of $\Rn$, follows from  a diagonal argument, by an iterated application of the  result established above   to the sequence of balls $\{\mathcal B_j\}$, centered at $0$, with radius $j\in \N$.
\end{proof}

We conclude with proofs of the main results of this section.

\begin{proof}[Proof of Theorem \ref{T:compactness_general}]
We begin by proving that property \textup{(i)} implies \textup{(ii)}.
 Let $\{u_i\}$ be a bounded sequence in $V^{s,A}_d(\Rn)$.
By Lemma~\ref{L:pointwise}, there  exists subsequence  of $\{u_{i}\}$, still denoted by  $\{u_{i}\}$, which converges a.e. in $\rn$ to some function $u$. Moreover, Theorem~\ref{T:C1h} guarantees that $\{u_{i}\}$ is  bounded   in $L(\widehat A,\frac{n}{s})(\Rn)$. By  Fatou's lemma, $u$  belongs to $L(\widehat A,\frac{n}{s})(\Rn)$ as well. Hence,  $\{u_{i}-u\}$ is a bounded sequence in $L(\widehat A,\frac{n}{s})(\Rn)$. Owing to property \eqref{E:36}, to the definition of
 the Orlicz-Lorentz space $L[\tilde{A},-\frac{n}{s}](0,L)$ and to  the fact that  $L^{\widetilde{A}}(0,L)=(L^A)'(0,L)$ (up to equivalent norms), one has that
\begin{equation}\label{a1_bis}
\|f\|_{L(\widehat A,\frac{n}{s})'(0,L)} \approx \|r^{\frac{s}{n}} f^{**}(r)\|_{(L^A)'(0,L)}.
\end{equation}
Throughout this proof, the relations $ \lesssim$ and $\approx$ hold up to constants depending on $s$, $n$ and $A$.
Thanks to~\cite[Theorem 4.2]{S2},
 assumption~\eqref{E:almost_compact} implies that the space $L(\widehat A,\frac{n}{s})(E)$ is almost compactly embedded into  $Y(E)$
for any bounded measurable set $E$ in $\rn$. An application of property~\eqref{E:convergence} thus tells us that the sequence  $\{(u_{i}-u)\chi_E\}$ converges to $0$ in $Y(E)$, and hence the sequence $\{u_{i}\chi_E\}$  converges to $u\chi_E$ in  $Y(E)$.
\\ Let us next show that, conversely, \textup{(ii)} implies \textup{(i)}. Property \textup{(ii)}
implies  that
$$
\|u\|_{Y(\Rn)} \leq C\big|\nabla^{[s]}u\big|_{\{s\},A,\Rn}
$$
for some constant $C$ and every function $u\in V^{s,A}_d(\R^n)$. Thus, by Lemma~\ref{T:reduction_principle}, the limit in \eqref{E:almost_compact} is finite. Let $T_s$ be the operator defined by  \eqref{T}, with $L=\infty$.
For each $i\in \N$, choose a function $f_i \in \mathcal M_+(0,\infty)$,  supported in the interval $[0,\frac{1}{i})$, and such that
$\|f_i\|_{L^A(0,\frac{1}{i})} \leq 1$ and
\begin{equation}\label{E:saturation}
\sup_{\|f\|_{L^A(0,\frac{1}{i})}\leq 1} \left\|T_s\big(\chi_{(0,\frac{1}{i})}f\big) \right\|_{\overline{Y}(0,\frac{1}{i})}
<\left\|T_s f_i\right\|_{\overline{Y}(0,\frac{1}{i})}+\frac{1}{i}.
\end{equation}
Set $m=[s]$ and, for $i \in \N$, let $u_i : \rn \to [0, \infty)$ be the function defined as
$$
u_i(x)=\int_{\omega_n |x|^n}^\infty \int_{r_1}^\infty \dots \int_{r_m}^\infty f_i(r_{m+1}) r_{m+1}^{-m-1+\frac{s}{n}}\,dr_{m+1} \dots dr_{1} \quad \text{for $x\in \R^n$}.
$$
Equation \eqref{E:upper}, with $u$ and $f$ replaced by $u_i$ and $f_i$, tells us that
$$
\big|\nabla^{m}u_i\big|_{\{s\},A,\Rn} \lesssim \|f_i\|_{L^A(0,\frac{1}{i})} \leq 1
$$
for $i \in\N$.
In addition, we have $|\{|\nabla ^{k} u_i|>t\}|<\infty$ for every $t>0$ and $k=0, \dots m$ since $u_i$ is compactly supported. Therefore, the function $u_i \in V^{s,A}_d(\rn)$.  Since the supports of the functions $u_i$ are uniformly bounded for $i\in\N$, assumption~\textup{(ii)} ensures that there exists  a subsequence of $\{u_i\}$, still denoted by  $\{u_i\}$, which is convergent in $Y(\Rn)$. Thanks to the properties of $f_{i}$,  one has that $\lim_{i\to \infty} u_{i}=0$, whence
\begin{equation}\label{E:lim1}
\lim_{i\to \infty}\|u_{i}\|_{Y(\Rn)} =0.
\end{equation}
On the other hand, by inequality \eqref{nov207}, with $u$ and $f$ replaced by $u_i$ and $f_i$,
$$
u_{i}(x) \gtrsim \int_{2\omega_n |x|^n}^\infty f_{i}(r) r^{-1+\frac{s}{n}}\,dr \quad \text{for $x \in \rn$.}
$$
Consequently,
\begin{equation}\label{E:lim2}
\|u_{i}\|_{Y(\Rn)} \gtrsim \|T_sf_{i}\|_{\overline{Y}(0,\frac{1}{i})}
\end{equation}
for $i \in\N$.
Coupling equation ~\eqref{E:lim1} with~\eqref{E:lim2} yields
$$
\lim_{i\to \infty} \|T_sf_{i}\|_{\overline{Y}(0,\frac{1}{i})} =0.
$$
Property (i) hence follows, via  equation~\eqref{E:saturation} and the monotonicity with respect to $L$  of the expression under the limit.
\\ The proof of the equivalence of properties
 \textup{(i)} and  \textup{(iii)}  is analogous to that of the equivalence of \textup{(i)} and \textup{(ii)}, and is omitted for brevity.
\end{proof}

\begin{proof}[Proof of Theorem \ref{T:orlicz_compactness}] As in the previous proof, we limit ourselves to showing the equivalence of properties \textup{(i)} and \textup{(ii)}. We first prove that
 \textup{(i)} implies \textup{(ii)}.   Let $\{u_i\}$ be a bounded sequence in $V^{s,A}_d(\R^n)$. By
 Lemma~\ref{L:pointwise}, there exists a subsequence  of $\{u_{i}\}$, still denoted by $\{u_i\}$, which converges a.e. in $\rn$ to some function $u$. Furthermore,  assumption \textup{(i)}, coupled  with Theorem~\ref{T:C2h}, ensures that $\{u_i\}$ is a bounded sequence in $L^{A_{\frac{n}{s}}}(\Rn)$. By Fatou's lemma,  the function $u$   belongs to $L^{A_{\frac{n}{s}}}(\Rn)$, and hence $\{u_{i}-u\}$ is a bounded sequence in $L^{A_{\frac{n}{s}}}(\Rn)$. Assumption~\eqref{E:almost_compact_orlicz}, combined with \cite[Theorem 4.17.7]{PKJF},  tells us that the space  $L^{A_{\frac{n}{s}}}(E)$ is almost compactly embedded into  $L^B(E)$ for any bounded measurable set $E$ in $\rn$.  By applying property~\eqref{E:convergence}, we thus obtain that the sequence $\{u_{i}-u\}$ converges to $0$ in $L^B(E)$, whence the sequence $\{u_{i}\chi_E\}$ is convergent in $L^B(E)$.
\\ We conclude by proving that \textup{(ii)} implies \textup{(i)}.
Assume that property \textup{(ii)} holds. Thanks to Theorem~\ref{T:compactness_general}, this piece of information  ensures that condition~\eqref{E:almost_compact} holds with $\overline{Y}(0,L)=L^B(0,L)$. Owing to Theorem B and \cite[Theorem 4.2]{S2}, this condition  in its  turn implies  that the space  $L(\widehat A,\frac{n}{s})(0,1)$ is almost-compactly embedded into $L^B(0,1)$. On testing this almost-compact embedding on characteristic functions of intervals of the form $(0,L)$ with $L \in (0,1)$, one infers that
\begin{equation}\label{a2_bis}
\lim_{L\to 0^+} \frac{\|\chi_{(0,L)}\|_{L^B(0,1)}}{\|\chi_{(0,L)}\|_{L(\widehat A,\frac{n}{s})(0,1)}} =0.
\end{equation}
By Theorem B and Theorem E,
\begin{equation*}
    \|f\|_{L(\widehat A,\frac{n}{s})'(0,1)} \approx \|r^{\frac{s}{n}}f^{**}(r)\|_{L^{\widetilde A}(0,1)}
\end{equation*}
for every function $f\in\M(0,1)$. Here, and in what follows,  equivalence is up to multiplicative constants depending only on $n$, $s$ and $A$.
In particular,
\begin{equation}\label{dec240}
    \|\chi_{(0,L)}\|_{L(\widehat A,\frac{n}{s})(0,1)} \approx \frac{L}{\|\chi_{(0,L)}\|_{L(\widehat A,\frac{n}{s})'(0,1)}}\approx
\frac{L}{\|r^{\frac{s}{n}}\chi_{(0,L)}^{**}(r)\|_{L^{\widetilde A}(0,1)}} \quad \text{for $L\in(0,1)$,}
\end{equation}
up to equivalence constants independent of $L$. Notice that the first equivalence holds thanks to a general property of rearrangement-invariant norms \cite[Chapter 2, Theorem 5.2]{BS}.
On the other hand,
\begin{equation}\label{dec241}
    \|r^{\frac{s}{n}}\chi_{(0,L)}^{**}(r)\|_{L^{\widetilde A}(0,1)}
        \approx \|r^{\frac{s}{n}}\chi_{(0,L)}(r)\|_{L^{\widetilde A}(0,1)}
            + L\|r^{-1+\frac{s}{n}}\chi_{(L,1)}(r)\|_{L^{\widetilde A}(0,1)} \quad \text{for $L\in(0,1)$.}
\end{equation}
In particular, if  $L\in(0,\tfrac{1}{2})$, then
\begin{align}\label{dec242}
\|r^{-1+\frac{s}{n}}\chi_{(L,1)}(r)\|_{L^{\widetilde A}(0,1)}
    & \ge \|r^{-1+\frac{s}{n}}\chi_{(L,2L)}(r)\|_{L^{\widetilde A}(0,1)}
    \ge (2L)^{-1+\frac{s}{n}}\|\chi_{(L,2L)}\|_{L^{\widetilde A}(0,1)} \\ \nonumber
    & = (2L)^{-1+\frac{s}{n}}\|\chi_{(0,L)}\|_{L^{\widetilde A}(0,1)}
        \ge 2^{-1+\frac{s}{n}}L^{-1}\|r^{\frac{s}{n}}\chi_{(0,L)}(r)\|_{L^{\widetilde A}(0,1)}.
\end{align}
Equations \eqref{dec240}--\eqref{dec242} tell us that
\begin{equation}\label{a3_bis}
\|\chi_{(0,L)}\|_{L(\widehat A,\frac{n}{s})(0,1)} \approx \frac 1{\|r^{-1+\frac sn}\chi_{(L, 1)}(r)\|_{L^{\widetilde A}(0, 1)}}\quad \text{as $L \to 0^+$.}
\end{equation}
Furthermore, owing to \cite[Lemma~1]{cianchi_pacific},
\begin{equation}\label{a4_bis}
\|r^{-1+\frac sn}\chi_{(L, 1)}(r)\|_{L^{\widetilde A}(0, 1)} \approx A_{\frac ns}^{-1}(1/L)  \quad \text{as $L \to 0^+$.}
\end{equation}
On the other hand, $\|\chi_{(0,L)}\|_{L^B(0,1)} = \frac 1{B^{-1}(1/L)}$ for $L\in (0,1)$. This equality, and equations \eqref{a3_bis} and \eqref{a4_bis} entail that
 condition~\eqref{a2_bis}
is equivalent to
\begin{equation}\label{a5_bis}
\lim_{t\to \infty} \frac{A_{\frac ns}^{-1}(t)}{B^{-1}(t)} =0,
\end{equation}
and the latter is in its turn equivalent to \eqref{E:almost_compact_orlicz}.
%
\end{proof}

\section*{Compliance with Ethical Standards}\label{conflicts}

\subsection*{Funding}

This research was partly funded by:

\begin{enumerate}
\item Research Project 201758MTR2  of the Italian Ministry of Education, University and
Research (MIUR) Prin 2017 ``Direct and inverse problems for partial differential equations: theoretical aspects and applications'';
\item GNAMPA of the Italian INdAM -- National Institute of High Mathematics
(grant number not available);
\item Grant P201-18-00580S of the Czech Science Foundation;
\item Deutsche Forschungsgemeinschaft (DFG, German Research Foundation) under Germany's Excellence Strategy - GZ 2047/1, Projekt-ID 390685813.
\end{enumerate}

\subsection*{Conflict of Interest}

The authors declare that they have no conflict of interest.

\end{document}